\newtheorem{theorem}{Theorem}[section]
\newtheorem{definition}[theorem] {Definition}
\newtheorem{lemma}[theorem]{Lemma}
\newtheorem{corollary}[theorem]{Corollary}
\newtheorem{proposition}[theorem]{Proposition}
\newtheorem{conjecture}[theorem]{Conjecture}
\theoremstyle{definition}
\newtheorem{example}[theorem]{Example}
\newcommand{\C}{\mathbb{C}}
\DeclareMathOperator{\Lam}{Lam}
\DeclareMathOperator{\Rel}{Rel}
\DeclareMathOperator{\PM}{PM}
\DeclareMathOperator{\metric}{met}
\DeclareMathOperator{\md}{md}
\newcommand{\Z}{\mathbb{Z}}
\newcommand{\R}{\mathbb{R}}
\newcommand{\RP}{\mathbb{RP}}
\newcommand{\E}{\mathbb{E}}
\newcommand{\D}{\mathbb{D}}
\newcommand{\compose}{{\circ}}
\tikzstyle{every picture}=[> = to]
\tikzset{cdlabel/.style={execute at begin node=$\scriptstyle,execute at end node=$}}
\tikzset{implication/.style={double equal sign distance, -implies}}
\tikzset{biimplication/.style={double equal sign distance, implies-implies}}
\def\mystrut{{\rule[-2ex]{0ex}{4.5ex}{}}}
\def\cbar{\overline{\C}}
\def\g{\gamma}
\def\G{\Gamma}
\def\De{\Delta}
\def\ds{\displaystyle}
\def\HD{{\rm HD}}
\def\N{\mbox{$\mathbb N$}}
\newcommand{\REFEQN}[1] { \begin{equation}\label{#1} }
\newcommand{\ENDEQN}{\end{equation}}
\newcommand{\REFTHM}[1] { \begin{theorem}\label{#1} }
\newcommand{\ENDTHM}{\end{theorem}}
\newcommand{\REFNTH}[1] { \begin{newthm}\label{#1} }
\newcommand{\ENDNTH}{\end{newthm}}
\newcommand{\REFPROP}[1]{\begin{proposition}\label{#1} }
\newcommand{\ENDPROP}{\end{proposition} }
\newcommand{\REFLEM}[1]{\begin{lemma}\label{#1} }
\newcommand{\ENDLEM}{\end{lemma} }
\newcommand{\REFCOR}[1]{\begin{corollary}\label{#1} }
\newcommand{\ENDCOR}{\end{corollary} }
\newcommand\NE{\mathit{NE}}
\DeclareMathOperator{\met}{met}
\newcommand{\abs}[1]{\lvert #1 \rvert}
\begin{document}

\title{Degree-$d$-invariant laminations}

\author[W. Thurston]{William P. Thurston}
\address[W. Thurston]{Department of Mathematics, Cornell University\\ Ithaca, NY, 14853, USA, and  \newline King Abdulaziz University \\ Jeddah 22254, Saudi Arabia}

\author[Baik]{Hyungryul Baik}
\address[Baik]{Department of Mathematical Sciences, KAIST\\
		291 Daehak-ro Yuseong-gu, Daejeon, 34141, South Korea}
\email{hrbaik@kaist.ac.kr}

\author[Gao]{Gao Yan}
\address[Gao Yan]{Department of Mathematics, Sichuan University, Chengdu, 610065, China}
\email{gyan@scu.edu.cn}

\author[Hubbard]{John H. Hubbard}
\address[Hubbard]{Department of Mathematics, Cornell University \\ Ithaca, NY 14853, USA}
\email{jhh8@cornell.edu}

\author[Lei]{Tan Lei}
\address[Tan Lei]{Universit\'{e} d'Angers, Facult\'{e} des sciences, LAREMA \\ 49045 Angers cedex 01, France}
\email{tanlei@math.univ-angers.fr}

\author[Lindsey]{Kathryn A. Lindsey}
\address[Lindsey]{Department of Mathematics, University of Chicago\\ Chicago, IL 60637, USA}
\email{lindseka@bc.edu}

\author[D. Thurston]{Dylan P. Thurston}
\address[D. Thurston]{Department of Mathematics, Indiana University--Bloomington \\ Bloomington, IN 47405, USA}
\email{dpthurst@indiana.edu}

\subjclass{}
\keywords{}

\date{\today}

\begin{abstract}

Degree-$d$-invariant laminations of the disk model the dynamical action of a degree-$d$ polynomial; such a lamination defines an equivalence relation on $S^1$ that corresponds to dynamical rays of an associated polynomial landing at the same multi-accessible points in the Julia set.  Primitive majors are certain subsets of degree-$d$-invariant laminations consisting of critical leaves and gaps.  The space $\PM(d)$ of primitive degree-$d$ majors is a spine for the set of monic degree-$d$ polynomials with distinct roots and serves as a parameterization of a subset of the boundary of the connectedness locus for degree-$d$ polynomials.  The core entropy of a postcritically finite polynomial is the topological entropy of the action of the polynomial on the associated Hubbard tree.  Core entropy may be computed directly, bypassing the Hubbard tree, using a combinatorial analogue of the Hubbard tree within the context of degree-$d$-invariant laminations.

 \end{abstract}
\maketitle


\section*{Preface}

During the last year of his life, William P. Thurston developed a
theory of {\it degree-$d$-invariant laminations}, a tool that he hoped
would lead to what he called ``a qualitative picture of [the dynamics
of] degree~$d$ polynomials." Thurston discussed his research on this
topic in his seminar at Cornell University and was in the process of
writing an article on this topic, but he passed away before completing
the manuscript.  Part I of this document consists of Thurston's
unfinished manuscript.  As it stands, the manuscript is beautifully
written and contains a lot of his new ideas. However, he discussed
ideas that are not in the unfinished manuscript and some details are
missing. Part II consists of supplementary material written by the other
authors based on what they learned from him throughout his seminar and
email exchanges with him. Tan Lei also passed away during the preparation of Part II.  William Thurston's vision was far beyond what we could
write here, but, hopefully, this paper will serve as a starting point
for future researchers.

Each semester since moving to Cornell University in 2003, William Thurston taught a seminar course titled  ``Topics in Topology," which was familiarly (and perhaps more accurately) referred to by participants as ``Thurston seminar."   On the first day of each semester, Thurston asked the audience what mathematical topics they would like to hear about, and he tailored the direction of the seminar according to the interests of the participants.  Although the course was nominally a seminar in topology, he discussed other topics as well, including combinatorics, mathematical logic, and complex dynamics. Between 2010 and 2012, a high percentage of the seminar participants were dynamicists, and so (with the exception of one semester)
  Thurston's seminar during this period primarily focused on topics in complex dynamics.  He discussed his topological characterization of rational maps on the Riemann sphere, as well as how to understand complex polynomials via topological entropy and laminations on the circle. During this time, Thurston developed many beautiful ideas, motivated in part by discussions with people in his seminar and in part by email exchanges with others who were at a distance.

His seminar was not an organized lecture series; it was much more than that. He talked about ideas that he was developing at that very moment, as opposed to previously known findings. Thurston invited members of the seminar to be actively involved in the exploration. He often demonstrated computer experiments in class, and he encouraged seminar participants to experiment with his codes.  Seminar participants frequently received drafts of Thurston's manuscript as his thinking evolved. Those fortunate enough to learn from Thurston observed how his understanding of the subject gradually transformed into a beautiful theory.

\vspace{1cm}
\begin{center}
{\noindent \bf \sc \Large Part I}
\end{center}
\vspace{.5cm}

\begin{center}
{\bf \large Degree-$d$-invariant laminations}

\bigskip
William P. Thurston

February 22, 2012

\end{center}

\section{Introduction}

Despite years of strong effort by an impressive group of insightful and hardworking mathematicians and many advances, our overall understanding and global picture of the dynamics of degree $d$ rational maps and even degree $d$ complex polynomials has remained sketchy and unsatisfying.

The purpose of this paper is to develop at least a sketch for a skeletal qualitative picture of degree $d$ polynomials. There are good
theorems characterizing and describing examples individually or in small-dimensional families, but that is not our focus. We hope instead to contribute toward developing and clarifying the global picture of the connectedness locus for degree $d$ polynomials, that is, the higher-dimensional analogues of the Mandelbrot set.

To do this, the main tool will be the theory of degree-$d$-invariant laminations.

We hope that by developing a better picture for degree $d$ polynomials, we will develop insights that will carry on to better understand degree $d$ rational maps, whose global description is even more of a mystery.

\section{Some definitions and basic properties}

A degree $d$ polynomial map $z\mapsto P(z) : \C\to \C$ always ``looks like" $z\mapsto z^d$ near $\infty$. More precisely, it is known that $P$ is
conjugate to $z\mapsto z^d$ in some neighborhood of $\infty$.

We may as well specialize to monic polynomials such that the center of mass of the roots is at the origin (so that the coefficient of $z^{d-1}$ is $0$),
since any polynomial can be conjugated into that form. In that case, we can choose the conjugating map to converge to the identity near $\infty$; this
uniquely determines the map.  As Douady and Hubbard noted, we can use the dynamics to extend the conjugacy near $\infty$ inward toward $0$ step by step. If the Julia set is connected, we obtain in this way a Riemann mapping of the complement of the Julia set to the complement of the closed unit disk in $\hat\C$ that conjugates the dynamics outside the Julia set (which is the attracting basin of $\infty$) to the standard form $z\mapsto z^d$.

It has been known since the time of Fatou and Julia (and easy to show) that the Julia set is the boundary of the attracting basin of $\infty$. We want to investigate the topology of the Julia set, and how this topology varies among polynomial maps of degree $d$. As long as the Julia set is locally connected, the Julia set is the continuous image of the unit circle that is the boundary of the Riemann map around $\infty$; the key question is to understand the identifications on the circle made by these maps, and the way the identifications vary as the polynomial varies.

Define a
\emph{treelike} equivalence relation on the unit circle to be  a closed equivalence relation such that for any two distinct equivalence classes,
 their convex hulls in the unit disk are disjoint.
(A relation $R \subset X\times X$ on a topological space $X$ is \emph{closed} if it is a closed subset of $ X\times X$.)
The condition that the convex hulls of equivalence classes be disjoint comes from the topology of the plane: it translates into the condition that if we take the complement of the open unit disk and make the given identifications on the circle, two simple closed curves that cross the circle quotient using different equivalence classes cannot have intersection number $1$, otherwise the quotient space would not embed in the plane.

\begin{figure}[htpb]
\centering
\vbox{
\includegraphics[width=2.3in]{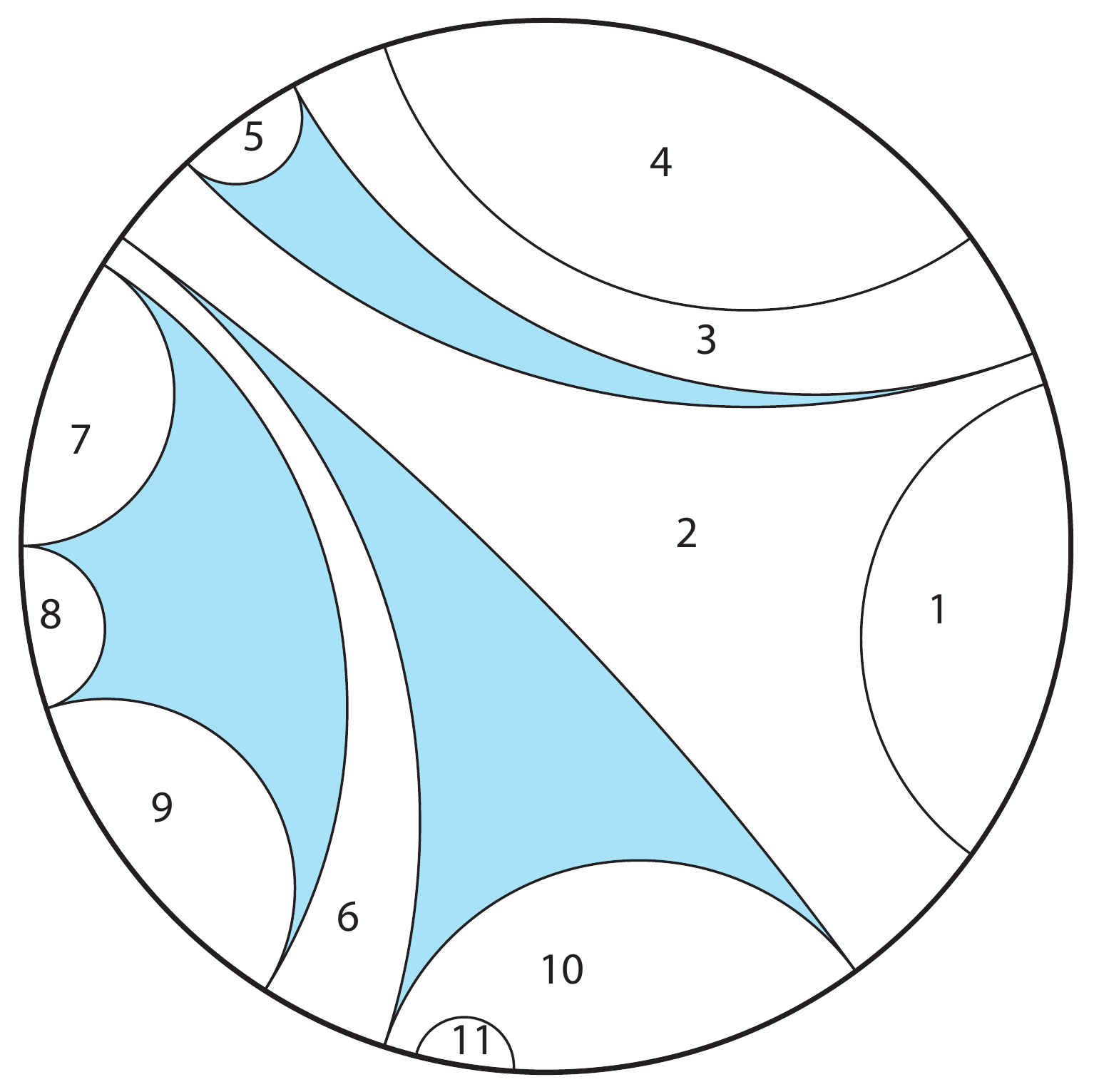}
\includegraphics[width=2.3in]{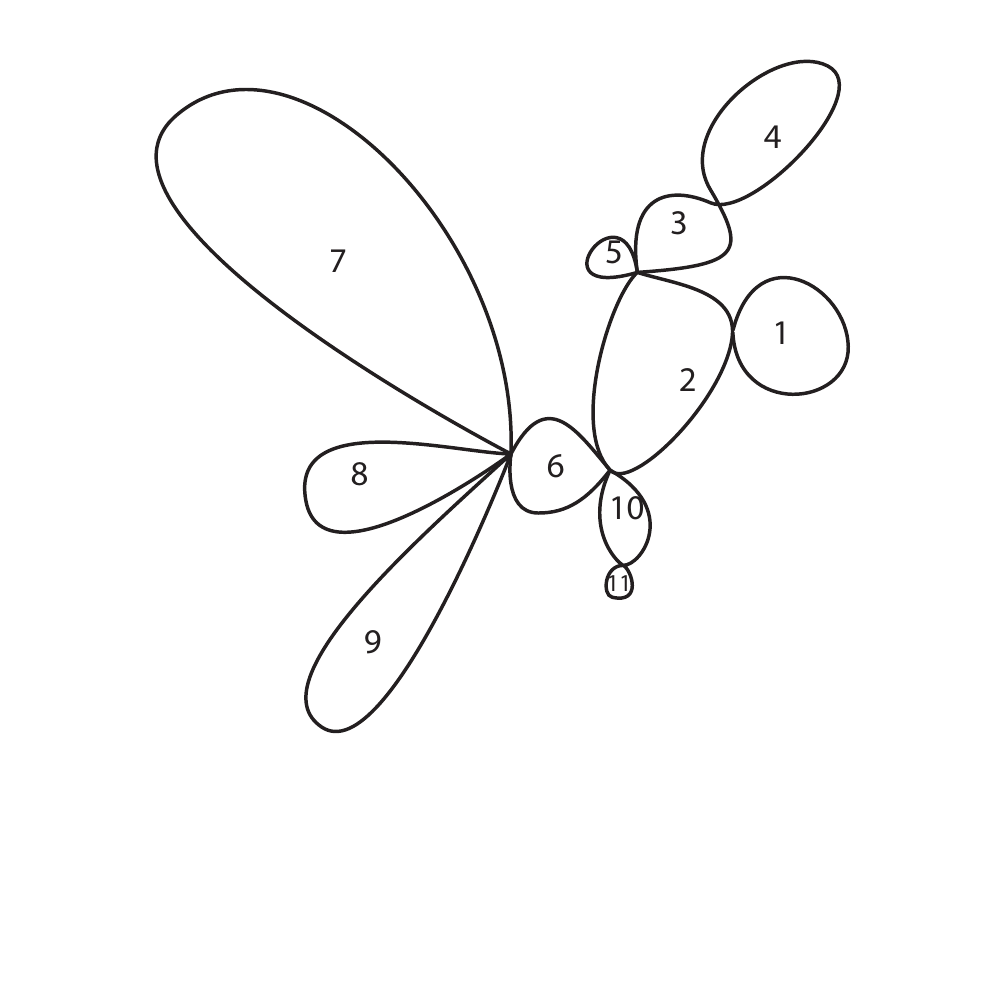}
}
\caption{On the left is a finite lamination associated with a treelike equivalence relation with finitely many non-trivial equivalences: the two ends of any leaf are equivalent, and (as a consequence) the vertices of the polygons are all equivalent. The leaves are drawn as geodesics in the Poincar\'e  disk model of the hyperbolic plane. The leaves and shaded areas can be shrunk to points by a pseudo-isotopy of the plane to obtain (topologically) the figure on the right.}
\end{figure}

To develop a geometric understanding of the possible equivalence relations, it helps to translate the concept into the language of laminations.  Given a treelike equivalence relation $R$, there is an associated lamination $\Lam(R)$ of the open disk, where the leaves of $\Lam(R)$
consist of boundaries of convex hulls of equivalence classes intersected with the open disk.  The regions bounded by leaves are called \emph{gaps}.  Some of the gaps of $\Lam(R)$ are \emph{collapsed gaps},
which touch the circle in a single equivalence class, while other gaps are \emph{intact gaps}. An intact gap necessarily touches the circle in an uncountable set, and its boundary mod $R$ is  homeomorphic to a circle.

Conversely, given a lamination $\lambda$ there is an equivalence relation $\Rel(\lambda)$, usually obtained by taking the transitive closure of the relation that equates endpoints
of leaves. However, this relation may not be topologically closed.  One can take the closure, which may not be an equivalence relation.  It is possible to alternate the operations
of transitive closure and topological closure  by transfinite induction until it stabilizes to a closed equivalence relation $\Rel(\lambda)$, or simply define $\Rel(\lambda)$
as the intersection of all closed equivalence relations that identify endpoints of leaves of $\lambda$.

 Often, but not always, the operations $\Lam$ and $\Rel$ are inverse to each other.  If $R$ has equivalence classes that are Cantor sets, then $\Rel(\Lam(R))$ will collapse these only to a circle, not to a point. If $\lambda$ has chains of leaves with common endpoints, then any leaves in the interior of the convex hull of the chain of vertices will disappear, and
new edges may also appear.

A circular order on a set intuitively means an embedding of the set on a circle, up to orientation-preserving homeomorphism. In combinatorial terms, a circular order can be given by a function $C$ from triples of elements to the set $\{-1,0,1\}$ (interpreted as giving the sign of the area of the triangle formed by three elements in the plane) such that $C(a,b,c) = 0$ if and only if the elements are not distinct, and $C$ is a 2-cocycle, meaning for any four elements $a$, $b$, $c$ and $d$, the sum of the values for the boundary of a 3-simplex labeled with these elements is 0. (This implies, in particular, that $C(a,b,c) = -C(b,a,c)$, etc.)  With this definition, it can be shown that any countable circularly ordered set can be embedded in the circle in such a way that $C(a,b,c)$ is the sign of the area of the triangle formed by the images of $a$, $b$ and $c$.  One way to prove this assertion   is to first  observe that a circular order can be `cut' at a point $a$ to into a linear order, defined by  $b < c \iff C(a,b,c) = 1$.  It is reasonably well-known that a countable linear order on a set is induced by an embedding of that set in the interval, from which it follows that a circular ordering is induced by an embedding in $S^1$.

An \emph{interval} $J$ of a circularly-ordered set is a subset with a linear order (=total order)
satisfying the condition that
\begin{itemize}
\item[i.] {For  $a,b,c \in J$,
$a < b < c \iff C(a,b,c) = 1$, and}
\item[ii.] {For any $x$ and any $a, b \in J$, if
 $C(a,x,b) = 1$ then $ x \in J$.}
 \end{itemize}
Usually the linear order is determined by the subset, but there is an exception if the subset is the entire circularly ordered set.
In that case the definition is like cutting a circle somewhere to form an interval.
   Just as for linear orders, any two elements $a, b$ in a circularly ordered set determine a closed interval $[a,b]$, as well as an open interval $(a,b)$, etc.

The \emph{degree} of a map $f: X \to Y$ between two circularly ordered sets is the minimum size of a partition of $X$ into intervals such that on each interval, the circular order is preserved.  If there is no such finite partition,
the degree is $\infty$.

A treelike equivalence relation $R$ is \emph{degree-$d$-invariant}
if
\begin{itemize}
\item[i.] {If $sRt$ then $s^d R t^d$, and}
\item[ii.] {For any equivalence class $C$,  the total degree of the restrictions of $z \mapsto z^d$ to the equivalence classes on the set $C^{1/d}$ is $d$.}
\end{itemize}

Similarly, a lamination $\lambda$ is \emph{degree-$d$-invariant} if
\begin{itemize}
\item[i.] {If there is a leaf with endpoints $x$ and $y$, then either $x^d = y^d$ or there is a leaf with endpoints $x^d$ and $y^d$}
\item[ii.] {If there is a leaf with endpoints $x$ and $y$, there is a set of $d$ disjoint leaves with one endpoint in $x^{1/d}$ and the other endpoint in $y^{1/d}$.}
\end{itemize}

These conditions on leaves imply related conditions for gaps, from the behavior of the boundary of the gap.  In particular, an equivalence relation $R$ is degree-$d$-invariant if and only if
$\Lam(R)$ is degree-$d$-invariant.
As customary,  we will use the word `quadratic' as a synonym for degree 2, `cubic' for degree 3, `quartic' for degree 4, etc.

It is worth pointing out that the map $f_d: z \mapsto z^d$ is centralized by a dihedral group of  symmetries of order $2(d-1)$, generated by reflection $z \mapsto \overline z$ together with
rotation $z \mapsto \zeta z$ where $\zeta$ is a primitive $(d-1)$th root of unity.  Therefore, the set of all degree-$d$-invariant laminations and the set of all degree-$d$-invariant relations
has the same symmetry group.

If $R$ is a degree-$d$-invariant equivalence relation, a \emph{critical class}
$C$ is an equivalence class that maps with degree greater than 1. Similarly, a critical gap in $\Lam(R)$ is a gap whose intersection with the circle is mapped with degree greater than 1. The \emph{criticality} of a class or a gap
$C$ is its degree minus 1.  A critical class may correspond to either  a critical leaf, which must have criticality 1, or a critical gap.  A critical gap may be a collapsed gap that corresponds to a critical class, or an intact gap.

\begin{proposition}
For any degree-$d$-invariant equivalence relation $R$, the total criticality of
equivalence classes of $R$ together with intact critical gaps of $\Lam(R)$ (i.e the sum of the criticality of the critical classes and the intact critical gaps) equals $d-1$.
\end{proposition}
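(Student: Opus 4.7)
The plan is to set up a Riemann-Hurwitz identity on a quotient of the closed disk. Form $X := \overline{\D}/\approx$, where $\approx$ is the equivalence relation on $\overline{\D}$ identifying any two points lying in the convex hull of the same equivalence class of $R$. By the treelike property these convex hulls are pairwise disjoint, and each is contractible, so the collapse preserves Euler characteristic: $\chi(X) = \chi(\overline{\D}) = 1$. In $X$, each equivalence class of $R$ (together with its associated collapsed gap, if any) becomes a single point, while each intact gap of $\Lam(R)$ survives as a two-dimensional disk-like region whose interior is unaltered. Since $f_d$ carries classes to classes by condition~(i), it descends to a continuous self-map $\tilde f \colon X \to X$ of generic degree $d$.

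Next I would identify the total ramification $N$ of $\tilde f$ with the total combinatorial criticality. At a point of $X$ coming from a critical class $C$, the local degree of $\tilde f$ equals the combinatorial degree $\deg(C)$, contributing $\deg(C) - 1$ to $N$. For an intact critical gap $G$, the restriction $f_d \colon \overline{G} \to \overline{f_d(G)}$ is a branched cover of closed topological disks whose topological degree matches the combinatorial degree $\deg(G)$; Riemann-Hurwitz applied to this local branched cover of disks contributes $\deg(G) - 1$ to $N$. All other points of $X$ are regular for $\tilde f$. Applying Riemann-Hurwitz globally to $\tilde f \colon X \to X$ then yields $\chi(X) = d\cdot \chi(X) - N$, so substituting $\chi(X)=1$ gives $N = d - 1$, matching the sum of criticalities of critical classes and intact critical gaps.

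The main obstacle is to apply Riemann-Hurwitz rigorously on the singular planar space $X$, which glues 0-dimensional points (from classes) to 2-dimensional disk-like regions (from intact gaps) along possibly fractal boundaries, and to verify that the combinatorial degree defined by minimum partitions into order-preserving intervals agrees with the topological local degree of $f_d$ at each feature. A natural route is to handle first the case of laminations with only finitely many non-trivial classes and finitely many intact gaps, where $X$ is a finite CW-complex and Riemann-Hurwitz applies classically, and then extend to general degree-$d$-invariant $R$ via the closedness of the relation, approximating by finite sub-laminations.
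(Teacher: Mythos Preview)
Your approach is correct in outline but takes a genuinely different route from the paper. The paper argues by induction: after extending $z\mapsto z^d$ over $\overline{\D}$ (linearly on leaves, then on a vertical foliation of each gap) to show that some critical leaf or gap must exist, it cuts the circle along a chosen critical leaf $\overline{xy}$, producing two circles that map with degrees $d_1$ and $d_2$ where $d_1+d_2=d$, and applies the inductive hypothesis to each piece (with a parallel cut-and-count when the chosen feature is a critical gap). Your global Riemann--Hurwitz computation on the quotient $X$ explains conceptually why the answer is $d-1$---it is the ramification of a degree-$d$ self-map of a space with Euler characteristic $1$, exactly as one would argue for an actual polynomial on its filled Julia set---but it requires the technical verifications you flag: that $X$ carries a suitable CW structure, that $\tilde f$ is a genuine branched cover, and that the combinatorial degree of a class (minimum partition into order-preserving intervals) matches the local topological degree of $\tilde f$ at the corresponding point. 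The paper's one-dimensional inductive cut sidesteps all of this, needing essentially no topology of the quotient beyond the initial existence step; your approximation-by-finite-sublaminations plan is workable, but the induction is the shorter path once you see it.
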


\begin{proof}
First, let's establish that if $d > 1$ there is at least one critical leaf or critical gap.  We can do this by extending the
map $z \mapsto z^d$ to the disk: first extend linearly on each edge of the lamination, then foliate each gap by vertical lines and extend linearly to each of the leaves of this foliation. If there were neither collapsing leaves nor collapsing gaps, this would be a homeomorphism on each leaf and on each gap, so the map would be a covering map, impossible since the disk is simply-connected.

Now consider any  critical leaf $\overline{x y}$, the circle with $x$ and $y$ identified is the union of two circles mapping with total degrees
$d_1$ and $d_2$ where $d_1 + d_2 = d$.  Proceed by induction: if the total criticality in  these two circles is $d_1-1$ and $d_2-1$, then the total criticality in the whole circle is $d-1$.

Now consider any  critical gap $G$.  Let $X$ be the union of $S^1$ with the boundary of $G$, and $X'$ be its image
under $z \mapsto z^d$, extended linearly on each edge.  Any leaf in the image of the boundary of $G$ has $d$ preimages, from which it follows readily that the total criticality of the map on the complementary regions of $X$ is $d-1$.
\end{proof}

\begin{figure}[htpb] \label{fig:majors}
\centering
\vbox{
\includegraphics[width=2.1in]{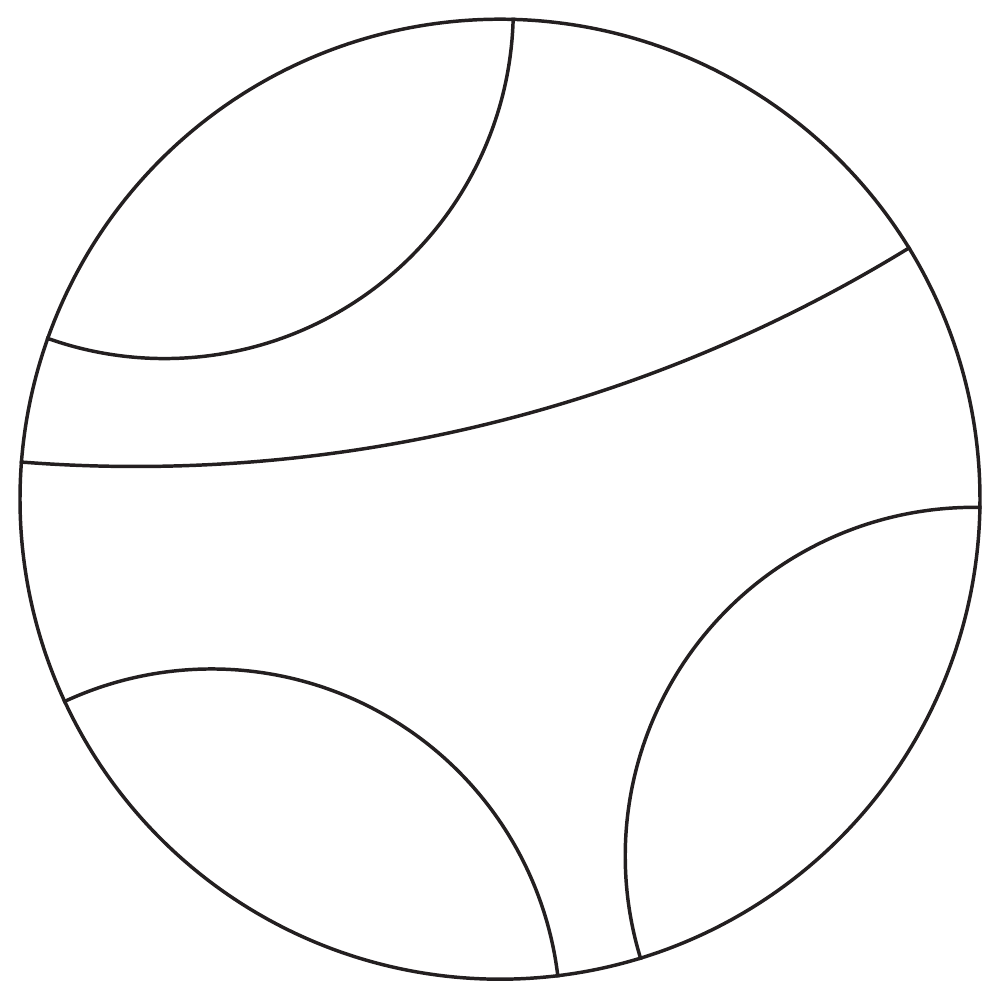}
\includegraphics[width=2.1in]{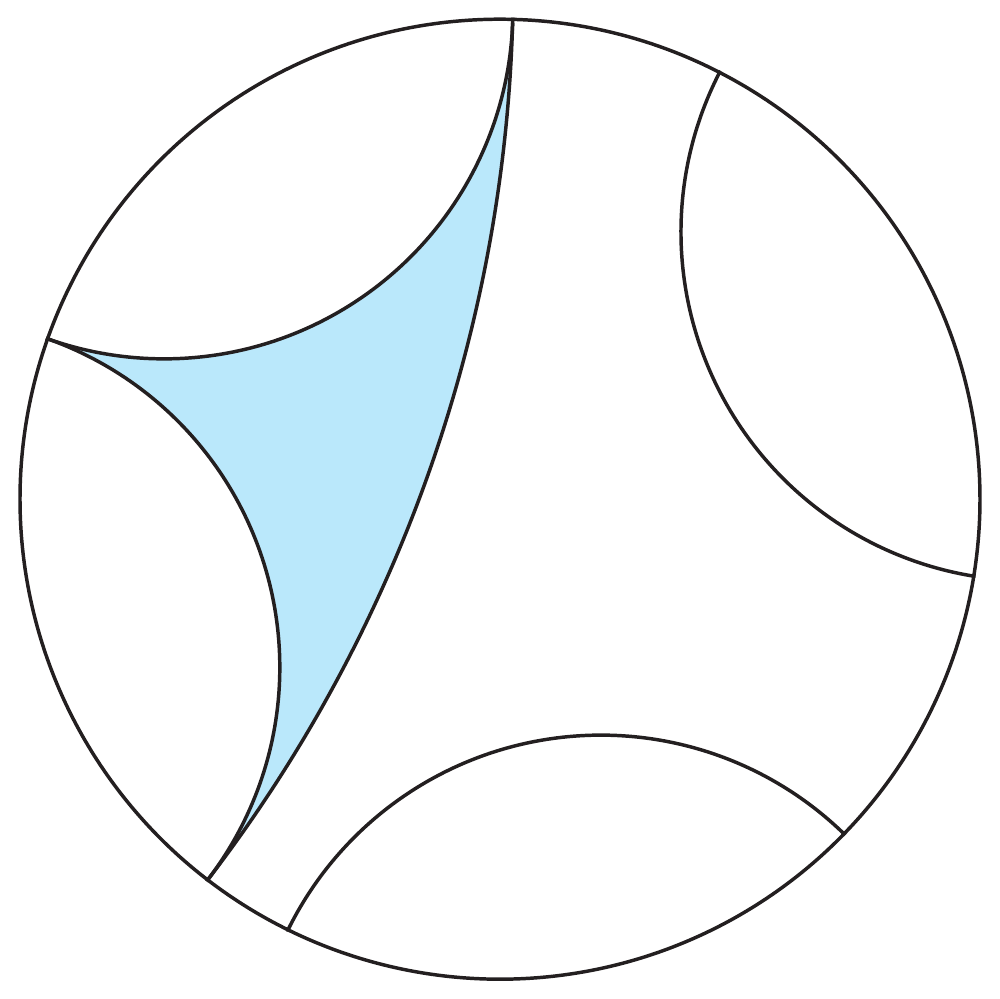}
}
\caption{Here are two examples of primitive majors for a quintic-invariant lamination.  Each intact gap that touches $S^1$ touches with total length $2\pi/5$. As a consequence, the two endpoints of any leaf are collapsed to a single point under $z \mapsto z^5$.  Each isolated leaf has criticality 1; the three vertices of the shaded triangular gap in the example at right map to a single point, so it has criticality 2.
If the leaves of the lamination at left move around so that two of them touch, then a third leaf joining the non-touching endpoints of the touching leaves is implicit, as in the figure at right. The example at right can be perturbed so that any one of the three leaves forming the ideal triangle disappears and the other two become disjoint.}
\end{figure}

\begin{definition}
The \emph{major} of a degree-$d$-invariant lamination is the set of critical leaves and critical gaps.  The \emph{major} of a degree-$d$-invariant equivalence relation $R$ is the set of equivalence classes corresponding to critical leaves and critical gaps of $\Lam(R)$.
\end{definition}

\section{Majors }
\label{section:Majors}
What are the possibilities for the major of a degree-$d$-invariant lamination?

We'll say that the major of a lamination is \emph{primitive} if each critical gap is a polygon whose vertices are all identified by $z \mapsto z^d$.  We will see later that any degree-$d$-invariant lamination is contained in a degree-$d$-invariant lamination whose major is primitive; there are sometimes several possible invariant enlargements, and sometimes uncountably many. The  invariant enlargements might not come from invariant equivalence relations, but they are useful nonetheless for understanding the global structure of invariant laminations, invariant equivalence relations, and Julia sets for degree $d$ polynomials.

Even without a predefined lamination, we can define a \emph{primitive degree-$d$ major} to be a collection of disjoint  leaves and polygons each of whose vertices are identified under $z \mapsto z^d$, with total criticality $d-1$.
In Section \ref{section:LaminationsFromMajors} we will show how to construct a degree-$d$-invariant lamination whose major is the given major.  First though, we will analyze the set $\PM(d)$
  of all primitive degree-$d$ majors.

  An element $m \in \PM(d)$ determines a quotient graph $\gamma(m)$ obtained by identifying each equivalence class to a point. The path-metric of $S^1$ defines a path-metric on $\gamma(m)$. In addition, $\gamma(m)$ has the structure of a planar graph, that is, an embedding in the plane well-defined up to isotopy, obtained by shrinking each leaf and each ideal polygon of the lamination to a point.  These graphs have the property that $H^1(\gamma(m))$ has rank $d$, and every cycle has length a multiple of $2\pi/d$.  Every edge must be accessible from the infinite component of the complement, so the metric and the planar embedding together with the starting point, that is, the image of $1 \in \C$, is enough to define the major.

The metric  $\metric(m)$ on the circle induced by the path-metric on $\gamma(m)$  determines a metric $\md$ on  $\PM(d)$,
  define as the sup difference of metrics,
  \[
  \md(m, m') = \sup_{x, y \in S^1} \left | \metric(m)(x,y) - \metric(m')(x,y)\right | .
  \]
  In particular, there is a well-defined topology on $\PM(d)$.

\medskip
There is a recursive method one can use to construct and analyze primitive degree-$d$ majors, as follows.
Consider any isolated leaf $l$ of a major $m \in PM(d)$. The endpoints of
$l$ split the circle into two intervals, $A$ of length $k/d$ and $B$ of length $(d-k)/d$, for some integer $k$.  If we identify the endpoints of $A$ and make an affine reparametrization to stretch it  by a factor of $d/k$
so it fits around the unit circle and place the identified endpoints at 0,  the leaves of $m$ with endpoints in $A$ become a primitive degree $k$ major.  Similarly, we get a primitive degree $(d-k)$ major from $B$.
If $p$ is any ideal $j$-gon of an element $m \in \PM(d)$, we similarly can derive a sequence of $j$ primitive majors whose total degree is $d$.

\bigskip

   \begin{theorem}
   \label{theorem:pmd_into_polynomialspace}
 The space $\PM(d)$ can be embedded in the space of monic degree $d$ polynomials
  as a spine for the set of polynomials with distinct roots, that is, the complement of the discriminant locus.
The spine (image of the embedding) consists of monic polynomials whose critical values are all on the unit circle and such that the center of mass of the roots is at the origin.
\end{theorem}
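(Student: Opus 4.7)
The plan is to construct an embedding $\Phi:\PM(d)\to\mathcal{P}_d$ whose image $\Sigma$ is the locus of monic degree-$d$ polynomials (with vanishing $z^{d-1}$-coefficient) having all critical values on $S^1$, and then to produce a deformation retraction of the distinct-roots locus $\mathcal{P}_d\setminus\Delta$ onto $\Sigma$ by quasiconformal surgery.

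Given $m\in\PM(d)$, I would first realize $\gamma(m)$ as a planar graph in $S^2$. Using $\mathrm{rank}\,H^1(\gamma(m))=d$ as noted in the text, the complement has one unbounded face $F_\infty$ and $d$ bounded faces $F_1,\dots,F_d$. Build a topological degree-$d$ branched cover $f_m:S^2\to S^2$ by sending $F_\infty$ to the exterior of the target unit disk via a degree-$d$ cover tangent to $z\mapsto z^d$ at $\infty$, sending each $F_i$ homeomorphically onto the interior unit disk, and sending each edge of $\gamma(m)$ isometrically onto $S^1$ using $\metric(m)$. The branching is concentrated at the vertices of $\gamma(m)$ coming from critical leaves/polygons, with local degree equal to criticality plus one.

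To upgrade $f_m$ to a holomorphic map, pull back the standard complex structure face-by-face (Riemann maps on each $F_i$; $z\mapsto z^d$ on $F_\infty$); the structures agree along edges because the edge-maps are isometric, and $\gamma(m)$ is removable as a finite graph, so by uniformization we obtain a complex structure on $S^2$ equivalent to the standard one. Normalizing uniformization to be tangent to the identity at $\infty$ produces a monic polynomial $P_m=\Phi(m)$ with vanishing subleading coefficient, whose critical values all lie on $S^1$. Injectivity of $\Phi$ follows from recovering $\gamma(m)=P_m^{-1}(S^1)$ together with the circular combinatorics from the external-ray structure of $P_m$; continuity in the metric $\md$ follows from continuous dependence of the face-by-face Riemann mappings on $\metric(m)$. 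The converse (every $P\in\mathcal{P}_d$ with distinct roots and critical values on $S^1$ lies in $\Phi(\PM(d))$) is immediate from uniqueness, so the image equals $\Sigma$.

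For the deformation retraction, given $Q\in\mathcal{P}_d\setminus\Delta$ with critical values $v_1,\dots,v_{d-1}$ (with multiplicity), choose a smooth isotopy $\psi_t:\mathbb{C}\to\mathbb{C}$ that fixes $S^1$ pointwise, is radial along each ray through the origin, and satisfies $\psi_0=\mathrm{id}$, $\psi_1(v_i)=v_i/|v_i|$. The composition $\psi_t\circ Q$ is quasiregular of degree $d$, asymptotic to $z^d$ at $\infty$, so the Measurable Riemann Mapping Theorem straightens its Beltrami coefficient into a unique monic polynomial $Q_t\in\mathcal{P}_d$ with critical values $\psi_t(v_i)$, depending continuously on $(Q,t)$. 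Since $\psi_t$ acts as the identity on any critical value already on $S^1$, this deformation fixes $\Sigma$ pointwise, while $Q_1\in\Sigma$, yielding the retraction. The principal obstacle is continuity and well-definedness of this surgery step as critical values collide or pass through $0$: this requires verifying that no Thurston obstruction appears for the topological branched covers $\psi_t\circ Q$ (automatic for polynomials by absence of Levy cycles in the forward orbit data) together with a careful analysis of continuity of the Beltrami-equation solution as the coefficient degenerates along the boundary strata of $\mathcal{P}_d\setminus\Delta$.
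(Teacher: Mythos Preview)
Your embedding $\Phi$ is essentially the construction given in Part~II of the paper (Theorem~\ref{spine}): build a Riemann surface from the combinatorial data of $\gamma(m)$, observe it is a sphere, and read off a polynomial.  Thurston's original argument in Part~I is the inverse viewpoint---start from the gradient flow of $|p(z)|^2$, whose upward separatrices at critical points produce the major---but the two are equivalent, and your version is fine.

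The retraction, however, has a genuine gap.  You ask for a smooth isotopy $\psi_t$ of $\mathbb{C}$ that is radial along every ray, fixes $S^1$, and satisfies $\psi_1(v_i)=v_i/|v_i|$.  Such a $\psi_1$ cannot be a diffeomorphism whenever two critical values $v_i\ne v_j$ share an argument (e.g.\ one inside and one outside $S^1$ on the same ray), since both must land at the same point of $S^1$.  This happens on a real-codimension-one set in $\mathcal{P}_d\setminus\Delta$, so the isotopy simply does not exist there, and the MRMT step never gets off the ground.  Separately, your invocation of Thurston obstructions and Levy cycles is a non sequitur: those notions concern \emph{iterated} postcritically finite branched covers, not the straightening of a single quasiregular map, which is automatic once you have a bounded Beltrami coefficient.

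Both proofs in the paper avoid quasiconformal surgery entirely.  Thurston works directly with the height function $\log|p|$ on the tree of upward separatrices: he linearly scales the heights of the compact edges to zero (equivalently, moves all critical values radially onto $S^1$ in the target) and rebuilds the polynomial at each stage by the slit-and-glue (Hurwitz) construction.  Because the reconstruction is from branch data rather than from a global diffeomorphism of the target, colliding arguments of critical values cause no trouble---they just land you in a non-generic stratum of $\PM(d)$.  If you want to salvage your approach, drop the requirement that $\psi_t$ be radial and instead move the critical values along paths in $\mathbb{C}^*$ that stay injective (braid them if necessary); but at that point you are essentially redoing the Hurwitz argument, and the qc machinery is superfluous.
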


A \emph{spine} is a lower-dimensional subset of a manifold such that the manifold deformation-retracts to the spine.
\begin{proof}
We'll start by defining a map from the space of polynomials with distinct zeroes to $\PM(d)$.  Given a polynomial $p(z)$, we look at the gradient vector field for $|p(z)|$,
or better, the gradient of the smooth function $|p(z)|^2$. Any simple critical point for $p$ is a saddle point for this flow.   Near infinity, the flow lines align very closely to the flow lines for the gradient of $|z|^{2d}$,
 so each flow line that doesn't tend toward a critical point goes to infinity with some asymptotic argument (angle).  Define a lamination whose leaves join the pairs of outgoing separatrices for the critical points.  If there are critical points of higher multiplicity or if the unstable manifolds from some critical points coincide with stable manifolds for others (separatrices collide), then define an equivalence relation that decrees that for any flow-line, if the limit of asymptotic angles on the left is not the same as the limit of asymptotic angles on the right, then the two limits are equivalent.  For each such equivalence class, adjoin the ideal polygon that is the boundary of its convex hull.

We claim that the lamination so defined is a primitive degree-$d$
major. To see that, notice that the level sets of $|p(z)|$ are mapped
under $p(z)$ to concentric circles, so the flow lines for the gradient flow map to the perpendicular rays emanating from the origin. Each equivalence class coming from discontinuities in the asymptotic angles therefore maps to a single point, since there are no critical points for the image foliation except at the origin in the range of $p$. The total criticality is $d-1$ since the degree of $p'(z)$ is $d-1$.

There are many different polynomials that give any particular major. In the first place, note that for any polynomial, if we translate in the domain by any constant (which amounts to translating all the roots in some direction, keeping the vectors between them constant), the asymptotic angles of the upward flow lines from the critical points do not change, so then lamination does not change. We may as well keep the roots centered at the origin. Algebraically, this is equivalent to saying that the critical points are centered at the origin (by considering the derivative of the defining polynomial).

 You can think of the spine this way.  In the domain of $p$, cut $\C$ along each upward separatrix of each critical point. This cuts $\C$ into $d$ pieces, each containing one root of $p$ (that you arrive at by flowing downhill; whenever there is a path from $z_1$ to $z_2$ whose downhill flows don't ever meet a critical point, they end up at the same root).    For any  region, if you glue the two sides of each edge together, starting at the lowest critical point and matching points with equal values for $p(z)$, you obtain a copy of $\C$ which is really just a copy of the image plane; the seams have joined together to become rays.

You can construct other polynomials associated with the same major by varying the length of the cuts. (These cuts are classical branch cuts).  Assuming for now that we are in the generic case with no critical gaps, for each leaf choose a positive real number.  Take one copy of $\C$ for each region of the complement of the lamination, and for each leaf $l$
$\overline{xy}$ on its boundary, make a slit on the ray at angle $x^d = y^d$ from $\infty$ to the point $r_l x^d$. Now glue the slit copies of $\C$ together so as to be compatible with the parametrization by $\C$.   (This is equivalent to forming a branched cover of $\C$, branched over the various
critical values, with given combinatorial information or  \emph{Hurwitz data} describing the branching.)  By uniformization theory, the
 Riemann surface obtained by gluing the copies of $\C$ together is analytically equivalent to $\C$, and the induced map is a polynomial.

In the nongeneric case (in which there are critical gaps),  the set of possibilites branches: it  is not parametrized by a product of Euclidean spaces, because of the different possibilities for the combinatorics of saddle connections.  Let's look more closely at  the possible structure of saddle connections for the gradient flow of $|p(z)|$.   A small regular neighborhood of the union
of the upward separatrices for the critical points has boundary
consisting of $k+1$ lines (here $k$ is the multiplicity of the
critical point); these map to the leaves of the boundary of the corresponding gap of the major.
The union of upward separatrices is a tree whose leaves are the vertices of the gap polygon. The height function
$log|p(z)|$ has the property that induces a function with exactly one local minimum on each boundary component of the regular neighborhood projected to the graph.

We can bypass the enumeration of all such structures by observing that there is a direct way to define a retraction to the case when all critical values equal 1.  We think of the graph as a metric graph where the height function $log|p|$ has speed 1.  Multiply
the height function on the compact edges of the graph by $(1-t)$ while adding the  constant times $t$ to the height function on each unbounded component that makes it continuous at the vertices.  When $t=1$, the compact edges all collapse, only the unbounded edges remain, and the polygonal gap contains a single multiple critical point with critical value 1.

\end{proof}

\begin{corollary} \label{Corollary:braid}
$\PM(d)$ is a $K(B_d, 1)$ where $B_d$ is the $d$-strand braid group.  In other words, $\pi_1(\PM(d)) = B_d$ and all higher homotopy groups are trivial.
\end{corollary}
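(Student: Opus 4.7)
The plan is to leverage Theorem \ref{theorem:pmd_into_polynomialspace} and reduce the claim to a classical fact about configuration spaces. Concretely, let $\mathrm{Mon}(d)$ denote the space of monic degree-$d$ polynomials with distinct roots and centered roots (coefficient of $z^{d-1}$ equal to $0$). The theorem asserts that $\mathrm{Mon}(d)$ deformation retracts onto $\PM(d)$, so $\PM(d)$ and $\mathrm{Mon}(d)$ are homotopy equivalent. Thus it suffices to show $\mathrm{Mon}(d)$ is a $K(B_d,1)$.

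First I would identify $\mathrm{Mon}(d)$ with a familiar object. The map sending a polynomial to its multiset of roots is a homeomorphism between $\mathrm{Mon}(d)$ and the unordered configuration space of $d$ distinct points in $\C$ with center of mass at $0$, which I denote $\mathrm{UConf}_0^d(\C)$. The translation action of $\C$ on $\mathrm{UConf}^d(\C)$ exhibits $\mathrm{UConf}^d(\C) \cong \mathrm{UConf}_0^d(\C) \times \C$, and since $\C$ is contractible, $\mathrm{UConf}_0^d(\C)$ is homotopy equivalent to $\mathrm{UConf}^d(\C)$. Now invoke the classical theorem of Fox--Neuwirth (and Fadell--Neuwirth): the ordered configuration space $\mathrm{Conf}^d(\C)$ is aspherical (it is iteratively fibered with fibers $\C$ minus finitely many points, each $K(\pi,1)$), and $\mathrm{UConf}^d(\C) = \mathrm{Conf}^d(\C)/S_d$ has fundamental group $B_d$ by the Artin presentation, with the same universal cover, hence $\mathrm{UConf}^d(\C)$ is a $K(B_d,1)$.

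Chaining these homotopy equivalences $\PM(d) \simeq \mathrm{Mon}(d) \cong \mathrm{UConf}_0^d(\C) \simeq \mathrm{UConf}^d(\C) = K(B_d,1)$ gives the corollary. The step requiring most care is the first one: we must be confident that the ``spine'' of Theorem \ref{theorem:pmd_into_polynomialspace} really is a deformation retract of the whole of $\mathrm{Mon}(d)$, not merely an embedded subset of the correct dimension. This is precisely the content of the phrase ``spine'' as defined in that theorem, so for the purposes of this corollary we may take it as given; any verification would amount to unpacking the explicit retraction that scales the compact edges of the gradient graphs while normalizing critical values to the unit circle, which is sketched in the proof of Theorem \ref{theorem:pmd_into_polynomialspace}. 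No further combinatorics is required.
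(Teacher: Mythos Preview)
Your argument is correct and follows the same route as the paper, which simply cites the ``well-known fact that the complement of the discriminant locus is a $K(B_d,1)$'' and invokes Theorem~\ref{theorem:pmd_into_polynomialspace}. You have merely unpacked that well-known fact via the identification with $\mathrm{UConf}^d(\C)$ and the Fox--Neuwirth/Fadell--Neuwirth fibration argument, which is the standard justification.
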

This follows from the well-known fact that the complement of the discriminant locus is a $K(B_d, 1)$.
A loop in the space of primitive degree-$d$ majors can be thought of as braiding the $d$ regions in the complement of the union of its leaves and critical gaps.

A primitive quadratic major is just a diameter of the unit circle, so  $\PM(2)$ is itself a circle.  This corresponds to the fact that the 2-strand braid group is $\Z$.

A primitive cubic major is either an equilateral triangle inscribed in the circle, or a pair of chords that each cut off a segment of angle $2 \pi/3$.
  There are a circle's worth of equilateral triangles.  To each primitive cubic  that consists of a pair of leaves, there is associated a unique diameter
 that bisects the central region.  Thus, the space of two-leaf primitive cubic majors fibers over $S^1$, with fiber an interval.  When the diameter is
turned around by an angle of $\pi$, the interval maps to itself by reversing the orientation, so this is a Moebius band.  The boundary of the Moebius
band is attached to the circle of equilateral triangle configurations, wrapping around 3 times, since, given an equilateral triangle, you can remove any of its three edges to get a limit of two-leaf majors.

\begin{figure}[htpb]
\centering
\includegraphics[width=4.5in]{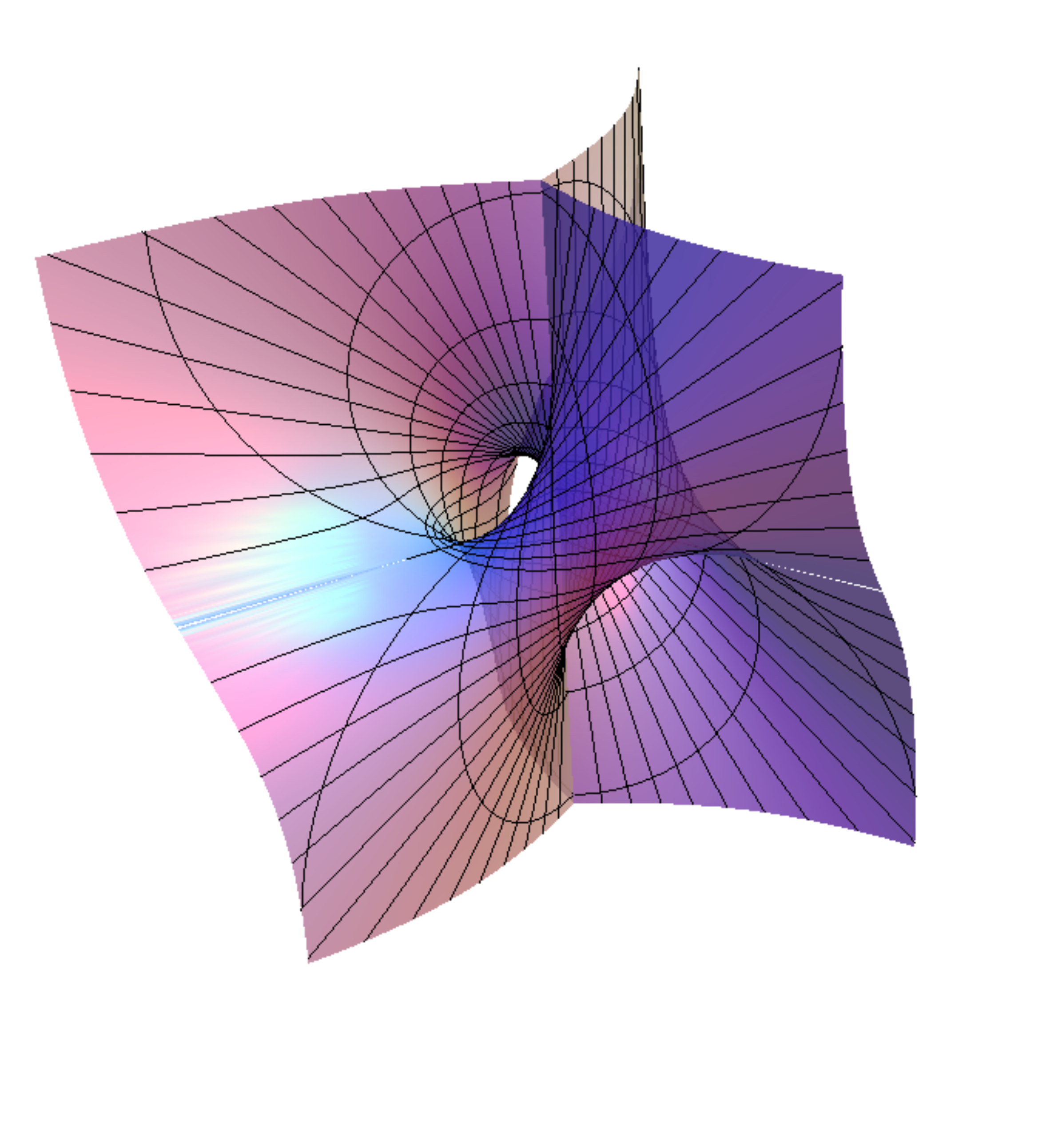}
\caption{This is the space $\PM(3)$ embedded in $S^3$.  It is formed by a $3/2$-twisted Moebius band centered on a horizontal circle that grows wider and wider and wider until its boundary comes together in  the vertical line, which closes into a circle passing through the point at infinity, and wraps around this circle 3 times.  The complement of $\PM(3)$ consists of 3 chambers which are connected by tunnels, each tunnel burrowing through one chamber and arriving  $2/3$ of the way around the post in the middle.  A journey through all three tunnels ties a trefoil knot: such a trefoil knot is the locus where the discriminant of a cubic polynomial is 0, and $\PM(3)$ is a spine for the complement of the trefoil.}
\label{figure:3dimensionalSpine}
\end{figure}

The space of cubic polynomials can be normalized to make the leading coefficient 1 (monic) and, by changing coordinates by a translation, make the second coefficient (the sum of the roots)  equal to 0.  By multiplying by a positive real constant, one can then normalize so that the other two coordinates, as an element of $\C^2$, are on the unit sphere.  The discriminant locus intersects the sphere in a trefoil knot. The spine can be embedded in $S^3$ as follows: start with a $3/2$-twisted Moebius band centered around a great circle in $S^3$.  This great circle can be visualized via stereographic projection of $S^3$ to $\R^3$ as the unit circle in the $xy$-plane.   The Moebius band can be arranged so that it is generated by geodesics perpendicular to the horizontal great circle, in a way that is invariant by a circle action that rotates the horizontal great circle at a speed of 2 while rotating the $z$-axis, completed to a great circle by adding the point at infinity, at a speed of 3. Extend the perpendicular geodesics all the way to the $z$-axis; this attaches the boundary of the Moebius band by wrapping it three times around the vertical great circle.

The spine gives a graphic description for one presentation of the 3-strand braid group,
\[ B_3 = \left \langle a, b | a^2 = b^3 \right \rangle,
\]
where $a$ is represented by the core circle of the Moebius band, and $b$ is the circle of equilateral triangles, the two loops being connected via a short arc to a common base point. The presentation is an amalgamated free product of two copies of $\Z$ over subgroups of index $2$ and $3$.  As braids, $a$ is a 180 degree flip of three strands in a line, while $b$ is a 120 degree rotation of 3 strands forming a triangle; the square of $a$ and the cube of $b$  is a 360 degree rotation of all strands, which generates the center of the 3-strand braid group.

\section{Generating Invariant laminations from Majors}
\label{section:LaminationsFromMajors}
Let $m$ be a degree-$d$ primitive major. How can we construct a degree-$d$-invariant lamination having $m$ for its major?

A leaf of a lamination is defined by an unordered pair of distinct points on the unit circle. The space of possible leaves is topologically an open Moebius band. To see this, consider that any leaf divides the circle into two intervals.  The line connecting the midpoints of these two intervals is the unique diameter perpendicular to the leaf. For each
diameter, there is an interval's worth of leaves, parametrized by the point at which a leaf intersects the perpendicular diameter.  When the interval is rotated 180 degrees, the parameter is reversed, so the space is an open Moebius band.  One way to graphically represent the Moebius band is as the region outside the unit disk in $\RP^2$. The pair of tangents to the unit circle at the endpoints of a leaf intersect somewhere in this region, which is homeomorphic to a Moebius band.

\begin{figure}[htpb]
\centering
\includegraphics[width=3.5in]{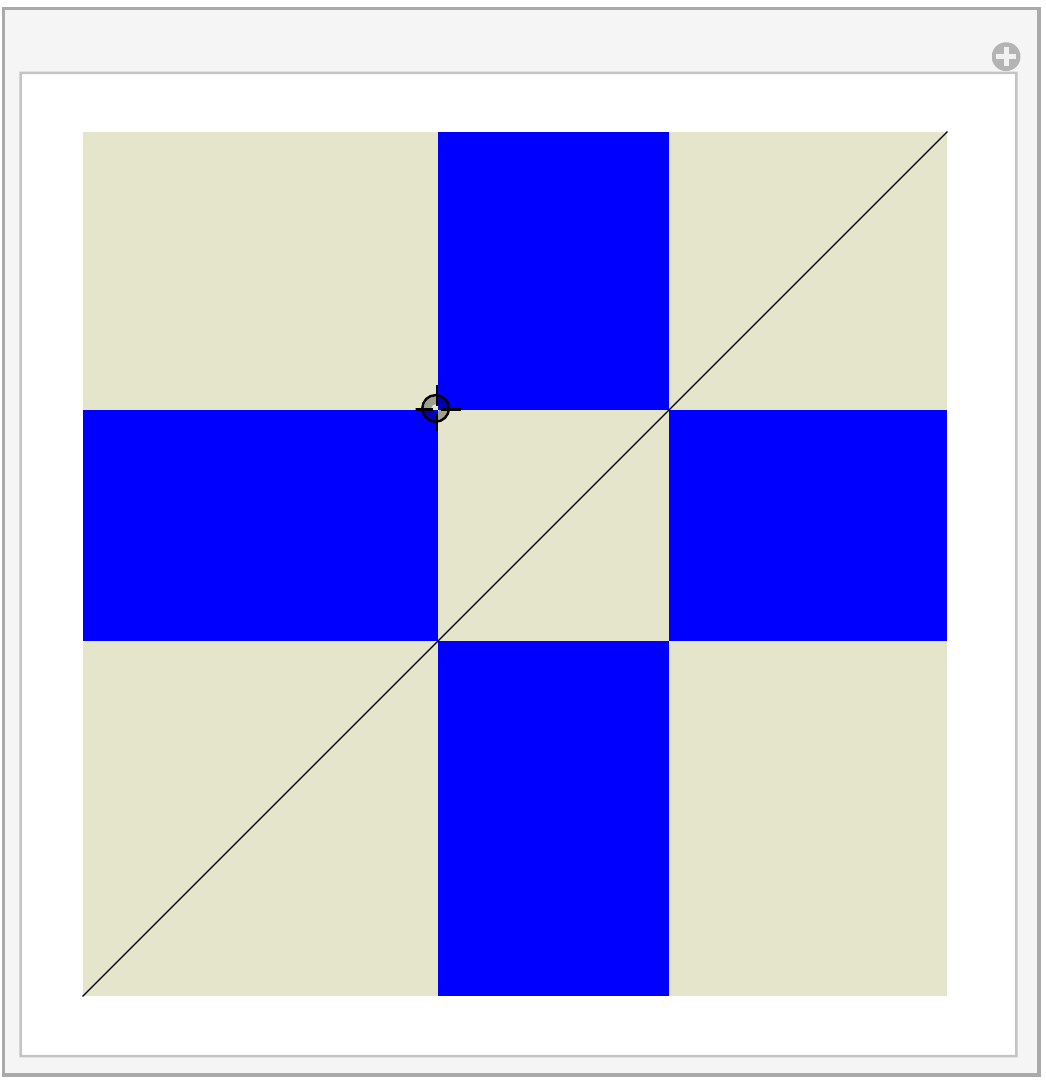}
\caption{
A leaf $\overline{xy}$ of a lamination can be represented by a pair of points $\{(x,y), (y,x) \}$ on a torus.
Leaves that are excluded by $\overline{xy}$ because they intersect it are represented in two shaded rectangles, and leaves compatible with it are represented in two squares of sidelength $b-a \mod 1$ and $a-b \mod 1$.
}
\label{fig:ExcludedLeaves}
\end{figure}

Another way to represent the information is by passing to the double cover, the set of ordered pairs of distinct points on a circle, that is, the torus $S^1 \times S^1$ minus the diagonal, which is a $(1,1)$-circle on the torus, going once around each axis.   Here $S^1=\R/\Z$. For $x,y\in S^1$, we use
$\overline{xy}$ to represent the geodesic in the unit disc linking $e^{2\pi i x}$ and $e^{2\pi i y}$. Each leaf $\overline{xy}$ is represented twice on the torus, as $(x,y)$ and
$(y,x)$.

If a lamination contains a leaf $l = \overline{xy}$, then a certain set $X(l)$ of other leaves are excluded from the lamination because they cross $\overline{xy}$.  On the torus, if you draw the horizontal and vertical circles through
the two points $(x,y)$ and $(y,x)$, they subdivide the torus into four rectangles having the same vertex set; the remaining two common vertices are $(x,x)$ and $(y,y)$.  The leaves represented by points in the interior of two of the rectangles constitute $X(l)$, while the leaves represented by the closures of the other two rectangles are all compatible with the given leaf $l = \overline{xy}$.  We will call this good, compatible region $G(l)$.
The compatible rectangles are actually squares,  of sidelengths $a-b \mod 1$
and $b-a \mod 1$.
 They form a checkerboard pattern, where the two squares of $G(l)$ are bisected by the diagonal.  Another way to express it is that two points $p$ and $q$ define compatible leaves if and only if you can travel on the torus, without crossing the diagonal, from one point to either
the other point or the other point reflected in the diagonal,  heading in a direction between north and west or between south and east (using the same conventions as on maps,
where up is north, left is west, etc. )

Given a  set  $S$ of leaves, the excluded region $X(S)$ is the union of the excluded regions $X(l)$ for $l \in S$, and the good region $G(S)$  is the intersection of the good regions $G(l)$ for $l \in S$.  If $S$ is a finite lamination, then $G(S)$ is a finite union of rectangles that are disjoint except for corners.
\begin{figure}[htpb]
\label{fig:TorusMajorPlot}
\centering
\vbox{
\includegraphics[width=2.2in]{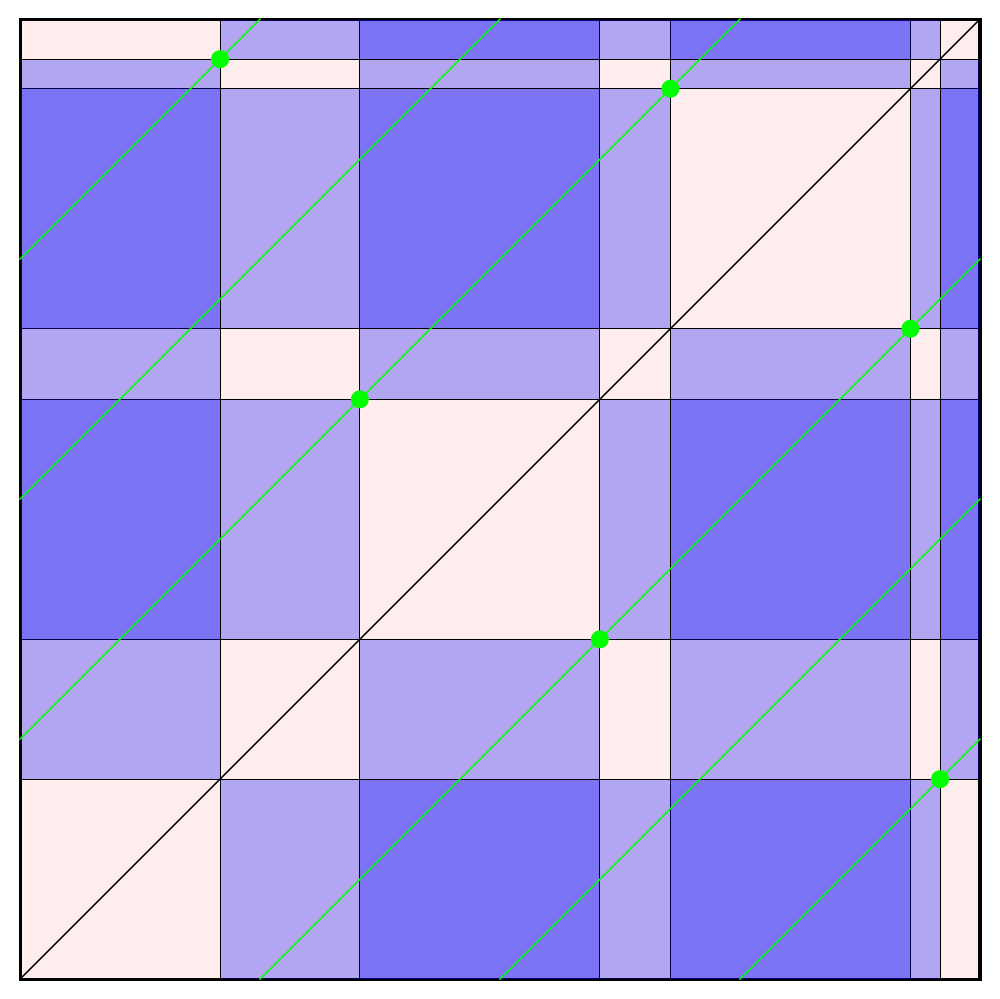}
\includegraphics[width=2.2in]{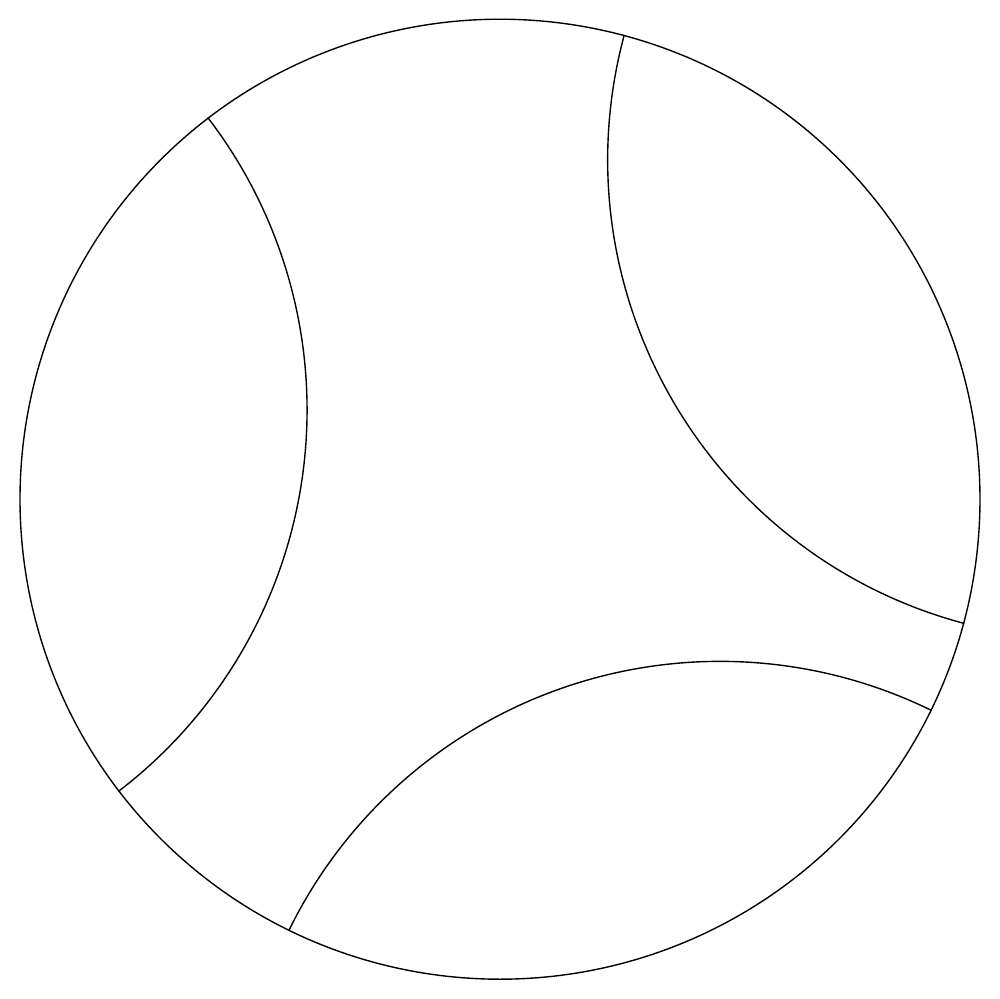}
}
\caption{On the left is a plot of showing the excluded region $X(m)$, shaded, together with the compatible region $G(m)$ on the torus, where $m \in \PM(4)$ is a primitive degree-$4$ major. The figure is symmetric by reflection in the diagonal.
The  quotient of the torus by this symmetry is a Moebius band. Note that $G(m)$ is made up of three $1/4 \times 1/4$ squares (one of them wrapped around) corresponding to the regions that touch the circle in only one edge, together with  $3 \cdot 3=9$ additional rectangles with total area is $1/4^2$, corresponding to the region that touches $S^1$ in 3 intervals.  The 6 green dots represent the leaves of the major, one dot for each orientation of the leaf.
}
\end{figure}

In the particular case of a primitive major lamination $m \in \PM(d)$,  each region of the disk minus $m$ touches $S^1$ in a union of one or more intervals $J_1 \cup \dots \cup J_k$ of total length $1/d$. This determines a finite union of rectangles $(J_1 \cup \dots \cup J_k) \times (J_1 \cup \dots \cup J_k)$ of $G(m)$ whose total area is $1/d^2$ that maps under  the degree $d^2$ covering map
$(x,y) \mapsto d\cdot (x, y)$ to the entire torus.
\begin{figure}[htpb]
\label{fig:TorusMajorPlotPentagon}
\centering
\vbox{
\includegraphics[width=2.2in]{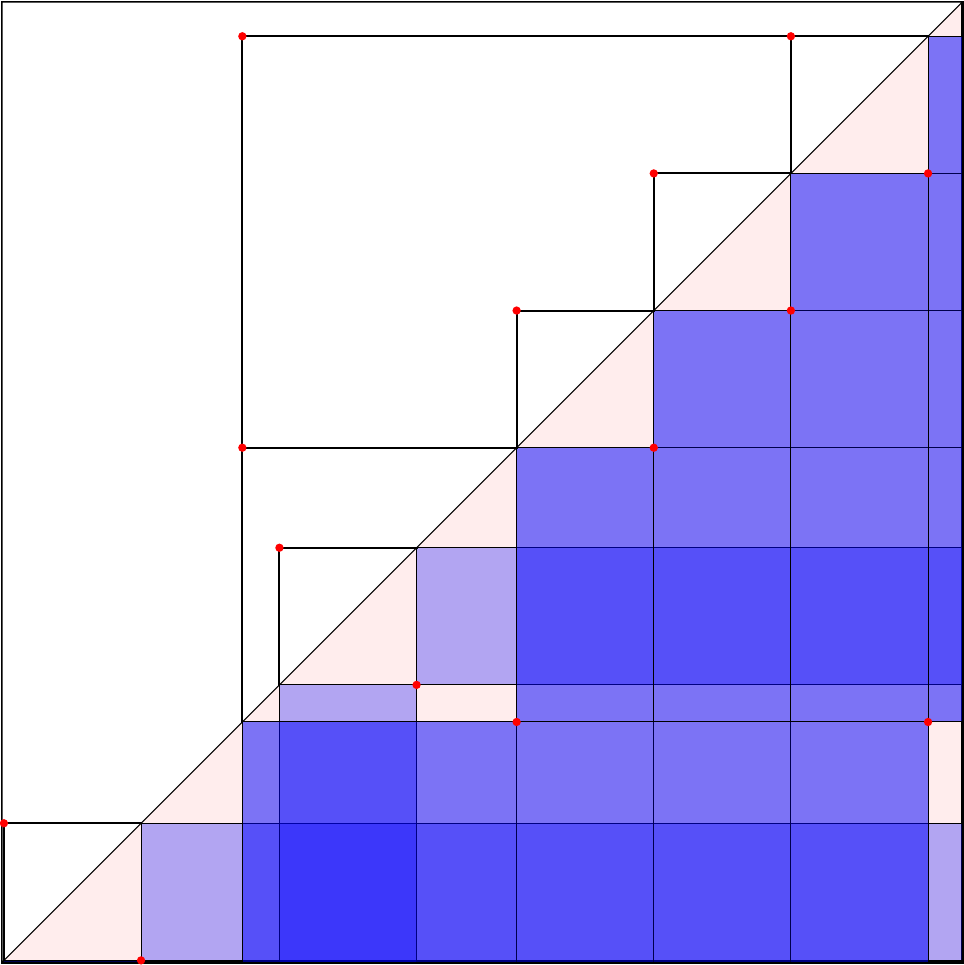}
 \includegraphics[width=2.2in]{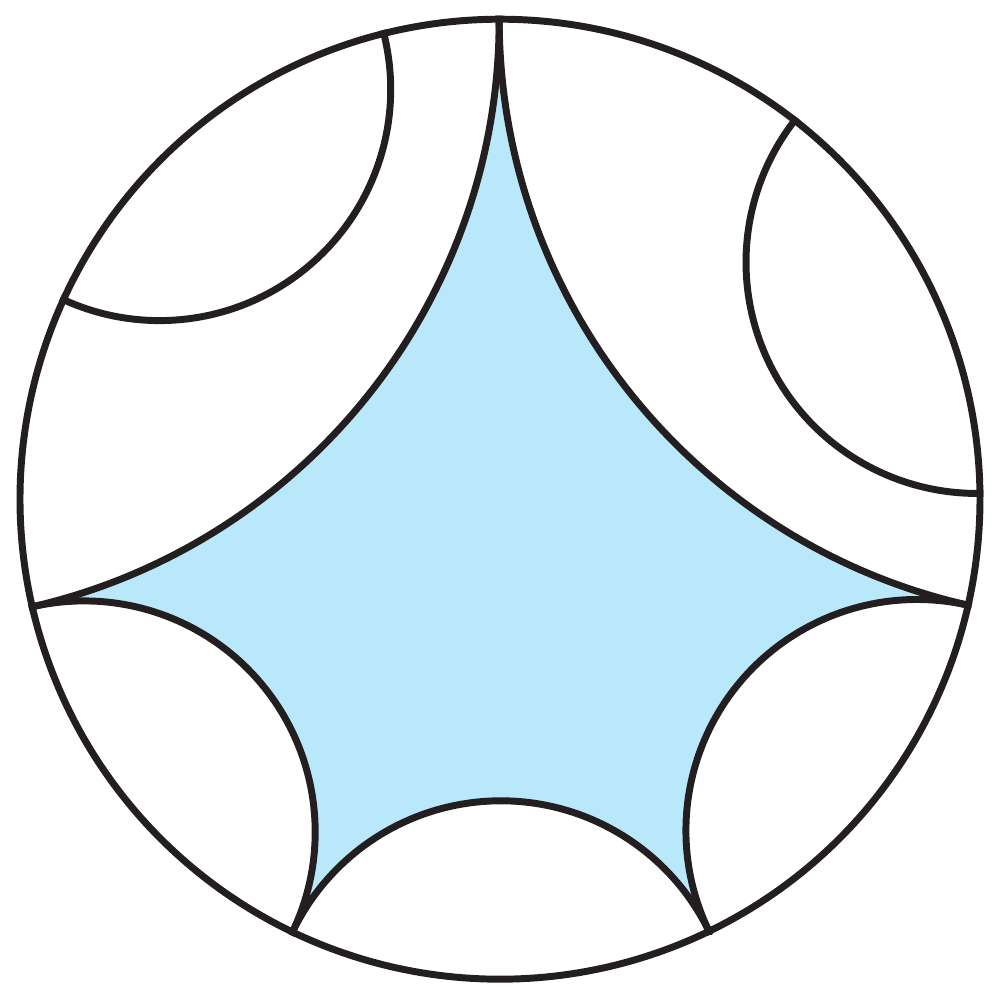}
}
\label{fig:TorusMajorWithPentagon}
\caption{Here is a primitive heptic (degree-7) major with a pentagonal gap, shown in a variation of the torus plot, along with the standard Poincar\'e disk picture.
On the left, half of the torus has been replaced by a drawing that indicates each leaf of the lamination by a path made up of a horizontal segment and a vertical segment. The upper left triangular picture transforms to the Poincar\'e
disk picture by collapsing the horizontal and vertical edges of the triangle to a point, bending the collapsed triangle so that the it goes to the unit disk with  the collapsed edges going to
 $1 \in \C$, then straightening each rectilinear path into the hyperbolic geodesic with the same endpoints.  Notice how the ideal
pentagon corresponds to a rectilinear 10-gon, where two pairs of edges of the 10-gon overlap.
The lower right triangle is a fundamental  domain for a group $\Gamma$ that is the covering group of the torus together with lifts  of reflection through the  diagonal of the torus. The horizontal edge of the
triangle is glued by an element $\gamma \in \Gamma$ to the vertical edge. The action of $\gamma$ on the edge is the same as a 90 degree rotation through the center of the square; $\gamma$ itself
follows this by reflection through the image edge.
 The quotient space $\E^2 / \Gamma$ is topologically a Moebius band. As an orbifold it is $(X*)$, the Moebius band with mirrored boundary.
}
\end{figure}

Consequently we have (here we use $\D^2$ to denote the unit disc):
\begin{proposition}
For any primitive degree-$d$ major $m$, the total area of $G(m)$ is $1/d$.  Almost every point $p \in T^2$
has exactly $d$ preimages in $G(m)$ by the degree $d^2$ covering map $f_d: (x,y) \mapsto d\cdot (x, y)$ with one preimage representing a leaf in each  of the $d$ regions of $\D^2 \setminus m$,
 and all points having at least $d$ preimages in $G(m)$ with at least one preimage representing a leaf in
each region of $\D^2 \setminus m$.
\end{proposition}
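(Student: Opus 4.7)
The plan is to describe $G(m)$ explicitly block-by-block, use the primitive-major hypothesis to compute how each block sits over the torus under $f_d$, and read off the preimage counts.

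\medskip
\noindent\textbf{Setup.} A leaf $\overline{xy}$ is compatible with every leaf of $m$ if and only if both $x$ and $y$ lie in the closure of a single region $R_i$ of $\D^2 \setminus m$. Setting $J^i := \overline{R_i}\cap S^1$, a finite disjoint union of closed arcs of total length $1/d$, one has
\[ G(m) \;=\; \bigcup_{i=1}^d J^i\times J^i \;\subset\; T^2, \]
with distinct blocks $J^i\times J^i$ meeting only at points coming from vertices of the major. The area of $G(m)$ is therefore $\sum_{i=1}^d (1/d)^2 = 1/d$, giving the first assertion.

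\medskip
\noindent\textbf{Key step.} I next show that for every $i$, the circle map $\phi_d\colon x\mapsto dx\pmod 1$ restricted to $J^i$ is surjective onto $S^1$ and a.e.\ a bijection. Enumerate the arcs of $J^i$ as $J^i_1,\dots,J^i_{k_i}$ in the CCW cyclic order induced by traversing $\partial R_i$ with $R_i$ on the left, and write $J^i_j=[a^i_j,b^i_j]$. Each arc has length at most $1/d$, so $\phi_d$ sends it to an embedded CCW arc of length $d(b^i_j-a^i_j)$ on $S^1$. Consecutive arcs $J^i_j,J^i_{j+1}$ are separated on $\partial R_i$ by a single edge of a critical leaf or critical polygon $P$ of $m$, whose vertices, by the definition of primitive, all share a common image under $\phi_d$; hence $db^i_j\equiv da^i_{j+1}\pmod 1$. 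The arc-images therefore chain head-to-tail into one CCW closed curve on $S^1$ of total length $d\cdot(1/d)=1$, i.e.\ a curve of winding number $1$. So $\phi_d|_{J^i}$ is surjective and injective off a finite set.

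\medskip
\noindent\textbf{Conclusion.} Since $f_d$ acts as $\phi_d$ on each coordinate, the restriction $f_d|_{J^i\times J^i}\colon J^i\times J^i\to T^2$ is an a.e.\ bijection onto $T^2$. Summing over the $d$ regions, a.e.\ $p\in T^2$ has exactly one preimage in each block $J^i\times J^i$, hence exactly $d$ preimages in $G(m)$, one representing a leaf in each region. For the ``all points'' clause, $J^i$ is closed in $S^1$, so $\phi_d(J^i)$ is closed; containing a dense subset of $S^1$, it equals $S^1$, and the analogous closed-plus-dense argument applied to the product gives $f_d(J^i\times J^i)=T^2$. Thus every $p\in T^2$ has at least one preimage in each of the $d$ blocks, giving at least $d$ preimages of $p$ in $G(m)$ with at least one representing a leaf in each region.

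\medskip
\noindent\textbf{Main obstacle.} The crux is the winding-number computation in the Key step: concluding that the arc-images of a single region wrap $S^1$ exactly once, rather than zero times or more than once, requires simultaneous use of (i) the length bound $\sum_j |J^i_j|=1/d$ (so each image arc is embedded and the total length is exactly $1$), (ii) the primitive identification of all vertices of each critical leaf/polygon under $\phi_d$ (so consecutive image arcs concatenate head-to-tail), and (iii) the CCW consistency of orientations (so the resulting closed curve has winding number $+1$ rather than $0$). Everything else is bookkeeping.
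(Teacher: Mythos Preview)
Your proof is correct and follows the same approach the paper takes: the paper observes in the paragraph immediately preceding the proposition that each region of $\D^2\setminus m$ meets $S^1$ in arcs of total length $1/d$, giving a block of rectangles of area $1/d^2$ that maps onto the whole torus under $f_d$, and then states the proposition as a consequence with no further proof. Your winding-number argument in the Key step supplies exactly the justification the paper leaves implicit for why each block surjects onto $T^2$, so your write-up is in fact more complete than the paper's own treatment.
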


For $m \in \PM(d)$ we can now define a sequence of backward-image  laminations $b_i(m)$.
Let $b_0(m) = m$ and inductively define $b_{i+1}(m)$ to be the union
of $m$ with the preimages under $f_d$ of $b_i(m)$ that are in $G(m)$.
\begin{figure}[htpb]
\centering
\vbox{
\includegraphics[width=2.4in]{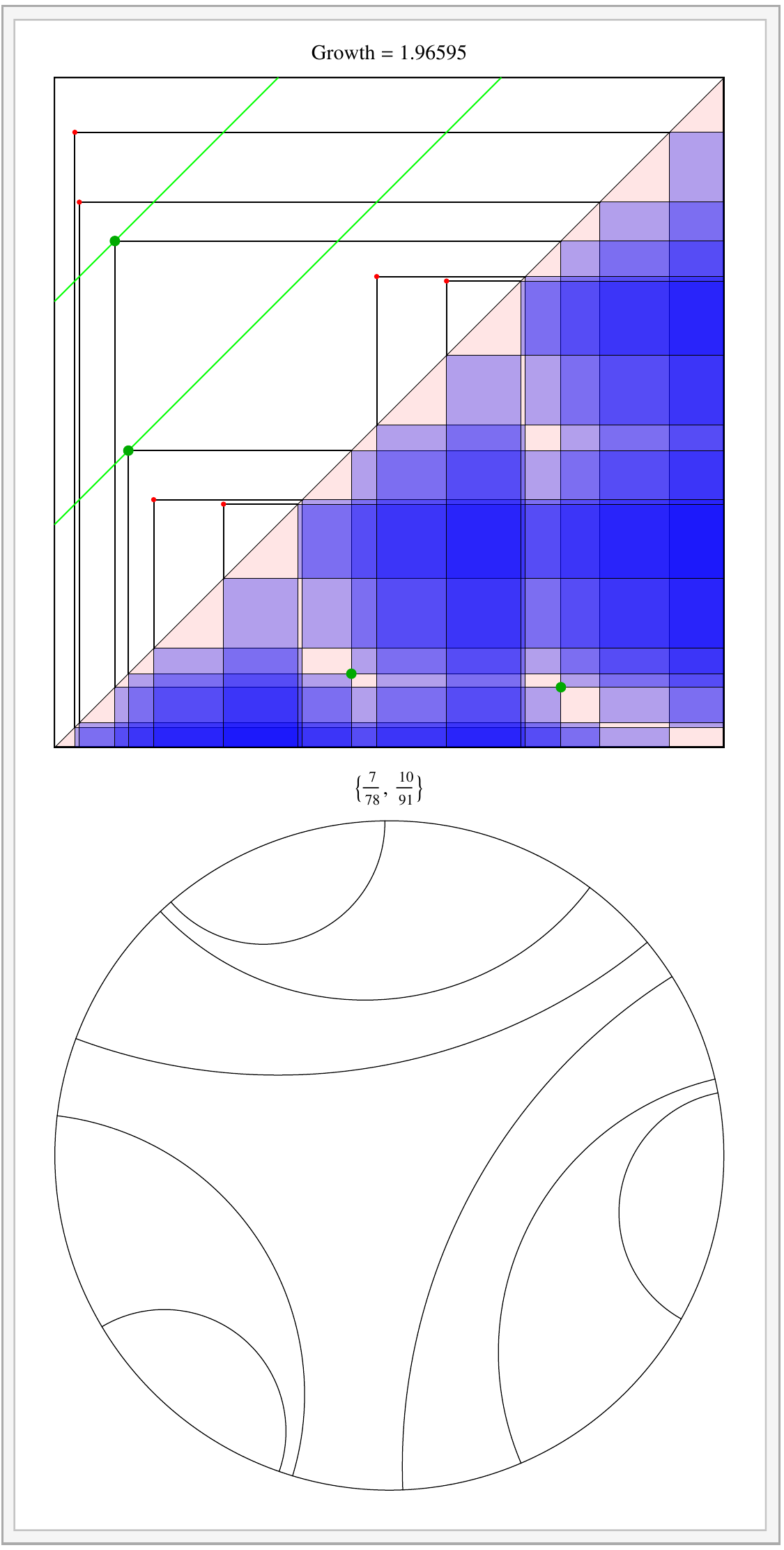}
\includegraphics[width=2.4in]{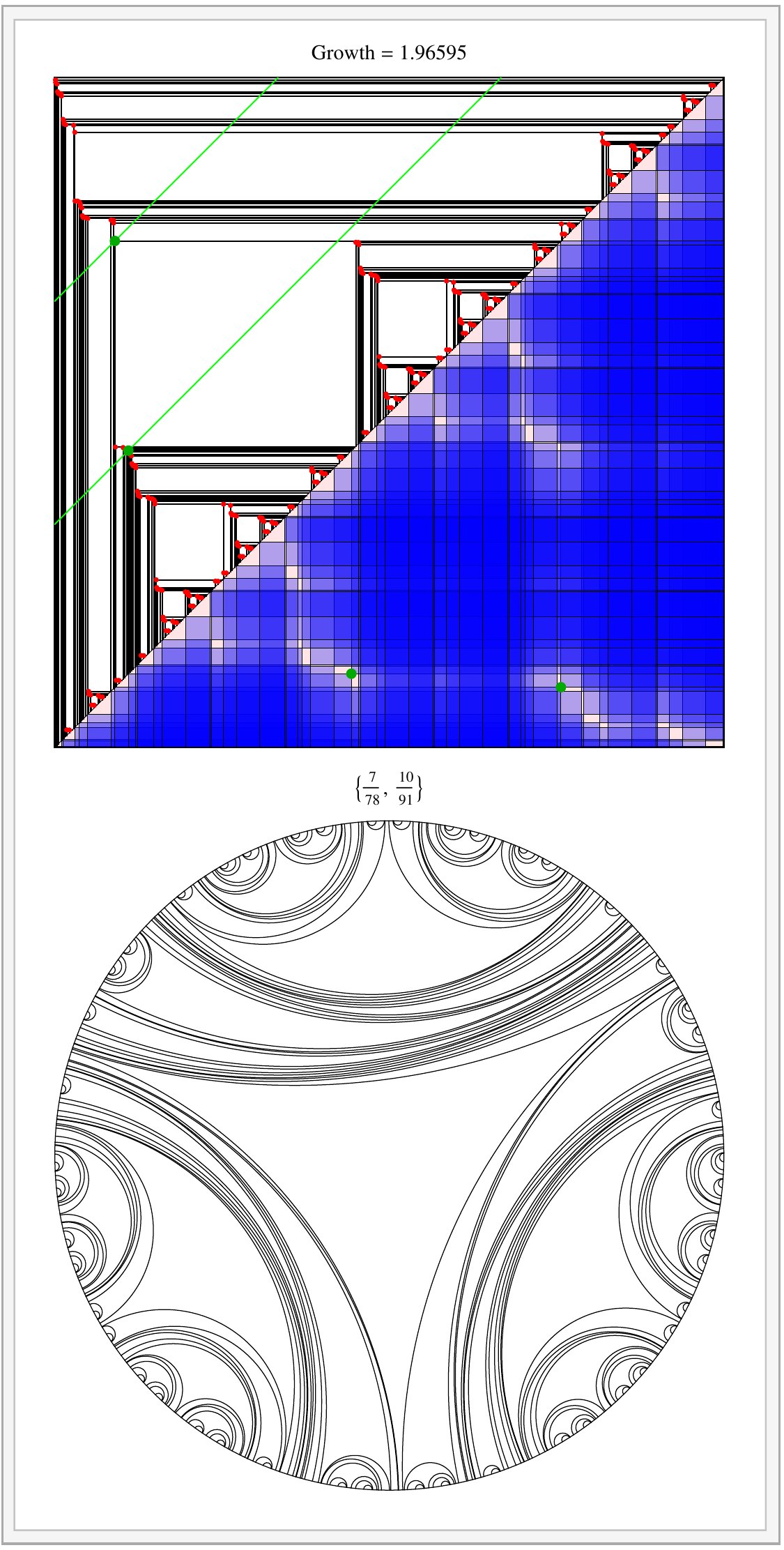}
}
\label{fig:CubicInvariantLamination}
\caption{
On the left is  stage 1 ($b_1(m)$) in building a cubic-invariant lamination for $m = \{ (\frac{10}{91}, \frac{121}{273}),(\frac{7}{78},\frac{59}{78})\} \in \PM(3)$. The two longer leaves of $m$ subdivide the disk into 3 regions,
each with two new leaves induced by the map $f_3$. On the right is a later stage that gives a reasonable approximation of $b_\infty(m)$.}
\end{figure}

\begin{proposition}
For each $i$, $b_i(m)$ is a lamination.
\end{proposition}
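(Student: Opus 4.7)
The plan is to prove the claim by induction on $i$. The base case $i=0$ is immediate, since $b_0(m)=m$ is a lamination by definition of a primitive major. For the inductive step, I will assume that $b_i(m)$ is a lamination, and consider two leaves $\ell,\ell'$ of
\[
  b_{i+1}(m) = m\cup\bigl(f_d^{-1}(b_i(m))\cap G(m)\bigr).
\]
My first observation is that any ``new'' (non-$m$) leaf of $b_{i+1}(m)$ has both endpoints on $\partial R\cap S^1$ for a single region $R$ of $\D^2\setminus m$: by the decomposition of $G(m)$ described in the paragraph preceding the previous proposition, every point of $G(m)$ lies in some rectangle $J\times J'$ where $J,J'$ are drawn from the arcs $J_1,\dots,J_k$ comprising $\partial R\cap S^1$ for some such region $R$.

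I would then split into four cases. If $\ell,\ell'\in m$, there is nothing to show. If $\ell\in m$ and $\ell'$ is new, then $\ell'$ is represented by a point of $G(m)=\bigcap_{\lambda\in m}G(\lambda)\subset G(\ell)$, so by the very definition of $G(\ell)$ the leaves $\ell,\ell'$ do not cross. If $\ell,\ell'$ are both new but supported in different regions of $\D^2\setminus m$, then some leaf or polygon-edge of $m$ separates them, so they cannot cross. This leaves only the case where $\ell,\ell'$ are both new and supported in the same region $R$, with all four endpoints in $\bigcup_{j=1}^k J_j$.

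For this last case, the key point I would establish is that $f_d$ restricted to $\bigcup_j J_j$ is a cyclically-order-preserving bijection onto $S^1$. Indeed, each $J_j$ has length at most $1/d$, so $f_d$ is an orientation-preserving embedding on it; the two endpoints of any leaf of $m$, and all vertices of any polygon of $m$, are identified by $f_d$, so the endpoints of consecutive arcs $J_j,J_{j+1}$ share a single $f_d$-image; and the total image length is $d\cdot(1/d)=1$, so the arcs $f_d(J_j)$ tile $S^1$ end-to-end in the same cyclic order as the $J_j$ around $\partial R$. Since crossing of two chords with endpoints in a cyclically-ordered finite union of arcs is determined solely by the interleaving of their endpoints, $\ell$ and $\ell'$ cross if and only if $f_d(\ell)$ and $f_d(\ell')$ do. But $f_d(\ell),f_d(\ell')\in b_i(m)$, and by the inductive hypothesis these do not cross, so neither do $\ell,\ell'$.

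The main obstacle I anticipate is the cyclic-order preservation claim above: I will need to confirm carefully, using the primitive-major hypothesis that the two endpoints of every critical leaf and the vertices of every critical polygon of $m$ each collapse to a single point under $f_d$, that the image arcs $f_d(J_j)$ fit end-to-end around $S^1$ without overlap and in the same cyclic order as the $J_j$ appear on $\partial R$. Everything else is a routine case analysis.
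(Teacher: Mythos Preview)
Your proposal is correct and follows essentially the same approach as the paper: induction on $i$, with the observation that new leaves in different regions of $\D^2\setminus m$ are separated by $m$, and that for leaves in the same region the boundary arcs collapse to a circle mapped homeomorphically to $S^1$ by $f_d$, so compatibility transfers back from $b_i(m)$ via the inductive hypothesis. Your version is more explicit in its case analysis (the paper handles compatibility with leaves of $m$ implicitly via membership in $G(m)$) and in spelling out why the restriction of $f_d$ to $\partial R\cap S^1$ preserves cyclic order, but the argument is the same.
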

\begin{proof}
We need to check that the good preimages under $f_d$ of two leaves of $b_i$ are compatible.
If they are in different regions of $\D^2 \setminus m$ they are obviously compatible.
For two leaves in a single region of $\D^2 \setminus m$, note that when the boundary of
the region is collapsed by collapsing $m$, it becomes a circle of length $1/d$ that is mapped
homeomorphically to $S^1$.  By the inductive hypothesis, the two image leaves are compatible;
since the compatibility condition is identical on the small circle and its homeomorphic image, the leaves are compatible.
\end{proof}

Note that this is an increasing sequence, $b_i(m) \subset b_{i+1}(m)$.   By induction, the good region
$G(b_i(m))$ has area $1/d^m$.

It follows readily that:
\begin{theorem}
The closure $b_\infty(m)$ of the union of all $b_i(m)$ is a degree-$d$-invariant lamination
having $m$ as its major.
\end{theorem}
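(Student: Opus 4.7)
The plan is to verify three properties of $b_\infty(m)$: that it is a lamination, that it is degree-$d$-invariant, and that its major is exactly $m$. Since $b_i(m) \subseteq b_{i+1}(m)$ by construction, the union $\bigcup_i b_i(m)$ is a well-defined directed union of laminations, and the closure makes sense in the Moebius band of possible leaves.

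For the lamination property, any two leaves in $\bigcup_i b_i(m)$ lie in some common $b_N(m)$, which is a lamination by the preceding proposition, so they are compatible. The issue to address is closure: compatibility of a pair of distinct leaves is a closed condition on the torus parametrization, because crossing requires the corresponding pair of points on $T^2$ to lie in the \emph{interior} of the excluded region $X(l)$, which is open. Hence limits of pairwise-compatible leaves remain pairwise-compatible, with newly-appearing coincidences occurring only at shared endpoints, which a lamination allows.

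For degree-$d$-invariance, the forward condition is proved by induction on $i$. Each leaf of $m$ is either a critical leaf or an edge of a critical polygon of $m$, so its endpoints have a common image under $f_d$ and it collapses. Any leaf $\ell \in b_i(m) \setminus m$ was added as a preimage in $G(m)$ of a leaf $\ell' \in b_{i-1}(m)$, so $f_d(\ell) = \ell' \in b_\infty(m)$; passage to limit leaves uses continuity of $f_d$ on endpoint pairs. The backward condition is where the preceding proposition on $G(m)$ does the work: for any leaf $\ell = \overline{xy} \in b_i(m)$, that proposition supplies $d$ preimage leaves lying in $G(m)$, one in each of the $d$ regions of $\D^2 \setminus m$; they are pairwise disjoint because they lie in distinct complementary regions of $m$, their endpoints cover $x^{1/d}$ and $y^{1/d}$, and they all belong to $b_{i+1}(m)$. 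For a limit leaf $\ell \in b_\infty(m) \setminus \bigcup_i b_i(m)$, take such preimage systems for an approximating sequence and extract a convergent subsequence.

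For the major, $m \subseteq b_\infty(m)$ gives one direction. No new critical structure appears, because every added leaf lies in the closure of a single complementary region $R$ of $m$; after collapsing the boundary arcs of $R$ belonging to $m$, the map $f_d$ restricts to $R$ as a homeomorphism onto $\D^2$, so new leaves in $R$ are non-critical and no new critical gap can be built inside $R$. Combined with the earlier proposition that total criticality equals $d-1$ (already achieved by $m$), this forces the major of $b_\infty(m)$ to coincide with $m$. The main obstacle I anticipate is the careful treatment of the closure: verifying that limits of compatible leaves do not cross one another, and that taking limits does not fuse several equivalence classes into one oversized critical gap. Both points reduce to the explicit description of $G(l)$ on the torus together with the injectivity of $f_d$ on each complementary region of $m$.
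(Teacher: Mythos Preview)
Your proposal is correct and is essentially the argument the paper has in mind: the paper's own ``proof'' consists of the single phrase ``It follows readily that'' after recording that the $b_i(m)$ are increasing and that $G(b_i(m))$ has shrinking area, so you have supplied precisely the details left implicit---closedness of compatibility on the torus, forward and backward invariance via the structure of $G(m)$ and the preimage proposition, and the check that no new criticality arises inside a complementary region of~$m$.
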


The lamination $b_\infty(m)$ may have various issues concerning its quality. In particular, it does not always happen that $b_\infty = \Lam(\Rel(b_\infty(m)))$.   We will study the quality of these laminations
later, and develop tools for studying more general degree-$d$-invariant laminations by embedding them in $b_\infty(m)$ for some $m$.
\medskip

Note that as a finite lamination $\lambda$ varies, a compatible rectangle can become thinner and thinner as two endpoints approach each other, and disappear in the limit.
 Thus $G(\lambda)$ is not continuous in the Hausdorff topology.

On the other hand,

\begin{proposition} The map from $\PM(d)$ to the space compact subsets of $T$ endowed with the Hausdorff topology defined by $m \mapsto X(m)$ is a homeomorphism onto its image, i.e, the topology of $\PM(d)$ coincides with Hausdorff topology on the set $\{X(m) : m \in \PM(d)\}$.
\end{proposition}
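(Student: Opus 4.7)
The plan is to show that $\Phi\colon m \mapsto X(m)$ is a continuous injection from $\PM(d)$ into the space of compact subsets of $T^2$ with Hausdorff topology, and then to invoke compactness of $\PM(d)$: a continuous injection from a compact space into a Hausdorff space is automatically a homeomorphism onto its image. The proof thereby reduces to three steps: (i) compactness of $\PM(d)$, (ii) injectivity of $\Phi$, and (iii) continuity of $\Phi$.

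For (i), a primitive degree-$d$ major is specified by finitely many equivalence classes of vertices on $S^1$ (totaling between $d$ and $2(d-1)$ points), each class lying in a single residue modulo $1/d$, with pairwise disjoint convex hulls and total criticality $d-1$. These closed conditions cut out a compact subset of a finite union of quotients of symmetric products of $S^1$, and $\md$ induces the correct topology there. For (ii), I would recover $m$ from $X(m)$ through its complement $G(m) = T^2 \setminus X(m)$: by the preceding proposition, each region of $\D^2 \setminus m$ meeting $S^1$ in intervals $J_1,\ldots,J_k$ contributes exactly the product rectangles $J_i \times J_j$ to $G(m)$. Thus the corners of $X(m)$ pin down the lamination vertices on $S^1$, the rectangular pattern of $G(m)$ determines which intervals bound a common region ($J_i,J_j$ share a region iff $J_i \times J_j \subset G(m)$), and taking convex hulls of the resulting vertex sets recovers the leaves and gaps of $m$.

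For (iii), suppose $m_n \to m$ in $\md$, so $\metric(m_n) \to \metric(m)$ uniformly on $S^1 \times S^1$. I would verify Hausdorff convergence of $X(m_n)$ to $X(m)$ in both directions of inclusion. Given $(p,q) \in X(m)$ witnessed by a leaf $l = \overline{xy}$ of $m$, we have $\metric(m)(x,y) = 0$ and hence $\metric(m_n)(x,y) \to 0$; any path in $\gamma(m_n)$ of small total arc length from $x$ to $y$ must use leaf-jumps of $m_n$ that collectively traverse from a neighborhood of $x$ to one of $y$. Combined with the lower bound $|x'-y'| \geq 1/d$ on the $S^1$-length of any critical leaf, this forces either a single leaf of $m_n$ with endpoints converging to $\{x,y\}$, or (in a polygon-gap limit) a chain of leaves whose endpoints collectively approach the polygon's vertex set. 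In either case some leaf of $m_n$ separates $p$ from $q$ for large $n$, producing a point of $X(m_n)$ near $(p,q)$. Conversely, if $(p_n,q_n) \in X(m_n)$ is separated by a leaf $\overline{x_n y_n}$, the bound $|x_n - y_n| \geq 1/d$ lets us pass to a subsequence with $x_n \to x$, $y_n \to y$, $x \neq y$; uniform metric convergence gives $\metric(m)(x,y) = 0$, so $x,y$ lie in a common equivalence class of $m$ and the limit $(p,q)$ is in $X(m)$.

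The main obstacle is the forward step of (iii) at degenerate configurations, where a polygon gap of $m$ is approximated by multiple leaves of $m_n$ (or conversely), so a single boundary edge of the polygon need not arise as the limit of a single leaf of $m_n$. The critical quantitative input resolving this is the length lower bound $|x-y| \geq 1/d$ for critical leaves, which prevents chains of leaves from collapsing to a single point and guarantees that at least one leaf of $m_n$ in each approximating chain actually separates any specified pair $(p,q) \in X(m)$.
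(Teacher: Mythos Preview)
Your proposal is correct but takes a different route from the paper. The paper proves both directions of the homeomorphism by hand: first that $\md$-closeness implies Hausdorff-closeness of the excluded regions (your step (iii), and your argument there essentially matches theirs, including the key use of the $1/d$ lower bound on leaf length), and then the converse direction---that Hausdorff-closeness of $X(m)$ and $X(m')$ forces $\md(m,m')$ to be small---by induction on $d$, peeling off a region of $\D^2\setminus m$ that meets $S^1$ in a single arc of length $1/d$ to reduce to $\PM(d-1)$.

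You instead trade that inductive argument for compactness of $\PM(d)$ together with injectivity, invoking the standard fact that a continuous bijection from a compact space to a Hausdorff space is a homeomorphism. This is a legitimate and arguably cleaner route; the cost is that you must supply compactness of $(\PM(d),\md)$, which the paper neither states nor proves prior to this proposition. Your sketch for (i) is in the right spirit but thin: you should say more explicitly why a $\md$-Cauchy sequence of majors has a limit in $\PM(d)$ (e.g., extract convergent subsequences of all leaf-endpoints, then check the limit configuration is again a primitive major, handling the case where leaves of different classes merge into a single polygonal gap). Your injectivity argument (ii) is fine. In the ``converse'' half of (iii), note that the limit pair $(x,y)$ with $\metric(m)(x,y)=0$ may be a diagonal of a polygonal gap rather than a boundary leaf; you should remark that any chord crossing such a diagonal necessarily crosses a boundary edge of the polygon, so $(p,q)$ still lands in $X(m)$.
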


\begin{proof}
Perhaps the main point is that $X(m)$ is a union of fat rectangles, with height and width at least $1/d$, so pieces of $X(m)$ can't shrink and suddenly disappear.

Suppose $m$ and $m'$ are majors that are within $\epsilon$ in the metric $\md$, and suppose $p \in X(m)$, so there is some leaf $l_m$ of $m$ intersecting the leaf $l_p$ represented by $p$. The endpoints
of $l_m$ have distance $0$ in the quotient graph $S^1 / m$, so there must be a path of length no greater
than $\epsilon$ on $S^1 / m'$ connecting these two points. In particular, assuming $\epsilon < 1/d$,
 some leaf of $m'$ comes within at most $\epsilon$ of intersecting $l_p$, therefore a leaf of $m'$ intersects a leaf near $l_p$ and is in $X(m')$.  Since this works symmetrically between $m$ and $m'$, it follows that they are close in the Hausdorff metric.

Conversely,  given $m \in \PM(d)$ and $\epsilon > 0$, we will show that there exists $\delta > 0$ such that any $m' \in \PM(d)$ with the Hausdorff distance between $X(m)$ and $X(m')$ is less than $\delta$,
the distance $\md(m, m') < \epsilon$.  We will do this by induction on $d$. It is obvious for $d  = 2$, since a major is a diameter and $X(m)$ is a union of two squares that intersect at two corners that are the representatives of the single leaf of $m$.

When $d > 2$, we choose a region of $\D^2 \setminus m$ that touches $S^1$ in a single interval $A$ of length $1/d$, bounded by
a leaf $l$.  Suppose $X(m')$ is Hausdorff-near $X(m)$.  Then most of the  square $A \times A$ of $G(m)$ is in $G(m')$, and most of the rectangles $A \times (S^1 \setminus A)$ and $(S^1 \setminus A) \times A$ of
$X(m)$ is also in $X(m')$. This implies that $m'$ has a nearby $l'$ spanning an interval $A'$ of length $1/d$.  Now we can look at the complementary regions, with $l$ or $l'$ collapsed, normalized by the affine transformation that makes the
 restriction of $m$ or $m'$ a primitive degree-$(d-1)$ major having the collapsed point at 0.
 Call these new majors $m_1$ and $m'_1$. The excluded region $X(m_1)$
 is  obtained from $X(m)$ intersected with a $(d-1) \times (d-1)$ square on the torus, and similarly for $X(m'_1)$; hence
if  the Hausdorff distance between $X(m)$ and $X(m')$ is small, so is that between $X(m_1)$ and $X(m'_1)$.  By induction, we can conclude
that the pseudo-metrics $\met(m)$ and $\met(m')$ on $S^1$  induced from the quotient graphs are close.
\end{proof}
\section{Cleaning laminations: quality and compatibility}

We will use the parametrization of the circle by \emph{turns}, that is, numbers interpreted as fractions of the way around the circle.  Thus $\tau \in [0,1]$ corresponds to the point $\exp(2\pi i \tau)$ in the unit circle in the complex plane.

\begin{figure}[htpb]
\centering
\vbox{
\includegraphics[width=2.5in]{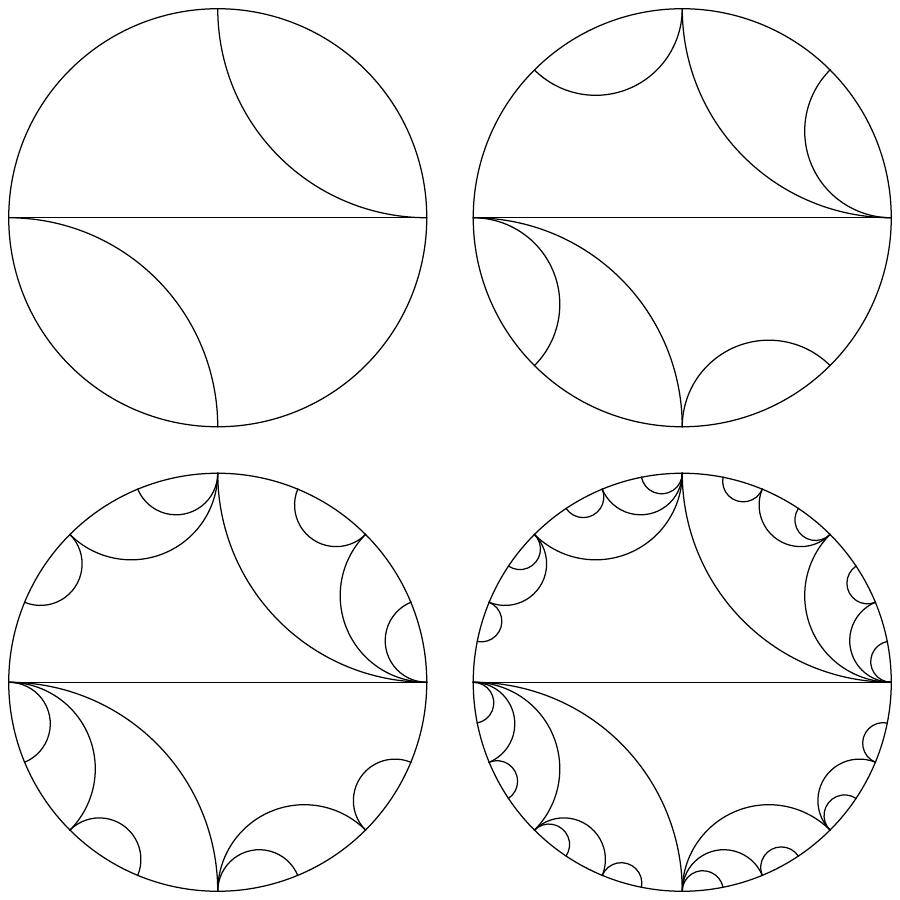}
\includegraphics[width=2.5in]{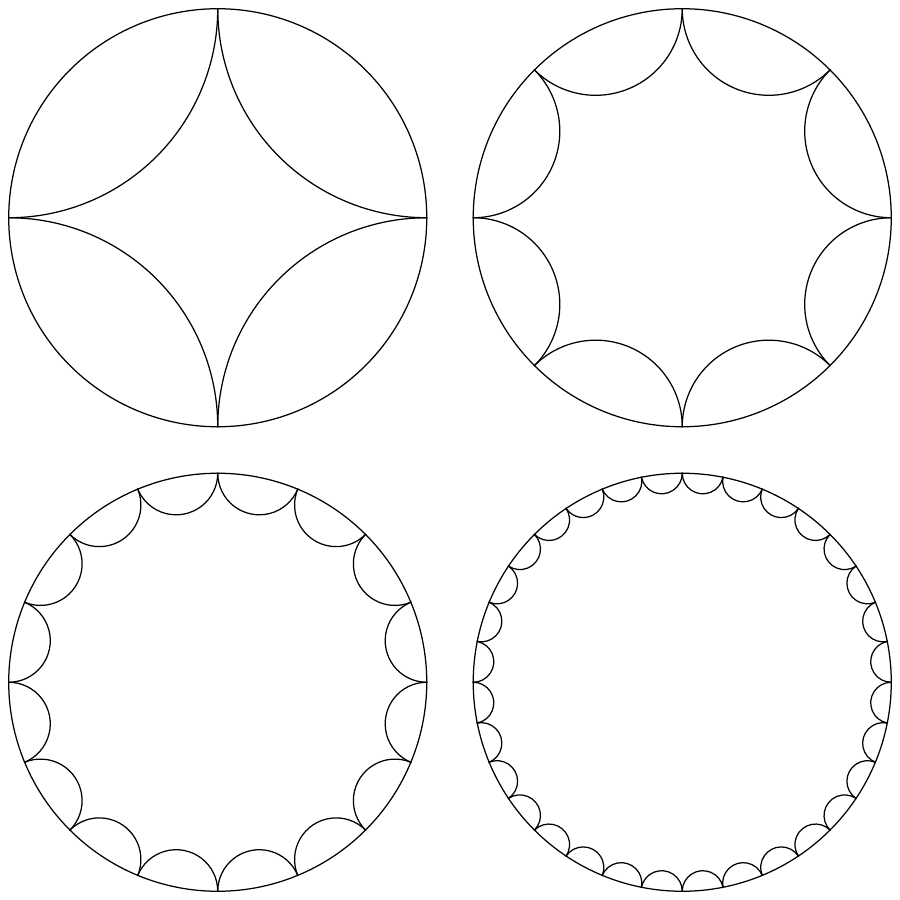}
}
\caption{
The top four laminations are successive steps of building an invariant quadratic lamination for  the major $\{0,\frac12\}$.  The bottom four laminations are obtained by performing
$\Lam\compose\Rel$.
}
\label{fig:UncleanLamination}
\end{figure}

Let's look at the quadratic major $\overline{0, \frac{1}{2} }$. We'll use a variation of the process of backward lifting, depicted in Figure \ref{fig:UncleanLamination}, which is really the limit
as $\epsilon \to 0$ of the standard process applied to $\{-\epsilon, \frac12-\epsilon\}$.
  The first backward lift adds 2 new leaves,
$\overline{0, \frac{1}{4}} $ and $\overline{\frac{1}{2}, \frac{3}{4}} $, and at each successive stage, a new leaf is added joining the midpoint of each interval between endpoints to the last clockwise endpoint of the interval.   (The full construction would also join each of these midpoints to the counterclockwise endpoint of the interval.)
In the limit, there are only a countable set of leaves, but they are joined in a single tree. The closed equivalence relation they generate collapses the entire circle to a point!
\begin{figure}[!htb]
\centering
\includegraphics[width=3in]{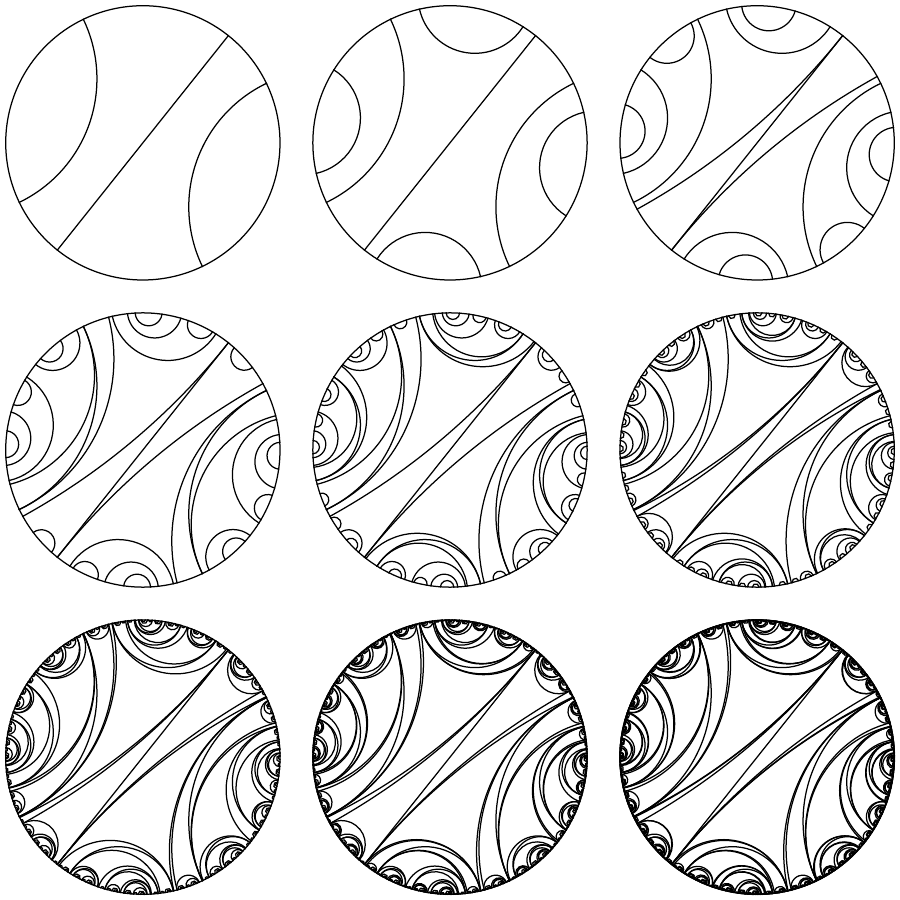}
\includegraphics[width=3in]{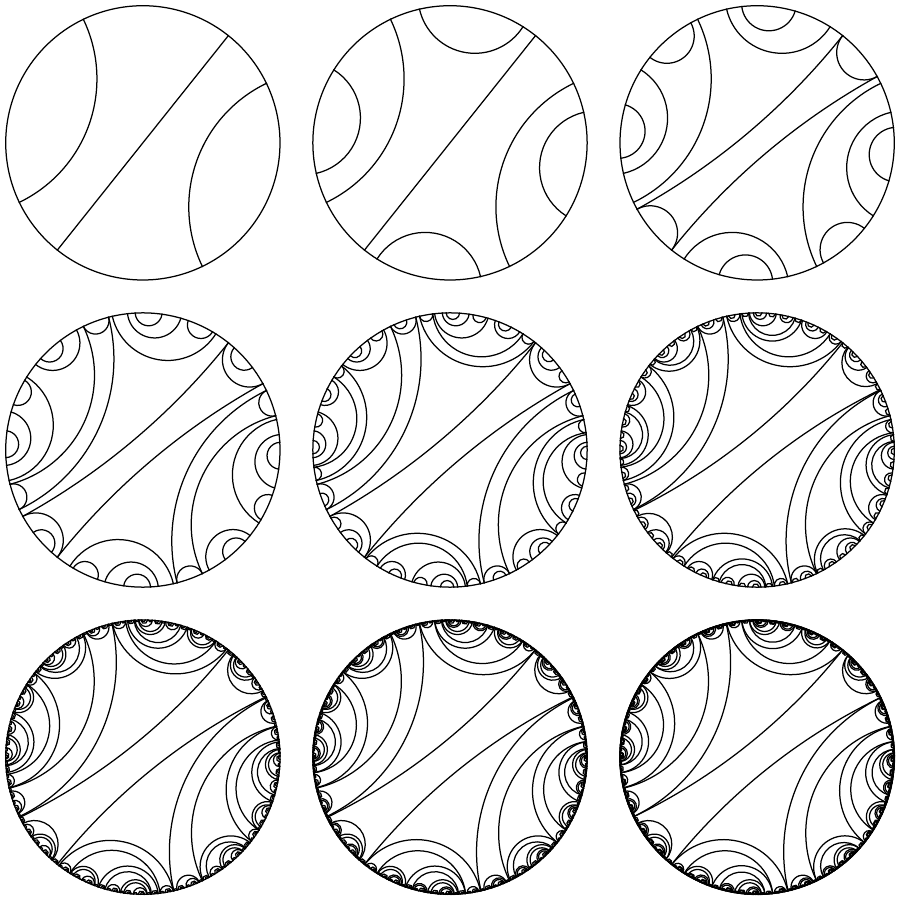}
\caption{
The top nine laminations are successive steps of building an invariant quadratic lamination for the major $\{\frac17,\frac9{14}\}$.  The bottom nine laminations are obtained by performing
$\Lam\compose\Rel$ (at every third step). It gives the lamination of Douady's rabbit.
}
\label{fig:Clean1-7}
\end{figure}

Each finite-stage lamination  $\lambda_k$ can be cleaned up into the standard form $\Lam(\Rel(\lambda_k))$.

\section{Promoting forward-invariant laminations}

A degree-$d$ primitive major is a special case of a lamination that is forward invariant. We have seen how to construct a fully invariant degree-$d$  lamination containing it. How generally can forward invariant laminations be promoted to fully-invariant laminations?

\section{Hausdorff dimension and Growth Rate}

\FloatBarrier

\addtocontents{toc}{\setcounter{tocdepth}{1}} 

 \vspace{1cm}
 \begin{center}
{\bf \sc \Large Part II}
\vspace{.5cm}
\end{center}

Part II consists of the following supplementary sections, which were written by the authors other than W. Thurston:

\tableofcontents

\section{Related work on laminations}
Invariant laminations were introduced as a tool in the study of
complex dynamics by William Thurston \cite{Th}.  Thurston's theory
suggests using spaces of invariant laminations as models for parameter
spaces of dynamical systems defined by complex polynomials.  In degree
$d=2$, Thurston uses \emph{QML} (quadratic minor lamination) to model
the space of $2$-invariant laminations.  Underpinning this approach in
degree $2$ is Thurston's No Wandering Triangle theorem.  Thurston
conjectured that the boundary of the Mandelbrot set is essentially the quotient of the circle by the equivalent relation induced by
QML.  The precise relationship between invariant laminations and
complex polynomials is even less clear for higher degree.

Not all degree $d$-invariant laminations correspond to degree $d$
polynomials. One necessary condition for a lamination to be directly
associated to a polynomial is that it be generated by a tree-like
equivalence relation on $S^1$ (see, for example, \cite{BL,BO1,Ki2,Mimbs}
and Section 2 of Part I).  To distinguish between arbitrary invariant
laminations and those associated to polynomials, we call the invariant
laminations defined by Thurston {\it geometric invariant laminations},
and the smaller class of laminations defined by tree-like equivalence
relations {\it combinatorial invariant laminations}.  (In \cite{BMOV},
such laminations are called $q$-laminations). 

The difference between these two notions can be explained via the following examples. 
Let $a, b, c$ be three distinct points on $S^1$ which form an equivalence class under a tree-like equivalence relation. 
Then in the combinatorial lamination obtained from the given tree-like equivalence, all the leaves $(a, b), (b, c), (a, c)$ are contained. 
But as a geometric lamination, it may have eaves $(a, b), (b, c)$ with out having $(a, c)$ as a leaf. 
Here is another important case to consider. From the ``primitive major"
construction, we can end up with leaves $(a,b), (b,d)$, and
$(c,d)$, where $a, b,c d$ are four distinct points on $S^1$ so that $(a,b,c,d)$ appear in cyclic order. 
On the other hand, if this were a combinatorial lamination, it would only include the ones around
the outside, namely $(a,b), (b,c), (c,d)$, and $(d,a)$. 
One way to understand both of these examples geometrically is to dentify the circle with the ideal boundary of the hyperbolic plane $\mathbb{H}^2$.
Then a combinatorial lamination can be understood as the one consisting of the boundary leaves of the convex hulls of finitely many points on the ideal boundary of $\mathbb{H}^2$.

A fundamental global result in the theory of combinatorial invariant laminations is the existence of {\it locally connected models for connected Julia sets}, obtained by Kiwi \cite{Ki2}.
He associates a combinatorial invariant lamination $\lambda(f)$ to each polynomial $f$ that has no irrationally neutral cycles and whose Julia set is connected.
Then the topological Julia set $J_{\sim_f}:=S^1/_{\sim_f}$ is a locally connected continuum, where $\sim_f$ is the equivalence relation generated by $x\sim_f y$ if $x$ and $y$ are connected by a leaf of $\lambda(f)$, and $f|_{J_f}$ is semiconjugate to the induced map $f_{\sim_f}$ on $J_{\sim_f}$ via a monotone map $\phi: J_f\to J_{\sim_f}$ (by monotone we mean a map whose point preimages are connected).
Kiwi characterizes the set of combinatorial invariant laminations that can be realized by polynomials that have no irrationally neutral cycles and whose Julia sets are connected. In \cite{BCO}, Block, Curry and Oversteegen present a different approach, one based upon continuum theory, to the problem of constructing locally connected dynamical models for connected polynomial Julia sets $J_f$; their approach works regardless of whether or not $f$ has irrational neutral cycles.  These locally connected models yield nice combinatorial interpretations of connected quadratic Julia sets that themselves may or may not be locally connected.

The No Wandering Triangle Theorem is a key ingredient in Thurston's construction (\cite{Th}) of a locally connected model $\mathcal{M}^2_c$ of the Mandelbrot set.  The theorem asserts the non-existence of wandering non-(pre)critical branch points of induced maps on quadratic topological Julia sets.  Branch points of $\mathcal{M}_c$ correspond to topological Julia sets whose critical points are periodic or preperiodic.  Thurston posed the problem of extending the No Wandering Triangle Theorem to the higher-degree case.  In \cite{Le}, Levin showed that for ``unicritical" invariant laminations, wandering polygons do not exist.  Kiwi proved (\cite{Ki1}) that for a combinatorial invariant lamination of degree $d$, a wandering polygon has at most $d$ edges. Block and Levin obtained more precise estimates on the number of edges of wandering polygons in \cite{BL}. Soon after,  Blokh and Oversteegen discovered that some combinatorial invariant laminations of higher-degree ($d\geq3$) do admit wandering polygons (see \cite{BO2}).

Extending Thurston's technique of using invariant laminations to construct a combinatorial model $\mathcal{C}_d$ of the connectedness locus for polynomials of degree $d>2$ remains an area of inquiry.
In \cite{BOPT} and \cite{Pt},  A. Blokh, L. Oversteegen, R. Ptacek and V. Timrion make progress in this direction. They  establish two necessary conditions of laminations from  the polynomials in the {\it Main Cubioid} $CU$, i.e. the boundary of the principal hyperbolic component of the cubic connectedness locus $\mathcal{M}_3$; CU is the analogue of the main cardioid in the quadratic case.  They propose this set of laminations
as the {\it Combinatorial Main Cubioid} $CU^c$, a model for $CU$.

\section{The space of degree $d$ primitive majors}

The next few sections discuss the space $\PM(d)$ of all degree $d$ primitive majors.  In this section, we see various dynamical interpretations of $\PM(d)$. In the following sections, we parametrize $\PM(d)$ and study its topology for small $d$'s. More precisely, we give complete descriptions of $\PM(2)$ and $\PM(3)$, and discuss $\PM(4)$ to some extent.

\subsection{Recalling definitions}
We begin by recalling some concepts from Part I.   A critical class of a degree-d-invariant equivalent relation is subset of $S^1 = \partial \mathbb{D} \subset \mathbb{C}$ that consists of all elements of an equivalence class and that maps under $z \mapsto z^d$ with degree greater than $1$; the associated subsets of $\mathbb{D}$ are the critical leaves and critical gaps of the lamination.   The criticality of a critical class is defined to be one less than the degree of the restriction of the map to that subset.  Per Definition 2.2, the major of a degree-d-invariant lamination (resp. equivalence relation) is the set of critical leaves and critical gaps (resp. equivalence classes corresponding to critical leaves and critical gaps). Such a major is said to be primitive if every critical gap is a `collapsed polygon' whose vertices are identified under the map $z \mapsto z^d$.  (The restriction of $z \mapsto z^d$ to an intact gap of a primitive degree-d-invariant lamination is necessarily injective.)  A critical leaf may be thought of a critical gap defined by polygon with precisely two vertices, and those vertices are identified by $z \mapsto z^d$.  By Proposition , the sum over the critical classes of their criticalities equals $d-1$.

\subsection{Metrizability of $\PM(d)$}

As described in Part I, an element $m \in \PM(d)$ determines a quotient graph $\gamma(m)$ obtained by identifying each equivalence class to a point. The path-metric of $S^1$ defines a path-metric on $\gamma(m)$. In addition, $\gamma(m)$ has the structure of a planar graph, that is, an embedding in the plane well-defined up to isotopy, obtained by shrinking each leaf and each ideal polygon of the lamination to a point.  These graphs have the property that $H^1(\gamma(m))$ has rank $d$, and every cycle has length a multiple of $1/d$.  Every edge must be accessible from the infinite component of the complement, so the metric and the planar embedding together with the starting point, that is, the image of $1 \in \C$, is enough to define the major.

The pseudo-metric  $\met(m)$ on the circle induced by the path-metric on $\gamma(m)$  determines
a continuous function on $S^1\times S^1$. The sup-norm on the space of continuous function on $S^1\times S^1$ induces a metric $\md$ on  $\PM(d)$:
   \[
  \md(m, m') = \sup_{(x, y )\in S^1\times S^1} \left | \met(m)(x,y) - \met(m')(x,y)\right | .
  \]
For the sake of completeness, we give a proof of the fact that $\md$ is indeed a metric.

\begin{lemma} $\md$ is a metric on $\PM(d)$.
\end{lemma}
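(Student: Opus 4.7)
The plan is to verify the four metric axioms for $\md$. Non-negativity and symmetry are immediate from the sup-of-absolute-values definition. The triangle inequality follows by applying the pointwise bound $|u-w| \le |u-v| + |v-w|$ with $u = \met(m)(x,y)$, $v = \met(m')(x,y)$, $w = \met(m'')(x,y)$, and taking suprema on both sides. Finiteness $\md(m, m') < \infty$ holds because the quotient map $S^1 \to \gamma(m)$ is length non-increasing, so $\met(m)(x,y) \le 1/2$ for all $x,y$ when the circle is parameterized by turns. The reflexivity condition $\md(m, m) = 0$ is clear. Thus the only substantive point is the identity of indiscernibles.

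For this, my strategy is to show that $m$ can be recovered from $\met(m)$. Define an equivalence relation $\sim_m$ on $S^1$ by $x \sim_m y$ iff $\met(m)(x,y) = 0$. I will argue that the non-singleton classes of $\sim_m$ are precisely the vertex sets of the critical leaves and critical gaps that comprise $m$. By construction, the quotient map $q \colon S^1 \to \gamma(m)$ identifies two points if and only if they lie in a common critical class of $m$, so the claim reduces to asserting that distinct points of $\gamma(m)$ have strictly positive path-distance. Granting this, two majors with $\met(m) = \met(m')$ induce the same equivalence relation on $S^1$, hence share the same critical-class vertex sets; and since a primitive major is uniquely determined by these vertex sets (each critical class being a collapsed polygon in the disk, with a critical leaf the $2$-vertex case), we conclude $m = m'$.

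The main obstacle is therefore establishing that the path-metric on $\gamma(m)$ is an honest metric rather than merely a pseudo-metric, i.e.\ that $\gamma(m)$ is Hausdorff. This uses the finiteness inherent in the definition of a primitive major. By the total criticality bound (the proposition in Part I stating that total criticality equals $d-1$), $m$ has only finitely many critical leaves and critical gaps, and primitivity forces each critical gap to be a polygon with finitely many vertices. Thus $\sim_m$ has only finitely many non-trivial classes, each finite. Consequently $\gamma(m)$ is a finite graph built from $S^1$ by identifying finitely many finite subsets, and so it has only finitely many edges, each an arc of $S^1$ between consecutive critical vertices of strictly positive length. On such a finite metric graph the path-metric is a genuine metric, which is the Hausdorffness we require; once this is in hand, the remainder is bookkeeping.
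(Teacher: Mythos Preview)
Your proof is correct and follows essentially the same approach as the paper. Both reduce the identity of indiscernibles to the fact that two points of $S^1$ have $\met(m)$-distance zero if and only if they lie in the same critical class of~$m$; the paper asserts the positivity direction with a ``clearly,'' while you supply the justification via the finite-graph structure of $\gamma(m)$.
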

\begin{proof}
 Non-negativity and symmetry follow automatically from the definition. The rest of the proof is also pretty straightforward.

 Suppose $\md(m, m') = 0$ for some $m, m' \in \PM(d)$. Since $\sup_{x,y} | \met(m)(x,y) - \met(m')(x,y) | = 0$, this means $\met(m)(x,y) - \met(m')(x,y)$ for all $x, y \in S^1$. This implies indeed $m = m'$. In order to see this, suppose $m = \{W_1, \ldots, W_n\}$ and $m' = \{W_1', \ldots, W_k'\}$ are different. Then one can pick two distinct point $p, q$ such that $p, q \in W_i$ for some $i$ but there is no $W_j'$ which contains both $p$ and $q$. Clearly one has $\met(m)(p, q) = 0$ while $\met(m')(p, q) > 0$. This proves that $\md(m, m') = 0$ if and only if $m = m'$.

 It remains to prove the triangular inequality. Let $m_1, m_2, m_3$ be
 three elements of $\PM(d)$. Then
 \begin{align*}
   \md(m_1, m_3) &= \sup_{x, y} \abs{\met(m_1)(x,y) - \met(m_3)(x,y)}\\
                 &\le \sup_{x,y} (\abs{\met(m_1)(x,y) - \met(m_2)(x,y)} + \abs{\met(m_2)(x,y) - \met(m_3)(x,y)})\\
                 &\le \sup_{x,y} \abs{\met(m_1)(x,y) - \met(m_2)(x,y)} + \sup_{x,y} \abs{\met(m_2)(x,y) - \met(m_3)(x,y)}\\
                 &= \md(m_1, m_2) + \md(m_2, m_3).\qedhere
 \end{align*}
\end{proof}

\subsection{A spine for the complement of the discriminant locus}

Let $\mathcal{P}_d$ be the space of monic centered polynomials of degree $d$, and $\mathcal{P}_d^0 \subset \mathcal{P}_d$ the space of polynomials with distinct roots.

There is a natural map $f:\mathcal{P}_d^0 \to \PM(d)$ defined as follows:  for $p\in P^0_d$ consider the meromorphic 1-form
\[
\frac{1}{2d\pi i} \frac{d}{dz} \log p(z) =\frac{1}{2d\pi i} \frac{p'(z)}{p(z)} dz.
\]
Denote by $Z(p)$ the set of roots of $p$. This 1-form gives $\C-Z(p)$ a Euclidean structure. Near infinity, we see a semi-infinite cylinder of circumference $1$  ($\infty$ is a simple pole of $d\log p$ with residue $d$) and near all the zeroes of $p$ we see a semi-infinite cylinder with circumference $1/d$.

We restate Theorem \ref{theorem:pmd_into_polynomialspace}, and give another proof here.

\REFTHM{spine}
\label{thm:Spine}
The map $f:\mathcal{P}_d^0 \to \PM(d)$ is a homotopy equivalence.  More specifically, there exists a section $\sigma: \PM(d) \to \mathcal{P}_d^0$ which is a deformation retract.
\ENDTHM

\begin{proof} For $m\in \PM(d)$ consider $X'_ m=(S^1\times[0,\infty))/ \sim$ where
\[
(\theta_1,t_1)\sim (\theta_2,t_2)\iff (\theta_1,t_1)= (\theta_2,t_2)\quad \text{or}\quad t_1=t_2=0\ \text {and}\ \theta_1\sim_m \theta_2.
\]
The graph quotient of $S^1\times \{0\}$ by $\sim_m$  consists of $d$ closed curves of length $1/d$. Glue to each of these a copy of $(\R/\frac 1d \Z) \times (-\infty,0)$, to construct $X_m$, which is a Riemann surface carrying a holomorphic 1-form $\phi_m$.
The integral of this 1-form around any of the $d$ punctures at $-\infty$ is $1/d$, so we can define a function $p_m:X_m \to \C$
\[
p_m(x)=e^{2\pi i d \int_{x_0}^x \phi_m},
\]
well defined up to post-composition by a multiplicative constant. But the end-point compactification $\overline X_P$ of $X_P$ is  homeomorphic to the 2-sphere, and the complex structure extends to the endpoints, so $\overline X_P$ is analytically isomorphic to $\overline \C$.  With this structure we see that $p_m$ is a polynomial of degree $d$ with distinct roots at the finite punctures; it can be uniquely normalized to be centered and monic, by requiring that $1\times [0,\infty]$ is mapped to a curve asymptotic to the positive real axis.
The map $m\mapsto p_m$ gives the inclusion $\sigma:\PM(d)\to \mathcal{P}_d^0$.

We need to see that $\sigma$ is a deformation retract. For each $p\in \mathcal{P}_d^0$, we consider the manifold with 1-form $(\C-Z(p), \phi_p)$, and adjust the heights of the critical values until they are all $0$.
\end{proof}

\subsection{ Polynomials in the escape locus} Again let $\mathcal{P}_d$ be the space of monic centered polynomials of degree $d$; this time they will be viewed as dynamical systems.  For $p\in \mathcal{P}_d$ let $G_p$ be the {\it Green's function\/} for the filled Julia set $K_p$.

Let $ Y_d(r)\subset \mathcal{P}_d$ be the set of polynomials $p$ such that $G_p(c)=r$ for all critical points of $p$.  The set $Y_d(0)$ is the degree $d$ {\it connectedness locus\/}; it is still poorly understood for all $d>2$.

For $r>0$, there is a natural map $Y_d(r) \to \PM(d)$ that associates to each polynomial  $p\in \mathcal{P}_d$ the equivalence relation $m_p$ on $S^1$ where two angles $\theta_1$ and $\theta_2$ are equivalent if the external rays at angles $\theta_1$ and $\theta_2$ land at the same critical point of $p$.

\REFTHM{equipotential} For $r>0$ and $p\in Y_d(r)$, the equivalence relation $m_p$ is in $\PM(d)$, and the map $p\mapsto m_p$ is a homeomorphism $Y_d(r) \to \PM(d)$.
\ENDTHM

The above theorem is a combination of a theorem of L. Goldberg \cite{Goldberg} and a theorem of Kiwi \cite{Kiwi05}.  To see a more recent proof using quasiconformal surgery, the readers are referred to \cite{Zeng}.

\subsection{Polynomials in the connectedness locus}
For  $m\in \PM(d)$, we geometrically identify it as the unions of convex hulls within $\overline{\mathbb{D}}$ of non-trivial equivalence classes of $m$. Let us denote by $J_1(m), \dots, J_d(m)$ the open subsets of $S^1$ that are intersections with $S^1$ of the components of $\mathbb{D}\setminus m$; each $J_i(m)$ is some finite union of open intervals in $S^1$.  An {\it attribution\/} will be a way of attributing  $J_i(m)\cap J_j(m)$ to  $J_i(m)$, or to $J_j(m)$, or to both.   Call $A$ such an attribution, and denote by $J_i^A$ the interval $J_i$ together with all the points attributed to it by $A$. Define the equivalence relation $\sim_{(m,A)}$ to be
\[
\theta_1\sim_{m,A} \theta_2 \iff \ \text{$d^k\theta_1$ and $d^k\theta_2$ belong to the same $J^A_i$ for all $k\ge 0$.}
\]

Suppose that some $p\in \mathcal{P}_d$ belongs to the connectedness locus without Siegle disks, and that $K_p$ is locally connected, so that there is a Carathéodory loop $\gamma_p:\R/\Z \to \C$.   Then $\gamma_p$ induces the equivalence relation $\sim_p$ on $\R/\Z$ by $\theta_1\sim_p\theta_2$ if and only if  $\gamma_p(\theta_1)=\gamma_p(\theta_2)$.

\REFTHM{existence} There exists $m\in \PM(d)$ and an attribution $A$ such that the equivalence relation $\sim_p$  is precisely $\sim_{m,A}$.
\ENDTHM

This theorem is essentially proved in \cite[Theorem 1.2]{Zeng2}.  Understanding when different $\sim_{m,A}$ correspond to the same polynomial is a difficult problem, even for quadratic polynomials.

\begin{proof} (Sketch)  Choose an external ray landing at each critical value in $J_p$, and an external ray landing at the root of each component of $\overset \circ{K_p}$ containing a critical value if the critical value is attracted to an attracting or parabolic cycle. Then for each critical point $c\in J_p$,  the angles of the inverse images of the chosen rays landing at $c$ form an equivalence class for $(m,A)$.

For each critical point $c \in  \overset \circ  {K_p}$, the angles of the inverse images of the chosen rays landing on the component of $\overset \circ  {K_p}$ containing $c$ form the other equivalence classes. These are the ones that need to be attributed carefully.
\end{proof}

\subsection{Shilov boundary of the connectedness locus}

\begin{conjecture}  All stretching rays through $Y_d(r)$ land on the Shilov boundary of the connected locus. Furthermore, if $r \to 0$, $Y_d(r)$ accumulates to the Shilov boundary.
\end{conjecture}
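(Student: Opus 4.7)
The plan is to leverage the parametrization $Y_d(r) \cong \PM(d)$ from Theorem~\ref{equipotential}, together with quasiconformal surgery. A stretching ray through $p \in Y_d(r)$ is the one-parameter family $\{p_s\}_{s > 0}$ produced by dilating by factor $s$ the Euclidean structure coming from the Green's function on the basin of infinity $U_p$, while leaving $K_p$ untouched. Each $p_s$ lies in $Y_d(rs)$ and has the same primitive major, so the set of stretching rays is in bijection with $\PM(d)$; write $\sigma_r(m) \in Y_d(r)$ for the point along the ray indexed by $m$ at potential $r$.

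For the first assertion, fix $m \in \PM(d)$ and study the behaviour of $\sigma_r(m)$ as $r \to 0$. After normalizing to monic centred form, a priori bounds on the coefficients (controlled by the fact that the critical values all have potential exactly $r$) confine the family to a compact region of $\mathcal{P}_d$, so any accumulation point $p^*$ lies in the connectedness locus $Y_d(0)$. Since $G_{\sigma_r(m)}$ converges to $G_{p^*}$, every critical point of $p^*$ has vanishing Green's function, and hence lies in $K_{p^*}$. The first key step is to show that each such critical point in fact lies on $J_{p^*}$: a critical point trapped in the interior of $K_{p^*}$ would sit in an attracting or parabolic basin, or a Siegel disc, all of which are open conditions on the parameter, contradicting approximation by polynomials in $Y_d(r)$ whose critical orbits all escape. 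To identify this escape-free locus with the Shilov boundary $\partial_{\mathrm{Sh}} Y_d(0)$, use the plurisubharmonic function $\Phi(p) = \max_i G_p(c_i(p))$, which vanishes on $Y_d(0)$ and is strictly positive on its complement; the maximum principle forces $\partial_{\mathrm{Sh}} Y_d(0)$ into the closure of the set where all critical points lie on the Julia set, while a perturbation argument (moving individual critical values off $J_p$) yields the reverse inclusion.

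For the accumulation statement, given $p^* \in \partial_{\mathrm{Sh}} Y_d(0)$ I would construct an approximating sequence $p_n \in Y_d(r_n)$ with $r_n \to 0$ by appealing to Theorem~\ref{existence}: realize the combinatorics of $p^*$ by a pair $(m, A)$, then apply a local quasiconformal surgery on small disc neighbourhoods of the critical points of $p^*$ that spreads them onto the equipotential at level $r_n$, yielding a polynomial on the stretching ray $\sigma_{r_n}(m)$ that converges to $p^*$ by the first part. The hardest step of the whole programme, and the reason this remains a conjecture, is the landing problem itself: showing that $\sigma_r(m)$ has a single limit rather than a nondegenerate accumulation set as $r \to 0$. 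Already in the cubic case this requires serious combinatorial rigidity in the spirit of Branner--Hubbard, and in higher degrees one expects to need a priori geometric bounds on the Riemann surfaces $X_m$ built in the proof of Theorem~\ref{spine}, together with control of wandering gaps, which is precisely where higher-degree extensions of the No Wandering Triangle Theorem discussed in the previous section become indispensable.
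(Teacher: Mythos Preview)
The statement you are addressing is labelled as a \emph{Conjecture} in the paper, and the paper offers no proof whatsoever---only the remark that, if true, it would give ``a description of the Shilov boundary of $Y_d(0)$, which is probably the best description of the connectedness locus we can hope for.'' There is therefore nothing in the paper to compare your proposal against.

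Your writeup is candid about this: you explicitly identify the landing problem as ``the reason this remains a conjecture'' and do not claim to resolve it. What you have written is a research outline rather than a proof, and it should be presented as such. A few of the heuristic steps would need sharpening even at that level. The assertion that a critical point in the interior of $K_{p^*}$ forces an open condition on the parameter is not correct as stated: attracting basins are open, but parabolic basins and Siegel discs are not (they occur on boundaries of hyperbolic components or on measure-zero loci), so the contradiction you derive does not go through without more work. Likewise, your identification of the Shilov boundary with the locus where all critical points lie on the Julia set is itself a substantial claim; the Shilov boundary is defined via the uniform algebra of functions on $Y_d(0)$, and relating it to the critical-point stratification requires an argument you have not supplied. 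But since the paper attempts no proof at all, these are comments on your programme rather than discrepancies with the text.
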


If this is true, it gives a description of the Shilov boundary of $Y_d(0)$, which is probably the best description of the connectedness locus we can hope for.


\section{Parametrizing primitive majors} \label{s:parametrizingprimitivemajors}

As part of his investigations into core entropy, William P. Thurston wrote numerous Mathematica programs.  This section presents an algorithm found in W. Thurston's computer code which cleverly parametrizes primitive majors using starting angles.

\bigskip

Denote by $I$ the circle of unit length.  We will interpret $I$ as the fundamental domain $[0,1)$ in $\mathbb{R}/\mathbb{Z}$ with the standard ordering on $[0,1)$.  Simultaneously, we will think of $I$ as the space of angles of points in the boundary of the unit disk.

Throughout this section, we will let $m=\{W_1, \cdots, W_s\}\in \PM(d)$ denote a generic primitive major.  By ``generic," we mean that the associated lamination $\mathcal{M}$ consists of $d-1$ leaves and has no critical gaps.  Each leaf $\ell_i$ in $\mathcal{M}$ has two distinct endpoints in $I$.  We will call the lesser of these two points the {\it starting point} of $\ell_i$, and denote it $s_i$, and we will call the greater the {\it terminal point} of $\ell_i$ and denote it $t_i$.  We will adopt the labelling convention that the labels of the leaves are ordered so that $$s_1 < s_2 < \dots < s_{d-1}.$$
Since, for each $i$, $d \cdot s_i \pmod{1}= d \cdot t_i \pmod{1}$, there exists a unique natural number $k_i \in \{1,...,d-1\}$ such that $t_i = s_i + \frac{k_i}{d}$.

Each leaf $\ell_i \in \mathcal{M}$ determines two open arcs of $I$: $I_i=(s_i,t_i)$ and ${I}_i^0 = I \setminus [s_i,t_i]$.  The complement in $\overline{\mathbb{D}}$ of the lamination $\mathcal{M}$ consists of $d$ connected sets.  We will adopt the notation that $C_i$ is the connected component of whose boundary contains $\ell_i$ and has nonempty intersection with the arc $I_i$, for $1 \leq i \leq d$, and $C_0$ is the connected set whose boundary contains an arbitrarily small interval $(1-\epsilon,1) \subset I$.  For each connected set $C_i$, denote by $\mu(C_i)$ the Lebesgue measure of the boundary of $C_i$ in $I = \partial \overline{\mathbb{D}}$.

\begin{lemma}
Let $m \in \PM(d)$ be a generic primitive major.  Then $\mu(C_i) = 1/d$ for all $i$.
\end{lemma}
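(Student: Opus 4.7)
My plan is to combine a summation identity with a divisibility constraint: the $d$ numbers $\mu(C_0), \dots, \mu(C_{d-1})$ are positive, are each a multiple of $1/d$, and sum to $1$, which forces each of them to equal $1/d$.

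First I would establish the counting. The genericity hypothesis---that $\mathcal{M}$ consists of $d-1$ leaves with no critical gaps---forces the $d-1$ leaves to be pairwise disjoint chords whose $2(d-1)$ endpoints are all distinct (a shared endpoint would place three or more points in a single equivalence class, producing a critical gap). Hence $\mathcal{M}$ cuts $\overline{\mathbb{D}}$ into exactly $d$ topological disks $C_0, \dots, C_{d-1}$, and each of them meets $\partial \mathbb{D}$ in a nonempty open set (otherwise its boundary would be a closed chain of pairwise disjoint chords sharing no endpoints, which is impossible). The open arcs $\partial C_i \cap \partial \mathbb{D}$ then tile $\partial \mathbb{D}$ up to the finite set of leaf endpoints, so $\sum_{i=0}^{d-1} \mu(C_i) = 1$.

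The crux is to show each $\mu(C_i)$ is a multiple of $1/d$. I would identify $\mu(C_i)$ with the length of a cycle in the quotient graph $\gamma(m)$: collapsing each leaf of $\mathcal{M}$ to a point carries the Jordan curve $\partial C_i$ to a loop in $\gamma(m)$ whose length, measured in the path metric inherited from $\partial \mathbb{D}$, equals precisely the total length of arcs of $\partial C_i$ on $\partial \mathbb{D}$, namely $\mu(C_i)$. As already noted in the discussion of $\gamma(m)$ in Part~I, every cycle of $\gamma(m)$ has length a multiple of $1/d$, so $\mu(C_i) \in \frac{1}{d}\mathbb{Z}_{>0}$ and in particular $\mu(C_i) \ge 1/d$. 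Combined with $\sum_i \mu(C_i) = 1$ over $d$ summands, this forces $\mu(C_i) = 1/d$ throughout.

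I expect the main obstacle to be the divisibility step. If one prefers not to cite the cycle-length fact, I would reprove it directly: traverse $\partial C_i$ once with $C_i$ on the left; each arc on $\partial \mathbb{D}$ then contributes positively to the net angular displacement, while each leaf $\ell_j$ contributes a displacement $\pm k_j/d \in \frac{1}{d}\mathbb{Z}$ (its endpoints satisfy $t_j - s_j = k_j/d$). The total signed displacement equals the integer winding number of $\partial C_i$ about any chosen interior point of $C_i$, so $\mu(C_i)$ equals an integer minus a multiple of $1/d$, which again lies in $\frac{1}{d}\mathbb{Z}$.
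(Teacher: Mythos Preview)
Your proof is correct and follows the same two-step scheme as the paper's: show that each $\mu(C_i)$ is a positive multiple of $1/d$, observe that the $d$ values sum to $1$, and conclude. The paper argues the divisibility step more directly: for each leaf $\ell_j$ on the boundary of $C_i$, the far side of $\ell_j$ meets $S^1$ in an arc of length $|I_j|$ or $|I_j^0|$, both multiples of $1/d$; since these far sides are disjoint, $\mu(C_i) = 1 - \sum(\text{removed lengths}) \in \tfrac{1}{d}\mathbb{Z}$. Your route through the quotient graph $\gamma(m)$ is a legitimate alternative, though it leans on a fact that is asserted rather than proved in Part~I.

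One correction to your alternative winding-number argument: the displacement you are computing---arc lengths on $\partial\mathbb{D}$ together with leaf jumps of $\pm k_j/d$---is the change in $\arg z$, i.e., angular displacement measured from the origin. Its total around $\partial C_i$ is therefore the winding number of $\partial C_i$ about $0$ (which is $0$ or $1$), not about an arbitrary interior point of $C_i$. From a generic interior point the angle subtended by a chord need not lie in $\tfrac{1}{d}\mathbb{Z}$, so the argument as stated does not close up. Replacing ``interior point of $C_i$'' by ``the origin'' fixes this (with a small caveat if some leaf happens to be a diameter, which one can sidestep by a slight perturbation of the reference point).
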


\begin{proof}
Since for every leaf $\ell_i$ the lengths of the arcs $I_i$ and $I_i^0$ are both integer multiples of $1/d$, $\mu(C_i)$ is also an integer multiple of $1/d$.  Thus, we can write $\mu(C_i) = m_i/d$ for a unique natural number $m_i$.  Then, since the $C_i$ are pairwise disjoint, $$1 = \sum_{i=0}^{d-1} \mu(C_i) = \frac{1}{d} \sum_{i=0}^{d-1} m_i.$$
Consequently, $m_i = 1$ for all $i$.
\end{proof}

\begin{lemma} \label{l:lastleaf}
Let $m \in \PM(d)$ be a generic primitive major.  Then $s_{d-1} < \frac{d-1}{d}$ and $t_{d-1}=s_{d-1}+\frac{1}{d}$.
\end{lemma}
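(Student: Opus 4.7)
The plan is to show that the open arc $(s_{d-1}, t_{d-1}) \subset I$ contains no leaf endpoints of $\mathcal{M}$ other than $s_{d-1}$ and $t_{d-1}$ themselves; once this is established, the arc must lie in the boundary of a single complementary component, which turns out to be $C_{d-1}$, and the length constraint $\mu(C_{d-1})=1/d$ from the previous lemma will immediately force $t_{d-1}-s_{d-1}=1/d$.

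First, I would observe that by the labeling convention $s_1 < s_2 < \cdots < s_{d-1}$, no starting point $s_j$ with $j \neq d-1$ lies in $(s_{d-1}, t_{d-1})$. Next, I would rule out terminal points: if $t_j \in (s_{d-1}, t_{d-1})$ for some $j \neq d-1$, then $s_j < s_{d-1} < t_j < t_{d-1}$, so the endpoints of $\ell_j$ and $\ell_{d-1}$ would be interleaved on $S^1$, contradicting the fact that the leaves of $\mathcal{M}$ are pairwise non-crossing. (Here I implicitly use that, under the genericity hypothesis of no critical gaps, all $2(d-1)$ endpoints are distinct: if two leaves shared an endpoint, then three points would share a common image under $z \mapsto z^d$, producing a critical gap.)

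Having established that $(s_{d-1}, t_{d-1})$ contains no leaf endpoints, this arc lies in $\partial C \cap I$ for a single connected component $C$ of $\overline{\mathbb{D}} \setminus \mathcal{M}$. The component $C$ has $\ell_{d-1}$ on its boundary and meets $I_{d-1} = (s_{d-1}, t_{d-1})$, so by the definition of $C_{d-1}$ we have $C = C_{d-1}$. Applying the previous lemma, $t_{d-1} - s_{d-1} \le \mu(C_{d-1}) = 1/d$. But $t_{d-1} - s_{d-1} = k_{d-1}/d$ with $k_{d-1} \in \{1,\ldots,d-1\}$, so $k_{d-1} = 1$ and $t_{d-1} = s_{d-1} + 1/d$.

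The inequality $s_{d-1} < (d-1)/d$ then follows immediately: since $t_{d-1} \in [0,1)$, we have $s_{d-1} = t_{d-1} - 1/d < 1 - 1/d = (d-1)/d$. The main (modest) obstacle is the non-crossing argument ruling out terminal points in the arc; the rest is a direct bookkeeping consequence of the definitions and the previous lemma.
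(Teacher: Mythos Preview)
Your proof is correct and follows essentially the same approach as the paper: both use that no starting point lies in $I_{d-1}$ to conclude the arc is contained in $\partial C_{d-1}$, then invoke $\mu(C_{d-1})=1/d$ from the previous lemma. The only differences are that you reverse the order of the two conclusions (deriving $s_{d-1}<(d-1)/d$ as a corollary of $t_{d-1}=s_{d-1}+1/d$ rather than proving it first) and make the non-crossing argument ruling out terminal points in $I_{d-1}$ explicit, whereas the paper leaves that step implicit.
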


\begin{proof}

First, observe that $s_{d-1}< \frac{d-1}{d}$.
To see this, suppose $s_{d-1} \in [\frac{d-1}{d},1)$.  We know $t_{d-1}$ is the fractional part of $(s_{d-1} + \frac{k_{d-1}}{d} \pmod{1})$ for some $k_{d-1} \in \mathbb{N}$.  Consequently, $t_{d-1} \not \in (\frac{d-1}{d},1)$, contradicting the fact that $s_{d-1}<t_{d-1}$.

Now, suppose $s_{d-1}+\frac{1}{d} < t_{d-1}$.  Since $s_{d-1}$ is the biggest of the $s$'s, there is no leaf in $\mathcal{M}$ whose starting point lies in the arc $I_{d-1}$.  Therefore the boundary of $C_{d-1}$ contains the entire arc $I_{d-1}$, and so $\mu(C_{d-1}) > 1/d$, a contradiction.
\end{proof}

\begin{definition} Let $m \in \PM(d)$ be a generic primitive major.  The \emph{derived primitive major} $m^{\prime}$ is an equivalence relation on $I$ that is the image of $m$ under the following process:  collapse the interval $[s_{d-1},t_{d-1}]$ in $I$ to a point, and then affinely reparametrize the quotient circle so that it has unit length, keeping the point $0$ fixed.
\end{definition}

\begin{lemma}
For any generic primitive major $m \in \PM(d)$, the derived primitive major $m^{\prime}$ is in $\PM(d-1)$.
\end{lemma}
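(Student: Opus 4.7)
My plan is to verify $m'$ against the definition of a primitive degree-$(d-1)$ major given in the Majors section of Part I: a collection of pairwise disjoint leaves and polygons whose vertices are identified under $z\mapsto z^{d-1}$, with total criticality $d-2$. The derivation removes $\ell_{d-1}$, collapses the arc $[s_{d-1},t_{d-1}]$ of length $1/d$, and affinely rescales the resulting circle by $d/(d-1)$ to unit length.

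First I would check that the collapse is well-defined on the surviving leaves. No leaf $\ell_i$ with $i<d-1$ can have an endpoint in the open arc $(s_{d-1},t_{d-1})$: the maximality of $s_{d-1}$ rules out starting points there, and a terminal point there would force $\ell_i$ to link with $\ell_{d-1}$, contradicting the lamination property. Therefore each of the remaining $d-2$ leaves has both endpoints in $[0,s_{d-1})\cup(t_{d-1},1)$, and the quotient together with the rescaling yields a well-defined leaf in the new circle $I'$. Because the quotient preserves cyclic order away from the collapsed arc, pairwise disjointness among the surviving leaves is preserved.

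Next I would verify by a short case analysis that the endpoints of each surviving leaf in $I'$ are identified under $\theta\mapsto(d-1)\theta\bmod 1$. Writing $t_i-s_i=k_i/d$ as in the preceding lemmas: if both endpoints of $\ell_i$ lie on the same side of the collapsed arc, the new separation is $\frac{d}{d-1}\cdot\frac{k_i}{d}=\frac{k_i}{d-1}$; if they straddle it, the new separation is $\frac{d}{d-1}\bigl(\frac{k_i}{d}-\frac{1}{d}\bigr)=\frac{k_i-1}{d-1}$. In the straddling case I would separately argue $k_i\ge 2$: since $\ell_i$ and $\ell_{d-1}$ are distinct and disjoint and $m$ is generic, the cyclic interval spanned by $\ell_i$ must strictly contain $[s_{d-1},t_{d-1}]$, forcing $t_i-s_i>1/d$. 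In either case the resulting separation is a positive integer multiple of $1/(d-1)$ strictly less than $1$, so the endpoints are identified by $z\mapsto z^{d-1}$.

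Finally, the criticality count is automatic: $m'$ consists of $d-2$ pairwise disjoint leaves each of criticality $1$, contributing total criticality $d-2$, as required for membership in $\PM(d-1)$. No new polygonal gaps appear, since the collapse only identifies points inside the single arc $[s_{d-1},t_{d-1}]$, which previously contained no other leaf endpoints. The main obstacle is the straddling case of the second step, where both the arithmetic and the inequality $k_i\ge 2$ rely essentially on the genericity hypothesis; the rest of the argument is routine bookkeeping.
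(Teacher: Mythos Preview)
Your proof is correct and follows essentially the same approach as the paper: the core step is the identical case analysis showing that the image leaf has endpoint separation $k_i/(d-1)$ or $(k_i-1)/(d-1)$ according to whether $[s_{d-1},t_{d-1}]$ lies inside $[s_i,t_i]$ or not. You are simply more thorough than the paper, which omits the checks that no surviving leaf has an endpoint in the collapsed arc, that disjointness is preserved, that $k_i\ge 2$ in the straddling case so the image leaf is nondegenerate, and that the total criticality comes out to $d-2$.
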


\begin{proof}
By Lemma \ref{l:lastleaf}, the arc $[s_{d-1},t_{d-1}]$ has length $\frac{1}{d}$, so the reparametrization affinely stretches the quotient circle by a factor of $\frac{d}{d-1}$.  For $i \leq d-2$, denote the image of $s_i$ and $t_i$ in $\mathcal{M}^{\prime}$ by $s_i^{\prime}$ and $t_i^{\prime}$.  If $t_i - s_i = \frac{k_i}{d}$, then
$$t_i^{\prime} - s_i^{\prime} = \frac{k_i^{\prime}}{d} \cdot \frac{d}{d-1} = \frac{k_i^{\prime}}{d-1},$$ where $k_i^{\prime} = k_i-1$ if $[s_{d-1},t_{d-1}] \subset [s_i,t_i]$ and $k_i^{\prime}=k_i$ otherwise.  In either case, $(d-1)\cdot(t_i^{\prime}-s_i^{\prime}) = 0 \pmod{1}$. Hence $m'=\{(s_i', t_i') \mid i=1,\cdots, d-2\}$ is in $\PM(d-1)$.
\end{proof}

\begin{lemma} Let $m \in \PM(d)$ be a generic primitive major.  Then $s_i < \frac{i}{d}$ for all $i$.
\end{lemma}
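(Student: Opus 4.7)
The plan is to proceed by induction on $d$, using the derived primitive major construction to reduce to a lower-degree case.

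For the base case $d=2$, there is only one starting point $s_1$, and Lemma~\ref{l:lastleaf} immediately gives $s_1 < 1/2 = 1/d$, as desired.

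For the inductive step, assume the result for all generic primitive majors of degree less than $d$, and let $m \in \PM(d)$ be generic. The case $i = d-1$ is exactly Lemma~\ref{l:lastleaf}. For $i \leq d-2$, the idea is to pass to the derived primitive major $m' \in \PM(d-1)$ and exploit the inductive hypothesis there. Since $m$ is generic, the leaves $\ell_1,\dots,\ell_{d-1}$ are pairwise disjoint (no two leaves share an endpoint, else there would be a polygonal critical gap). Because $s_{d-1}$ is maximal among the starting points, no $s_i$ with $i \le d-2$ lies in $(s_{d-1}, t_{d-1})$; moreover, the non-crossing property of the lamination forces the terminal point $t_i$ also to lie outside the open interval $(s_{d-1},t_{d-1})$ whenever $s_i < s_{d-1}$. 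Therefore, collapsing $[s_{d-1}, t_{d-1}]$ and reparametrizing by the factor $d/(d-1)$ sends $s_i$ to $s_i' := s_i \cdot d/(d-1)$ for each $i \le d-2$.

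One then verifies that $s_i'$ is still the smaller endpoint of the image leaf: whether $t_i < s_{d-1}$ (in which case $t_i' = t_i \cdot d/(d-1)$) or $t_i > t_{d-1}$ (in which case $t_i' = (t_i - 1/d)\cdot d/(d-1)$), a short computation using $t_i > s_i$ and $t_i > s_{d-1} \ge s_i$ respectively gives $t_i' > s_i'$. Since $s \mapsto s \cdot d/(d-1)$ is strictly increasing, the labeling is preserved: $s_1' < s_2' < \cdots < s_{d-2}'$. Genericity of $m'$ (it has exactly $d-2$ pairwise disjoint leaves and no critical gaps, since no new identifications among $\ell_1,\dots,\ell_{d-2}$ are introduced by the collapse) lets us apply the inductive hypothesis to obtain $s_i' < i/(d-1)$, and rearranging yields $s_i < i/d$.

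The main obstacle in this plan is bookkeeping around the derivation step: one must verify simultaneously that $m'$ is generic, that the indexing of leaves is preserved under the collapse, and that the non-crossing hypothesis rules out the troublesome configuration in which an endpoint of some $\ell_i$ lies inside $(s_{d-1}, t_{d-1})$. Once these structural facts are in place, the inequality $s_i < i/d$ follows transparently from the inductive hypothesis applied on the reparametrized circle.
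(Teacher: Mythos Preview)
Your proof is correct and follows essentially the same strategy as the paper: both use the derived primitive major to reduce the bound on $s_i$ to a lower-degree statement, ultimately resting on Lemma~\ref{l:lastleaf}. The paper presents this as a direct computation, deriving $d-1-i$ times until $\ell_i$ becomes the top leaf and then telescoping the rescaling factors, whereas you package the same mechanism as an induction on~$d$; unfolding your induction reproduces the paper's calculation exactly. Your version is somewhat more explicit about the bookkeeping (preservation of the ordering of starting points, genericity of $m'$, and that $s_i'$ remains the smaller endpoint), points the paper leaves implicit.
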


\begin{proof}
 Repeatedly deriving the major $m$ yields a sequence of primitive majors $$m^{(0)} (:=m), m^{(1)} (:= m^{\prime}), m^{(2)}, \dots, m^{(d-3)}.$$
The major $m^{(j)}$ consists of $d-1-j$ leaves, $\ell^{(j)}_1,\dots,\ell^{(j)}_{d-1-j}$, that are the images under $j$ derivations of the leaves $\ell_1^{(0)}, \dots,\ell_{d-1-j}^{(0)}$ of the original major $m$.  We will denote the starting point of the leaf $\ell_k^{(j)}$ by $s_k^{(j)}$.

We wish to show, for any fixed $i$, that $s_i^{(0)} < \frac{i}{d}$.  The major $m^{(d-1-i)}$ consists of $i$ leaves, of which $\ell_i^{(d-1-i)}$ has the largest starting point.  Hence by Lemma \ref{l:lastleaf}, we have $$s_i^{(d-1-i)} < \frac{i}{i+1}.$$
When deriving $m^{(j-1)}$ (which is in $\PM(d-j+1)$) to form $m^{(j)}$, for any $j$, we collapse an interval of length $\frac{1}{d-j+1}$ and rescale by a factor of $\frac{d-j+1}{d-j}$.  Thus,

\begin{align*}
s_i^{(d-1-i)} &= s_i^{(0)} \cdot \frac{d}{d-1} \cdot \frac{d-1}{d-2} \cdot ... \cdot \frac{d-(d-1-i)+1}{d-(d-1-i)} \\
& = s_i^{(0)} \cdot \frac{d}{i+1}. \\
\end{align*}
Hence $$s^{(0)}_i \cdot \frac{d}{i+1} < \frac{i}{i+1},$$ implying $$s^{(0)}_i < \frac{i}{d} .$$

\end{proof}

\begin{theorem} \label{t:StartsToMajor}
Given any increasing sequence $0 \leq s_1 < s_2 < ... < s_{d-1} < 1$ such that $s_i < \frac{i}{d}$ for all $i$, there is a unique degree-$d$ invariant primitive major $m$ whose starting points are $s_1,...,s_{d-1}$ and there is an algorithm to find $m$.
\end{theorem}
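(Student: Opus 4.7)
The plan is to proceed by induction on $d$, using the derivation operation (collapse-and-rescale) that was already introduced. The base case $d=2$ is immediate: the single datum $s_1 < 1/2$ forces the diameter $\{s_1, s_1+1/2\}$, which is the unique element of $\PM(2)$ with that starting point.

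For the inductive step, assume the theorem holds in degree $d-1$. Given $s_1 < \cdots < s_{d-1}$ with $s_i < i/d$, the algorithm I would use is as follows. By Lemma \ref{l:lastleaf}, the terminal point $t_{d-1}$ is forced to be $s_{d-1}+1/d$; this is legitimate because $s_{d-1} < (d-1)/d$. Next, define the rescaled starting points $s_i' := s_i \cdot d/(d-1)$ for $i = 1,\dots,d-2$. These are strictly increasing, and the hypothesis $s_i < i/d$ immediately yields $s_i' < i/(d-1)$, so the induction hypothesis applies and produces a unique $m' \in \PM(d-1)$ with starting points $s_1',\dots,s_{d-2}'$. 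Lifting $m'$ back to $\PM(d)$ is then done by reversing the derivation: rescale $m'$ by $(d-1)/d$ (so $s_i'$ returns to $s_i$), reinsert the interval $[s_{d-1}, s_{d-1}+1/d]$ at the collapsed point, and finally adjoin the leaf $\ell_{d-1} = \{s_{d-1}, s_{d-1}+1/d\}$. For each $i \leq d-2$, this defines $t_i$ from $t_i'$ as follows: if $t_i' \cdot (d-1)/d \leq s_{d-1}$ then $t_i = t_i' \cdot (d-1)/d$; otherwise $t_i = t_i' \cdot (d-1)/d + 1/d$.

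The verifications that the output is indeed a primitive major in $\PM(d)$ are (i) the leaves are pairwise non-crossing, which follows because the inserted interval $(s_{d-1}, s_{d-1}+1/d)$ contains no endpoint of any other leaf (no endpoint of $m'$ sits at the collapsed point, by the genericity/strict inequality hypothesis), and because the lift is a homeomorphism on each of the two arcs on either side of $\ell_{d-1}$, which preserves disjointness; (ii) each leaf endpoint difference is a nontrivial multiple of $1/d$, since $t_i' - s_i' = k_i'/(d-1)$ passes to $k_i'/d$ after rescaling and then to $k_i'/d$ or $(k_i'+1)/d$ depending on whether the inserted interval lies inside the leaf or outside it, and in both cases the multiple is strictly between $0$ and $d$; and (iii) the total criticality is $(d-2)+1 = d-1$, coming from the $d-2$ lifted critical leaves of $m'$ plus $\ell_{d-1}$.

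Uniqueness follows for free: any $m \in \PM(d)$ with starting points $s_1,\dots,s_{d-1}$ has $t_{d-1}$ forced by Lemma \ref{l:lastleaf}, and its derived major $m'$ is then forced to have starting points $s_i \cdot d/(d-1)$, so by the inductive uniqueness $m'$ is determined, and since the derivation is invertible given $s_{d-1}$, $m$ is determined. I expect the main obstacle to be the bookkeeping in step (ii) — in particular, checking that $k_i'+1$ does not exceed $d-1$ (so the lifted leaf is nondegenerate and not a whole diameter appearing twice) — but this reduces to the inequality $k_i' \leq d-2$ already built into $m' \in \PM(d-1)$ together with the strict inequality $s_i < i/d$.
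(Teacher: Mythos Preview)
Your proposal is correct and follows essentially the same route as the paper: both proofs hinge on Lemma~\ref{l:lastleaf} to pin down $t_{d-1}=s_{d-1}+1/d$, and then use the derivation (collapse-and-rescale) to reduce to a lower-degree problem. The paper unrolls this into an iterative formula inside the original circle---after collapsing the already-determined arcs $I_{d-1},\dots,I_{d-n}$, the next terminal point $t_{d-1-n}$ is the point making the remaining measure of $[s_{d-1-n},t_{d-1-n}]$ equal to $1/d$---whereas you package the same recursion as induction on~$d$, rescaling to $\PM(d-1)$ and lifting back. Your version has the modest advantage of explicitly verifying that the output is a valid primitive major (noncrossing leaves, correct criticality), which the paper treats only implicitly.
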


\begin{proof}
Let $m \in \PM(d)$ be any primitive major whose leaves have starting points $s_1,\dots,s_{d-1}$; we will show that the terminal points of each leaf of $m$ are uniquely determined.

When we derive $m$ $n$ times, we collapse a union of arcs, namely $\bigcup_{i=1}^{n} cl(I_{d-i})$.  The starting point $s^{(n)}_{d-1-n}$ is the biggest starting point of the resulting major, $m^{(n)}$.  By Lemma \ref{l:lastleaf}, $t^{(n)}_{d-1-n}-s^{(n)}_{d-1-n} = \frac{1}{d-n}$.  Reversing the rescaling process which at each derivation rescales the quotient circle to have unit length, an interval of length $\frac{1}{d-n}$ in $\mathcal{M}^{(n)}$ corresponds to an interval of length $\frac{1}{d}$ in the original major $\mathcal{M}$ when we measure the arcs which get collapsed as having length $0$.

Thus, for each natural number $n<d-1$, $t_{d-1-n}$ is the smallest number in $[0,1)$ such that
\begin{equation} m \left( [s_{d-1-n},t_{d-1-n}] \setminus \bigcup _{i=1}^n I_{d-i} \right) = \frac{1}{d}, \label{eq:algorithm} \end{equation}
where $m$ is Lebesgue measure.  For $n=0$, we have $t_{d-1}=s_{d-1}+\frac{1}{d}$, and thus $I_{d-1}=(s_{d-1},s_{d-1}+\frac{1}{d})$, by Lemma \ref{l:lastleaf}. Inductively, if $t_{d-1},...,t_{d-n}$ are known, Equation \ref{eq:algorithm} gives $t_{d-n-1}$.
\end{proof}

Theorem \ref{t:StartsToMajor} describes an algorithm which associates a primitive major to any ordered sequence of points $\{s_i\}_{i=1}^{i=d-1}$ such that $s_i < \frac{i}{d}$ and $s_i < s_{i+1}$ for all $i$.  We now describe an algorithm used in W. Thurston's code for constructing such sequences of points from arbitrary collections of points.

\begin{definition}
For any sequence $X = \{x_1,\dots,x_{d-1}\}$ of $d-1$ distinct points $x_i$ in $I$, define $A$ to be the map $$A:X \mapsto Y$$ where $Y=\{y_1,\dots,y_{d-1}\}$ is the sequence of $d-1$ points $y_i$ in $I$ defined by the following process:

\begin{enumerate}
\item Reorder and relabel (if necessary) the elements of the sequence $X$ so that $$x_1 < x_2 < \dots < x_{d-1}.$$

\item Set
$$y_i =  \begin{cases} x_i &\mbox{if } x_i < \frac{i}{d} \\
x_i - \frac{1}{d}  & \mbox{if } x_i  \geq \frac{i}{d}.
 \end{cases} $$
\end{enumerate}
\end{definition}

\begin{theorem}
Let $X = \{x_1,\dots,x_{d-1}\}$ be any sequence of $d-1$ points $x_i$ in $I$. Then there exists $n \in \mathbb{N}$ such that $A^n(X) = A^{m}(X)$ for all $m \geq n$.   For such an $n$, $A^n(X) = \{a_1,\dots,a_{d-1}\}$ is a sequence of $d-1$ distinct points $a_i$ in $I$ such that $$a_1 < a_2 < \dots < a_{d-1}$$ and $a_i < \frac{i}{d}$ for all $i$.

\end{theorem}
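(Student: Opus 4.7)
My plan is to prove stabilization using a monotone potential function on a finite state space, together with verifying that $A$ preserves the structural hypotheses (distinctness and, after the first application, the sorted order).

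First, I would verify that $A$ sends sorted sequences of $d-1$ distinct points to sorted sequences of $d-1$ distinct points. Given sorted $x_1 < \dots < x_{d-1}$, set $y_i = x_i - \epsilon_i/d$ where $\epsilon_i \in \{0,1\}$ records whether $x_i \geq i/d$. For consecutive indices, a four-case analysis on $(\epsilon_{i-1},\epsilon_i)$ shows $y_{i-1} < y_i$: the only nontrivial case is $(\epsilon_{i-1},\epsilon_i) = (0,1)$, where $x_{i-1} < (i-1)/d$ and $x_i \geq i/d$ force $x_i - x_{i-1} > 1/d$, hence $y_i > y_{i-1}$. This also shows all $y_i$ are distinct. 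Thus, after the first application of $A$ (whose step 1 sorts), all further iterations act on sorted tuples, and step 1 becomes redundant.

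Second, I would introduce the potential $\Phi(X) = \sum_{i=1}^{d-1} x_i$ and observe that each nontrivial application of $A$ strictly decreases $\Phi$ by an integer multiple of $1/d$: $\Phi(A(X)) = \Phi(X) - \tfrac{1}{d}\sum_i \epsilon_i$, so $\Phi(A(X)) \leq \Phi(X)$ with equality iff every $\epsilon_i = 0$, i.e.\ iff $x_i < i/d$ for all $i$ (in the sorted order), in which case $A(X) = X$.

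Third, I would observe that the iteration lives in a finite state space. Writing $x_i = r_i + k_i/d$ with $r_i = x_i \bmod 1/d \in [0,1/d)$ and $k_i \in \{0,1,\dots,d-1\}$, the multiset $\{r_1,\dots,r_{d-1}\}$ is invariant under $A$, because sorting is a permutation and each individual shift changes only the integer part $k_i$. Hence the orbit of $X$ under $A$ is contained in the finite set of tuples whose multiset of fractional-mod-$1/d$ parts equals $\{r_1,\dots,r_{d-1}\}$.

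Finally, combining these ingredients, $\Phi(A^n(X))$ is a non-increasing sequence taking values in the finite set $\Phi(\mathrm{orb}(X))$, so it must stabilize, and once $\Phi(A^{n+1}(X)) = \Phi(A^n(X))$ we have $A^{n+1}(X) = A^n(X)$, hence $A^m(X) = A^n(X)$ for all $m \geq n$ by induction. The stable tuple $\{a_1,\dots,a_{d-1}\}$ is sorted (by the first paragraph) and is a fixed point of $A$, so every $\epsilon_i = 0$, i.e.\ $a_i < i/d$ for all $i$. The main obstacle I anticipate is cleanly handling the first application of $A$ on an unsorted input; this is resolved by noting that the multiset invariant and the potential $\Phi$ are both insensitive to the ordering of the input, so the finiteness and monotonicity arguments apply uniformly from $n=0$ onward.
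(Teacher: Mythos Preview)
Your proposal is correct and shares the paper's core idea: the sum $\Phi(X)=\sum_i x_i$ is nonincreasing under $A$ and strictly decreases unless $A(X)=X$. The paper, however, terminates the argument more directly than you do. It observes that one subtracts $1/d$ only from an entry $b_i \ge i/d \ge 1/d$, so every entry stays nonnegative; hence $\Phi\ge 0$ is a lower bound, and since each nontrivial step drops $\Phi$ by at least $1/d$, there can be only finitely many nontrivial steps. Your route instead proves the orbit lies in a finite set via the mod-$1/d$ invariant of the multiset of entries. This is valid but heavier than needed, and it tacitly relies on the same nonnegativity fact (to pin $k_i\in\{0,\dots,d-1\}$) that already suffices for the paper's shorter argument. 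Your explicit verification that step~2 preserves strict order is a nice addition the paper does not make; the paper simply re-sorts at every stage and notes that once step~2 is inert the output equals the sorted input $B^{n_0}$, which then visibly satisfies $a_i<i/d$.
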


\begin{proof} For convenience, denote by $B^j = \{b_1^j,\dots,b_{d-1}^j\}$ the result of applying Step 1 of the definition of the map $A$ to $A^j(X)$.  Thus, $B^j$ is just the sequence $A^j(X)$ reordered and relabeled so that $b_1^j < \dots < b_{d-1}^j$.  Notice that when passing from $A^j(X)$ to $A^{j+1}(X)$ (with $B^j$ as an intermediate step), if we do not subtract $1/d$ from at least one of the elements $b^j_i$, then $B^j = A^m(X)$ for all $m > j$.

Since we only subtract $1/d$ from $b_i^j$ if $b_i^j \geq i/d \geq 1/d$, every $b^j_i$ is nonnegative.  Since $\sum_i b^0_i < \infty$, we can only subtract $1/d$ finitely many times from elements of $B^0$ without some $b^j_i$ becoming negative.   Hence, there are only finitely many integers $j$ such that we subtract $1/d$ from at least one element of $B^j$ when passing from $A^j(X)$ to $A^{j+1}(X)$.   Hence, there exists $n_0 \in \mathbb{N}$ such that $B^{n_0} = A^m(X)$ for all $m > n_0$.  Set $n = n_0+1$.  Then $B^{n_0} = A^n(X) = A^m(X)$ for all $m \geq n$.

By assumption, $b_1^{n_0} < \dots < b_{d-1}^{n_0}$.  Since we do not subtract $1/d$ from any $b^{n_0}_i$ when passing from $A^{n_0}(X)$ to $A^{n}(X)$, this means $b^{n_0}_i < i/d$ for all $i$.
\end{proof}

\section{Understanding $\PM(3)$}
\subsection{Topology of $\PM(3)$}
We now explicitly describe $\PM(3)$. One can represent points on the
circle by their angle from the positive real axis. This angle is
measured as the number of turns we need to get to that point, i.e., as
a number between 0 and 1.
Let $m \in \PM(3)$.  In the generic case, $m$ has two leaves, each of which bounds one third of the circle. Assume that we start with a generic choice of a cubic primitive major and rotate it counterclockwise. We get a new cubic primitive major at each angle until we make one full turn. But there are two types of special cases to look at. For brevity of writing, we will say simply \emph{a major} to denote a cubic primitive major throughout this section.

One case is when two major leaves share an end point. Putting an extra
leaf connecting the non-shared end points of the major leaves, we get
a regular triangle. In fact, which two sides of this regular triangle
you choose as the major does not matter. Therefore, we get an extra
symmetry in this case; if we rotate such a major, then it does not
take a full turn to see the same major again: only a $1/3$-turn is
enough. Let's call the set of all majors of this type the
\emph{degeneracy locus}. One can think of this space as the space of regular
triangles inscribed in the unit circle.

Another special case is when two major leaves are parallel (if drawn
as straight lines). We call the set of all majors of this type
the \emph{parallel locus}. In this case, after making a half-turn, we
see the same major again.

Starting from these two types of singular loci allows us to
understand the topology of the space $\PM(3)$. Note that both the
degeneracy locus and the parallel locus are topological circles. Let's
see what a neighborhood of a point on the degeneracy locus looks
like. From a major $m$ on the
degeneracy locus, there are three different ways to move into the
complement: remove one of the sides of the regular triangle and open
up the shared end point of the remaining two, as illustrated in Figure
\ref{fig:threenearbymajors}. They are all nearby, but
there is no short path connecting two of $A, B, C$ without crossing
the degeneracy locus.

\begin{figure}	
	\centering
	\includegraphics[scale=0.18]{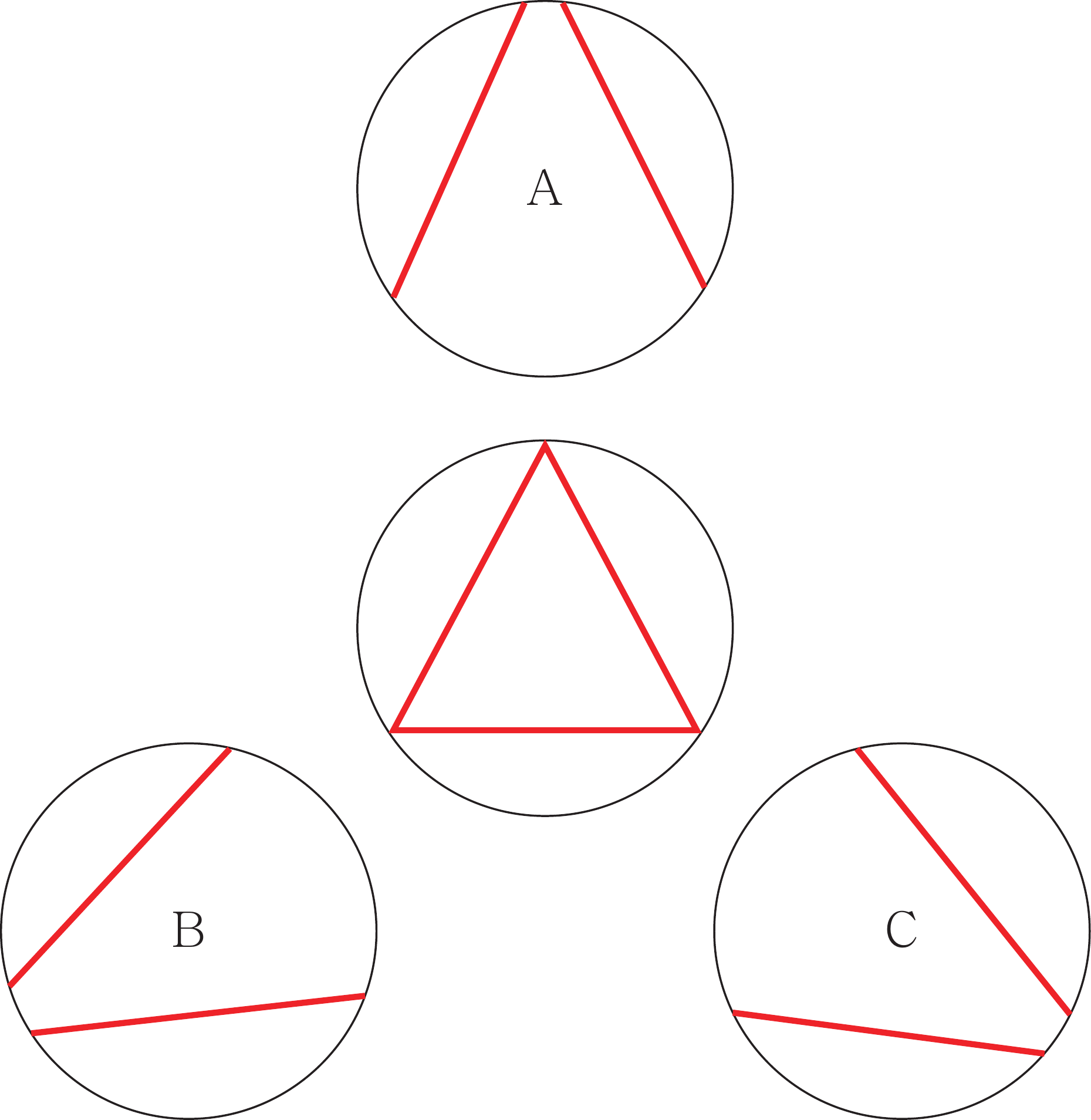}
	\caption{The three generic majors $A,B$ and $C$ are close to the degenerate major $m$ shown in the center.}
	\label{fig:threenearbymajors}
\end{figure}

On the other hand, if you move along a locus consisting of majors
that are distance $\epsilon$ away from the degeneracy locus for a
small enough positive number $\epsilon$ and look at the closest
degenerate major at each moment, then you will see each degenerate
major exactly three times, as in Figure~\ref{fig:nearbymajors}.

\begin{figure}	
	\centering
	\includegraphics[scale=0.35]{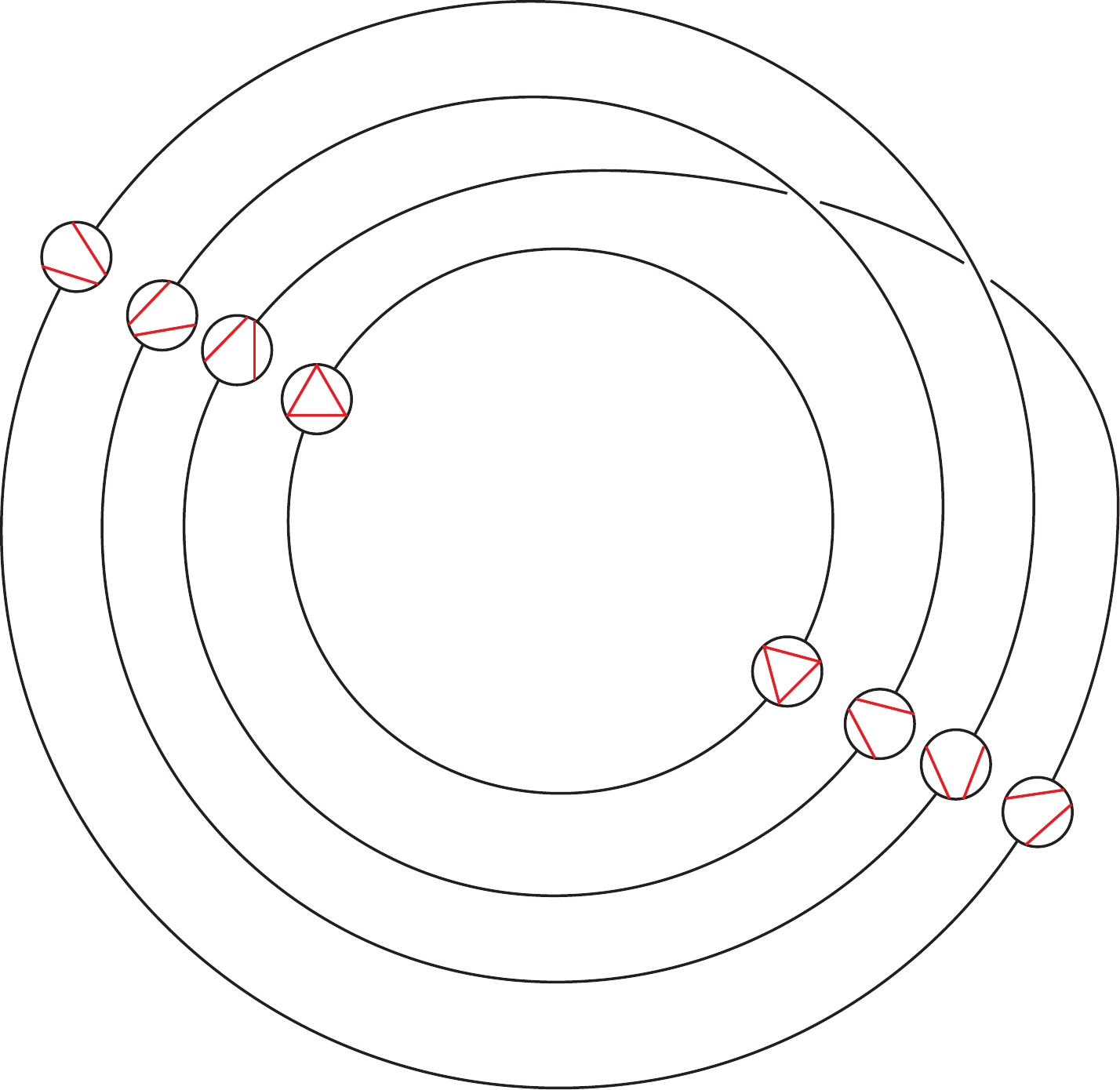}
	\caption{ The degeneracy locus (the inner-most circle) and the
          locus of majors which are distance~$\epsilon$ away from it.}
	\label{fig:nearbymajors}
\end{figure}

Hence, a neighborhood of the degeneracy locus is obtained from a tripod
cross an interval by gluing the tripod ends with a 2/3-turn. The
boundary is again a topological circle which is embedded in $\R^3$ as
a trefoil knot (see the image on the right side of Figure \ref{fig:neighborhoods}).

Now, we move on to the local picture around the parallel
locus. Almost the same argument works, except now the situation is
somewhat simpler and the neighborhood is homeomorphic to a M\"{o}bius
band. One can embed this space into $\R^3$ so that the boundary is
again a trefoil knot. (See the image on the left side of Figure \ref{fig:neighborhoods}.) Now the whole space $\PM(3)$ is obtained from
these two spaces by gluing along the boundary. Figure \ref{figure:3dimensionalSpine} illustrates what the space looks like after this gluing.

Visualizing $\PM(3)$ by
dividing into neighborhoods of two singular loci in this way allows one
to see that $\PM(3)$ is a $K(B_3, 1)$-space.

We first construct the universal cover
of $\PM(3)$. Write $\PM(3)$ as $A \cup B$, where $A$ is the closure of the
neighborhood of the degeneracy locus and $B$ is the closure of the
neighborhood of the parallel locus. We glue
them in a way that $A \cap B = \partial A = \partial B$.

Now it is easy to see that $\widetilde{A}$ is just the product of a tripod
with $\R$, and $\widetilde{B}$ is simply an infinite
strip. $\widetilde{A}$ has three boundary lines each of which is glued
to a copy of $\widetilde{B}$, and each end of each copy of
$\widetilde{B}$, one needs to glue a copy of $\widetilde{A}$, and so
on. To get $\widetilde{\PM(3)}$, we need to do this infinitely many
times and finally get
the product of an infinite trivalent tree with $\R$, which is
obviously contractible. Therefore, all the higher homotopy groups of
$\PM(3)$ vanish.

On the other hand, the Seifert--van-Kampen theorem says that
\begin{align*}
  \pi_1(\PM(3)) &= \pi_1(A) *_{\Z} \pi_1(B) \\
     &= \langle \alpha, \beta \mid \alpha^3 = \beta^2 \rangle
\end{align*}
which is one presentation for~$B_3$.

\begin{figure}
  \[
  \includegraphics[width=.4\textwidth]{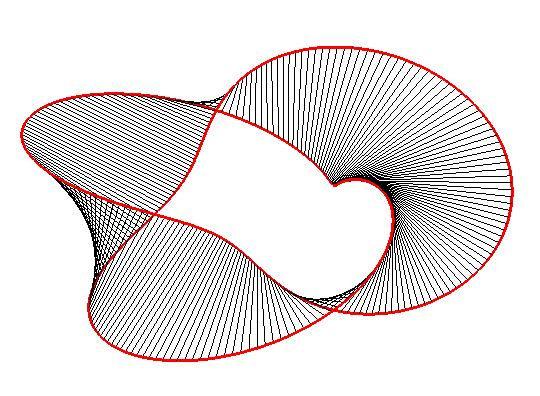}\qquad
  \reflectbox{\includegraphics[width=.4\textwidth]{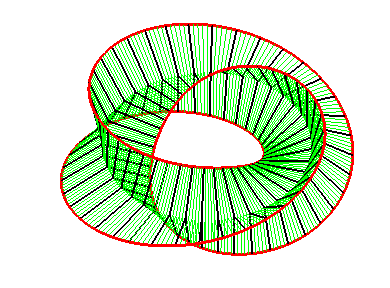}}
  \]
  \caption{Neighborhoods of the parallel locus and the degeneracy locus}
  \label{fig:neighborhoods}
\end{figure}

\subsection{Parametrization of $\PM(3)$ using the angle bisector}
Let $m$ be a non-degenerate cubic major. The end points of leaves of $m$ divide the
circle into four arcs, two of them with length 1/3 and the other
two have length between 0 and 1/3. Call these other two arcs $I$
and~$J$. Draw a line $L$ passing through the midpoint of $I$ and the
midpoint of $J$, and let $\theta$ be the angle from the positive real
to~$L$. Relabeling $I$ and $J$ if necessary, let
$I$ be the interval that $L$ meets at the angle $\theta$, and let
$a$ be the length of the interval $I$. Our parameters are $a$ and
$\theta$, as illustrated in Figure~\ref{fig:cubicmajor}. Note that we can
choose $\theta$ from $[0, 1/2]$, since $(a, \theta)$ represents the same major as
$(1/3 - a, \theta - 1/2 \mod 1)$. Also note that $a$ runs from 0
to 1/3.
Hence, the set $\{(a,\theta) : 0 \le a \le 1/3, 0 \le \theta \le 1/2\}$ with appropriate identifications on the boundary gives a parameter space of $\PM(3)$ (see Figure \ref{fig:paramforcubicmajor}).

When $a$ is either $0$ or $1/3$, either $I$ or $J$ becomes a single point, and this corresponds to the degeneracy locus. The locus where $a = 1/6$ is the parallel locus (the red line in Figure \ref{fig:paramforcubicmajor}).

\begin{figure}
  \[
  \includegraphics[scale=0.3]{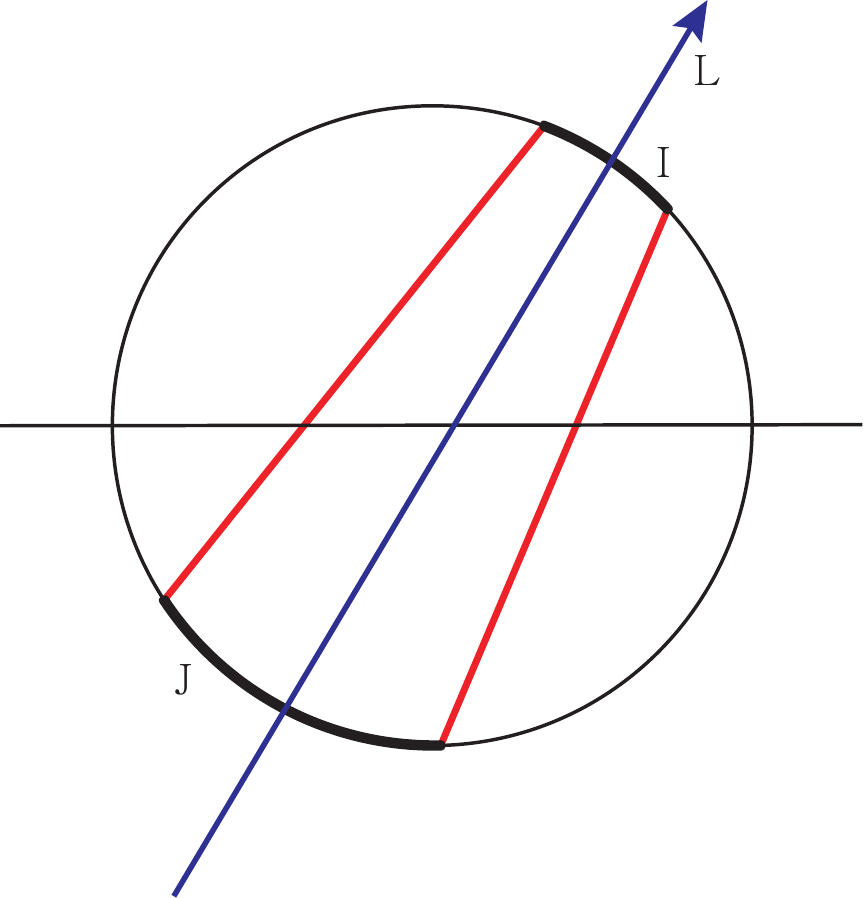}
  \]
  \caption{The red leaves represent a cubic major and the blue line is the angle bisector.}
  \label{fig:cubicmajor}
\end{figure}

\begin{figure}
    \[
  \includegraphics[scale=0.35]{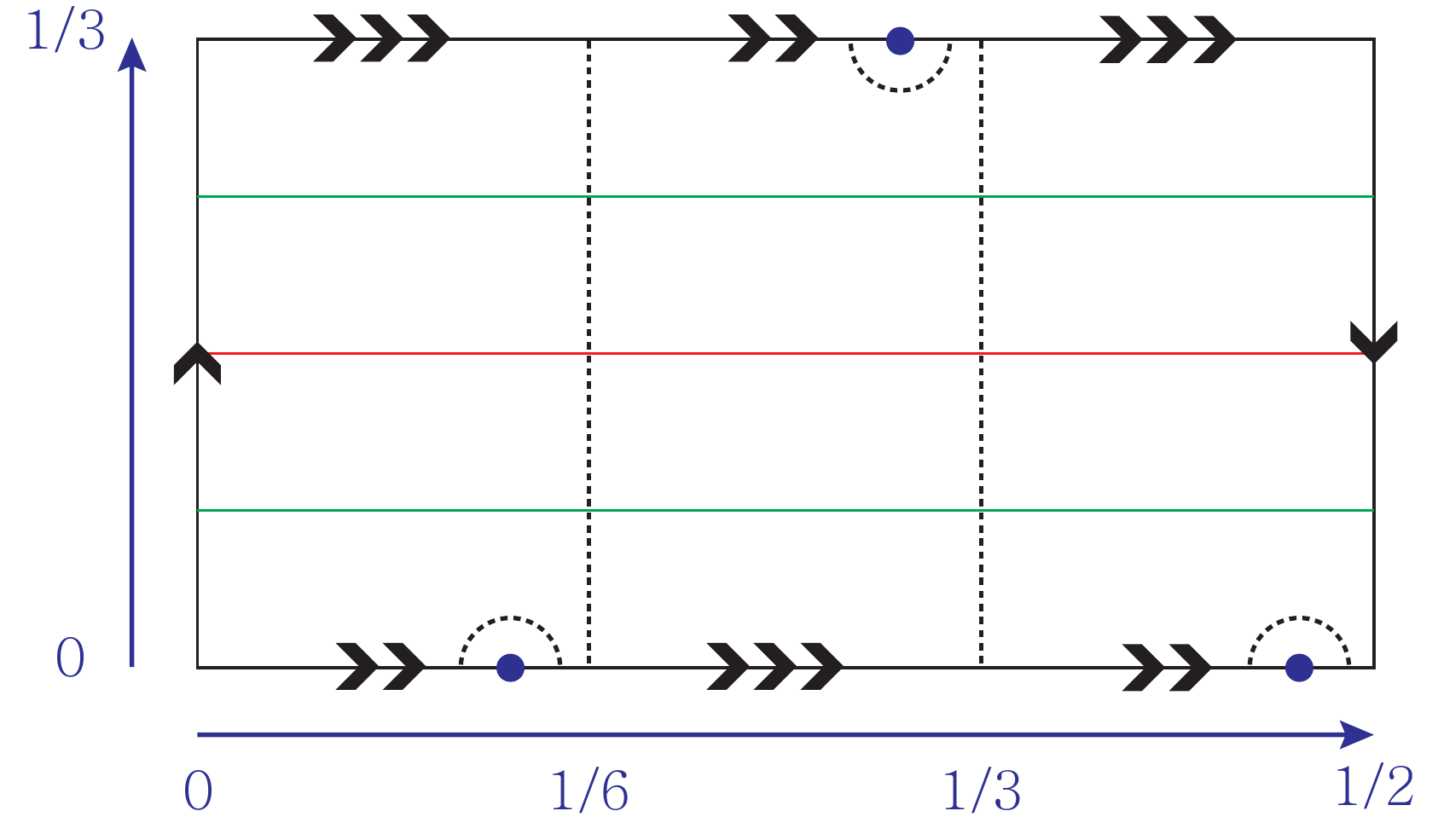}
  \]
		\caption{The parameter space for cubic majors using the angle bisector.}\label{fig:paramforcubicmajor}		
\end{figure}

It easy to see that $(a, 0)$ and $(1/3 - a, 1/2)$ represent the same major. Also observe that, for any $0\le \theta \le 1/6$, the pairs
$(0, \theta)$, $(1/3, \theta + 1/6)$, and $(0, \theta + 1/3)$ are just
different choices of two edges of the regular triangle whose vertices
are at $(\theta, \theta + 1/6, \theta + 1/3)$. Hence they must
be identified. Similarly, for $1/6 \le \theta \le 1/3$,
$(1/3, \theta - 1/6)$, $(0, \theta)$, and $(1/3, \theta + 1/6)$ represent the
same point. For instance, the three blue dots in Figure \ref{fig:paramforcubicmajor} should be identified.

\subsection{Embedding of $\PM(3)$ into $S^3$}

We can visualize how $\PM(3)$ embeds into $S^3$. Consider the
decomposition of $S^3$ into two solid tori glued along the boundary.
Put the degeneracy and parallel locus as central circles of the solid tori.
Seeing $S^3$ as $ \R^3$ with a point at infinity, one may assume that the parallel locus coincides with the unit circle
on $xy-$plane and the degeneracy locus is the $z-$axis with the point
at infinity. Then one can view $\PM(3)$ with one point removed as a 2-complex in~$\R^3$.

We already know that $\PM(d)$ embeds as a spine of the
complement of the discriminant locus, so it would be instructive to
see the discriminant locus in this picture. A monic centered cubic
polynomials is written as $z^3 + az + b$ for some complex numbers $a,
b$. Hence the space of all such polynomials can be seen as $\C^2$. The
unit sphere is the locus $|a|^2 + |b|^2 = 1$, and the discriminant
locus is $4a^3 + 27b^2 = 0$. The intersection of these two loci is a
trefoil knot. We will embed $\PM(3)$ into $S^3$ so that it forms a
spine of the complement of this trefoil knot.

Consider the stereographic projection $\phi: S^3 \setminus \{
(0,0,0,1) \} \to  \R^3$ defined by
 $$\phi(x_1, x_2, x_3, x_4) =
\left(\dfrac{x_1}{1-x_4}, \dfrac{x_2}{1-x_4},
  \dfrac{x_3}{1-x_4}\right).$$
   Instead
of taking the line segment connecting a point on the unit circle of
$xy$-plane and the $z$-axis, we first take the preimages of these two
points under $\phi$ and consider the great circle passing through them
in $S^3$. Then we take the image of this great circle under
$\phi$. While one wraps up the parallel locus twice and the degeneracy
locus three times, we construct a surface as the trajectory of the
image of the great circle passing through the preimages of the points
on the parallel and the degeneracy locus. Now it is guaranteed to be
an embedded 2-complex (not a manifold, since the degeneracy
locus is singular) by construction.

Let's return to the space of normalized cubic polynomials
\[
\bigl\{ \, z^3 + az + b \bigm| a, b \in \C, \abs{a}^2 + \abs{b}^2 = 1 \, \bigr\}.
\]
We can identify the degeneracy locus inside this space as the subset
cut out by $a = 0$, and the parallel locus as the subset cut out by $b
= 0$. To connect the degeneracy locus to the parallel locus by
spherical geodesics, running
three times around the degeneracy locus while running twice around the
parallel locus, we look at the subset
\begin{align*}
\PM(3) &= \bigl\{ \, z^3 + s \theta^2 + t \theta^3 \bigm|
  s, t \in \R_{\ge 0}, \theta \in S^1, \abs{s\theta^2}^2 +
  \abs{t\theta^3}^2 = 1\, \bigr\}\\
   &= \Bigl\{\,z^3 + a z^2 + b z^3 \Bigm|
       a, b \in \C, \abs{a}^2 + \abs{b}^2 = 1,
     \frac{a^3}{b^2} \in [0,\infty] \,\Bigr\}.
\end{align*}
This is clearly disjoint from the discriminant locus.

Recall that in the proof of Theorem \ref{thm:Spine}, we constructed  a section $\sigma: \PM(d) \to \mathcal{P}_d^0$
where  $\mathcal{P}_d^0$ is the space of all monic centered polynomials of degree $d$ with distinct roots.  The above embedding of $\PM(3)$ gives another
section into $\mathcal{P}_3^0$, but it is not exactly the same as $\sigma$ in Theorem \ref{thm:Spine}. For instance, $\sigma$ has the property that
for each polynomial $f$ in the image of $\PM(3)$ under $\sigma$, all the critical values of $f$ have the same modulus, while the above embedding
does not have this property.

\subsection{Other Parametrizations}

It is not apriori clear that the angle-bisector parametrization of $\PM(3)$ can be
generalized to the parametrization of $\PM(d)$ for higher~$d$. In this
section, we briefly survey several different ways of parametrizing $\PM(3)$
and see the advantages and disadvantages of each method.

\subsubsection{Starting point method}\label{sec:starting-point}
Here we start with the most native way to parametrize the space of primitive majors. Start from angle $0$ and walk around the circle until you meet an end of a major leaf, and say the angle is $x_1$. Keep walking until you meet an end of different major leaf, say the angle is $x_2$. The numbers $x_1, x_2$ are regarded as the starting points of the major leaves of given cubic major. We have two cases: either $x_2 - x_1 < 1/3$ and the leaves are $\{(x_1, x_1 - 1/3 \mod 1), (x_2, x_2+1/3)\}$ or $x_2 - x_1 \ge 1/3$ and the leaves are $\{(x_1, x_1 + 1/3), (x_2, x_2 + 1/3)\}$. As you see, it is fairly easy to get a neat formula for leaves. $x_1$ has the range from $0$ to $1/3$ and $x_2$ as the range from $0$ to $2/3$. But not every point in the rectangle $[0,1/3] \times [0, 2/3]$ is allowed. First of all, there is a restriction $x_2 \ge x_1$, and sometimes even $x_2 \ge x_1 + 1/3$. So, this parametrization method is not as neat as the rectangular parameter space obtained by the angle bisector method.
Another issue is that there are many more combinatorial possibilities in higher degrees (compare with \S  \ref{s:parametrizingprimitivemajors}). We will see this in more detail while we discuss the next method.

\subsubsection{Sum and difference of the turning number}
Given a cubic lamination, start from angle 0 and walk around the circle until the first time you meet two consecutive ends $x < y$ belonging to distinct leaves. We call these numbers $x, y$ turning numbers of the given lamination. Let $S = y + x$ and $D = y - x$ (S stands for the `sum' and D stands for the `difference'). Then one can easily get $x=(S-D)/2$ and $y=(S+D)/2$, and the leaves are $\{x, x-1/3 \mod 1\}, \{y,y+1/3 \mod 1\}$. Note that $D$ runs from $0$ to $1/3$ and for a given $D$, $S$ runs from $D$ to $4/3 - D$. Hence we get a trapezoid shape domain for the parameter space and one can figure out which points on the boundary are identified as we did for the other models.

It is pretty clear what each parameter means and one gets a neat formula for the leaves. On the other hand, it has some drawbacks when one tries to generalize to higher degree cases. Even for degree 4, the complement of the degeneracy locus in $\PM(4)$ is not connected. (This is what we postponed discussing in the last subsection. See Figure \ref{fig:pm4}).
Hence, however one defines the turning numbers, it is hard to determine which configuration one has. One can divide the domain into pieces, each of which represents one combinatorial configuration, and give a different formula for each such piece. This requires understanding the different possible configurations. In particular, one can start with counting the number of connected components of the complement of the degeneracy locus in $\PM(d)$. Tomasini counted the number of components in his thesis (see Theorems 4.3.1 and 4.3.2., pp.
118-121, in \cite{Toma}).

\subsubsection{Avoiding the M\"{o}bius band}
One way to avoid the M\"{o}bius band is this:  if you take the quotient of the set of majors by the symmetry $z \leftrightarrow -z$
that conjugates a cubic polynomial to a dynamically isomorphic polynomial, then the M\"{o}bius band folds in half to an annulus.   One boundary component of the annulus is wrapped three times around a circle (laminations with a central lamination), and the other boundary component consists of laminations whose majors are parallel. The parameter transverse to the annulus is the shortest distance between endpoints of the majors, in the interval [0,1/6]. The other parameter is the most clockwise endpoint of this shortest distance interval.

\section{Understanding $\PM(4)$}
\label{sec:pm4}

We now give some brief comments on the shape of $\PM(4)$. Unlike in $\PM(2)$ and $\PM(3)$,
there is more than one way that a generic primitive major can be
arranged topologically: the complement of the degeneracy locus is not
connected. The two possibilities are illustrated in
Figure~\ref{fig:pm4}.

\begin{figure}
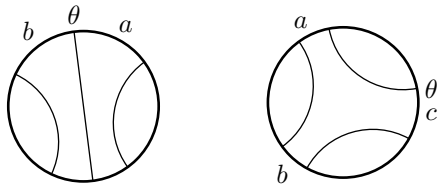

  \[\includegraphics{majors-0}
  \qquad\qquad
  \includegraphics{majors-1}
  \]
  \caption{The two topological types of generic primitive majors in
    $\PM(4)$, with the parameters marked.}
  \label{fig:pm4}
\end{figure}

$\PM(4)$ is a 3-complex, and each of these topological types of
generic majors contributes a top-dimensional stratum of the
3-complex. The \emph{parallel
  stratum} is the piece corresponding to the stratum on the left,
containing the part where the three leaves are
parallel. Topologically, the parallel stratum may be parametrized by
triples $(\theta,a,b)$ with $\theta$ in the circle and $a,b \in (0,1/4)$.
Here $\theta$ is the angle
to one endpoint of the central leaf and $\theta - a$ and $\theta + b$
are the angles to adjacent endpoints of the other two leaves, so that
the three leaves have endpoints
\[
(\theta-a, \theta-a-1/4),\quad(\theta, \theta + 1/2),
  \quad\text{and}\quad(\theta+b,\theta+b+1/4)
\]
(with coordinates interpreted modulo~$1$).
There is an
equivalence relation:
\[
(\theta, a, b) \equiv (\theta+1/2, 1/4 - b, 1/4 - a).
\]
Therefore, the parallel stratum topologically is a square cross an interval,
with the top glued to the bottom by a half-twist. As a manifold with
corners, this piece has 2 codimension-1 faces, each an annulus.

The other stratum, the \emph{triangle stratum}, can be parametrized by
quadruples $(\theta,a,b,c)$ with $a,b,c > 0$, $a+b+c = 1/4$, and
$\theta$ in the circle. Here $a$, $b$, and $c$ are the lengths of the
three intervals on the boundary of the central gap, and $\theta$ is
the angle to the start of one leaf, so that the three leaves have endpoints
\[
(\theta, \theta + 1/4),\quad(\theta+1/4+a, \theta+1/2+a),
  \quad\text{and}\quad(\theta-1/4-c, \theta-c).
\]
(The last leaf is also $(\theta+1/2+a+b,\theta+3/4+a+b)$.)
Again, there is an equivalence relation:
\[
(\theta,a,b,c) \equiv (\theta+1/4+a,b,c,a) \equiv (\theta+1/2+a+b,c,a,b).
\]
This stratum is therefore topologically a triangle cross an interval,
with the top glued to the bottom by a $1/3$ twist. As a manifold with
corners, it has only one codimension-1 face, an annulus.

We next turn to the codimension-1 degeneracy locus. Here there is only
one topological type (the stratum is connected), consisting of a
triangle and another leaf. As illustrated in
Figure~\ref{fig:pm4-codim1}, the major can be uniquely parametrized by
an angle~$\theta$, the angle to the vertex of the triangle opposite
the leaf, and a number $a \in (0,1/4)$, the length of one of the
intervals on the boundary of the gap between the triangle and the
leaf. The codimension-1 degeneracy locus is therefore an annulus.

There are three ways to perturb a codimension-1 degenerate major into
generic majors. Note that two of the generic majors are in the
parallel stratum and one is in the triangle stratum. Thus, all three
annuli that we found on the boundary of the top-dimensional strata are
glued together.

\begin{figure}
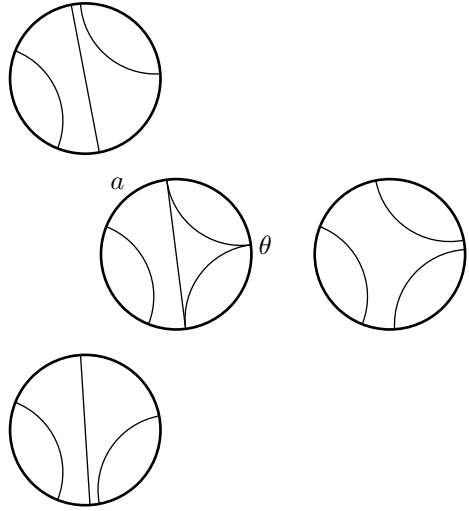

  \[
  \begin{tikzpicture}
    \node at (0,0) {\includegraphics{majors-10}};
    \node at (0:2.7cm) {\includegraphics{majors-12}};
    \node at (120:2.7cm) {\includegraphics{majors-13}};
    \node at (240:2.7cm) {\includegraphics{majors-11}};
  \end{tikzpicture}
  \]
  \caption{A codimension-1 primitive major in $\PM(4)$ (in the center)
    and three perturbations to generic majors.}
  \label{fig:pm4-codim1}
\end{figure}



\section{Thurston's entropy algorithm and entropy on the Hubbard tree}

The purpose of this section is to define the notions in the following diagram and establish the equality on the right column (Theorem \ref{Thurston-algorithm}):

 \[
 \begin{tikzpicture}
   \matrix[row sep=0.8cm,column sep=4cm] {
     \node (Gammai) {$ \text{rational }\theta$}; &
       \node (Gamma) {$\log\rho(A_{\theta})$}; \\
     \node (S2i) {$f_{\theta}$}; &
       \node (S2) {$h(\mathcal{H}_{\theta}, f_{\theta})$.}; \\
   };
   \draw[double equal sign distance] (Gamma) to node[auto=left,cdlabel] {\text{ equality by Thm.\ \ref{Thurston-algorithm}} } (S2);
   \draw[->] (S2i) to node[auto=right,cdlabel] {\text{entropy on Hubbard tree}} (S2);
   \draw[->] (Gammai) to node[auto=left,cdlabel] {\text{Thurston's  entropy algorithm}} (Gamma);
   \draw[->] (Gammai) to node[auto=right,cdlabel] {(DH)} (S2i);
 \end{tikzpicture}
 \]

More precisely, to any rational angle $\theta$ (mod 1), Douady-Hubbard \cite{DH}  associated a unique postcritically finite
quadratic polynomial $ f_\theta: z\mapsto z^2+c_\theta$. This polynomial induces a Markov action on its Hubbard tree.
The topological entropy of the polynomial $f_\theta$ on its Hubbard tree is called the \emph{core entropy} of $f_\theta$.

 In order to combinatorially encode  and effectively compute  the core entropy, W. Thurston developed an algorithm that takes $\theta$ as its
 input, constructs a non-negative matrix $A_\theta$ (bypassing $f_{\theta}$), and outputs its Perron-Frobenius leading eigenvalue $\rho(A_\theta)$.
 We will prove that the logarithm of this eigenvalue is  the core entropy of the quadratic polynomial $f_\theta$.

\subsection{Thurston's entropy algorithm }\label{algorithm2}

Set $ \mathbb{S}=\R/\Z$.   All angles in this section are considered to be  mod 1, i.e., elements of $\mathbb{S}$.

Let $\tau: \mathbb{S} \to \mathbb{S}$ denote the angle doubling map. An  angle $\theta$ is \emph{periodic} under the action of $\tau$ if and only if it is rational with odd denominator,
and  (strictly) \emph{preperiodic} if and only if it is rational with even denominator.

Fix  a rational angle $\theta \in \mathbb{S} \setminus \{0\}$.
 If $\theta$ is periodic, exactly one of $(\theta+1)/2$ and $\theta /2$ is periodic and the other is preperiodic. If $\theta$ is preperiodic, both $(\theta+1)/2$ and $\theta/2$ are preperiodic.
Set $2^{-1}\theta$ to be the periodic angle, if exists, among $(\theta+1)/2$ and $\theta /2$, and otherwise set it to be $\theta /2$.  Define the set
 \[O_\theta:=\Bigl\{\{2^n \theta, 2^l \theta\}\Bigm|l, n\geq  -1\text{ and }2^n\theta\not=2^l\theta\Bigr\},\]
  with the convention that the 
pairs $\{2^n\theta,2^l\theta\}$ that constitute $O_\theta$ are
unordered sets.
We divide  the   circle  $\mathbb{S}$ at the points
$\big\{{\theta}/2, (\theta+1)/2\big\}$, forming  two closed half circles, with the boundary points belonging to both
halves.

Define $\Sigma_\theta$ to be the abstract linear space over $\R$ generated by the elements of $O_\theta$.
Define a linear map $\mathcal{A}_\theta: \Sigma_\theta \to \Sigma_\theta$ as follows. For any basis vector $\{a,b\}\in O_\theta$, if $a$ and $b $ are in a common  closed half-circle, set $\mathcal{A}_\theta(\{a,b\} )=\{2a,2b\}$; otherwise set $\mathcal{A}_\theta(\{a,b\} )=\{2a,  \theta\} +  \{\theta, 2b\}$.
 Denote by $A_\theta$ the matrix of $\mathcal{A}_\theta$ in the basis  $O_\theta$; it is a non-negative matrix. Denote its leading eigenvalue, which exists by the Perron-Frobenius theorem, by  $\rho(A_\theta)$. It is easy to see that $A_\theta$ is not nilpotent, so $\rho(A_\theta)\ge1$.

\begin{definition}\label{alto} \emph{Thurston's entropy algorithm} is the map
$$(\mathbb{Q} \cap \mathbb{S} \setminus \{0\}) \owns \theta  \mapsto \log \rho(A_\theta).$$
 \end{definition}
 
\noindent We will relate the output of Thurston's entropy algorithm, $ \log \rho(A_\theta)$, to quadratic polynomials  in subsection \ref{ss:RelatingAlgorithmToPolynomials}.

\begin{example}
Set $\theta=\frac{1}{5}$. The abstract linear space $\Sigma_\theta$ has basis
\[O_{\frac{1}{5}}=
\bigl\{\{\tfrac{1}{5},\tfrac{2}{5}\},\{\tfrac{1}{5},\tfrac{3}{5}\},\mystrut\{\tfrac{1}{5},\tfrac{4}{5}\},\{\tfrac{2}{5},\tfrac{3}{5}\},\{\tfrac{2}{5},\tfrac{4}{5}\},\{\tfrac{3}{5},\tfrac{4}{5}\}\bigr\}.
\]
We divide the circle $\mathbb{S}$ by the pair $\{\frac1 {10}, \frac 35\}$. The linear map $\mathcal{A}_{\frac{1}{5}}$ acts on the basis vectors as follows:
\begin{align*}
  \Big\{\dfrac{1}{5},\dfrac{2}{5}\Big\}&\mapsto\Big\{\dfrac{2}{5},\dfrac{4}{5}\Big\},
    &\Big\{\dfrac{1}{5},\dfrac{3}{5}\Big\}&\mapsto\Big\{\dfrac{2}{5},\dfrac{1}{5}\Big\},
      &\Big\{\dfrac{1}{5},\dfrac{4}{5}\Big\}&\mapsto\Big\{\dfrac{1}{5},\dfrac{2}{5}\Big\}+\Big\{\dfrac{1}{5},\dfrac{3}{5}\Big\},\\
  \Big\{\dfrac{2}{5},\dfrac{3}{5}\Big\}&\mapsto\Big\{\dfrac{4}{5},\dfrac{1}{5}\Big\},
    &\Big\{\dfrac{2}{5},\dfrac{4}{5}\Big\}&\mapsto\Big\{\dfrac{4}{5},\dfrac{1}{5}\Big\}+\Big\{\dfrac{1}{5},\dfrac{3}{5}\Big\},
      & \Big\{\dfrac{3}{5},\dfrac{4}{5}\Big\}&\mapsto\Big\{\dfrac{1}{5},\dfrac{3}{5}\Big\}.
\end{align*}
We then compute $\log\rho(A_\theta)=0.3331$.
\end{example}

\begin{figure}[htpb]
\centering
\includegraphics[width=3.5in]{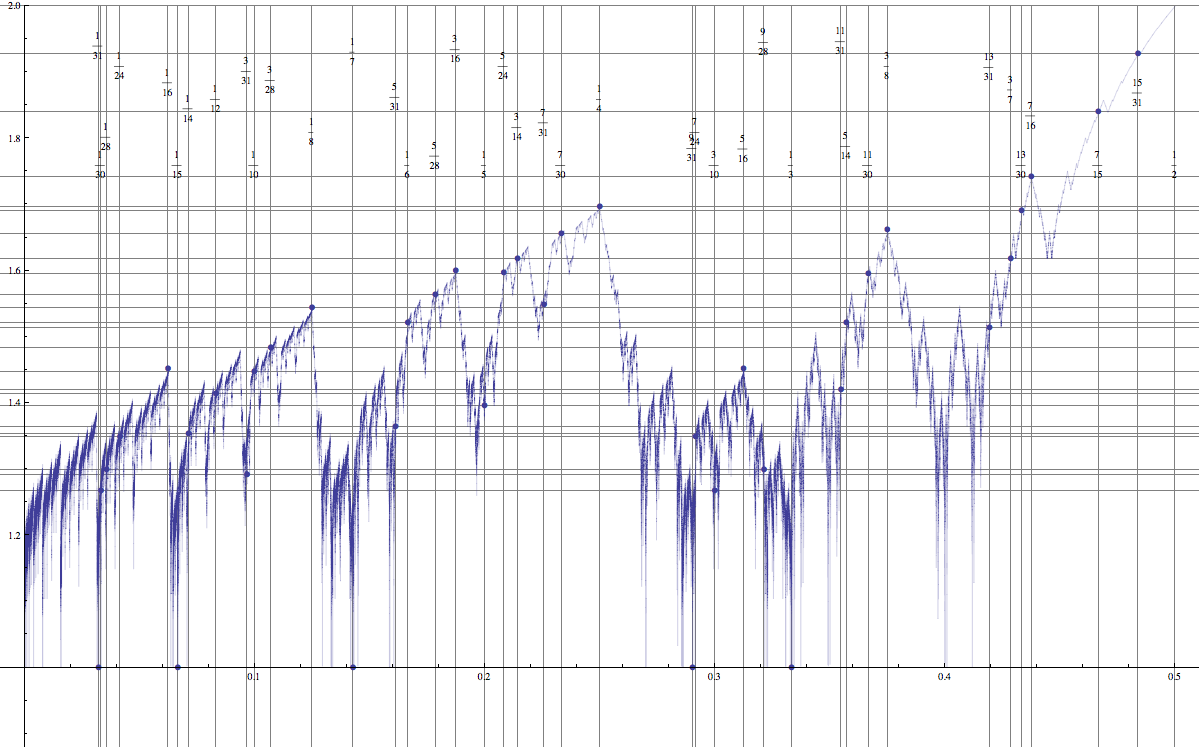}
\caption{W. Thurston's first plot of core entropy. The horizontal axis is the $\theta$-axis, for rational $\theta \in [0,1/2]$ (half of the unit circle), and the vertical axis is core entropy, or $\log \rho(A_{\theta})$.
 } \label{fig:Bills-plot}
\end{figure}

\subsection{Hubbard trees following Douady and Hubbard}
This subsection presents background material about Hubbard trees, which will be used in later sections to justify Thurston's entropy algorithm as computing core entropy.  (See, for example, \cite{DH,Poi2} for additional information about Hubbard trees.) 

Let $f$ be a postcritically finite polynomial, i.e., a polynomial all
of whose critical points have a finite (and hence periodic or preperiodic) orbit under $f$.
By classical results of Fatou, Julia, Douady and Hubbard,  the \emph{filled Julia  set} 
$$\mathcal{K}_f=\{z\in \C\mid f^n(z)\not\to \infty\}$$
  is  compact, connected, locally connected  and locally arc-connected. These conditions also hold for the \emph{Julia set} $\mathcal{J}_f:=\partial \mathcal{K}_f$.
The Fatou set $\mathcal{F}_f :=\cbar \setminus \mathcal{J}_f$ consists of one component $U(\infty)$ which is the basin of attraction of $\infty$, and at most countably many bounded components constituting the interior of $\mathcal{K}_f$. Each of the sets $\mathcal{K}_f, \mathcal{J}_f, \mathcal{F}_f$ and $U(\infty)$ is fully invariant by $f$; each Fatou component is (pre)periodic (by Sullivan's non-wandering domain theorem, or by hyperbolicity of the map); and each periodic cycle of Fatou components contains at least one critical point of $f$ (counting $\infty$).

There is a system of Riemann mappings $$\Big\{\phi_U: \D\to U\Bigm| U \text{ Fatou component}\Big\}$$
each extending to a continuous map on the closure $\overline{\D}$, so that  the following diagram commutes for all $U$:
\[
 \begin{tikzpicture}
   \matrix[row sep=0.8cm,column sep=2.4cm] {
     \node (Gammai) {$ \overline \D $}; &
       \node (Gamma) {$\overline \D$}; \\
     \node (S2i) {$\overline U$}; &
       \node (S2) {$\overline{f(U)}$.}; \\
   };
   \draw[->] (Gamma) to node[auto=left,cdlabel] {\phi_{f(U)}} (S2);
   \draw[->] (S2i) to node[auto=right,cdlabel] {f} (S2);
   \draw[->] (Gammai) to node[auto=left,cdlabel] {z \mapsto z^{d_U}} (Gamma);
   \draw[->] (Gammai) to node[auto=right,cdlabel] {\phi_U} (S2i);
 \end{tikzpicture}
 \]

\noindent In particular, on every periodic Fatou component $U$, including $U(\infty)$, the map $\phi_U$ realizes a conjugacy between a power map and the first return map on
$U$. The image in $U$ under $\phi_U$ of radial lines in $\D$ are, by
definition, \emph{internal rays} on $U$ if $U$ is bounded and
\emph{external rays} if $U=U(\infty)$.
Since a power map sends a radial line to a radial line, the polynomial $f$ sends an internal/external ray to an internal/external ray.

 If $U$ is a bounded Fatou component, then  $\phi_U:\overline \D\to \overline U$ is a homeomorphism, and thus every boundary point of $U$ receives exactly one internal ray from $U$. This is in general not true for $U(\infty)$, where several external rays may land at a common boundary point.

\begin{definition}[supporting rays]\label{def:support-ray} We say that
  an external ray $R$ \emph{supports} a bounded Fatou component~$U$
 if
 \begin{enumerate}
 \item the ray lands at a boundary point $q$ of $U$, and
 \item there is a sector based at $q$ delimited by $R$ and the
   internal ray of $U$ landing at $q$ which does not
   contain other external rays landing at~$q$.
 \end{enumerate}
\end{definition}

\begin{figure}[htpb]
\centering
\includegraphics[width=3.5in]{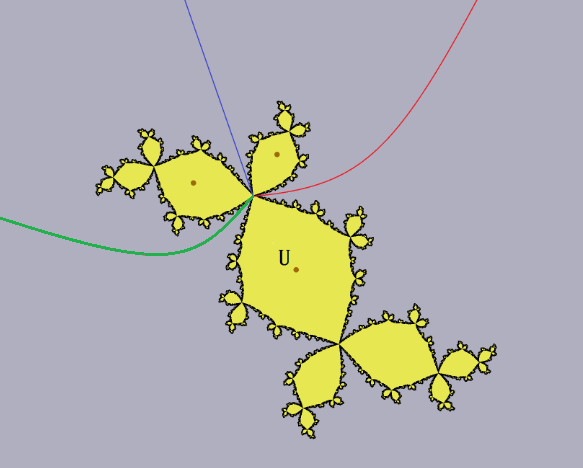}
\caption{The green (left) and red (right) rays are supporting rays of the Fatou component $U$ , but the blue (middle) one is not.} \label{fig:supporting-ray}
\end{figure}
It follows from  Definition \ref{def:support-ray} that for any bounded Fatou component $U$ and point $z\in\partial U$ there are at most two external rays which support $U$ and land at $z$.
Start from the internal
ray in $U$ which lands at  $z$ and turn in the counterclockwise
direction centered at $z$.
The first (resp.\ last) encountered external ray landing at $z$  is
called the \emph{right supporting ray} (resp.\ \emph{left supporting
  ray}) of $U$ at $z$.
 See Figure \ref{fig:supporting-ray}.

The system of internal/external rays does not depend on the possible choices of $\phi_U$. If $f: z\mapsto z^2+c$
and $f$ is postcritically finite, there is actually a unique choice of $\phi_U$ for each Fatou component $U$. In particular,   $\phi_{U(\infty)}$ conjugates $z^2$ to $f$ and $\phi_{U}$ conjugates $z^2$ to $f^p$ if $U$ is a  bounded periodic
Fatou component and $p$ is the minimal integer such that $f^p(0)=0$ (if no such $p$ exists, $\mathcal{K}_f=\mathcal{J}_f$).
In this case, for any $x\in \mathbb{S}$ we use $\mathcal{R}_f(x)$ or simply $\mathcal{R}(x)$ to denote the image under $\phi_{U(\infty)}$ of the ray $\{re^{2\pi i x}, 0<r<1\}$ and will call it  the \emph{external ray of angle $x$}. Angles of internal rays can be defined similarly. We also use $\g(x)=\phi_{U(\infty)}(e^{2\pi i x})$ to denote the \emph{landing point} of the ray $\mathcal{R}(x)$.

Any pair of points in the closure of a bounded Fatou component  can be joined in a unique way by a Jordan arc consisting of (at most two) segments of internal rays. We call such arcs \emph{regulated} (following Douady and Hubbard).
Since $\mathcal{K}_f$ is arc-connected, given two points $z_1, z_2\in \mathcal{K}_f$, there is an arc $\gamma: [0,1]\to \mathcal{K}_f$ such that $\gamma(0)=z_1$ and $\gamma(1)=z_2$. In general, we will not distinguish  between the map  $\g$ and its image. It is proved in \cite{DH} that such arcs can be chosen in a unique way so that  the intersection with the closure of a Fatou component is regulated. We still call such arcs regulated and denote them by $[z_1,z_2]$.
We say that a subset $X\subset \mathcal{K}_f$ is \emph{allowably
  connected} if for every $z_1,z_2\in X$ we have $[z_1,z_2]\subset
X$.

\begin{definition}
 We define the \emph{regulated hull}
of  a subset $X$ of $ \mathcal{K}_f$ to be  the minimal closed allowably connected subset of $\mathcal{K}_f$ containing $X$.
\end{definition}

\begin{proposition}
For a collection of $z_1,\ldots,z_n$ finitely many points in $\mathcal{K}_f$, their regulated hull   is a finite tree.
\end{proposition}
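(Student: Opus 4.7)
The plan is to argue by induction on $n$, building the regulated hull one point at a time, and reducing the statement to a single key lemma about the finite combinatorial complexity of an individual regulated arc $[z,w]$ for a PCF polynomial.

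First I would set up the base case and the inductive structure. For $n=1$ there is nothing to prove. Suppose that the regulated hull $H_{n-1}$ of $\{z_1,\ldots,z_{n-1}\}$ has been shown to be a finite tree $T\subset\mathcal{K}_f$. To add $z_n$, I would show that there is a unique ``first meeting point'' $p\in T$ such that the regulated arc $[z_n,y]$ contains $[z_n,p]$ as its initial subarc for every $y\in T$, and such that $[z_n,p]\cap T=\{p\}$. Uniqueness of regulated arcs (the Douady--Hubbard uniqueness statement quoted above) and the tree structure of $T$ force this: for any two points $y,y'\in T$ the regulated arcs $[z_n,y]$ and $[z_n,y']$ must agree on an initial segment, and the ``last common point'' as $y$ varies over $T$ is the desired $p$. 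Since $T$ is a finite tree, $p$ is well defined and is either a vertex or an interior point of an edge. Then the regulated hull $H_n$ is $T\cup[z_n,p]$: this union is closed, contains all the $z_i$, is allowably connected (any two points either lie in $T$, in $[z_n,p]$, or the arc between them is obtained by concatenating pieces that stay inside $T\cup[z_n,p]$), and is clearly minimal with these properties.

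The key lemma I would isolate is the following: for any two points $z,w\in\mathcal{K}_f$, the regulated arc $[z,w]$ is a Jordan arc whose intersection with $\overline{U}$ is nonempty for only finitely many Fatou components $U$, and intersects each such $\overline{U}$ in at most two radial segments meeting at the center. The second part is the definition of ``regulated''. The first part is the real content, and I expect this to be the main obstacle. For PCF polynomials it follows from the existence of the (finite) Hubbard tree $\mathcal{H}_f$, the regulated hull of the postcritical set: all centers of periodic bounded Fatou components lie on $\mathcal{H}_f$, centers of strictly preperiodic bounded components eventually map into $\mathcal{H}_f$, and any regulated arc between two points of $\mathcal{K}_f$ can be decomposed into a bounded number of ``transverse'' radial segments entering and leaving finitely many distinguished Fatou components before meeting $\mathcal{H}_f$. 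One concrete route is to combine local connectivity of $\mathcal{K}_f$ (which gives that the components of $\mathcal{K}_f\setminus\{\text{finitely many cut points}\}$ have diameters tending to zero) with the fact that the diameters of Fatou components with diameters exceeding any fixed $\varepsilon>0$ are finite in number; this forces any Jordan arc of bounded length passing through pairwise distinct Fatou components to pass through only finitely many. Granting the lemma, $[z_n,p]$ is a finite concatenation of radial segments and pieces of $\mathcal{J}_f$, hence itself a finite tree (in fact an arc), and attaching it to $T$ at a single point yields a finite tree.

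Putting the pieces together, $H_n=T\cup[z_n,p]$ is closed, allowably connected, minimal with these properties, contains no loop (a loop would have to cross into a bounded complementary component, contradicting the regulated structure inside each $\overline{U}$ and the dendritic behavior of $\mathcal{J}_f\cup\bigcup\partial U$ along the tree $\mathcal{H}_f$), and has only finitely many branch points and edges. This completes the induction. The main obstacle, as flagged, is the finiteness lemma for a single regulated arc, and the cleanest way I see to handle it is to first prove the proposition for the postcritical set (giving the classical finite Hubbard tree) and then use that tree as a backbone against which arcs to arbitrary new points can be controlled.
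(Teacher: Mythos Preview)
The paper does not prove this proposition; it is quoted as background from Douady--Hubbard. Your inductive scheme---attach $[z_n,p]$ to $H_{n-1}$ at a single point $p$---is the right framework, and in fact it already finishes the argument once you drop the ``key lemma'', which is both unnecessary and false.

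It is false: for the basilica $z\mapsto z^2-1$, the regulated arc $[-\beta,\beta]$ is the real segment of $\mathcal{K}_f$ and crosses infinitely many bounded Fatou components (the one around $1$, its preimages around $\pm\sqrt{2}$, and so on, accumulating on the whole real slice of $\mathcal{J}_f$). It is unnecessary because an arc is \emph{already} a finite tree---two vertices, one edge---no matter how many Fatou components it traverses; ``finite tree'' in this context means finitely many endpoints and branch points, and your induction delivers that directly (at most $n$ endpoints, hence at most $n-2$ branch points). The absence of loops is exactly the uniqueness of regulated arcs, which the paper states just before this proposition; you allude to this but then obscure it with the Fatou-component digression. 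Finally, your proposed workaround---first establish $\mathcal{H}_f$ and use it as a backbone for general arcs---is circular: this proposition is precisely what the very next definition uses to \emph{define} $\mathcal{H}_f$, so $\mathcal{H}_f$ is not available to you here.
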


\begin{definition}
Let $f$ be a  postcritically finite polynomial. The \emph{postcritical set}
$\mathcal{P}_f$ is defined to be%
\footnote{For the purpose of this part, we include critical points in the postcritical set.}
$$ \mathcal{P}_f = \left\{f^n(c)\Bigm| f'(c)=0, n\ge 0\right\}.$$
The \emph{Hubbard tree} $\mathcal{H}_f$ is defined to be the regulated hull of the finite set $\mathcal{P}_f$.\end{definition}

The \emph{vertex set} $V(\mathcal{H}_f)$ of $\mathcal{H}_f$ is the union of $ \mathcal{P}_f$ together with the branching points of $\mathcal{H}_f$, namely the points  $p$ such that $\mathcal{H}_f\setminus\{p\}$ has  at least three connected components. The closure of a connected component of $\mathcal{H}_f\setminus V(\mathcal{H}_f)$ is called an \emph{edge}.
\begin{lemma} For a postcritically finite  polynomial $f$,
the set $\mathcal{H}_f$ is a tree with finitely many edges. Moreover
$f(\mathcal{H}_f)\subset \mathcal{H}_f$  and
$f:\mathcal{H}_f\to \mathcal{H}_f$ is a Markov map
(as defined in Appendix A).
\end{lemma}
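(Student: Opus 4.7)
The plan is to verify the three assertions in turn, leaning on the preceding proposition (regulated hull of finitely many points is a finite tree) and on the commuting diagram that conjugates $f$ on each Fatou component to a power map. Since $f$ is postcritically finite there are at most $d-1$ critical points, each with finite forward orbit, so $\mathcal{P}_f$ is a finite set. Applying the preceding proposition to $\mathcal{P}_f$ then immediately gives that $\mathcal{H}_f$ is a finite tree; in particular, $V(\mathcal{H}_f)$ and the edge set are finite.

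For the forward invariance $f(\mathcal{H}_f)\subset\mathcal{H}_f$, the main step will be to show that $f$ carries regulated arcs to regulated paths. I would extract this from the commuting diagram: on each Fatou component $U$, $f$ is conjugate via $\phi_U$ to $z\mapsto z^{d_U}$, which sends radial segments to radial segments, so $f$ carries internal-ray segments to internal-ray segments. Consequently, for a regulated arc $[a,b]\subset\mathcal{H}_f$ with $a,b\in\mathcal{P}_f$, splitting at the critical points $c_1,\dots,c_k$ of $f$ lying on $[a,b]$ yields
\[
f([a,b])=f([a,c_1])\cup f([c_1,c_2])\cup\cdots\cup f([c_k,b]),
\]
where each piece is a concatenation of internal-ray segments with endpoints in $\mathcal{P}_f$ (using the convention that $\mathcal{P}_f$ contains all critical points and is forward invariant). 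Since $f$ is locally injective away from critical points, each piece is an embedded regulated arc between two points of $\mathcal{P}_f$; by uniqueness of regulated arcs, it equals the regulated arc $[f(c_i),f(c_{i+1})]$, which lies in $\mathcal{H}_f$. Writing $\mathcal{H}_f$ as the union of the finitely many regulated arcs between pairs of postcritical points, this yields $f(\mathcal{H}_f)\subset\mathcal{H}_f$.

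For the Markov property, it will suffice (per Appendix A) to show $f(V(\mathcal{H}_f))\subset V(\mathcal{H}_f)$, which forces $f$ to send each edge to a finite union of edges. Postcritical vertices land in $\mathcal{P}_f\subset V(\mathcal{H}_f)$ by forward invariance. For a branch vertex $p\notin\mathcal{P}_f$, the point $p$ is not critical (since all critical points lie in $\mathcal{P}_f$), so $f$ is locally injective at $p$; it therefore sends the $\ge 3$ local branches of $\mathcal{H}_f$ at $p$ to $\ge 3$ distinct local branches of $f(\mathcal{H}_f)\subset\mathcal{H}_f$ at $f(p)$, which makes $f(p)$ a branch vertex of $\mathcal{H}_f$, hence an element of $V(\mathcal{H}_f)$.

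The principal obstacle I anticipate is the careful verification that the image of a regulated arc really decomposes into regulated arcs with endpoints in $\mathcal{P}_f$. This requires tracking the behavior at critical points, where folding can occur, and at boundary points shared by several Fatou components, where distinct internal-ray segments meet; both must be handled using the Douady–Hubbard description of $\mathcal{K}_f$ and the $\phi_U$-conjugacies. Once that bookkeeping is in place, everything else reduces to finite combinatorics on the finite tree produced by the preceding proposition.
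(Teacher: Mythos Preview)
The paper does not prove this lemma at all: it is stated as background from Douady--Hubbard and immediately used, with no proof environment following it. So there is nothing to compare against.

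Your outline is the standard argument and is essentially correct. A couple of small points to tighten. First, regulated arcs are not in general ``concatenations of internal-ray segments'': in the preperiodic case $\mathcal{K}_f=\mathcal{J}_f$ and there are no Fatou components at all, so the arcs lie entirely in the Julia set. What you actually need is the (true) statement that $f$ sends regulated arcs to regulated arcs, which follows because $f$ is holomorphic on $\mathcal{K}_f$, sends internal rays to internal rays via the $\phi_U$-conjugacy when bounded Fatou components are present, and is a local homeomorphism away from critical points. Second, for the Markov property you should make explicit that the interior of each edge contains no critical point (since critical points lie in $\mathcal{P}_f\subset V(\mathcal{H}_f)$); this is what guarantees $f$ is injective on each edge and hence maps it homeomorphically onto a regulated arc, which, being an arc between vertices in a finite tree, is a finite edge path. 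With these clarifications the subdivision required by the Appendix~A definition is just the preimage under $f|_e$ of the vertices lying on $f(e)$. Your anticipated obstacle about tracking behavior at Fatou-component boundaries is real but routine once you invoke the Douady--Hubbard uniqueness of regulated arcs.
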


Using Proposition \ref{entropy-formula}, we may relate the topological entropy of $f$ on $\mathcal{H}_f$ to the spectral radius of a transition matrix $D_f$ constructed from $f$ by  $h(\mathcal{H}_f,f)=\log\rho(D_f)$.

\subsection{Relating Thurston's entropy algorithm to polynomials} \label{ss:RelatingAlgorithmToPolynomials}

Thurston's entropy algorithm effectively computes the topological  entropy $h(\mathcal{H}_{f}, f)$ for any postcritically finite polynomial without actually computing the Hubbard tree. We will see how to relate
the quadratic version of the algorithm given in Section \ref{algorithm2} to quadratic polynomials.

 On one  hand,  Thurston's entropy algorithm  produces a quantity, $\log \rho(A_\theta)$,  from any given rational angle $\theta$. On the other hand,
Douady-Hubbard defined a finite-to-one map $\mathbb{Q}\ni \theta\mapsto c_\theta$  so that the quadratic polynomial  $ z\mapsto z^2+{c_\theta}$ is
postcritically finite.
More precisely:

\begin{theorem}[Douady-Hubbard]\label{notation1} If $\theta\in \mathbb{Q}$ is  preperiodic (resp.\ $p$-periodic) under the angle doubling map, there is a unique parameter $c_\theta$   such that for $f: z\mapsto z^2+c_\theta$ both  external rays $\mathcal{R}(\frac{\theta}{2})$ and $\mathcal{R}(\frac{\theta+1}{2})$ land at $0$, and $0$ is preperiodic (resp.\ support the Fatou component  containing $0$, and $0$ is $p$-periodic). Furthermore,  every postcritically finite quadratic polynomial arises in this way.
\end{theorem}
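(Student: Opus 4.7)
The plan is to invoke the full Douady--Hubbard parametrization of postcritically finite quadratic polynomials via external rays in both the dynamical and parameter planes, relying on their landing theorem as the key input. First, I would recall that the Mandelbrot set $\mathcal{M}$ is compact, connected, and full, so its complement admits a canonical Riemann mapping $\Phi_{\mathcal{M}}:\hat{\C}\setminus\mathcal{M}\to\hat{\C}\setminus\overline{\D}$ tangent to the identity at $\infty$, which defines parameter rays $\mathcal{R}^{\mathcal{M}}(\alpha)$. The central tool is the Douady--Hubbard landing theorem: every parameter ray at a rational angle lands on $\partial\mathcal{M}$, with preperiodic angles landing at Misiurewicz points and periodic angles landing at roots of hyperbolic components.

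For existence, I would split into two cases. If $\theta$ is strictly preperiodic, then both $\theta/2$ and $(\theta+1)/2$ are preperiodic; their parameter rays are known to land at a common Misiurewicz point $c_\theta$, and I would check that at this parameter the two dynamical rays $\mathcal{R}(\theta/2)$ and $\mathcal{R}((\theta+1)/2)$ land at a common point which must be the critical value's preimage, namely $0$, since these are the only two preimages of the angle $\theta$ under doubling and only the critical point has coincident dynamical preimages. If $\theta$ is $p$-periodic, then exactly one of $\theta/2$, $(\theta+1)/2$ is $p$-periodic, and the two parameter rays co-land at the root of a unique hyperbolic component $W$ of period $p$; I would take $c_\theta$ to be the center of $W$ (the unique superattracting parameter), at which $0$ is $p$-periodic, and verify that both dynamical rays support the Fatou component containing $0$ by continuity of rays across $W$.

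For uniqueness, I would use the rigidity of postcritically finite quadratics: if two parameters $c, c'$ both satisfied the landing condition, the combinatorial data of which external rays land at postcritical points would agree, producing a topological conjugacy between $f_c$ and $f_{c'}$ on their Julia sets; Thurston's rigidity theorem for postcritically finite branched coverings then forces $c=c'$. For the surjectivity clause, I would use that a postcritically finite quadratic has locally connected Julia set so every $z\in\mathcal{J}_{f_c}$ is the landing point of finitely many external rays; applying this to $0$ (preperiodic case) or to the root of the Fatou component containing $0$ (periodic case) and tracking the doubling-map dynamics on angles produces an admissible $\theta$.

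The main obstacle is the Douady--Hubbard parameter ray landing theorem itself, which requires either a delicate wringing / holomorphic-motion argument in the preperiodic case or puzzle-theoretic stability in the periodic case; rather than reproducing those arguments I would cite \cite{DH} directly, as they form the backbone of the theory being used and would dominate the exposition. A secondary subtlety is distinguishing the ``support'' condition in the periodic case from mere landing, which requires identifying the correct supporting side of the co-landing pair at the root of $W$ and following it as $c$ moves from the root to the center.
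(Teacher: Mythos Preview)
The paper does not prove this theorem at all: it is stated with the attribution ``Douady--Hubbard'' in the theorem header and is used as a black box, with the implicit reference to \cite{DH}. There is nothing to compare your argument against beyond that citation. Your proposal is therefore not so much an alternative proof as a sketch of what lies behind the citation, and in that capacity it is broadly accurate and appropriately modest about where the real work sits.

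One imprecision worth flagging: in the periodic case you assert that the two \emph{parameter} rays at angles $\theta/2$ and $(\theta+1)/2$ co-land at the root of a hyperbolic component. But exactly one of those two angles is strictly preperiodic (preperiod~$1$, period~$p$), and a parameter ray at a preperiodic angle lands at a Misiurewicz point, not at a root. The standard route is to work with the parameter ray at the \emph{characteristic} angle $\theta$ itself, which lands at the root of a period-$p$ component $W$; one then takes $c_\theta$ to be the center of $W$ and pulls back the dynamical ray $\mathcal{R}(\theta)$ (which supports the critical-value component) under $f_{c_\theta}$ to obtain the two dynamical rays $\mathcal{R}(\theta/2)$, $\mathcal{R}((\theta+1)/2)$ supporting the critical-point component. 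This is a routing issue rather than a real gap, since you already defer the substantive landing theorem to \cite{DH}.
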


\noindent Our objective is to establish:

\begin{theorem}\label{Thurston-algorithm}
For $\theta$ a rational angle,  $\log\rho(A_\theta)=h(\mathcal{H}_f, f )$ for $f:z\mapsto z^2+{c_\theta}$.
\end{theorem}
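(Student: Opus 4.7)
The plan is to interpret Thurston's matrix $A_\theta$ as a transition matrix for the action of $f = f_\theta$ on regulated arcs in the filled Julia set $\mathcal{K}_f$, and to relate this to the transition matrix $D_f$ for the Markov action of $f$ on the edges of the Hubbard tree $\mathcal{H}_f$. Since $h(\mathcal{H}_f, f) = \log \rho(D_f)$ by Proposition \ref{entropy-formula}, the theorem reduces to the spectral identity $\rho(A_\theta) = \rho(D_f)$ for two non-negative matrices linked by an explicit intertwining linear map.

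The first step is to set up a dictionary between angle pairs and arcs. By Theorem \ref{notation1}, both rays $\mathcal{R}(\theta/2)$ and $\mathcal{R}((\theta+1)/2)$ land at the critical point $0$ and together form a Jordan arc separating $\mathbb{C}$ into two pieces indexed by the two closed half-circles bounded by $\{\theta/2, (\theta+1)/2\}$. Using the convention $\gamma(2^{-1}\theta) := 0$, the finite set $V_\theta := \{\gamma(2^n\theta) : n \geq -1\}$ contains the postcritical set $\mathcal{P}_f$, and each pair $\{a,b\} \in O_\theta$ is identified with the regulated arc $[\gamma(a), \gamma(b)] \subset \mathcal{H}_f$. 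Such an arc passes through $0$ precisely when $a$ and $b$ lie in different closed half-circles. When the arc avoids $0$, $f$ restricts to a homeomorphism from $[\gamma(a), \gamma(b)]$ onto $[\gamma(2a), \gamma(2b)]$, matching $\mathcal{A}_\theta\{a,b\} = \{2a,2b\}$. When it passes through $0$, its image passes through $c_\theta = \gamma(\theta)$ and decomposes as $[\gamma(2a), \gamma(\theta)] \cup [\gamma(\theta), \gamma(2b)]$, matching the splitting rule $\mathcal{A}_\theta\{a,b\} = \{2a,\theta\} + \{\theta, 2b\}$.

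Next I would write each arc as the formal sum of the edges of $\mathcal{H}_f$ it covers, defining a linear map $\pi: \Sigma_\theta \to \mathbb{R}^E$, where $E$ is the edge set of $\mathcal{H}_f$. Since $0 \in \mathcal{P}_f$ is already a vertex, no edge contains $0$ in its interior, and the arc-wise description above yields the intertwining identity $\pi \circ \mathcal{A}_\theta = D_f \circ \pi$. Hence $W := \ker \pi$ is $\mathcal{A}_\theta$-invariant, and on $\Sigma_\theta / W \cong \mathrm{image}(\pi)$ the map $\mathcal{A}_\theta$ acts as $D_f|_{\mathrm{image}(\pi)}$. To show $\pi$ is surjective onto $\mathbb{R}^E$, I would express any edge of $\mathcal{H}_f$ as an integer combination of arcs from $O_\theta$; the vertex set $V(\mathcal{H}_f)$ consists of postcritical points together with branching vertices, which for a postcritically finite quadratic are (pre)periodic and lie on regulated arcs between points of $V_\theta$, so every edge can be written as a telescoping difference of such arcs.

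The hard part will be the inequality $\rho(A_\theta) \leq \rho(D_f)$. I would apply Perron--Frobenius to the non-negative matrix $A_\theta$: let $v$ be a non-negative leading eigenvector with $A_\theta v = \rho(A_\theta) v$. Since $\pi$ has non-negative entries in the basis $O_\theta$ and is surjective, $\pi(v) \in \mathbb{R}^E$ is a non-negative vector satisfying $D_f \pi(v) = \rho(A_\theta) \pi(v)$. The crux is to show $\pi(v) \neq 0$: this would fail only if the Perron eigenvector of $A_\theta$ is supported entirely in $\ker \pi$, i.e., in the space of formal relations among arcs. I would rule this out by analyzing the block-triangular structure of $A_\theta$ in a basis compatible with the filtration $W \subset \Sigma_\theta$, showing that the restriction $\mathcal{A}_\theta|_W$ corresponds only to combinatorial identities whose growth is controlled by the underlying edge dynamics, and in particular has spectral radius at most $\rho(D_f)$. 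Once this is established, $\pi(v) \neq 0$ implies $\rho(A_\theta)$ is an eigenvalue of $D_f$, giving $\rho(A_\theta) \leq \rho(D_f)$; combined with the reverse inequality from the intertwining, we conclude $\rho(A_\theta) = \rho(D_f)$ and hence $\log \rho(A_\theta) = h(\mathcal{H}_f, f)$.
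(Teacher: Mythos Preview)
Your setup---interpreting basis vectors $\{a,b\}\in O_\theta$ as regulated arcs $[\gamma(a),\gamma(b)]$ and checking that $\mathcal{A}_\theta$ mirrors the way $f$ acts on such arcs---is exactly the geometric picture the paper uses. The paper also builds a graph $G$ whose edges are indexed by $O_\theta$, defines a Markov map $L$ on $G$ with transition matrix $A_\theta$, and maps edges to regulated arcs in $\mathcal{H}_f$ via a semi-conjugacy $\Phi$. So the two proofs start from the same dictionary.

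Where your argument diverges, and where it has a genuine gap, is the endgame. You reduce everything to the spectral inequality $\rho(A_\theta|_{\ker\pi})\le\rho(D_f)$ and then write ``I would rule this out by analyzing the block-triangular structure\dots showing that the restriction $\mathcal{A}_\theta|_W$ corresponds only to combinatorial identities whose growth is controlled by the underlying edge dynamics.'' That is not a proof; it is a restatement of what needs to be shown. The kernel $W=\ker\pi$ contains not only the degenerate arcs $\{a,b\}$ with $\gamma(a)=\gamma(b)$ but also all linear relations among non-degenerate arcs (e.g.\ $[p,q]+[q,r]-[p,r]-2[q,m]$ with $m$ the median), and you give no mechanism for bounding the spectral radius of $\mathcal{A}_\theta$ on this space. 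The paper sidesteps this entirely: instead of comparing spectral radii of matrices, it passes through topological entropy and invokes Proposition~\ref{Do4} (a finite-to-one surjective semi-conjugacy preserves entropy), which delivers both inequalities at once. In the preperiodic case the map $\Phi$ is not finite-to-one on all of $G$, so the paper restricts to the subgraph $\Gamma$ where $\gamma(a)\ne\gamma(b)$, shows the complement $G\setminus\Gamma$ is $L$-invariant with $L$ acting by edge-to-edge homeomorphisms there (hence entropy $0$), and applies Proposition~\ref{Do2}. That concrete identification of the ``bad'' part and the observation that $L$ does not split edges there is precisely the missing idea in your sketch.

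A second gap: your dictionary is written only for the preperiodic case. When $\theta$ is periodic, the rays $\mathcal{R}(\theta/2)$ and $\mathcal{R}((\theta+1)/2)$ do \emph{not} land at $0$; they land on $\partial U$ and merely support the Fatou component $U\ni 0$. The paper handles this by sending an angle $x$ not to its landing point but to the center of the Fatou component that the ray $\mathcal{R}(x)$ right-supports, and checks this gives a bijection $V_G\to\mathcal{P}_f$. Your statement ``both rays\dots land at the critical point $0$'' is false in this case, and the arc--angle correspondence needs the modification the paper makes.
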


\begin{proof}
 Fix a rational angle $\theta$ and consider $f:z\mapsto z^2+{c_\theta}$. The idea of the proof is the following, inspired by \cite{Gao,Jung}:
\begin{enumerate}
\item Construct a topological graph $G$ and a Markov action $L:G\to G$   such that the spectral radius
of the transition matrix $D_{(G,L)}$ is equal to the spectral radius of $A_\theta$.
\item Construct a continuous, finite-to-one, and surjective semi-conjugacy $\Phi$ from $L:G\to G$ to $ f_{}:\mathcal{H}_f\to \mathcal{H}_f$.
\end{enumerate}
Then we may conclude that
\[\log\rho(A_\theta)\overset{\text{same matrix}}=\log\rho(D_{(G,L)})\overset{\text{Prop. \ref{entropy-formula}}}=h(G,L)\overset{\text{Prop. \ref{Do4}}}=h(\mathcal{H}_f, f_{}).\]

Let $G$  be a topological complete  graph whose
vertex set is the forward orbit  $\frac{\theta}2$ and $ \frac{\theta+1}{2}$ after identifying the diagonal angles: 
$$V_{G}=\left\{2^n\theta, n\ge -1\right\}/{\left (\frac{\theta}2\sim \frac{\theta+1}{2} \right)}.$$
 The set of edges is denoted by
 $$E_{G}=\left\{\,e(x,y)|\ x\not=y\in V_{G}\right\}$$ 
 (with $e(x,y)=e(y,x)$). Being a topological graph means that $G$ is a topological space and each edge $e(x,y)$
 is homeomorphic to a closed interval with ends $x$ and $y$.
Clearly $E_G$ is in bijection with $O_\theta$.

Mimicking the action of the linear map $\mathcal{A}_\theta$, we can define a piecewise monotone map $L:G\to G$ as follows.
Let $x,y$ be two distinct vertices in $V_{G}$. If $x,y$ belong to the same closed half circle, i.e., the closure of a complete component of $\frac{\theta}{2}, \frac{\theta+1}{2}$, then $\tau(x)\ne \tau(y)\in V_{G}$. In this case, let $L$ map the edge $e(x,y)$ homeomorphically onto the edge $e(\tau (x),\tau (y))$. If $x,y$ belong to distinct open half circles, subdivide the edge $e(x,y)$ into non-trivial arcs $e(x,z)$ and $e(z,y)$, and let $L$ map the arc $e(x,z)$ (resp.\  $e(z,y)$)  homeomorphically onto $e(2x,\theta)$ (resp.\ $e(\theta,2y))$.

It is easy to see that the transition matrix of $(G,L)$ is exactly $A_\theta$. By Proposition \ref{entropy-formula}, the topological entropy  $h(G,L)$ is always equal to $\log\rho(A_\theta)$, regardless  the
precise choices of $L$ as  homeomorphisms on the edges.

However, to relate $h(G,L)$ to $h(\mathcal{H}_f,f)$, we will redefine the homeomorphic action of $L$ on each edge by lifting corresponding actions of $ f_{}$ via suitably
defined conjugacies.

We will treat the periodic and  preperiodic cases separately.

\begin{itemize}
\item $\theta$ is periodic.

Then the rays of angles $\frac{\theta}{2}$ and $\frac{\theta+1}{2}$ both land at the boundary of the Fatou component $U$ containing $0$ and support the component.
If $\frac{\theta}{2}$ is periodic then the rays are right supporting rays, and if it is  $\frac{\theta+1}{2}$ that is periodic then
the rays are left supporting rays. For simplicity we will only treat the former case and thus right supporting rays.
\begin{figure}[htpb]
\centering
\includegraphics[width=3.8in]{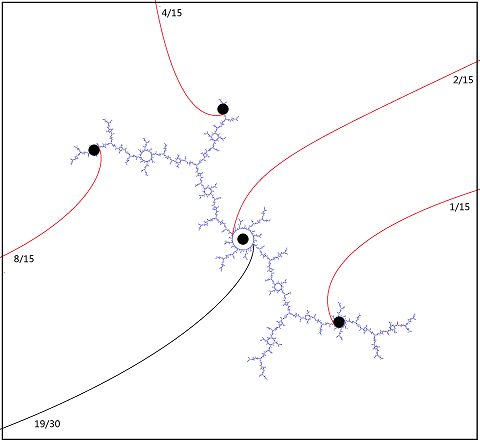}
\caption{In this example, $\theta=4/15$ and $\theta/2=2/15$ is periodic. All the rays in this figure are right-supporting rays.} \label{fig:supporting-ray-2}
\end{figure}

The angles in $V_{G}$ form a periodic cycle. For each $x\in V_{G}$, the external ray $\mathcal{R}(x)$ of angle $x$  may support one or two Fatou components, but it right-supports a unique periodic Fatou component, denoted by $U_x$. Define a map $\Phi:V_{G}\to \mathcal{P}_f$ such that $\Phi(x)$ is the center of the Fatou component $U_x$. Since   external rays with distinct angles of $V_{G}$ right-support  distinct Fatou components, we have $U_x\not=U_y$ if $x\not=y\in V_{G}$. Thus the map $\Phi:V_{G}\to \mathcal{P}_f$ is bijective.

 Extend $\Phi$ to a map, also denoted by $\Phi$,  from $G$ to $\mathcal{H}_f$ such that $\Phi$ maps the edge $e(x,y)$ homeomorphically to the regulated arc $[\Phi(x),\Phi(y)]\subset \mathcal{H}_f$. We now assert and justify several facts about $\Phi$:
\begin{enumerate}
\item $\Phi$ is finite-to-one.

It follows directly from the fact that $\Phi:V_{G}\to \mathcal{P}_f$ is a bijection.

\item $\Phi$ is surjective.

Since $\Phi(V_{G})=\mathcal{P}_f$ and $G$ is a complete graph, for any $p,q\in \mathcal{P}_f$, $[p,q]\subset \Phi(G)$. So we only need to evoke the fact that each edge of $\mathcal{H}_f$ is contained in a regulated arc $[p,q]$ with $p,q\in \mathcal{P}_f$.

\item $ f_{}\circ\Phi=\Phi\circ L$ after suitable modification of $ L$ on each edge.

If two distinct vertices $x,y\in V_{G}$ belong to the same closed half circle, the interior of the regulated arc $[\Phi(x),\Phi(y)]$ does not contain the critical point of $ f_{}$. Its $f$-image is
\begin{align*} f_{}([\Phi(x),\Phi(y)])& =[ f_{}(\Phi(x)), f_{}(\Phi(y))]=[\Phi(2x),\Phi(2y)]\\
&=
\Phi(e(2x,2y))=\Phi(L(e(x,y)). \end{align*}
So we can redefine $L$ on the edge by lifting $ f_{}$, i.e. by setting   
$$ L=\Phi^{-1}\circ f_{}\circ\Phi$$ on the edge of $e(x,y)$.

If two vertices $x,y\in V_{G}$ belong to distinct open half circles, the interior of the regulated arc $[\Phi(x),\Phi(y)]$ contains the critical point $0$. Its $f$-image is
\begin{align*}
  f_{}( [\Phi(x),\Phi(y) ])&= [f_{}(\Phi(x)),c_\theta]\cup\ [c_\theta, f_{}(\Phi(y)) ] \\ 
   &= [ \Phi(2x),\Phi(\theta)\ ]\cup\ [\Phi(\theta),\Phi(2y)]\\
   &= \Phi(e(2x,\theta)\cup e(\theta,2y))= \Phi(L(e(x,y)).
\end{align*}
So we can redefine  $L$ on  $e(x,y)$ such that  $\Phi\circ L= f_{}\circ\Phi$ on each of the
two segments of $e(x,y)$ subdivided by $\Phi^{-1}(0)$.
\end{enumerate}
Thus the maps $L,\Phi$ and $ f_{}$ have been shown to satisfy the properties of Proposition \ref{Do4}, so the equation
$$h(G,L)=h(\mathcal{H}_f, f_{})$$ holds.

\item $\theta$ is preperiodic.

In this case, the filled Julia set is equal to the Julia set. We can also define a map $\Phi:V_{G}\to \mathcal{P}_f$ such that $\Phi(x)$ is the landing point of $\mathcal{R}(x)$. It is easy to see that $\Phi$ is surjective. However, if we extend $\Phi$ piecewise monotonically on the edge of $G$ as what we did in the periodic case, the map
 $\Phi$ will lose the property of being finite-to-one, because some rays with distinct angles in $V_{G}$ may land at the same point, which means that $\Phi$ collapses some edges of $G$ to points. So we may no longer apply Proposition \ref{Do4} directly.   To overcome this difficulty, let us define a subgraph $\G$ of  $G$ as follows:
 \[V_{\G}:=V_{G}\quad\text{and}\quad E_{\G}:=\{e(x,y)\in E_{G}|\ \Phi(x)\not=\Phi(y)\}\]
 We will check that the graph $ \G$ satisfies the following properties.
\begin{enumerate}
\item  $ \G$ is connected.

This is because  for any $x\in V_{\G}$, the edge $e(\theta/2,x)$ belongs to $\G$.

\item $\G$ is $L$-invariant.

First, we observe that for any two distinct vertices $x,y\in V_G$ belonging to the same closed half circle, if $\mathcal{R}(x)$ and $\mathcal{R}(y)$ land at distinct points, then $\mathcal{R}(2x)$ and $\mathcal{R}(2y)$ also land at distinct points.

Let $e(x,y)$ be an edge of $\G$. Then the rays $\mathcal{R}(x)$ and $\mathcal{R}(y)$ land at distinct points $\Phi(x)$ and $\Phi(y)$ respectively. If $x,y$ belong to the same  closed half circle,  by this observation  the image edge 
$$L(e(x,y))=e(2x,2y)$$ belongs to $\G$. If $x,y$ belong to distinct open half circles, then $x$ (resp.\ $y$), $\theta/2$ belong to the same closed half circle  and the image 
$$L(e(x,y))=e(2x,\theta)\cup e(\theta,2y)$$ belongs to $\G$.

\item $h(G,L)=h(\G,L)$.

We claim that the set $G\setminus \G$ is $L$-invariant. In fact, an edge $e(x,y)$ belongs to $ E_{G}\setminus E_{\G}\overset{\text{definition}}{\iff}\Phi(x)=\Phi(y)\iff $ the rays $\mathcal{R}(x),\mathcal{R}(y)$ land at a common periodic point in $\mathcal{P}_f$. To see the last implication,  we only need to show the ``$\Rightarrow$''
part. Since $x,y$ are both in the forward orbit of $\theta$, we may assume $y=2^k x,k\geq1$. Then
$$\Phi(y)=\Phi(2^kx)=f^k(\Phi(x)). $$ So $\Phi(x)$ is a periodic point, receives both rays
$\mathcal{R}(x),\mathcal{R}(y)$,  and belongs to $\mathcal{P}_f$.
 In this case the angles $x,y$ must belong to the same half circle and $\Phi(2x)=\Phi(2y)$. It follows that 
 $$L(e(x,y))=e(2x,2y)$$ also belongs to $E_{G}\setminus E_{\G}$.

 The argument above also shows that $L$ maps an edge in $E_{G}\setminus E_{\G}$ homeomorphically onto an edge in $E_{G}\setminus E_{\G}$. According to the definition of the topological entropy, we have
\[h(G\setminus \G, L)=0.\]
So by Proposition \ref{Do2}, we obtain
\[h(G,L)=\max\Big\{h(\G,L),h(G\setminus \G, L)\Big\}=h(\G,L)\]
\end{enumerate}

By these properties, it is enough to prove that $h(\G,L)=h(\mathcal{H}_f, f_{})$. In this case, the map $\Phi\big|_{\G}:\G\to \mathcal{H}_f$ is finite-to-one. By the same argument as in the periodic case, we also obtain that $\Phi\big|_{\G}$ is surjective and 
$$ f_{}\circ\Phi=\Phi\circ L$$ on $\G$. So the equality 
$$h(\G,L)=h(\mathcal{H}_f, f_{})$$ holds.
\end{itemize}
   \end{proof}
Thurston's entropy algorithm and Theorem  \ref{Thurston-algorithm} are generalized to higher degree maps in \cite{Gao2}.

\section{Combinatorial laminations and polynomial laminations}

The action of a degree $d$ postcritically finite polynomial on its Julia set can be combinatorially encoded by
the action of a degree $d$ expanding map on an invariant lamination on the circle $\mathbb{S}=\R/\Z$ or on the
torus $\mathbb{T}=\mathbb{S}\times \mathbb{S}$. Here we illustrate this connection.

\subsection{Combinatorial laminations}

We denote the diagonal $\Delta$  of $\mathbb{T}$ by $$\De=\{(x,x), x\in \mathbb{S}\}.$$

For $x\ne y\in \mathbb{S}$, we use $\overline{xy}$ to denote the closure in $\overline \D$ of the hyperbolic chord in $\D$ connecting
$e^{2\pi ix}$ and $e^{2\pi iy}$, which is called a \emph{leaf}. This leaf is represented (twice) on the torus $\mathbb{T}\setminus\De$ as $(x,y)$ and $(y,x)$.

Two  leaves $\overline{xy}$ and  $\overline{x'y'}$ are said to be \emph{compatible} if   they are either equal or do not
cross inside $\D$. We say also that the two points $(x,y)$ and $(x',y')$ on the torus are compatible.
In this case the four points $(x,y), (y,x), (x',y'), (y',x')$ are pairwise compatible.

A \emph{lamination in $\D$} is a collection of leaves which are pairwise compatible and whose union is closed in $\D$. A \emph{lamination on the torus} is a subset $L$ closed in $\mathbb{T}\setminus\De$ and symmetric with respect to the diagonal $\De$
so that points in $L$ are pairwise compatible.

\subsection{Polynomial laminations}
Let $f$ be a monic  degree $d$ polynomial with a connected and locally connected Julia set. There is a unique Riemann mapping $\phi :\cbar\setminus \overline \D\to \cbar\setminus \mathcal{K}_f$ tangent to the identity at $\infty$; it extends continuously to the closure by the Carath\'{e}odory theorem. External rays are parametrized by external angles. The set of pairs of  external rays   landing
at a common point gives another combinatorial characterization of the polynomial dynamics.

Two distinct external rays $\mathcal{R}(x)$ and $\mathcal{R}(y)$ landing at the same point  are said to be a \emph{ray-pair of $f$}. We also say that $\{x,y\}$ is an angle-pair of $f$. If the rays land at $z$, we says that $\{x,y\}$ is an angle-pair at $z$.

 Let $\{x,y\}$ be an angle-pair  at $z$. Then the union of the two rays $\mathcal{R}(x)$ and $\mathcal{R}(y)$ together with $\{z\}$ divides the plane into two regions. If at least one of the two regions
does not contain other rays landing at $z$, we say that $\{x,y\}$ is an \emph{adjacent angle-pair}.

 A polynomial $f$  induces a  lamination $L(f)$ whose leaves (i.e. points in $\mathbb{T}$) consist of geodesics connecting   adjacent angle-pairs. $L(f)$ is said to be the \emph{polynomial lamination} of $f$.

\subsection{Good and excluded regions of a lamination}

We recall here some notions introduced in Section 4.
If a lamination on $\mathbb{T}$ contains a point $l = (x,y)$, then a certain set $X(l)$ of other points are excluded from the lamination because they are not compatible with  $(x,y)$.  If you draw the horizontal and vertical circles through
the two points $(x,y)$ and $(y,x)$, they divide the torus into four rectangles having the same vertex set; the remaining two common vertices are $(x,x)$ and $(y,y)$. The two rectangles  bisected by the diagonal are actually squares,  of side lengths $a-b \mod 1$
and $b-a \mod 1$. Together, they form the compatible region $G(l)$. The
leaves represented by points in the interior of the remaining two
rectangles constitute the excluded region $X(l)$. See
Figure~\ref{fig:ExcludedLeaves}, where the blue region is $X(l)$ and
the tan region is $G(l)$.

Given a  set  $S$ of leaves, the \emph{excluded region} $X(S)$ is the union of the excluded regions $X(l)$ for $l \in S$, and the \emph{good region} $G(S)$  is the intersection of the good regions $G(l)$ for $l \in S$.  If $S$ is a finite lamination, then $G(S)$ is a finite union of closed rectangles that are disjoint except for corners.

\subsection{Invariant laminations from majors}

Define the map  $F: \mathbb{T}\to \mathbb{T}$ by 
$$(x,y)\mapsto (\tau(x), \tau(y)),$$ where $\tau$ is the angle-doubling map defined in $\S$ 1.1.  A lamination $L\subset \mathbb{T}\setminus\De$ is said to be \emph{$F$-invariant} if, for every $(x,y)\in L $,
\begin{enumerate}
\item  either $F(x,y)\in\De$ or $F(x,y)\in L$;
\item there exist two preimages of $(x,y)$ in $L$, which have different $x$ components and different $y$ components.
\end{enumerate}

The polynomial lamination $L(f)$ for a postcritically finite quadratic polynomial $f:z\mapsto z^2+c$ is an example of an $F$-invariant lamination.

Given an angle $\theta\in \mathbb{S}$, there are several more or less natural ways to define an $F$-invariant lamination, mimicking the polynomial lamination of $ f:z\mapsto z^2+c_\theta$. And as we shall see, they differ by at most a countable set. Here we choose one that mimics the fact that preimages of the critical point of $f$ accumulates on every Julia point. Other definitions will be given later (see Section \ref{sec:entropy}).

Set, inductively,
 \begin{align*} b_0 &:= \Big\{\Big(\frac{\theta}{2},\frac{\theta+1}{2}\Big),\Big(\frac{\theta+1}{2}, \frac{\theta}{2}\Big) \Big\}\quad \text{the major leaves } \\
b_{i+1} &:=\Big(F^{-1}(b_i )\cap G(b_i )\Big)\cup b_i  \\
\text{\rm Pre}_\theta&:= \bigcup_{i\geq0}b_i\quad  \text{the set of pre-major leaves}\\
\text{\rm cluster}(\text{\rm Pre}_\theta)&:=\text{the set of cluster points in $\mathbb{T}$ of the pre-major leaves}.\\
L_\theta&:=\text{the  set  of cluster points in $\mathbb{T}\setminus \De$ of the pre-major leaves.}
\end{align*}

\subsection{Relating combinatorial laminations to polynomial laminations}

\begin{proposition}\label{polynomial-lamination}
For a rational angle $\theta$, the set $L_\theta$ is equal to the polynomial lamination of $ f:z\to z^2+c_\theta $. In particular,
 $L_\theta$ is an $F$-invariant lamination.
\end{proposition}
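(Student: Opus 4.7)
\textbf{Proof proposal for Proposition~\ref{polynomial-lamination}.}
My plan is to build an explicit correspondence between pre-major leaves and adjacent angle-pairs of $f := f_{\theta}$ that land in the grand orbit of the critical point $0$, and then exploit density of that grand orbit in the Julia set. Since $L(f)$ is closed in $\mathbb{T}\setminus\De$ and $L_\theta$ is by definition the derived set of $\mathrm{Pre}_\theta$ in $\mathbb{T}\setminus\De$, it suffices to show (i) every leaf of $\mathrm{Pre}_\theta$ represents an adjacent angle-pair of $f$, and (ii) every adjacent angle-pair of $f$ is an accumulation point of $\mathrm{Pre}_\theta$ in $\mathbb{T}\setminus\De$. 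Once both hold, $F$-invariance of $L_\theta$ follows automatically from the $F$-invariance built into the construction of $\mathrm{Pre}_\theta$ together with continuity of $F$.

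For (i), I induct on the generation $i$ of a leaf $(x,y)\in b_i$. The base case is Theorem~\ref{notation1}: $\mathcal{R}(\theta/2)$ and $\mathcal{R}((\theta+1)/2)$ form an adjacent angle-pair landing at $0$ (preperiodic case) or co-supporting the Fatou component $U_0\ni 0$ at a common boundary point (periodic case). For the inductive step, if $(x,y)\in b_i$ is an adjacent angle-pair at a point $p$ in the grand orbit of~$0$, the two $f$-preimages of $p$ lie in opposite halves of the plane cut out by the critical rays $\mathcal{R}(\theta/2)\cup\mathcal{R}((\theta+1)/2)$. Consequently, among the four $F$-preimages of $(x,y)$ exactly the two with both coordinates in the same half, namely $(x/2,y/2)$ and $((x+1)/2,(y+1)/2)$, are compatible with the major leaf; these are precisely the preimages retained by the good-region condition $G(b_i)\subset G(b_0)$, and each represents the pullback of the adjacent pair at $p$ through a local inverse branch of $f$, hence an adjacent pair at the corresponding preimage of $p$.

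For (ii), let $(x,y)\in L(f)$ be an adjacent pair landing at $z\in\mathcal{J}_f$. Standard expansion for postcritically finite polynomials forces the iterated $f$-preimages of the base points constructed in (i)---that is, of $0$, or of the point on $\partial U_0$ carrying the critical rays in the periodic case---to be dense in $\mathcal{J}_f$. Choose $z_n\to z$ in this grand orbit; by (i) each $z_n$ carries an adjacent pre-major pair $(x_n,y_n)\in\mathrm{Pre}_\theta$. Continuity of the Carath\'eodory extension of $\phi_{U(\infty)}$ and the finiteness of rays landing at $z$ then allow one (after passing to a subsequence and possibly transposing coordinates) to extract a limit angle-pair $(x_\infty,y_\infty)$ landing at $z$; adjacency of $(x_n,y_n)$ at $z_n$ together with a shadowing picture around $z$ force $(x_\infty,y_\infty)=(x,y)$, exhibiting $(x,y)$ as a cluster point of $\mathrm{Pre}_\theta$ in $\mathbb{T}\setminus\De$.

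The principal obstacle is the convergence step in (ii): when $z$ receives more than two external rays, one must choose the approximating preimages $z_n$ so that the adjacency structure at $z_n$ converges to the specified pair $(x,y)$ at $z$ rather than to another adjacent pair there. The remedy is a local inverse-branch argument: select inverse branches of suitable iterates $f^{k_n}$ that send a small disk around a base point in the orbit of $0$ into a shrinking disk around $z$ inside the sector of interest, which forces the ray configuration at $z_n$ to converge to the prescribed adjacency at $z$. Convergence of $(x_n,y_n)$ to the diagonal $\De$ is ruled out because the good-region condition keeps every pre-major leaf outside a fixed neighborhood of $\De$ determined by $b_0$. The periodic case proceeds identically after replacing ``landing at $0$'' with ``co-supporting $U_0$ at a common boundary point'' throughout.
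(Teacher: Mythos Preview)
Your strategy rests on claim (i), that every leaf of $\mathrm{Pre}_\theta$ is an \emph{adjacent} angle-pair of $f$, but this is false in both cases. In the periodic case the two rays $\mathcal{R}(\theta/2)$ and $\mathcal{R}((\theta+1)/2)$ do not land at a common point at all: they land at two \emph{distinct} boundary points of the Fatou component $U_0\ni 0$ (the root and its $f^p$-preimage). Hence the major leaf---and by induction every pre-major leaf---is not an angle-pair, so $\mathrm{Pre}_\theta\not\subset L(f)$ and the closure argument collapses. Your final paragraph's fix (``replace landing at $0$ by co-supporting $U_0$ at a common boundary point'') contradicts the actual geometry. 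Even in the preperiodic case, if the critical value receives $k>1$ rays then $0$ receives $2k$ rays, and the major leaf joins diametrically opposite angles among these, which is not an adjacent pair; the paper explicitly notes that pre-major leaves give ``(not necessarily adjacent)'' angle-pairs.

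A second concrete error: your claim that ``the good-region condition keeps every pre-major leaf outside a fixed neighborhood of $\De$'' is wrong. The good region $G(b_0)$ consists of the two squares \emph{bisected} by the diagonal, so pre-major leaves can and do approach $\De$; nothing in the construction bounds them away from it.

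The paper's proof avoids these pitfalls by never asserting that pre-major leaves lie in $L(f)$. Instead it assigns to each pre-major leaf a \emph{cutting curve} in the dynamical plane (in the periodic case this curve threads two internal rays through a Fatou component between the two external rays), and then studies Hausdorff limits of these cutting curves directly. A case analysis (whether the traversed Fatou components shrink or stabilize) shows that any off-diagonal limit is an adjacent angle-pair. For the converse direction, rather than pulling back neighborhoods under inverse branches, the paper works inside $\mathcal{K}_f$ along the regulated arc from a chosen point to the landing point $z$, and manufactures separating cutting curves either from itinerary discrepancies (via Corollary~\ref{separate}) or from dyadic internal angles in the adjacent Fatou components.
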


\begin{proof} This says that every accumulation point $(x,y)$, $x\ne
  y$, of $\text{\rm Pre}_\theta$ is an adjacent angle-pair for $
  f:z\mapsto z^2+c_\theta$. And conversely
every adjacent angle-pair is accumulated by a sequence $(x_n,y_n)\in \text{\rm Pre}_\theta$.

If  $\theta$ is preperiodic, the ray-pair of angles $\frac\theta 2$ and $\frac{\theta+1}2$ land at the critical point $0$. So  each point in $\text{\rm Pre}_\theta$ is a  (not necessarily adjacent) angle-pair at a pre-critical point.
If a sequence of distinct leaves $(x_n,y_n)\in \text{\rm Pre}_\theta$ converges to $(x,y)$, one can find a subsequence converging from one side
of the limit leaf $(x,y)$. It follows that $(x,y)$ is  an angle-pair, and is an adjacent angle-pair. Conversely,  one just needs to consider  an adjacent angle-pair
at a point $z$ of the Hubbard tree, and then use expansions on the tree to see that $z$ is approximated by preimages of $0$
on the tree. Details are presented in  Appendix B.

The case that $\theta$ is periodic is considerablely more complicated. Ends of a leaf in $\text{\rm Pre}_\theta$ is
 not an angle-pair, but
corresponds to a pair of rays supporting a common Fatou component with dyadic internal angles.  They would form a cutting line if we add the two internal
rays. See details  in Appendix B.
\end{proof}

\section{Combinatorial Hubbard tree for a rational angle $\theta$}

We have now seen the combinatorial encodings of Julia sets in $\mathbb{S}$ and $\mathbb{T}$. Let us turn now to
the corresponding encodings of Hubbard trees. There are actually two characterizations of a Hubbard tree: one
by looking at  ray-pairs separating the postcritical set, the other by looking at forward images of ray-pair landing points.

\subsection{Combinatorial Hubbard tree}
Note that the Julia points of the Hubbard tree $\mathcal{H}_f$ can be considered as the set of
Julia points  that `separate' the postcritical set $\mathcal{P}_f$. Such Julia points receive necessarily
at least two external rays.
One can therefore define a combinatorial counterpart of  the postcritical set and the Hubbard tree for quadratic maps as follows.

Let $\theta\in\mathbb{S}$ be a rational angle. Define the post-major angle set
 to be:
\[P^{\mathbb{S}}_\theta=\left\{\tau^n\theta\mid n\geq -1\right\}  \quad \text{and}\quad P_\theta=\left\{(\alpha,\alpha)\in \mathbb{T}\mid\ \alpha\in P^{\mathbb{S}}_\theta\right\}.\]
  As $\theta$ is  rational, the set $P_\theta$ is a finite $F$-forward-invariant set in the diagonal of the torus
(also $P^{\mathbb{S}}_\theta$ is a finite $\tau$-forward-invariant set). \footnote{We should have set $P^{\mathbb{T}}_\theta$ in place of $P_\theta$. Since we work more often on the torus model, we  neglect the superscript $\mathbb{T}$ for convenience. }

For $(x,y)\in\mathbb{T}  $, we say that $(x,y)$ \emph{separates} $P_\theta$ if  $x\ne y$ and both components of  $\De\setminus  \{(x,x),(y,y)\}$ contain points of $P_\theta$,
or equivalently, both components of $\mathbb{S}\setminus\{x,y\}$
contain points of $P^{\mathbb{S}}_\theta$. Visually, this is
equivalent to assert that both  open tan squares in $\mathbb{T}$  in
Figure~\ref{fig:ExcludedLeaves}
generated by $(x,y)$ contain points of $P_\theta$ in the interior.
We say that $(x,y)$ \emph{intersects} $P_\theta$ if either $(x,x)$ or $(y,y)$ is in $P_\theta$.

Recall that
\[ L_\theta=\text{\rm cluster}(\text{\rm Pre}_\theta)\setminus\De=\text{the cluster set in $ \mathbb{T} \setminus\De$ of the pre-major leaves.}\] Set
 \begin{equation*}
   X_\theta=\Big\{\,(x,y)\in L_\theta\mid  (x,y)\text{ intersects or separates }P_\theta\Big\}.
   \end{equation*}
 \begin{definition}[Combinatorial Hubbard tree]
The set $H_\theta:=X_\theta\cup P_\theta$ is called the {combinatorial Hubbard tree} of $\theta$.
\end{definition}

It is known that the Hubbard tree of a postcritically finite polynomial is an  attracting core of the landing points of ray-pairs, in the sense that some forward iterations of any such a point will be in the Hubbard tree and stay there under further iterations. Here
is the combinatorial counterpart.

\begin{proposition}\label{attractor}
Let $\theta\in \mathbb{S}$ be a rational angle, The set $X_\theta$  is an attracting core of $L_\theta$, in the sense that  $X_\theta\subset L_\theta$ and  for any $(x,y)\in L_\theta$, there is $n\ge 0$ so that $F^n(x,y)\in X_\theta$.
  \end{proposition}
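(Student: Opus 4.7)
The first assertion, $X_\theta\subset L_\theta$, is immediate from the definition of $X_\theta$. For the attracting property, my plan is to argue by contradiction: assume $(x,y)\in L_\theta$ satisfies $F^n(x,y)\notin X_\theta$ for every $n\ge 0$, and show that the length of the $P_\theta^{\mathbb{S}}$-free arc between $\tau^n x$ and $\tau^n y$ doubles at each step, conflicting with an upper bound $1/2$ that will emerge from the geometry.

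By the $F$-invariance of $L_\theta$ (Proposition \ref{polynomial-lamination}), each $F^n(x,y)=(\tau^n x,\tau^n y)$ lies in $L_\theta\setminus X_\theta$, which means for every $n$: (i) $\tau^n x,\tau^n y\notin P_\theta^{\mathbb{S}}$ (non-intersection), and (ii) one of the two open arcs of $\mathbb{S}\setminus\{\tau^n x,\tau^n y\}$, call it $I^{(n)}$, is disjoint from $P_\theta^{\mathbb{S}}$ (non-separation). Since the pre-major leaves are constructed inside $G(b_i)$ and hence compatible with the major leaf $\{\theta/2,(\theta+1)/2\}$, and compatibility is closed under limits, every leaf of $L_\theta$ is compatible with the major. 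Thus $\tau^n x,\tau^n y$ both lie in one of the two closed half-circles $H^{(n)}$ bounded by the major endpoints. Because $2^{-1}\theta\in P_\theta^{\mathbb{S}}$, (i) rules out $\tau^n x=2^{-1}\theta$; and since the other major endpoint, call it $\theta^{\star}$, doubles to $\theta\in P_\theta^{\mathbb{S}}$, applying (i) at step $n+1$ rules out $\tau^n x=\theta^{\star}$ as well. So both endpoints lie in the open interior of $H^{(n)}$, and the complementary arc to $I^{(n)}$, which runs through the opposite half-circle, necessarily contains $2^{-1}\theta$. Therefore $I^{(n)}$ is the shorter arc sitting inside $H^{(n)}$; in particular $|I^{(n)}|\le 1/2$ and neither major endpoint lies in $I^{(n)}$.

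Next I would execute the doubling step. Since $\tau$ is injective on the open interior of $H^{(n)}$, the image $\tau(I^{(n)})$ is an arc of length $2|I^{(n)}|$ with endpoints $\tau^{n+1}x,\tau^{n+1}y$. The two $\tau$-preimages of $\theta$ are precisely the major endpoints, both excluded from $I^{(n)}$, so $\theta\notin\tau(I^{(n)})$. If $\tau(I^{(n)})$ met $P_\theta^{\mathbb{S}}$ at all, it would meet it properly (missing $\theta$), turning $F^{n+1}(x,y)$ into a separator of $P_\theta$ and contradicting (ii) at step $n+1$. Hence $\tau(I^{(n)})\cap P_\theta^{\mathbb{S}}=\emptyset$, its complement contains all of $P_\theta^{\mathbb{S}}$, and therefore $I^{(n+1)}=\tau(I^{(n)})$ with $|I^{(n+1)}|=2|I^{(n)}|$. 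By induction $|I^{(n)}|=2^n|I^{(0)}|$, which exceeds $1/2$ for large $n$ (since $x\ne y$ gives $|I^{(0)}|>0$), contradicting $|I^{(n)}|\le 1/2$.

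The main obstacle I expect is the bookkeeping that pins down \emph{which} arc of $\mathbb{S}\setminus\{\tau^{n+1}x,\tau^{n+1}y\}$ is the free one: the identification $I^{(n+1)}=\tau(I^{(n)})$ is essential for the doubling, whereas the alternative $I^{(n+1)}=\mathbb{S}\setminus\overline{\tau(I^{(n)})}$ would give $|I^{(n+1)}|=1-2|I^{(n)}|$ and destroy the argument. The key observation rescuing the doubling is that the two $\tau$-preimages of $\theta$ sit on the boundary of every candidate half-circle $H^{(n)}$, and so by the compatibility and non-intersection hypotheses cannot lie in the open-interior arc $I^{(n)}$; combined with the non-separation hypothesis, this forces $\tau(I^{(n)})\cap P_\theta^{\mathbb{S}}=\emptyset$.
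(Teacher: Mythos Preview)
Your argument is correct and rests on the same two geometric facts the paper uses: the doubling of arc length under $\tau$ and the compatibility of every leaf of $L_\theta$ with the major $\{\theta/2,(\theta+1)/2\}$. The paper's proof is simply a direct version of your contradiction argument: rather than tracking the $P_\theta$-free arc $I^{(n)}$, it tracks the shorter-arc distance $|\tau^n x-\tau^n y|$, notes that this doubles while $\le 1/4$, takes the minimal $n$ with $|\tau^n x-\tau^n y|>1/4$, and then observes that at step $n+1$ the shorter arc between the endpoints must contain $\theta$ while the longer arc contains the major endpoints (hence $2^{-1}\theta$), so $F^{n+1}(x,y)$ separates or intersects $P_\theta$. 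Your free arc $I^{(n)}$ is exactly the shorter arc (you establish this yourself when you show $I^{(n)}\subset H^{(n)}$), so the two arguments coincide; the paper just reaches the conclusion in one step once the threshold $1/4$ is crossed, whereas you carry the induction to an eventual length contradiction. Both approaches need the $F$-forward-invariance of $L_\theta$ to know that iterates remain compatible with the major.
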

\begin{proof} The inclusion $X_\theta\subset L_\theta$ is obvious.

For a pair of points $x,y\in \mathbb{S}$, let us define the distance of $x,y$,  denoted by $|x-y|$, to be the  arc-length of the shortest of the two arcs in $\mathbb{S}$ determined by $x,y$. Let $(x,y)\in L_\theta$. Note that if $|x-y|\leq \frac{1}{4}$, then 
$$|\tau(x)-\tau(y)|=2|x-y|.$$
 So there exists a minimal $n\geq0$ such that $|\tau^n(x)-\tau^n(y)|> \frac{1}{4}$. In this case, $$\dfrac12\ge |\tau^n(x)-\tau^n(y)|> \frac{1}{4}.$$
It follows that
the shorter closed arc between $\tau^{n+1}(x)$, $\tau^{n+1}(y)$ must contain the point $\theta$. Since the leaf $\overline{\tau^{n+1}(x)\tau^{n+1}(y)}$ belongs to the closure of a component of 
$$\overline \D\setminus\overline{\frac{\theta}{2}\frac{\theta+1}{2}},$$ it follows that the leaf $\overline{\tau^{n+1}(x)\tau^{n+1}(y)}$ separates or intersects $$P^{\D}_\theta:=\Big\{e^{2\pi i t}\mid t\in P^{\mathbb{S}}_\theta\Big\}.$$
\end{proof}

\subsection{Relation between combinatorial trees and pre-major leaves}

Fix a rational angle $\theta\in \mathbb{S}$. It is relatively easy to find in the countable set $\text{\rm Pre}_\theta$ of points that separate or intersect
the post-major angle set. We will see in Proposition \ref{compact-invariant} that, starting from these points, we can recover the combinatorial Hubbard tree. Set
\begin{align*}
S_\theta&:=\Big\{ (x,y)\in \text{\rm Pre}_\theta\mid (x,y)\text{ separates or intersects  }P_\theta\Big\} \\
&=\Big\{\text{pre-major leaves separating or intersecting post-major angles}\Big\} \\
\text{\rm cluster}(S_\theta)&:=\text{the set of cluster points of $S_\theta$ in $\mathbb{T}$.}
\end{align*}

\begin{lemma}\label{limit-separate}
Any $(x,y)\in \text{\rm cluster}(S_\theta)$  intersects or separates $P_\theta$.
\end{lemma}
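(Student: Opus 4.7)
The plan is to take a convergent sequence $(x_n,y_n)\in S_\theta$ with limit $(x,y)\in\mathbb{T}$, and use the finiteness of $P^{\mathbb{S}}_\theta$ to extract a subsequence for which the ``witness'' of intersection or separation is the \emph{same} point (or same pair of points) in $P^{\mathbb{S}}_\theta$ for every $n$. Once the witnesses are frozen, passing to the limit is a simple continuity argument on $\mathbb{S}$.

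First I would argue as follows. For each $n$, either (a) $\{x_n,y_n\}\cap P^{\mathbb{S}}_\theta\neq\emptyset$ (intersection), or (b) there exist $\alpha_n,\beta_n\in P^{\mathbb{S}}_\theta$ lying in different components of $\mathbb{S}\setminus\{x_n,y_n\}$ (separation). Passing to a subsequence, assume all $n$ fall in the same case. In case~(a), by finiteness of $P^{\mathbb{S}}_\theta$ we may pass to a further subsequence so that (say) $x_n=\alpha\in P^{\mathbb{S}}_\theta$ is constant, hence $x=\alpha$ and $(x,x)\in P_\theta$, so $(x,y)$ intersects $P_\theta$. In case~(b), a further subsequence makes $\alpha_n=\alpha$ and $\beta_n=\beta$ constant elements of $P^{\mathbb{S}}_\theta$.

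Next, in the separation case, I would dichotomize on whether $\{\alpha,\beta\}$ meets $\{x,y\}$. If it does, then immediately $(x,x)$ or $(y,y)$ lies in $P_\theta$ and $(x,y)$ intersects $P_\theta$. Otherwise, $\alpha,\beta\notin\{x,y\}$; I claim this already forces $x\neq y$. Indeed, if $x=y$, then exactly one of the two arcs of $\mathbb{S}\setminus\{x_n,y_n\}$ has length tending to $0$ and pinches onto the singleton $\{x\}$, while the other has length tending to $1$. Since $\alpha$ and $\beta$ are separated, one of them lies in the shrinking arc; but then its (constant) distance to $x_n$ and $y_n$ tends to $0$, forcing $\alpha=x$ or $\beta=x$, contradicting $\{\alpha,\beta\}\cap\{x,y\}=\emptyset$. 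So $x\neq y$, and now both components of $\mathbb{S}\setminus\{x_n,y_n\}$ converge in the Hausdorff sense to the two components of $\mathbb{S}\setminus\{x,y\}$. Since $\alpha$ and $\beta$ sit at positive distance from both $x$ and $y$, for large $n$ they lie in the interiors of the limit components, and their membership is preserved in the limit: $(x,y)$ separates $P_\theta$.

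The main obstacle is precisely the degenerate behavior at the diagonal: a priori, a sequence of genuine leaves could collapse to a diagonal point, and the ``separating'' property of each $(x_n,y_n)$ could be carried entirely by points pinched into the vanishing arc. The key observation that defuses this is that the \emph{witnesses} $\alpha,\beta$ are drawn from the \emph{finite} set $P^{\mathbb{S}}_\theta$, so they cannot move; the pigeonhole/subsequence extraction is what makes the limit argument work. Everything else reduces to the elementary continuity of ``separates a leaf from another point on $\mathbb{S}$'' at non-vertex points.
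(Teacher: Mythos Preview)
Your argument is correct and is precisely the fleshed-out version of the paper's one-line proof, which simply reads: ``This is an easy consequence of the fact that $P_\theta$ is finite.'' The key idea---using finiteness of $P^{\mathbb{S}}_\theta$ to freeze the witnesses via pigeonhole and then pass to the limit---is exactly what the paper has in mind; you have just written out the details (including the degenerate diagonal case) that the paper leaves implicit.
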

\begin{proof} This is an easy consequence of the fact that $P_\theta$ is finite.
\end{proof}

\begin{proposition}\label{compact-invariant}
The combinatorial Hubbard tree $H_\theta$ and the set  $\text{\rm cluster}(S_\theta)$
are both compact and $F$-forward-invariant.
Moreover the two sets differ by a finite set.
\end{proposition}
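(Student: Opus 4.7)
The plan is to establish the four claims in turn---compactness of $H_\theta$, compactness of $\text{cluster}(S_\theta)$, $F$-forward invariance of both, and the finite symmetric difference---by combining the concrete description of $L_\theta$ provided by Proposition~\ref{polynomial-lamination} with direct arguments on the circle model.

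First I would address compactness. For $\text{cluster}(S_\theta)$, compactness is automatic, since the cluster set of any subset of the compact Hausdorff space $\mathbb{T}$ is closed. For $H_\theta=X_\theta\cup P_\theta$, I would note that $L_\theta$ is closed in $\mathbb{T}\setminus\Delta$ (being a polynomial lamination) and that the condition of intersecting or separating the finite set $P_\theta$ is closed on $L_\theta$; this gives $X_\theta$ closed in $\mathbb{T}\setminus\Delta$. The remaining content is that any sequence $(x_n,y_n)\in X_\theta$ converging to a diagonal point $(\alpha,\alpha)$ forces $\alpha\in P^{\mathbb{S}}_\theta$. In the intersecting case, one coordinate lies in the finite set $P^{\mathbb{S}}_\theta$, and after a subsequence it equals $\alpha$; in the separating case, the minor arc of $\mathbb{S}\setminus\{x_n,y_n\}$ collapses to $\{\alpha\}$ while always containing a post-major angle, and finiteness of $P^{\mathbb{S}}_\theta$ again pins $\alpha$ inside $P^{\mathbb{S}}_\theta$.

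Next I would prove $F$-forward invariance by lifting to the polynomial picture. The leaves of $L_\theta$ are exactly the adjacent angle-pairs of $f:z\mapsto z^2+c_\theta$, so each $(x,y)\in L_\theta$ corresponds to a point $z$ of the Julia set receiving both rays; the condition ``intersects or separates $P_\theta$'' is equivalent to $z$ lying in, or topologically separating, the postcritical set $\mathcal{P}_f$. Since $f(\mathcal{P}_f)\subset\mathcal{P}_f$, the image $f(z)$ still satisfies the corresponding topological condition (or $z$ is the critical point, in which case $F(x,y)=(\theta,\theta)\in P_\theta$); translating back, $F(X_\theta)\subset X_\theta\cup P_\theta$, while $F(P_\theta)\subset P_\theta$ is direct from $\tau$-invariance of $P^{\mathbb{S}}_\theta$. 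The same polynomial-side argument, applied to approximating sequences in $S_\theta$, yields $F(\text{cluster}(S_\theta))\subset \text{cluster}(S_\theta)$.

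Finally, for the finite symmetric difference, Lemma~\ref{limit-separate} already gives $\text{cluster}(S_\theta)\setminus\Delta\subset X_\theta$. Conversely, any $(x,y)\in X_\theta$ that separates $P_\theta$ with both witnesses in the open arcs determined by $\{x,y\}$ is a cluster point of $S_\theta$, because this strict separation is an open condition on $\mathbb{T}\setminus\Delta$ and every approximating sequence in $\text{Pre}_\theta$ therefore eventually lies in $S_\theta$. Any remaining discrepancies are confined to (i) the finitely many leaves of $L_\theta$ having an endpoint in the finite set $P^{\mathbb{S}}_\theta$ and (ii) diagonal points, where by the argument of the first paragraph $\text{cluster}(S_\theta)\cap\Delta\subset P_\theta$ while not every point of $P_\theta$ need be approached from $L_\theta$. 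Both kinds of exceptions are contained in finite sets. The principal obstacle I anticipate is the preservation of the separating condition under $F$: a purely combinatorial argument must control the $2$-to-$1$ behaviour of $\tau$ on witnesses (two separating witnesses could in principle merge under $\tau$), and Proposition~\ref{polynomial-lamination} seems to be the cleanest way to bypass this, by converting the combinatorial condition into a topological statement on $\mathcal{P}_f$ where forward invariance is manifest.
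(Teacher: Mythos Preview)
Your compactness arguments and your treatment of the finite symmetric difference are correct and line up with the paper's proof (the paper even leaves the compactness of $H_\theta$ as an exercise, so your extra paragraph on diagonal limits is welcome detail).

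Where you diverge is in $F$-forward invariance, and here there is a gap in your presentation. Your claimed equivalence ``$(x,y)$ intersects or separates $P_\theta$ $\iff$ $z$ lies in, or separates, $\mathcal{P}_f$'' is only one-directional: a branch point $z\in\mathcal{P}_f$ can have an adjacent leaf bounding a component of $\mathcal{K}_f\setminus\{z\}$ containing no postcritical point, and that leaf neither intersects nor separates $P_\theta$. Consequently, knowing only that $f(z)$ lies in or separates $\mathcal{P}_f$ (which does follow from $f(\mathcal{P}_f)\subset\mathcal{P}_f$ and $f(\mathcal{H}_f)\subset\mathcal{H}_f$) is not enough to conclude that the \emph{specific} leaf $(2x,2y)$ intersects or separates $P_\theta$. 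You flagged exactly this issue as ``the principal obstacle,'' but moving to the polynomial side does not by itself dissolve it.

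The paper's remedy is a short combinatorial observation you overlooked: since $(x,y)\in L_\theta$ (or $\text{Pre}_\theta$) lies in the good region of the major, the leaf $\overline{xy}$ sits in one component $W$ of $\overline{\D}\setminus\overline{\tfrac{\theta}{2}\,\tfrac{\theta+1}{2}}$. Hence if $\overline{xy}$ separates $P_\theta^{\D}$, it in particular separates some $e^{2\pi i t}\in P_\theta^{\D}$ from the major endpoint $e^{2\pi i\theta/2}$ lying in $W$; applying $\tau$ then shows $\overline{\tau(x)\tau(y)}$ separates $e^{2\pi i\tau(t)}$ from $e^{2\pi i\theta}$, both in $P_\theta^{\D}$. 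This single line resolves the ``witnesses could merge under $\tau$'' worry directly in the circle model and does not require Proposition~\ref{polynomial-lamination}. Your polynomial route can be completed with the analogous observation (the inner sector at $z$ is contained in one half of $\mathcal{K}_f$ and its $f$-image is separated from $c_\theta$ by the image leaf), but as written the step ``since $f(\mathcal{P}_f)\subset\mathcal{P}_f$, the image $f(z)$ still satisfies the corresponding topological condition'' is not a proof.
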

\begin{proof}
The compactness of  $\text{\rm cluster}(S_\theta)$ is obvious. We proceed to prove that it is $F$-forward-invariant.

We show first that the union of the post-major set $P_\theta$ with the set $S_\theta$  is $F$-forward-invariant.  As $\text{\rm Pre}_\theta$ is the set of pre-major leaves, if $(x,y)\in \text{\rm Pre}_\theta$, either $F(x,y)\in \text{\rm Pre}_\theta$ or $F(x,y)\in P_\theta$. So we are left to prove that if  $(x,y)$ separates or intersects $P_\theta$,
so does  $(\tau(x),\tau(y))$.

Denote, as before, the set $$P_\theta^{\D} =\{e^{2\pi it}\mid t\in P_\theta^{\mathbb{S}}\},$$ which is the post-major angle set in the unit
disc model.
Note that
$  (x,y)$ separates or intersects $P_\theta$ if and only if $\overline{xy}$ separates or intersects $P^{\D}_\theta$  in $\overline{\D}.$
If $\overline{xy}$ intersects $P^{\D}_\theta$, the leaf $\overline{\tau(x)\tau(y)}$ must intersect $P^{\D}_\theta$. If $\overline{xy}$ separates (in $\overline \D$) but does not  intersect $P^{\D}_\theta$, the leaf $\overline{xy}$ belongs to a component $W$ of 
$$\overline\D\setminus \overline{\frac{\theta}{2}\frac{\theta+1}{2}}.$$ Then,  $\overline{xy}$ must separate a point $e^{2\pi i t}$ of $P^{\D}_\theta$ and the point $e^{2\pi i \frac{\theta}{2}}$. It follows that  $\overline{\tau(x)\tau(y)}$ separates the two points $e^{2\pi i\tau(t)},e^{2\pi i \theta}\in P^{\D}_\theta$. This shows that $S_\theta\cup P_\theta$ is $F$-forward-invariant.

Second, we will show  that $\text{\rm cluster}(S_\theta)$ is $F$-forward invariant. Take $\{(x_n,y_n)\}$ a sequence of distinct points in $S_\theta$ such that $$\ds \lim_{n\to \infty}(x_n,y_n)=(x,y)\in \text{\rm cluster}(S_\theta).$$
According to the $F$-invariant property of $S_\theta\cup P_\theta$ proved above, the sequence of points $\{F(x_n,y_n)\}$ belong to $S_\theta\cup P_\theta$. As $P_\theta$ has only finitely many $F$-preimages, countably many of $\{F(x_n,y_n)\}$ must belong to $S_\theta$. Since 
$$\ds \lim_{n\to \infty}F(x_n,y_n)=F(x,y),$$ we have $F(x,y)\in \text{\rm cluster}(S_\theta)$.

The proof that  $H_\theta$ is compact and $F$-forward-invariant is similar and left as an exercise.

By Lemma \ref{limit-separate} we have $\text{\rm cluster}(S_\theta) \subset  H_\theta$.

Let us show that $H_\theta\setminus \text{\rm cluster}(S_\theta)$ is also finite.
  Note that if a point $(x,y)\in  X_\theta$
  separates $P_\theta$, then $(x,y)\in L_\theta$, which is the cluster set in $\mathbb{T}\setminus \De$ of $\text{Pre}_\theta$. One can thus find a sequence $\{(x_n, y_n)\} $ in $\text{ Pre}_\theta$
  converging to $(x,y)$. So for all large $n$, the point $(x_n, y_n)$ must separate the finite post-major set $P_\theta$. It follows by definition that $(x_n, y_n)\in S_\theta$, and therefore $(x,y)\in
\text{\rm cluster}(S_\theta)$. So a point of $  X_\theta\setminus \text{\rm cluster}(S_\theta)$ must intersect but not separate $P_\theta$.

 On the other hand, since $L_\theta$ is the polynomial lamination of $ f_\theta$ (by Proposition \ref{polynomial-lamination}), each
 angle belongs to at most two adjacent angle-pairs.  In particular, for any point $(p,p)\in P_\theta$, there are at most two points of $L_\theta$  intersecting $(p,p)$.
So $ X_\theta\setminus \text{\rm cluster}(S_\theta)$, and therefore $ H_\theta \setminus \text{\rm cluster}(S_\theta)$, is a finite set.
\end{proof}

\section{Entropies}\label{sec:entropy}

The objective is to prove that several quantities in the combinatorial and dynamical worlds  all encode the core entropy of a quadratic polynomial.
\begin{definition}
For a polynomial $f$ such that its Hubbard tree exists and is a finite tree, the \emph{core entropy} of $f$ is the topological entropy of the restriction of $f$ to its Hubbard tree,  $h(\mathcal{H}_f,f)$.  \end{definition}

We remark that if a polynomial $f$ has a connected and locally connected filled Julia set such that the postcritcal set lies on a finite topological tree, then its  Hubbard tree exists.  This condition is called “topologically finite” in \cite{Ti1}, and is more general than postcritically finite.
\subsection{Various entropies and Hausdorff dimensions}

Let $\theta\in \mathbb{S}$ be a rational angle.
As   $H_\theta$ is  compact and $F$-forward invariant, we can define its  topological entropy   $h(H_\theta,F)$.

Recall that for the polynomial  $ f_\theta:z\mapsto z^2+c_\theta$, we use $\g(\eta)$ to denote the
landing point of the external ray of angle $\eta$, and $\mathcal{H}_\theta$ to denote its Hubbard tree.

The polynomial $ f_\theta$ induces two more combinatorial sets\footnote{ these sets are referred to  in some literature as the set of bi-accessible angles  }
\begin{itemize}
\item the angle-pair set  
$$\text{\rm B}_\theta=\{ (\beta,\eta)\in \mathbb{T} \mid \beta\not=\eta\text{ and }\g(\beta)=\g(\eta)\}$$
\item and its projection to the circle 
$$\text{\rm B}^{\mathbb{S}}_\theta=\{ \beta\in\mathbb{S}\mid \exists\ \eta\not=\beta\text{ s.t }\g(\beta)=\g(\eta)\}.$$
\end{itemize}

The  set $\NE$ in the following table will be defined in the next subsection.

$$\begin{array}{|r|cr|c| r| c|}
\hline
  \text{the actions}& \multicolumn{2}{|c|}{F:(\eta,\zeta)\mapsto (2\eta, 2\zeta)} &\mystrut f_\theta & \tau:\eta\mapsto 2\eta\ \qquad& \text{on a graph}\mystrut \\ \hline
   \begin{array}{r}  \text{ entropies}\\ \text{on\  trees} \end{array} &&h(H_\theta)  &\mystrut h(\mathcal{H}_\theta) &&\mystrut  \\
  \begin{array}{r}  \text{ dimensions}\\ \text{ on trees} \end{array}& & \HD (H_\theta) & & &\mystrut  \\
 \begin{array}{r}  \text{dimensions}\\ \text{on bi-acc sets} \end{array}&\begin{array}{l}  \HD (L_\theta)\\ \HD (\NE\setminus\De) \end{array}  &\HD(\text{\rm B}_\theta)&& \HD (\text{\rm B}^{\mathbb{S}}_\theta)&\mystrut \\ \hline
\text{algorithm}&\multicolumn{4}{c}{} & \rho(A_\theta)\mystrut \\
\hline
\end{array}$$
\vspace{0.5cm}

\begin{theorem}\label{Main}
For a rational angle $\theta\in \mathbb{S}$, all the quantities in the above table (as well as several other quantities detailed in the proof)  are related by
$$\log \rho(A_\theta) = {h(\cdot)}={\HD(*)}\cdot \log 2.$$
\end{theorem}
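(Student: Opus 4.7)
The proof proceeds by establishing a chain of equalities, with the central pivot being Theorem~\ref{Thurston-algorithm}, which already gives $\log\rho(A_\theta) = h(\mathcal{H}_\theta, f_\theta)$. The remaining task is to transport this quantity to all the other entries of the table: first to the combinatorial entropy $h(H_\theta, F)$ via a semiconjugacy on the torus, and then to the various Hausdorff dimensions via a Bowen-type formula exploiting the uniform expansion rate~$2$ of both $F$ and $\tau$.

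First I would transport the core entropy to the torus. Define the landing map $\Psi \colon H_\theta \to \mathcal{H}_\theta$ by $\Psi(x,y) = \gamma(x) = \gamma(y)$ for $(x,y)\in X_\theta$ and $\Psi(p,p) = \gamma(p)$ for $(p,p)\in P_\theta$. By Proposition~\ref{polynomial-lamination}, $L_\theta$ is the polynomial lamination, so the equality $\gamma(x)=\gamma(y)$ holds for every $(x,y)\in X_\theta$, and by continuity of the Carath\'eodory extension of $\phi_{U(\infty)}$ the map $\Psi$ is continuous. For a postcritically finite quadratic only finitely many external rays land at each Julia point, so $\Psi$ is finite-to-one; its image contains $\mathcal{P}_f$ and all separating ray-pair landing points, which gives the Hubbard tree $\mathcal{H}_\theta$ (cf.\ Proposition~\ref{attractor}). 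The relation $\Psi\circ F = f_\theta\circ\Psi$ is immediate from $\tau = z\mapsto z^2$ on angles, and Proposition~\ref{Do4} of Appendix~A then yields $h(H_\theta,F) = h(\mathcal{H}_\theta, f_\theta)$.

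Next I would pass from entropies to Hausdorff dimensions via Bowen's formula in its simplest guise. Both $F$ and $\tau$ are piecewise affine with constant expansion factor $2$, so any invariant compact set built by the backward-lifting construction of Section~4 is covered at generation $n$ by rectangles of sidelength $2^{-n}$ whose number is controlled by $\|A_\theta^n\|\asymp \rho(A_\theta)^n$. A standard Moran-type computation therefore gives both
\[
h(H_\theta, F) = \log\rho(A_\theta), \qquad \HD(H_\theta) = \frac{\log\rho(A_\theta)}{\log 2},
\]
and the same argument applied on the circle to $B_\theta^{\mathbb{S}}$ yields $h(B_\theta^{\mathbb{S}},\tau) = \HD(B_\theta^{\mathbb{S}})\cdot\log 2$. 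The core point is that the matrix $A_\theta$ simultaneously counts the cylinders used for the entropy estimate and for the dimension estimate.

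Then I would identify the dimensions of the various sets. By Proposition~\ref{attractor}, $X_\theta$ is an attracting core of $L_\theta$, so $L_\theta = \bigcup_{n\ge 0} F^{-n}(X_\theta)$; each branch of $F^{-n}$ is a $2^{-n}$-contraction, hence a countable union of bi-Lipschitz copies preserves Hausdorff dimension and gives $\HD(L_\theta) = \HD(X_\theta) = \HD(H_\theta)$. Since $L_\theta$ is the polynomial lamination (adjacent ray pairs), $B_\theta$ differs from $L_\theta$ by only countably many ray-landing points of multiplicity $>2$, so $\HD(B_\theta)=\HD(L_\theta)$. The projection $\pi\colon B_\theta \to B_\theta^{\mathbb{S}}$, $(\beta,\eta)\mapsto\beta$, is $1$-Lipschitz with fibers of uniformly bounded cardinality (bounded by the maximal ray multiplicity, finite for a p.c.f.\ map), giving $\HD(B_\theta)=\HD(B_\theta^{\mathbb{S}})$ via the standard Lipschitz graph/projection arguments. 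The entry $\HD(\mathit{NE}\setminus\Delta)$ is handled in the same framework once $\mathit{NE}$ is introduced in the next subsection: it is another $F$-invariant compact set coded by the same matrix $A_\theta$ (or a subgraph sharing the Perron-Frobenius block), so Bowen's formula gives the same value.

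The main obstacle is the clean execution of Bowen's formula when the subshift coded by $A_\theta$ need not be irreducible and its limit set may accumulate on the diagonal $\Delta$. One must decompose $A_\theta$ into its Perron-Frobenius block (which carries $\rho(A_\theta)$) and transient blocks, check that the geometric constructions (cluster sets of $\mathrm{Pre}_\theta$, attracting-core passage from $X_\theta$ to $L_\theta$, projection $B_\theta\to B_\theta^{\mathbb{S}}$) respect this decomposition, and verify that accumulation onto $\Delta$ contributes only a set of smaller dimension. Once these technical points are in place, the uniform expansion constant $2$ of both $F$ and $\tau$ makes the equality $h=\HD\cdot\log 2$ a direct counting estimate using the powers of $A_\theta$, and the entire chain closes up to give the stated result.
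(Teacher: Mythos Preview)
Your overall architecture matches the paper's: pivot on Theorem~\ref{Thurston-algorithm}, transport to $h(H_\theta,F)$ via a finite-to-one semiconjugacy, then convert entropies to dimensions using the constant expansion rate~$2$. However, two steps contain genuine gaps, and one block is argued quite differently in the paper.

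First, in the periodic case your map $\Psi$ does not hit $\mathcal{P}_f$: the landing point $\gamma(p)$ of a periodic postcritical angle lies on the \emph{boundary} of a bounded Fatou component, not at its center, so the image of $\Psi$ is contained in $\mathcal{H}_\theta\cap\mathcal{J}_f$, not $\mathcal{H}_\theta$. The paper handles this by applying Proposition~\ref{Do4} to the semiconjugacy onto $\mathcal{H}_\theta\cap\mathcal{J}_f$ and then using Propositions~\ref{Do2} and~\ref{Do3} to show that the Fatou part contributes zero entropy. Second, and more seriously, your assertion that $H_\theta$ is covered at generation~$n$ by $\asymp\|A_\theta^n\|$ rectangles is neither justified nor needed. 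The matrix $A_\theta$ is the transition matrix for the auxiliary graph $(G,L)$ on the \emph{complete} graph over postcritical angles, not for a Markov partition of $H_\theta$; its only link to $H_\theta$ is the indirect chain $A_\theta\to(G,L)\to\mathcal{H}_\theta\to H_\theta$ already built from semiconjugacies. The paper bypasses this entirely by invoking the general Furstenberg-type Lemma~\ref{dimension-formula}, which gives $h(K)=\HD(K)\cdot\log 2$ for \emph{any} compact $F$- or $\tau$-invariant set, with no mention of $A_\theta$.

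For $\HD(L_\theta)=\HD(\mathit{NE}\setminus\Delta)=\HD(B_\theta)$, the paper does not argue that these sets are ``coded by the same matrix'': instead Proposition~\ref{non-escape} shows they are pairwise equal up to countable sets, whence equal dimension is immediate. Likewise, the paper does not use your projection argument $B_\theta\to B_\theta^{\mathbb{S}}$; it obtains $\HD(B_\theta^{\mathbb{S}})\cdot\log 2=h(\mathcal{H}_\theta,f_\theta)$ by citing Tiozzo's attracting-core argument on the circle (angles landing on the Hubbard tree) together with Lemma~\ref{dimension-formula} and the ray-landing semiconjugacy. Your route via bounded-fiber Lipschitz projections could be made to work, but it requires care you have not supplied, and the paper's approach avoids it.
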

\begin{proof}  We will establish the equalities following the schema:
\[\begin{array}{ll} & \log  \rho(A_\theta)\mystrut \\
&\ \Big\| \mystrut \text{\ by Thm \ref{Thurston-algorithm}}\\
 h(H_\theta,F)= \mystrut  & h(\mathcal{H}_\theta, f_\theta) \qquad (\text{entropies})  \\
 \  \Big\uparrow\mystrut \cdot  \log 2   &\  \Big\uparrow\mystrut \cdot  \log 2\\
 \HD(H_\theta) \mystrut  & {\HD(\text{\rm B}^{\mathbb{S}}_\theta)}  \qquad (\text{dimensions})  \\
\ \mystrut \Big\| \\
 \multicolumn{2}{l}{{\HD(L_\theta)} ={\HD(\NE\setminus\De)}=\mystrut\HD(\text{\rm B}_\theta) }\\
  \multicolumn{2}{c} {(\text{by Prop. \ref{non-escape} below })}
\end{array}\]

For simplicity set $f=f_\theta$ and $\mathcal{H}=\mathcal{H}_\theta$.
 \begin{itemize}
 \item  $h(H_\theta,F)=h(\mathcal{H}, f)$.

By Proposition \ref{compact-invariant},   the set $  H_\theta $ is  compact $F$-invariant.
 By Proposition \ref{polynomial-lamination} the set $L_\theta$ is equal to the polynomial lamination of $ f$. By the definition of $  H_\theta$,  there is a continuous surjective and finite-to-one map $\varphi:   H_\theta\to \mathcal{H}\cap \mathcal{J}$ realizing a semi-conjugacy: $ f\circ \varphi=\varphi\circ F$.
By Proposition \ref{Do4}, we have that 
$$h(  H_\theta,F)=h(\mathcal{H}\cap \mathcal{J}, f).$$ If $\theta$ is preperiodic,
$\mathcal{H}\cap \mathcal{J}=\mathcal{H}$ so 
$$h(  H_\theta,F) =h(\mathcal{H}, f).$$
If $\theta$ is periodic,   the points in $\mathcal{H}\setminus \mathcal{J}$  are attracted by the critical orbit $\mathcal{P}_f$, on which $ f$ has $0$ entropy.
So \begin{align*} h(\mathcal{H}, f) &= h\Big( (\mathcal{H}\cap \mathcal{J})\cup \mathcal{P}_f , f\Big)\text{\  \ by Proposition \ref{Do3}}\\
&= \max\Big( h(\mathcal{H}\cap \mathcal{J}, f), h(\mathcal{P}_f , f)\Big)\text{\  \ by Proposition \ref{Do2}}\\
&=h(\mathcal{H}\cap \mathcal{J}, f) = h(  H_\theta,F)\text{\  \ by Proposition \ref{Do4}}\end{align*}
Note that this entropy is also equal to 
$$ h(\text{\rm cluster}(S_\theta),F),$$  as $\text{\rm cluster}(S_\theta)$ and $H_\theta$  differ by a finite set (Proposition \ref{compact-invariant}).
\item $\HD(H_\theta)\cdot \log 2=h(H_\theta,F)$.

This follows directly from  Proposition \ref{compact-invariant} and Lemma \ref{dimension-formula}.

\item ${h(\mathcal{H}, f)} ={\HD(\text{\rm B}^{\mathbb{S}}_\theta)}\cdot \log 2$.

This fact was first noticed by Tiozzo \cite{Ti1,Ti2}. It is based on the fact that the set $\text{\rm B}^{\mathbb{S}}_\theta$ has an attracting core
which is the set $H^{\mathbb{S}}_\theta $ of angles  whose rays land at the Hubbard tree, so has the same dimension. One can then relate the dimension of $H^{\mathbb{S}}_\theta$ to entropy of $\tau$ on $H^{\mathbb{S}}_\theta$ by Lemma \ref{dimension-formula}, and then relates to the entropy on the Hubbard tree via the ray-landing semi-conjugacy which is finite-to-one (so one can apply Proposition  \ref{Do4}).

\item $ \HD(L_\theta)= \HD(X_\theta)$  since $X_\theta$ is an attracting core of $L_\theta$ (Proposition \ref{attractor}).

\item $\HD(L_\theta)={\HD(\NE\setminus\De)}=\HD(\text{\rm B}_\theta)$, see Proposition \ref{non-escape} below.
 \end{itemize} \end{proof}

\subsection{Non-escaping sets on the torus}\label{sec:no-escaping}

Let us fix  an angle $\theta\in \mathbb{S}$ (not necessarily rational). We define  two invariant subsets mimicking
filled Julia sets  as follows:

\medskip

{\bf\noindent Definition} (non-escaping sets)\textbf{.}
Set inductively   \begin{align*} \Omega _0 &:=\text{ the interior of the good region $G\Big(\dfrac\theta 2,\dfrac{\theta+1}2\Big)$}\\
 \Omega _{i+1} &:=F^{-1}(\Omega _i )\cap \Omega _0 \\
\NE_1(\theta)&:=\bigcap_{n\geq0}\Omega _n\\
\NE_2(\theta)&:=\bigcap_{n\geq0}\overline \Omega _n
\end{align*}
In other words, $\NE_1(\theta)$ consists of the points in $\mathbb{T}$ whose orbits never escape  $\Omega _0$.

\begin{lemma}
We have $\De\subset \overline{\NE_1}(\theta)\subset \NE_2(\theta)$.
\end{lemma}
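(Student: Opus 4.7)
The plan is to handle the two inclusions separately, starting with the easier second one and then working on the density argument for the first.

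For the inclusion $\overline{\NE_1(\theta)}\subset \NE_2(\theta)$, I would simply use the monotone definition. Since $\NE_1(\theta) = \bigcap_{n\ge 0}\Omega_n\subset \Omega_n$ for each $n$, taking closures yields $\overline{\NE_1(\theta)}\subset \overline{\Omega_n}$ for every $n$, and intersecting over $n$ gives exactly $\NE_2(\theta)$. No more than a couple of lines is needed.

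For $\De\subset \overline{\NE_1(\theta)}$ the idea is to show that a dense subset of the diagonal lies in $\NE_1(\theta)$. First I would verify that $\De\cap \Omega_0 = \De\setminus\{(\theta/2,\theta/2),((\theta+1)/2,(\theta+1)/2)\}$: the good region $G(\theta/2,(\theta+1)/2)$ is the union of two closed squares on the torus whose common corners are precisely $(\theta/2,\theta/2)$ and $((\theta+1)/2,(\theta+1)/2)$, and the diagonal passes through the open interior of each square, so taking interiors deletes only these two corner points. Next, since $F$ preserves the diagonal ($F(x,x)=(\tau(x),\tau(x))$), a diagonal point $(x,x)$ lies in $\Omega_n$ for every $n$ iff $\tau^n(x)\notin\{\theta/2,(\theta+1)/2\}$ for all $n\ge 0$. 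The set of ``bad'' $x$ — those whose $\tau$-orbit hits $\{\theta/2,(\theta+1)/2\}$ — is a countable union of finite sets, hence countable, so its complement $D\subset \mathbb{S}$ is dense in $\mathbb{S}$. Then $\{(x,x):x\in D\}\subset \NE_1(\theta)$, and since this set is dense in $\De$, we conclude $\De\subset \overline{\NE_1(\theta)}$.

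No step here is really a serious obstacle; the main conceptual point that needs to be checked carefully is the geometric claim that the interior of $G(\theta/2,(\theta+1)/2)$ contains the diagonal except for the two corners. Once that is settled, the rest is just combining $F$-invariance of $\De$ with the countability of preimages under $\tau$. I would present the argument in this order: (i) the easy inclusion via closures and monotonicity; (ii) the description of $\De\cap \Omega_0$; (iii) the inductive/$\tau$-orbit argument producing the dense subset of $\De$ inside $\NE_1(\theta)$.
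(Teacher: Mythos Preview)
Your proposal is correct and follows essentially the same approach as the paper's proof. The paper compresses your argument into a few lines: it observes that $\De\setminus\Omega_n$ is finite for each $n$, hence $\De\setminus\NE_1$ is countable and $\De\subset\overline{\NE_1}$, and declares the inclusion $\overline{\NE_1}\subset\NE_2$ obvious; your write-up simply unpacks these claims (identifying the two removed diagonal points explicitly and spelling out the $\tau$-orbit / $F$-invariance reasoning).
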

\begin{proof} We omit  $\theta$ for simplicity.

For each $n\geq0$, the diagonal $\De$ is contained in $ \overline{\Omega _n}\mbox{}$, and  the set  $\De\setminus \Omega _n\mbox{}$ is  finite. It follows that $\De\subset \NE_2\mbox{}$ and $\De\setminus \NE_1\mbox{}$ is a countable set and hence $\De\subset \overline{\NE_1}\mbox{}$. The inclusion $\overline{\NE_1}\mbox{}\subset \NE_2\mbox{}$ is obvious.   \end{proof}


For $\theta\not=0$, the diameter $\overline{\dfrac{\theta}{2}\dfrac{\theta+1}{2}}$ subdivides the unit circle into two half open halves. Denote by $S_0$ the half circle containing the angle $0$ and by $S_1$ the other one. Set
\[\text{\rm Pre}^{\mathbb{S}}_\theta=\bigcup_{n\geq1}\tau^{-n}(\theta).\]
For any angle $\alpha\in\R/\Z\setminus\text{\rm Pre}^{\mathbb{S}}_\theta$, we can assign a sequence $\epsilon_0\epsilon_1\ldots\in \{0,1\}^{\text{\small $\N$}}$ such that $\epsilon_i=\delta$ if and only if $\tau^i(\alpha)\in S_\delta$ ($\delta=0$ or $1$). This sequence is
said to be  the \emph{itinerary of $\alpha$ relative to $\theta$}, denoted by $\iota_\theta(\alpha)$.

Let $\theta$ be a rational angle. There is a parallel description  in the dynamical plane of $ f: z\mapsto z^2+c_\theta$.

 If $\theta$ is preperiodic,   the external rays $\mathcal{R}\left(\frac{\theta}{2}\right)$ and $\mathcal{R}\left(\frac{\theta+1}{2}\right)$ both land at the critical point $0$. In this case, we set
\[\mathcal{R}\left(\frac{\theta}{2},\frac{\theta+1}{2}\right) := \mathcal{R}\left(\frac{\theta}{2}\right)\cup\{0\}\cup \mathcal{R}\left(\frac{\theta+1}{2}\right).\]
This set is called the \emph{major cutting line}.

If $\theta$ is periodic, say of period $p$,  the rays $\mathcal{R}\left(\frac{\theta}{2}\right)$ and $\mathcal{R}\left(\frac{\theta+1}{2}\right)$ support the Fatou component $U$ that contains $0$. The period of $U$ is also $p$. The first return map $f^p: \overline U\to \overline U$ is conjugated to the action of $z^2$ on $\overline \D$, and
thus admits a unique fixed point on the boundary, called the root of $U$.  This root point has internal angle $0$, and its unique $f^{p}$-preimage on $\partial U$
has internal angle $1/2$.
Denote by $r_U(0)$ and $r_U(1/2)$ the corresponding internal rays in $U$.  They  land at $\g\left(\frac{\theta}{2}\right)$, $\g\left(\frac{\theta+1}{2}\right)$ respectively (or vice versa, depending on which of $\frac{\theta}{2}$, and $\frac{\theta+1}{2}$ is periodic). In this case, we define the \emph{major cutting line} to be
\[\mathcal{R}\left(\frac{\theta}{2},\frac{\theta+1}{2}\right):=\mathcal{R}\left(\frac{\theta}{2}\right)\cup \overline {r_U(0)\cup r_U(1/2)} \cup  \mathcal{R}\left(\frac{\theta+1}{2}\right)\]

In  both cases, $\mathcal{R}\left(\frac{\theta}{2},\frac{\theta+1}{2}\right)$ is a simple curve which subdivides the complex plane into two open regions.
Assume $\theta\ne 0$.  Denote by $V_0$ the region containing the ray $\mathcal{R}(0)$ and by $V_1$ the other region. For an angle 
$$\alpha\in\R/\Z\setminus\text{\rm Pre}^{\mathbb{S}}_\theta,$$ it is easy to see that $\iota_\theta(\alpha)=\epsilon_0\epsilon_1\ldots$ if and only if 
$$ f_\theta^{i}(\g_{c_\theta}(\alpha))\subset V_{\epsilon_i},$$ for all $i\geq0$.

Set
\[\text{Pre-cr}=\bigcup_{n\geq1}f^{-n}(\g(\theta))\]
For $z\in \mathcal{J}\setminus \text{Pre-cr}$, the orbit of $z$ does not meet the major cutting line. So we can define its itinerary relative to this cutting line:
\[\iota_\theta(z)=\epsilon_0\epsilon_1\epsilon_2\ldots\qquad\text{if\quad }f^i(z)\in V_{\epsilon_i}\]
It is easy to see that if $z\in \mathcal{J}\setminus \text{Pre-cr}$ has a external angle $\alpha$, then $\iota_\theta(\alpha)=\iota_\theta(z)$.

The following known result will be useful  for us (see Appendix C for a proof):
\begin{lemma}\label{land together}
Let $\theta$ be a rational angle, and let $\alpha\not=\beta\in\R/\Z\setminus\text{\rm Pre}^{\mathbb{S}}_\theta$. Then the rays $\mathcal{R}(\alpha)$ and $\mathcal{R}(\beta)$ land together if and only if $\iota_\theta(\alpha)=\iota_\theta(\beta)$.
\end{lemma}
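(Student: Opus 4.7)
If $\gamma(\alpha) = \gamma(\beta) =: z$, then since $\alpha,\beta \notin \mathrm{Pre}^{\mathbb{S}}_\theta$ the Julia point $z$ lies in $\mathcal{J}_{f_\theta}\setminus\mathrm{Pre\text{-}cr}$, so its entire forward orbit under $f_\theta$ avoids the major cutting line $\mathcal{R}(\theta/2,(\theta+1)/2)$. Consequently each $f_\theta^n(z)$ belongs unambiguously to one of the open regions $V_0, V_1$, and $z$ itself has a well-defined itinerary $\iota_\theta(z) \in \{0,1\}^{\mathbb{N}}$. Using the semi-conjugacy $f_\theta\circ\gamma=\gamma\circ\tau$ together with the observation that a non-precritical angle $\xi$ satisfies $\gamma(\xi)\in V_\delta$ iff $\xi\in S_\delta$, we conclude $\iota_\theta(\alpha)=\iota_\theta(\beta)=\iota_\theta(z)$.

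\textbf{Backward direction.} The plan is to reduce the claim to the injectivity statement that $\iota_\theta : \mathcal{J}_{f_\theta}\setminus\mathrm{Pre\text{-}cr} \to \{0,1\}^{\mathbb{N}}$ is injective. Granted this, since $\gamma(\alpha),\gamma(\beta)$ both lie in $\mathcal{J}_{f_\theta}\setminus\mathrm{Pre\text{-}cr}$ and share an itinerary, they must coincide. To establish injectivity, suppose $z_1\neq z_2$ in $\mathcal{J}_{f_\theta}\setminus\mathrm{Pre\text{-}cr}$ share the common itinerary $\epsilon=\epsilon_0\epsilon_1\ldots$, and form the nested compact cylinder sets
\[
C_n \;:=\; \bigcap_{k=0}^{n-1} f_\theta^{-k}\!\bigl(\overline{V_{\epsilon_k}}\bigr)\cap \mathcal{J}_{f_\theta},
\]
each of which contains $\{z_1,z_2\}$. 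The goal is to show $\operatorname{diam}(C_n)\to 0$, yielding the contradiction $z_1=z_2$.

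This diameter shrinkage is the crux of the proof. I would invoke the fact that a postcritically finite quadratic polynomial is uniformly expanding on (a neighborhood of) its Julia set with respect to the Thurston orbifold metric: in the periodic case $f_\theta$ is hyperbolic and the standard hyperbolic expansion on $\mathcal{J}_{f_\theta}$ suffices, while in the preperiodic Misiurewicz case the orbifold metric compensates for the preperiodic critical orbit. Combined with the univalence of the single-valued inverse branches of $f_\theta^{n}$ on each simply connected region $V_\delta$, this gives exponential shrinkage of the pulled-back cylinders, and hence of $C_n$.

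The main obstacle I anticipate is handling the periodic case cleanly. There the major cutting line includes the internal rays $\overline{r_U(0)\cup r_U(1/2)}$ through the critical Fatou component, so $V_0$ and $V_1$ cut through the interior of $\mathcal{K}_{f_\theta}$ rather than giving a partition pinched at a single Julia point. One has to verify that, despite this, the Julia portion $C_n\cap\mathcal{J}_{f_\theta}$ still shrinks; this suffices since $z_1,z_2$ are Julia points. Carrying out this verification carefully, and spelling out how the orbifold expansion interacts with the Markov structure induced by the cutting line, is what I expect Appendix C to accomplish.
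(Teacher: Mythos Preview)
Your outline is correct and matches the paper's approach: the forward direction is identical, and the backward direction in both cases rests on uniform expansion (hyperbolic metric on a compact collar $\mathcal{L}$ of $\mathcal{J}$ in the periodic case, orbifold metric in the preperiodic case) to force points sharing the first $n$ itinerary symbols to be within $C\lambda^{-n}$ of each other. The specific device the paper uses for the periodic-case obstacle you flag is arc-lifting rather than cylinder sets: it connects $f^n(z)$ and $f^n(w)$ by a bounded-length arc in $\mathcal{L}$ chosen to avoid the forward images $\mathcal{R}(P_\theta)$ of the cutting line, then lifts this arc back $n$ times through the correct side of the cutting line to conclude $d(z,w)\le C\lambda^{-n}$.
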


\begin{corollary}\label{separate}
For $z_1,z_2\in \mathcal{J}\setminus \text{Pre-cr}$, then $z_1\not=z_2$ if and only if $\iota_\theta(z_1)\not=\iota_\theta(z_2)$.
\end{corollary}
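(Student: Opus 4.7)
The plan is to reduce the claim to Lemma~\ref{land together} by transferring statements about Julia points to statements about their external angles. The equivalence $z_1 \neq z_2 \iff \iota_\theta(z_1) \neq \iota_\theta(z_2)$ is the contrapositive of $z_1 = z_2 \iff \iota_\theta(z_1) = \iota_\theta(z_2)$, which is what I will prove.

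The forward implication is immediate: the itinerary $\iota_\theta(z)$ is defined purely in terms of the orbit of $z$ and its position relative to the major cutting line, so if $z_1 = z_2$ then automatically $\iota_\theta(z_1) = \iota_\theta(z_2)$ (and this is well-defined precisely because $z_i \notin \text{Pre-cr}$, so the forward orbit of $z_i$ never meets the cutting line).

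For the non-trivial direction, suppose $\iota_\theta(z_1) = \iota_\theta(z_2)$. Since $z_1, z_2 \in \mathcal{J}$ and $\mathcal{J}$ is locally connected (by Douady--Hubbard for $f = f_\theta$ postcritically finite), each $z_i$ is the landing point of at least one external ray; pick angles $\alpha_i \in \R/\Z$ with $\gamma(\alpha_i) = z_i$. First I would verify that $\alpha_i \notin \text{\rm Pre}^{\mathbb{S}}_\theta$: indeed, if $\tau^n(\alpha_i) = \theta$ for some $n \geq 1$ then $f^n(z_i) = \gamma(\theta) $, which would force $z_i \in f^{-n}(\gamma(\theta)) \subset \text{Pre-cr}$, contrary to hypothesis. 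Next I would invoke the remark stated just before the corollary: since $z_i \in \mathcal{J} \setminus \text{Pre-cr}$ has external angle $\alpha_i$, we have $\iota_\theta(\alpha_i) = \iota_\theta(z_i)$. Therefore
\[
\iota_\theta(\alpha_1) = \iota_\theta(z_1) = \iota_\theta(z_2) = \iota_\theta(\alpha_2).
\]
If $\alpha_1 = \alpha_2$ then $z_1 = z_2$ and we are done. If $\alpha_1 \neq \alpha_2$, Lemma~\ref{land together} applies (both $\alpha_i$ lie in $\R/\Z \setminus \text{\rm Pre}^{\mathbb{S}}_\theta$ by the previous paragraph), yielding that $\mathcal{R}(\alpha_1)$ and $\mathcal{R}(\alpha_2)$ land at a common point; hence $z_1 = \gamma(\alpha_1) = \gamma(\alpha_2) = z_2$.

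The only real verification is the implication $z_i \notin \text{Pre-cr} \Rightarrow \alpha_i \notin \text{\rm Pre}^{\mathbb{S}}_\theta$, which is where the hypothesis is used, but this is a direct unwinding of definitions via the conjugacy $\gamma \circ \tau = f \circ \gamma$. Everything else is a routine application of Lemma~\ref{land together} and the itinerary-compatibility remark preceding the corollary.
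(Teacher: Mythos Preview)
Your proof is correct and is exactly the intended argument: the paper states this as a corollary of Lemma~\ref{land together} without a separate proof, and your reduction via choosing external angles $\alpha_i$ for the $z_i$, verifying $\alpha_i\notin\text{\rm Pre}^{\mathbb{S}}_\theta$ from $z_i\notin\text{Pre-cr}$, and invoking $\iota_\theta(\alpha_i)=\iota_\theta(z_i)$ is precisely how the corollary follows.
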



\begin{proposition}\label{non-escape}
For $\theta$ a rational angle, the four sets $L_\theta$, $\NE_1(\theta)\setminus\De$,  $\NE_2(\theta)\setminus\De$ and $\text{\rm B}_\theta$ are pairwise different by a set that is at most countable. Consequently, they have the same dimension.
\end{proposition}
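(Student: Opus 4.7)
The plan is to show pairwise that these four sets differ by countable sets; since countable sets have Hausdorff dimension zero, the equality of dimensions follows immediately. The central observation is that $\NE_1\setminus\De$ and $\text{\rm B}_\theta$ should essentially coincide, as both encode ``pairs of angles whose rays land together.'' I then relate $L_\theta$ to $\text{\rm B}_\theta$ by removing non-adjacent pairs and relate $\NE_2$ to $\NE_1$ via a directional-approximation argument.

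For $L_\theta$ versus $\text{\rm B}_\theta$: by Proposition \ref{polynomial-lamination}, $L_\theta$ equals the polynomial lamination of $f_\theta$, whose elements are by definition the \emph{adjacent} angle-pairs. Hence $\text{\rm B}_\theta \setminus L_\theta$ consists of non-adjacent pairs, which arise only at Julia points admitting three or more external rays. For a postcritically finite quadratic, such branch points lie in the backward orbit of finitely many periodic branch points together with that of the critical value (in the preperiodic case), which is a countable set; since each point receives only finitely many rays, only countably many non-adjacent pairs appear.

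For $\NE_1\setminus\De$ versus $\text{\rm B}_\theta$: let $(\beta,\eta)$ with $\beta\ne\eta$ and neither in $\text{\rm Pre}^{\mathbb{S}}_\theta$. Then $F^n(\beta,\eta)\in\Omega_0$ for every $n$ iff $\tau^n\beta$ and $\tau^n\eta$ lie in the same open half-circle for every $n$, i.e.\ $\iota_\theta(\beta)=\iota_\theta(\eta)$; by Lemma \ref{land together}, this is equivalent to $(\beta,\eta)\in\text{\rm B}_\theta$. On the exceptional set where some coordinate lies in $\text{\rm Pre}^{\mathbb{S}}_\theta\cup\{\theta/2,(\theta+1)/2\}$, $\NE_1$ is empty (the orbit is forced onto $\partial\Omega_0$), while $\text{\rm B}_\theta$ contributes only countably many pairs, since each pre-critical landing point carries only finitely many rays. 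Thus $(\NE_1\setminus\De)\triangle \text{\rm B}_\theta$ is countable.

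The main obstacle is $\NE_2\setminus\NE_1$, since $\overline{\Omega_n}$ a priori might contain Cantor-type limit sets. The key observation is that membership in $\overline{\Omega_n}$ requires approximation by points of the \emph{open} set $\Omega_n$, which imposes directional constraints all along the orbit. Any $(x,y)\in\NE_2\setminus\NE_1$ has its orbit meeting $\partial\Omega_0$, so some coordinate---say $x=x_0$---lies in the countable set $E=\text{\rm Pre}^{\mathbb{S}}_\theta\cup\{\theta/2,(\theta+1)/2\}$. Given any approximating sequence $(x_k,y_k)\in\Omega_n$ with $(x_k,y_k)\to(x_0,y)$, passing to a subsequence fixes the sign of $x_k-x_0$, and a diagonal extraction shows that a single such sign works simultaneously for all~$n$. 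This sign determines, for each $j$ with $\tau^j(x_0)\in\{\theta/2,(\theta+1)/2\}$, from which side $\tau^j(x_k)$ approaches, hence which open half-circle contains $\tau^j(x_k)$ and therefore which closed half-circle must contain $\tau^j(y)$. Taking $n\to\infty$, the itinerary $\iota_\theta(y)$ is completely determined (with only two possible outputs, one per approach side), and by Lemma \ref{land together} each such itinerary pins $y$ down to a single landing point carrying only finitely many rays. Thus $\{y:(x_0,y)\in\NE_2\}$ is finite, and summing over $x_0\in E$ together with the symmetric case $y_0\in E$ yields that $\NE_2\setminus\NE_1$ is countable. Combining the three comparisons, the four sets are pairwise equal modulo countable sets and hence share the same Hausdorff dimension.
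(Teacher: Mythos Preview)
Your overall strategy matches the paper's: compare the four sets pairwise (or to a common intermediate) and check that each symmetric difference is countable. Your treatment of $L_\theta$ versus $\text{\rm B}_\theta$ and of $\NE_1\setminus\De$ versus $\text{\rm B}_\theta$ is essentially the paper's argument, except that the paper routes both through the auxiliary set $Z=\{\text{angle-pairs at points of }\mathcal{J}\setminus\text{Pre-cr}\}$ and observes that $\NE_1\setminus\De$ is \emph{exactly} $Z$. One small slip: a non-adjacent angle-pair forces the landing point to receive at least \emph{four} rays, not three (any two of three rays are adjacent); this does not affect your countability conclusion.

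Where you genuinely diverge is the step $\NE_2\setminus\NE_1$. The paper's argument is analytic: it picks $(x_n,y_n)\in\Omega_n$ converging to $(x,y)$, pushes this to the dynamical plane, and uses uniform expansion in the orbifold metric to force $\gamma(x)=\gamma(y)$; since one coordinate lies in $\text{\rm Pre}^{\mathbb S}_\theta$, the common landing point is pre-critical, and pre-critical angle-pairs are countable. Your argument is purely symbolic: you fix $x_0\in E$, extract a one-sided approximating sequence, and read off a forced itinerary $s_\pm$ for $y$, concluding via Lemma~\ref{land together} that only finitely many $y\notin E$ qualify. This is a legitimate alternative and has the virtue of avoiding the orbifold metric entirely, staying within the combinatorial framework of itineraries. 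The cost is that you must manage the diagonal extraction carefully (a single sign working for all $n$) and treat separately the case $y\in E$, which you elide; that case is harmless since $E\times E$ is countable, but it should be said. You should also note explicitly that at \emph{non}-boundary times $j$ the symbol $\epsilon_j$ is determined by $\tau^j(x_0)$ itself, so that together with the boundary-time symbols fixed by the approach side, the full itinerary of $y$ is pinned down.
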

\begin{proof} Fix a rational angle $\theta$. Let us introduce
 a new set
\begin{multline*}Z =\{(x,y)\in\mathbb{T}\mid\{x,y\}\textrm{ is an angle-pair (not necessarily adjacent)} \\
 \textrm{ at a point in } \mathcal{J}\setminus \text{Pre-cr} \}.\end{multline*} \\
We will compare this set with the four sets $L_\theta$, $\NE_1 \setminus\De$, $\NE_2 \setminus\De$ and $\text{\rm B}_\theta$. The notation ``a.e'' below on top of the equal sign means that the symmetric difference between the two sets has measure zero.
\begin{enumerate}
\item \textbf{$\NE_1 \setminus\De=Z $}.

By the definition of $\NE_1 $, a point $(x,y)\in \NE_1 \setminus\De\iff x\not= y$ and the orbit of $(x,y)$ stays in $\Omega _0 $ $ \iff x\not=y\text{ and }\iota_\theta(x)=\iota_\theta(y)\overset{\text{Lemma }\ref{land together}}{\iff}\text{the rays }\mathcal{R}(x)$ and $\mathcal{R}(y)$ land at a common point $z\in \mathcal{J} \setminus \text{Pre-cr} \iff (x,y)\in Z $.

\item $\NE_1 \setminus\De\overset{a.e}{=}\NE_2 \setminus\De$.

We already know $\NE_1 \setminus\De\subset \NE_2 \setminus\De$.
Let us fix a point $(x,y)$  in $\NE_2 \setminus (\NE_1 \cup\De)$. By the definition  of $\NE_2 $ and $\NE_1 $, we have that $$(x,y)\in\left(\bigcap_{n\geq0}\overline{\Omega}_n \right)\setminus\left(\bigcap_{k\geq0}\Omega_k \right)$$ so $$ (x,y)\in \overline{\Omega}_n \ \ \forall\, n\text{\ \ and\ \ } (x,y)\notin  \Omega_N \text{\  for some $N\ge 0$}.$$ But $\Omega_k $ is decreasing with respect to $k$ (it is the set of points that remain in $\Omega_0 $ up to $k$ iterates). So
  for  every $n\ge N$, 
  $$(x,y)\in\overline{\Omega}_n \setminus\Omega_n =\partial \Omega_n.$$
    We can then select a point $(x_n,y_n)\in \Omega _n $ such that $\{(x_n,y_n),n\geq1\}$ converges to $(x,y)$ as $n$
goes to infinity. Set
\[z_n=\g(x_n),\quad w_n=\g(y_n),\quad z=\g(x),\quad w=\g(y).\]
We obtain that
\[\lim_{n\rightarrow\infty}z_n=z \quad\text{and}\quad \lim_{n\rightarrow\infty}w_n=w .\]
It is known that $ f_\theta$ is uniformly expanding on a  neighborhood  of $\mathcal{J}$ with respect to the orbiford metric of $f_\theta$ (see \cite{DH,Mc,Mil}). Since $(x_n,y_n)\in \Omega _n $, the first $n$-entries of $\iota_{\theta}(x_n)$ and $\iota_\theta(y_n)$ are identical. By the argument in the proof of Lemma \ref{land together}, the orbifold distance of $z_n$ and $w_n$ is less than $C\lambda^{-n}$ for some constant $C$ and $\lambda>1$. It follows
that $z=w$ and thus $(x,y)$ is an angle-pair.

 On the other hand, by an inductive argument, one can see that for a point $(p,q)\in\partial \Omega_n$,  either $p$ or $q$ belongs to the set $\bigcup_{i=1}^{n+1}\tau^{-i}(\theta)$.
So, in our case, either $x$ or $y$ belongs to $ \text{\rm Pre}^{\mathbb{S}}_\theta$. It follows that $$\g(x)=\g(y)\in \text{Pre-cr}.$$ It is known that the  angle-pairs  at points in Pre-cr   form a countable set, so we obtain that $\NE_1 \setminus\De\overset{a.e}{=}\NE_2 \setminus\De$.

 \item $L_\theta\overset{a.e}{=}Z $.

 By Proposition \ref{polynomial-lamination}, the set $L_\theta$ is the lamination of $ f_\theta$. Then a point $(x,y)\in  L_\theta\setminus Z$ represents an angle-pair at a common point of $\text{Pre-cr}$, so 
 $$\text{\rm cluster}(\text{\rm Pre}_\theta)\setminus(\De\cup \setminus Z )$$ is a countable set. On the other hand, the set $Z \setminus L_\theta$ consists of angle-pairs that are not adjacent and that land
 at non-precritical points. Then the landing points each
  receives at least 4 external rays and the number of rays remain constant under forward iteration. But each point receiving  at least two rays
  will be mapped eventually  into the Hubbard tree and stay there. So a landing point of angle-pairs in $Z \setminus L_\theta$
  is mapped eventually  to a branching point of the Hubbard tree. As the tree is finite, there are only finitely many branching points forming finitely
  many forward invariant orbits. So the set of such landing points is countable, and each point receives finitely many ray-pairs. It follows that
  the set $Z \setminus L_\theta$  is countable.

 \item Finally, $\text{\rm B}_\theta\supset Z $ and $\text{\rm B}_\theta\setminus Z $ is countable as it concerns only angle-pairs whose rays land at the countable set Pre-cr.
\end{enumerate}
\end{proof}



\appendix
\section{Basic results about entropy}

The following can be found in \cite{Do,DV}.
Let $X$ be a compact topological space, $f:X\rightarrow X$  a continuous map.

\REFPROP{Do2}
If $X=X_1\cup X_2$, with $X_1$ and $X_2$ compact, $f(X_1)\subset X_1$ and $f(X_2)\subset X_2$, then 
$$h(X,f)=\sup\bigl(h(X_1,f),h(X_2,f)\bigr).$$ \ENDPROP

\REFPROP{Do3}
Let $Z$ be a closed subset of $X$ such that $f(Z)\subset Z$. Suppose that for any $x\in X$, the distance of $f^n(x)$ to $Z$ tends to $0$ uniformly on any compact set in $X \setminus Z$. Then $h(X,f)=h(Z,f)$. \ENDPROP

\REFPROP{Do4} Assume that $\pi$ is a  surjective semi-conjugacy $$\begin{array}{rcl}Y &\xrightarrow[]{\ q\ }  &Y\\ \pi\Big\downarrow &&\Big\downarrow \pi \vspace{-0.1cm} \\ X &  \xrightarrow[]{\ f\ } & X\vspace{-0.1cm}.\end{array}$$
We have $h(X,f)\leq h(Y,q)$. Furthermore, if  $\ds \sup_{x\in X}\#\pi^{-1}(x)< \infty$, then  $h(X,f)=h(Y,q)$.\ENDPROP

The following can be found in \cite{Fu}.
\begin{lemma}\label{dimension-formula}
Let $A$ be a compact $\tau$-invariant subset in $\mathbb{S}$, resp.\ a compact $F$-invariant subset in $\mathbb{T}$. We have
\[e^{h(A,\tau)}=2^{\HD(A)}\quad\text{resp.}\quad e^{h(A,F)}=2^{\HD(A)}.\]
\end{lemma}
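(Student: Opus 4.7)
The plan is to express both $h(A,\tau)$ and $\HD(A)\log 2$ in terms of a single combinatorial quantity. For each $n$, let $s_n(A)$ denote the number of closed dyadic intervals of length $2^{-n}$ (respectively dyadic squares of side $2^{-n}$ on the torus) that meet $A$. I claim both sides equal $\ell := \lim_n \tfrac{1}{n}\log s_n(A)$, and I would first establish existence of this limit via submultiplicativity $s_{n+m}(A) \leq s_n(A)\cdot s_m(A)$. If $I$ is a dyadic interval of generation~$n$ meeting $A$, then $\tau^n$ sends $I$ affinely onto $\mathbb{S}$ and carries $A\cap I$ into $A$ by $\tau$-invariance; hence at most $s_m(A)$ of the $2^m$ dyadic sub-intervals of $I$ of generation $n+m$ meet~$A$. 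Summing over $I$ gives the inequality, and Fekete's lemma produces the limit $\ell$.

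Next, I would identify $h(A,\tau) = \ell$. Since $\tau$ is locally affine with slope~$2$, for $\epsilon < 1/4$ a Bowen ball $B_n(x,\epsilon)$ is simply the arc $(x-\epsilon 2^{-n},\, x+\epsilon 2^{-n})$. Therefore the minimum cardinality of an $(n,\epsilon)$-spanning set for $A$ is comparable, up to a factor depending only on $\epsilon$, to $s_n(A)$. Taking $\tfrac1n\log$ and letting $\epsilon \to 0$ yields $h(A,\tau) = \ell$.

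Finally, I would identify $\HD(A) = \ell/\log 2$. The upper bound is immediate: the $s_n(A)$ dyadic intervals of generation~$n$ meeting $A$ form a cover showing $\mathcal{H}^s_{2^{-n}}(A) \leq s_n(A)\cdot 2^{-ns} \to 0$ for any $s > \ell/\log 2$. For the lower bound I would invoke the variational principle together with Billingsley's formula, which in this locally-affine setting reduces to $\HD(\mu) = h_\mu(\tau)/\log 2$ for every ergodic $\tau$-invariant probability measure~$\mu$; choosing $\mu$ with metric entropy arbitrarily close to $h_{\mathrm{top}}(A,\tau) = \ell$ exhibits subsets of $A$ of Hausdorff dimension approaching $\ell/\log 2$. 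A classical alternative is Furstenberg's theorem that every closed $\times 2$-invariant subset of $\mathbb{S}$ satisfies $\HD = \overline{d}_B$.

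The torus case is essentially identical: $F(x,y) = (2x,2y)$ is affine with $\|DF\|_\infty = 2$ in the sup-norm metric on $\mathbb{T}$, so Bowen balls are dyadic squares of side $\epsilon\cdot 2^{-n}$; both Lyapunov exponents equal $\log 2$, and the Billingsley/Ledrappier--Young formula still reads $\HD(\mu) = h_\mu(F)/\log 2$ for ergodic $F$-invariant $\mu$. The main obstacle throughout is the lower bound $\HD(A) \geq \ell/\log 2$: without $\tau$- (resp.\ $F$-) invariance the equality $\HD = \overline{d}_B$ genuinely fails, so any rigorous proof must import some measure-theoretic dynamical input like the variational principle, whereas the upper bound and the entropy computation are essentially bookkeeping in terms of dyadic partitions.
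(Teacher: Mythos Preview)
The paper does not actually prove this lemma: it simply records it with the attribution ``The following can be found in \cite{Fu}'' (Furstenberg). So there is no proof in the paper to compare against; your sketch supplies content where the paper defers to the literature.

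Your outline is sound. The submultiplicativity of $s_n(A)$ uses exactly the forward invariance $\tau(A)\subset A$, the identification of Bowen balls with dyadic arcs is correct since $\tau$ has constant derivative~$2$, and the upper box-dimension bound is routine. You are also right that the only nontrivial step is the lower bound $\HD(A)\ge \ell/\log 2$, and that some dynamical input is unavoidable there. It is worth noting that your ``classical alternative'' --- Furstenberg's theorem that closed $\times 2$-invariant sets satisfy $\HD=\overline d_B$ --- is precisely the reference \cite{Fu} the paper cites, so in effect you have reconstructed the paper's intended justification and sketched one further route (variational principle plus the conformal dimension formula $\HD(\mu)=h_\mu/\log 2$). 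For the torus case your remark that both Lyapunov exponents equal $\log 2$ is the key point: $F$ is conformal with constant expansion, so the same entropy--dimension relation holds for invariant measures and the argument goes through unchanged.
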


\begin{definition}[finite connected graph] A { finite connected graph} is a connected topological space $G$ consisting of the union of the finite {vertex set} $V_G$ and the {edge set} $E_G$ with the properties that
\begin{itemize}
\item each edge $e\in E_G$ is homeomorphic to a closed interval and connects two points of $V_G$. These two vertices are called the ends of $e$.\vspace{-2pt}
\item the interior of an edge does not contain the vertices  and two edges can intersect only at their ends.
\end{itemize}
\end{definition}
A \emph{finite tree}  is a finite connected graph without cycles.

Let $T$ be a finite tree. A point $p$ in $T$ is called an \emph{end point} if $T\setminus \{p\}$ is connected and is called a \emph{branching\ point} if  $T\setminus \{p\}$ has at least $3$ connected components.
An important property of the finite tree is that for any two points $p,q\in T$, there is a unique  arc in $T$ that connects $p$ and $q$. This arc is denoted by $[p,q]_T$. For a subset $X\subset T$, denote by $[X]_T$ the connected  hull of $X$ in $T$.

Let $X$ be a finite connected graph. We call $f:X \to X$  a \emph{Markov map} if each edge of $X$ admits a finite
subdivision into segments and $f$ maps each segment continuously monotonically onto  some edge of $X$.

Enumerating the edges of  $X$  by $\g_i$ , $i=1,\cdots,k$, we obtain a \emph{transition matrix} $D_f=(a_{ij})_{k\times k}$ of $(X,f)$
 such that  $a_{ij}=\ell$ ($\ell\geq1$) if $f(\g_i)$ covers $\ell$ times $ \g_i$ and $a_{ij}=0$ otherwise. Note that for different enumerations of the edges, the obtained transition matrices are pairwise similar. Denote by $\rho(D_f)$ the leading eigenvalue of $D_{f}$. By Perron-Frobenius Theorem, $\rho(D_f)$ is a non-negative real number and it is also
the growth rate of $\|D_{f}^n\|$ for any matrix norm.

 Since the matrix  $D_f$ has integer coefficients, we have $\rho(D_f)\ge 1$ unless there exists $n$ such that $D_f^n= 0$ (nilpotent). But the latter case cannot happen because every edge is mapped onto at least one edge. It follows that we have always $\rho(D_f)\ge 1$.

The following is classical:
\begin{proposition}\label{entropy-formula}
The topological entropy $h(X,f)$ is equal to $\log\rho(D_f)$.
\end{proposition}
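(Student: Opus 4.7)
The plan is to reduce the topological-entropy computation to a symbolic one via the edge-cover of $X$ and then extract the growth rate from the transition matrix $D_f$ using Perron--Frobenius. After refining the edge subdivision if necessary, I may assume that $f$ maps each edge $\gamma_i$ monotonically onto a union of edges, so that $\gamma_i \cap f^{-1}(\gamma_j)$ consists of $a_{ij} := (D_f)_{ij}$ closed subintervals with disjoint interiors, and the intersections of distinct edges are contained in the finite vertex set $V_X$.

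For the upper bound $h(X,f)\le \log\rho(D_f)$, I would work with the open cover $\mathcal U$ whose elements are small neighborhoods of the interiors of the edges $\gamma_1,\dots,\gamma_k$ (plus tiny neighborhoods of the vertices, which contribute nothing to the entropy since $V_X$ is finite). An element of the refined cover $\mathcal U \vee f^{-1}\mathcal U \vee \cdots \vee f^{-(n-1)}\mathcal U$ is indexed, up to vertex issues, by an admissible word $(i_0,i_1,\dots,i_{n-1})$ such that $f(\gamma_{i_t})\supseteq \gamma_{i_{t+1}}$ for each $t$, and the number of such words is
\[
N_n \;=\; \sum_{i,j}(D_f^n)_{ij} \;\le\; k^2 \|D_f^n\| \;\le\; C\,\rho(D_f)^{n}(1+o(1))
\]
by Perron--Frobenius. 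Taking $\tfrac 1n \log$ and letting $n\to\infty$ gives $h(X,f)\le \log\rho(D_f)$.

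For the lower bound $h(X,f)\ge \log\rho(D_f)$, the task is to produce $(n,\varepsilon)$-separated sets of the right cardinality. Passing to an irreducible component of the transition graph on which the spectral radius is attained (this is allowed by Proposition \ref{Do2}, which lets one decompose entropy over invariant pieces), I may assume $D_f$ is irreducible with leading eigenvalue $\rho=\rho(D_f)$ and positive Perron eigenvector. For each admissible word $w=(i_0,\dots,i_{n-1})$ the set
\[
\gamma_{i_0}\cap f^{-1}(\gamma_{i_1})\cap\cdots\cap f^{-(n-1)}(\gamma_{i_{n-1}})
\]
is a nonempty closed subinterval of $\gamma_{i_0}$ by the monotone-onto hypothesis, and I can pick one point $x_w$ from each. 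Any two points coming from words that differ in some coordinate lie in disjoint edges at the corresponding time, hence are $\varepsilon$-separated under $f$ for $\varepsilon$ smaller than the minimal edge length. Thus the cardinality of an $(n,\varepsilon)$-separated set is at least $N_n$, which grows like $\rho^n$ by Perron--Frobenius, giving $h(X,f)\ge \log\rho$.

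The main subtlety will be handling vertex overlaps: distinct admissible words can share a chosen point when that point lies in $V_X$ or in the image of the orbit meeting a vertex, and distinct edges meet exactly at $V_X$. I will deal with this by (i) choosing the representative $x_w$ in the interior of the corresponding subinterval, which is possible since $f$ is monotone-onto on each edge and the subintervals are nondegenerate, and (ii) observing that the ambiguity contributes only a polynomial-in-$n$ factor that is absorbed in the $\tfrac 1n \log$ limit. Combining the two inequalities yields $h(X,f)=\log\rho(D_f)$.
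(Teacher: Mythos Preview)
The paper does not prove this proposition; it simply labels it ``classical'' and states it without proof, so there is no approach to compare against. Your argument via symbolic dynamics and Perron--Frobenius is the standard one and is essentially correct.

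One small inconsistency: you write ``After refining the edge subdivision if necessary, I may assume that $f$ maps each edge $\gamma_i$ monotonically onto a union of edges,'' and then immediately say $\gamma_i\cap f^{-1}(\gamma_j)$ has $a_{ij}=(D_f)_{ij}$ components. But if $f$ is monotone on $\gamma_i$ then $\gamma_i\cap f^{-1}(\gamma_j)$ is at most one interval, forcing $a_{ij}\in\{0,1\}$, which is not what the paper's $D_f$ allows. The fix is easy: either drop the refinement and work directly with the Markov segments already guaranteed by the definition (each edge subdivides into pieces that map monotonically onto single edges, and the number of such pieces landing on $\gamma_j$ is exactly $a_{ij}$), or, if you do refine to a $0$--$1$ matrix, add a line observing that the refined matrix has the same spectral radius as $D_f$ because both govern the growth rate of the lap number of $f^n$. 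With that adjustment, both the cover-counting upper bound and the separated-set lower bound go through as you describe.
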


\section{Proof of Proposition \ref{polynomial-lamination}}

For a rational angle $\theta\ne 0$, we want to prove that $L_\theta$, the cluster set in $\mathbb{T}\setminus \De$ of the pre-major lamination is equal to
the polynomial lamination of $f: z\mapsto z^2+c_\theta$.

\begin{proof}
To get a good geometric intuition of an invariant lamination, we choose to use the unit disk model. Let 
$$\wp:\mathbb{T} \to \{\text{closures of hyperbolic geodesics in } {\D}\}$$ map a point $(x,y)$ to   $\overline{xy}$, which is called a \emph{leaf} (or a point in the case $x=y$). Denote, respectively, by $\text{Pre}_\theta^{\D} $ and  $L_\theta^{\D}$ the image of $\text{\rm Pre}_\theta$ and $L_\theta$. For each $i\geq0$, define
 \[L_0 =\wp(b_0 )\qquad\text{and}\qquad L_i =\wp(b_{i} \setminus b_{i-1} ),\quad i\geq1\]
 Then we have 
 $$L_0 =\left\{\overline{\frac{\theta}{2}\frac{\theta+1}{2}}\right\}$$ and $\tau:L_{i+1} \to L_{i} $ is at least $2$ to $1$ (here by abuse of notation we set $\tau (\overline{xy})=\overline{\tau(x)\tau(y)}$.)

The leaf $\overline{\frac{\theta}{2}\dfrac{\theta+1}{2}}$ corresponds in   the dynamical plane to the major cutting line $\mathcal{R}\left(\frac{\theta}{2},\frac{\theta+1}{2}\right)$.

For any $n\geq1$, let $\overline{x_ny_n}$ be a leaf of $L_n $. Set  
$$(x_{n-i},y_{n-i}):=F^{i}(x_n,y_n), \quad i\in[0,n].$$
 Then $\overline{x_iy_i}$ is a leaf of $L_i $. Since each $i$,
the set $\{x_i,y_i\}$  belongs to  $\overline{S_0}$ or $\overline{S_1}$ (recall that $S_0$ denotes the half open circled containing $0$ bounded by $\frac{\theta}{2},\frac{\theta+1}{2}$), and the ray-pair $\mathcal{R}(x_i)\cup\mathcal{R}(y_i)$ belongs to the corresponding  component of $\overline{V_0}$, $\overline{V_1}$. Inductively, it follows that
 \begin{itemize}
 \item If $\theta$ is preperiodic,  each $\{x_{i},y_{i}\}$
 is an angle-pair at a point $z_{i}$ with $ f_\theta(z_i)=z_{i-1}$. In this case, we define a cutting   curve corresponding to $\overline{x_iy_i}$ by
 \[\mathcal{R}(x_i,y_i):=\mathcal{R}(x_i)\cup\{z_i\}\cup \mathcal{R}(y_i)\]
 \item If $\theta$ is periodic,  each  $\{x_{i},y_{i}\}$ is a pair of angles  whose rays support a common Fatou component $U_{i}$, and $f(U_i)=U_{i-1}$. In this case, we define a cutting curve corresponding to $\overline{x_iy_i}$ by
   \[\mathcal{R}(x_i,y_i):=\mathcal{R}(x_i)\cup\overline{ r_{U_i}(\alpha_i)\cup r_{U_i}(\beta_i) } \cup \mathcal{R}(y_i)\]
   where $r_{U_i}(\alpha_i)$ (resp.\ $r_{U_i}(\beta_i)$) is the internal ray in $U_i$ landing at $\g(x_i)$ (resp.\ $\g(y_i)$).
\end{itemize}

 For $i\geq0$, set
 \[\Gamma_i =\left\{\mathcal{R}(x,y)\ \Big|\ \overline{xy}\in L_i \right\}\quad\text{and}\quad \Gamma_\infty =\bigcup_{i\geq0}\Gamma_i \]
 Then there is a natural bijection $l_i:L_i \to\Gamma_i $ which maps a leaf $\overline{xy}\in L_i $ to the curve $\mathcal{R}(x,y)$ and satisfies the commutative diagram:
\begin{equation*} \begin{array}{c c c c}
&L_{i} & \overset{\tau}{\longrightarrow } &L_{i-1}  \\
&&&\\[-4pt]
&l_{i} \Big\downarrow & &\Big \downarrow l_{i-1}  \\
&&&\\[-6pt]
&\Gamma_{i} &\overset{ f_\theta}{\longrightarrow}& \Gamma_{i-1}
\end{array}\end{equation*}

We will talk a little more about the curves in $\Gamma_\infty $ for a periodic angle $\theta$. In this case, there is the unique Fatou component cycle of defgree $2$ which contains the critical point $0$. Then the internal angle for each Fatou component can be uniquely defined. Using the fact that the two internal rays contained in $\mathcal{R}\Big(\frac{\theta}{2},\frac{\theta+1}{2}\Big)$ have angles $0$ and $1/2$, together with the construction of $L_i $ ($i\geq1$), the correspondence between $L_i $ and $\Gamma_i $, and the definition of internal angles of a Fatou component, it is not difficult to check the following results:
\begin{itemize}
\item Every curve $\mathcal{R}(x,y)\in\Gamma_\infty $ passes through a single Fatou component $U$, and the rays $\mathcal{R}(x)$, $\mathcal{R}(y)$ support the component $U$.
\item The two internal rays of $U$ contained in $\mathcal{R}(x,y)$ have angles $\frac{j}{2^n},\frac{j+1}{2^n}$ for some $n\geq1$, $0\leq j\leq 2^{n}-1$.
\item For any Fatou component $U$, any integer $n\geq1$ and $0\leq j\leq 2^n-1$, there exist  a unique curve $\mathcal{R}(x,y)\in\Gamma_\infty $ such that it contains the two internal rays $r_U\left(\frac{j}{2^n}\right)$ and $r_U\left(\frac{j+1}{2^n}\right)$. In fact, there exists a minimal $k$ such that $f_\theta^k$ sends $r_U(\frac{j}{2^n})$ and $r_U(\frac{j+1}{2^n})$ to the internal rays contained in $\mathcal{R}(\frac{\theta}{2},\frac{\theta+1}{2})$. Suppose that $f_\theta^k(r_U(\frac{j}{2^n}))$ is the periodic one. Then $\mathcal{R}(x,y)$ is a lift by $f^k_\theta$ of $\mathcal{R}(\frac{\theta}{2},\frac{\theta+1}{2})$ based at $r_U(\frac{j}{2^n})$.
\end{itemize}

With these preparations, we are ready to prove the Proposition. First, we will show that each  leaf in $L_\theta^{\D} $ represents an adjacent angle-pair.

Let $\overline{xy}$ be a leaf in $L_\theta^{\D} $. Then there exists a sequence of leaves $\overline{x_iy_i}\in L_i $ such that $\{\overline{x_iy_i},i\geq1\}$ converges to $\overline{xy}$ in the Hausdorff topology. Without loss of generality, we may assume that
 \[\lim_{n\rightarrow\infty}x_n=x,\qquad\lim_{n\rightarrow\infty}y_n=y\]
  To see the dynamical explanation of the point $(x,y)$, we need to discuss the set of accumulation points $R$ of the corresponding curves $\mathcal{R}(x_i,y_i)$.
At first, we only consider the case that $\overline{x_iy_i}$ converges to $\overline{xy}$ from one side.
We distinguish $3$ basic cases.
\begin{enumerate}
\item No curves in  $\{\mathcal{R}(x_i,y_i) \mid i\geq1\}$ pass through Fatou components. In this case, the external rays $\mathcal{R}(x_i)$ converge to $\mathcal{R}(x)$ and the rays $\mathcal{R}(y_i)$ converge to $\mathcal{R}(y)$. Consequently, the common landing point $z_i$ of $\mathcal{R}(x_i)$ and  $\mathcal{R}(y_i)$ converge to a point $z\in \mathcal{J}_f$, which must  be the common landing point of $\mathcal{R}(x)$ and $\mathcal{R}(y)$. Then we have
    \[R:=\lim_{i\rightarrow\infty}\mathcal{R}(x_i,y_i)=\mathcal{R}(x)\cup\{z\}\cup \mathcal{R}(y)\]
    We claim that $\{x,y\}$ is an adjacent angle pair.   Otherwise, each component of $\C\setminus\mathcal{R}(x,y)$ contains at least two components of $\mathcal{K}_f\setminus z$. Since $\ds\lim_{i\rightarrow\infty}(x_i,y_i)=(x,y)$, for sufficiently large $i$, the points $z_i$ belong to a common component of $\mathcal{K}_f\setminus z$. Set
 \[\{z_i\}_{i\geq1}\subset K\subset U,\]
 where $K$ is a component of $\mathcal{K}_f\setminus z$ and $U$ is a component of $\C\setminus\mathcal{R}(x,y)$. Let $\{v,w\}$ be the  angle-pair at $z$ bounding $K$. On one hand, we know 
 $$[v,w]\subsetneqq[x,y],$$ since, by assumption, the pair $\{x,y\}$ is not adjacent.
 On the other hand,   the angles $x_i,y_i\ (i\geq1)$ belong to $ [v,w]$ and hence the interval $[x,y]$ is a subset of $ [v,w]$. This leads to a contradiction.

\begin{example}
Set $\theta=1/4$. The red curves in Figure \ref{case1} are cutting curves $\{\mathcal{R}(x_i,y_i)\}$ satisfying case (1) such that $x_i\to 2/7,y_i\to 4/7$ as $n\to\infty$. It is seen that  $\{\mathcal{R}(2/7),\mathcal{R}(4/7)\}$ is an adjacent ray-pair landing at the $\alpha$-fixed point of $f_{1/4}$ and $R=\mathcal{R}(2/7)\cup\{\alpha\}\cup\mathcal{R}(4/7)$.
\end{example}
\begin{figure}[htpb]
\centering
\includegraphics[width=4.3in]{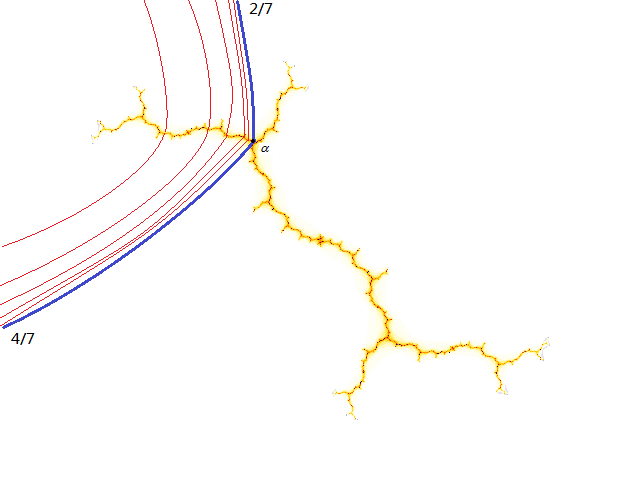}
\caption{} \label{case1}
\end{figure}\vspace{-8pt}
 \item For each $i\geq1$, the curve $\mathcal{R}(x_i,y_i)$ passes through a Fatou component $U_i$  and the diameters of $U_i$  ($i\geq1$) converge to $0$. In this case, the external rays $\mathcal{R}(x_i)$ converge to $\mathcal{R}(x)$, the  rays $\mathcal{R}(y_i)$ converge to $\mathcal{R}(y)$ and the Fatou components $U_i$ converge to a point $z\in \mathcal{J}_f$. This point  must  be the common landing point of $\mathcal{R}(x)$ and $\mathcal{R}(y)$. Then we have
    \[R=\lim_{i\rightarrow\infty}\mathcal{R}(x_i,y_i)=\mathcal{R}(x)\cup\{z\}\cup \mathcal{R}(y)\]

 Using the same argument as  in $(1)$ above, we see that the angle-pair $\{x,y\}$ is an adjacent angle-pair.
 \item For each $i\geq1$, the curve $\mathcal{R}(x_i,y_i)$ passes through a Fatou component $U_i$  and the infimum of the diameters of $U_i$ is bounded from $0$. In this case, for sufficiently large $i$, and passing to a subsequence if necessary, all $U_i$ coincide, and hence may be  denoted by $U$.  It follows that  the two internal rays of $U$ contained in the curves $\mathcal{R}(x_i,y_i)$ tend to a single internal ray as $i\rightarrow\infty$. So  the two limit external rays $\mathcal{R}(x)$ and $\mathcal{R}(y)$ must land together. In fact,
     \[R=\lim_{i\rightarrow\infty}\mathcal{R}(x_i,y_i)=\mathcal{R}(x)\cup \mathcal{R}(y)\cup r_U(\alpha)\cup\{z\}\]
     where $z=\g(x)=\g(y)$ and $r_{U}(\alpha)$ is the internal ray in $U$ landing at $z$.
     Since each pair of rays $\mathcal{R}(x_i)$, $\mathcal{R}(y_i)$ support the component $U$, the angle-pair $\{x,y\}$ must be an adjacent angle-pair.

\begin{example} Set $\theta=4/15$. The blue curves in Figure
  \ref{case23} are cutting curves $\{\mathcal{R}(x_i,y_i)\}$
  satisfying case (2) such that $x_i\to 2/7,y_i\to 4/7$ as
  $n\to\infty$. It is seen that
  $\{\mathcal{R}(2/7),\mathcal{R}(4/7)\}$ is an adjacent ray-pair
  landing at the $\alpha$-fixed point of $f_{4/15}$ and
  $R=\mathcal{R}(2/7)\cup\{\alpha\}\cup\mathcal{R}(4/7)$.  The red
  curves in Figure \ref{case23} are cutting curves
  $\{\mathcal{R}(2/15,v_i)\}$ satisfying case  (3) such that
  $v_i\to 3/5$ as $n\to\infty$. It is seen that
  $\{\mathcal{R}(2/15),\mathcal{R}(3/5)\}$ is an adjacent ray-pair
  landing at $z$ and
  $R=\mathcal{R}(2/15)\cup\overline{r_U(0)}\cup\mathcal{R}(3/5)$.
\end{example}
     \end{enumerate}
\begin{figure}[htpb]
\centering
\includegraphics[width=4.7in]{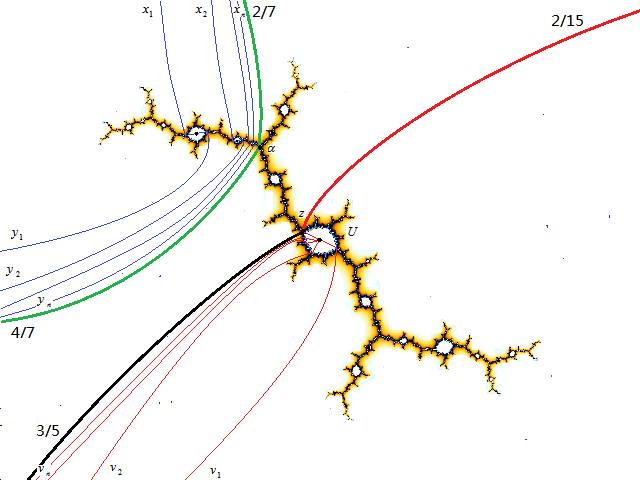}
\caption{} \label{case23}
\end{figure}

Generally, we allow the leaves $\overline{x_iy_i}$ to converge to $\overline{xy}$ from two sides. Then the sequence $\{(x_i,y_i)\}_{i\geq1}$ may contain  subsequences satisfying $1$, $2$ or $3$ of the basic cases described above from either sides of $\overline{xy}$. So the set of accumulation points $R$ is formed by  the possible combinations of the set of accumulation points in the  basic cases from two sides. That is,  $R$  consists of an adjacent ray-pair $\{\mathcal{R}(x),\mathcal{R}(y)\}$ together with eventual $0$, $1$ or $2$ internal rays. Anyway, a leaf $\overline{xy}\in L_\theta^{\D} $ corresponds to the unique adjacent angle-pair $\{x,y\}$.

Next, we will show the opposite implication. That is, any adjacent angle-pair $\{x,y\}$ is contained in $L_\theta^{\D} $ as a leaf $\overline{xy}$.

Let $\{x,y\}$ be an adjacent angle-pair at $z$. Let $V$ be one of  regions bounded by $\mathcal{R}(x)$ and $\mathcal{R}(y)$ without other rays landing at $z$, set $K=V\cap \mathcal{K}$.
Choose any point $w\in K$, denote by $[w,z]_K$ the regulated arc connecting $z$ and $w$ in $K$. We distinguish two cases:
 \begin{enumerate}
\item  $[w,z]_K\cap \mathcal{J} $ clusters at $z$. Then two further cases may happen.
\begin{itemize}
\item $[w,z]_K$ passes through an ordered infinite sequence  of Fatou components $\{U_i,i\geq1\}$ from $w$ to $z$ so that $U_i$ converges to $z$ as $i\rightarrow\infty$. We claim that for any $i\geq2$, we can pick up a curve $\mathcal{R}(x_i,y_i)\in\Gamma_\infty $ such that it separates $U_{i-1}$ and $z$ (in Figure \ref{case23}, we have $\{x,y\}=\{2/7,4/7\}$ and the blue curves are what we want). By this Claim, the sequence of curves $\mathcal{R}(x_i,y_i)$ ($i\geq2$) converge to the ray-pair of $\{x,y\}$ in the Hausdorff topology, so $\overline{xy}\in L_\theta^{\D} $.

\smallskip
{\em Proof of this Claim:}
Fix $U_i$. The intersection of $U_i\cap [w,z]_K$ consists of two internal rays $r_{U_i}(\alpha)$ and $r_{U_i}(\beta)$ of $U_i$. We assume that $r_{U_i}(\alpha)$ lies in between  $z$ and $r_{U_i}(\beta)$. If $\alpha$ is not equal to $\frac{j}{2^n}$ for any $n\geq1,j\geq0$, we can choose integers $n_0$ and $j_0$  so that 
$$\alpha\in\left(\frac{j_0}{2^{n_0}},\frac{j_0+1}{2^{n_0}}\right) \quad \textrm{and} \quad \beta\not\in\left[\frac{j_0}{2^{n_0}},\frac{j_0+1}{2^{n_0}}\right].$$ Then the curve
$\mathcal{R}(x_i,y_i)\in\Gamma_\infty $ that contains $r_{U_i}\left(\frac{j_0}{2^{n_0}}\right)$ and $r_{U_i}\left(\frac{j_0+1}{2^{n_0}}\right)$ separates $U_{i-1}$ and $z$.
If $\alpha=\frac{j_0}{2^{n_0}}$ for some integers $n_0,j_0$, we can choose a sufficiently large $n$ and a suitable integer $j$ such that 
$$\alpha=j/2^n \quad \textrm{and} \quad \beta\not\in\left[\frac{j-1}{2^{n}},\frac{j+1}{2^{n}}\right].$$  
In this case, either the curve (in $\Gamma_\infty $) containing $r_{U_i}\left(\frac{j-1}{2^{n}}\right)$, $r_{U_i}\left(\frac{j}{2^{n}}\right)$ or the curve (in $\Gamma_\infty $) containing $r_{U_i}\left(\frac{j-1}{2^{n}}\right)$, $r_{U_i}\left(\frac{j}{2^{n}}\right)$ separates $U_{i-1}$ and $z$.
Denote this curve by $\mathcal{R}(x_i,y_i)$.
\smallskip

\item No subsequence of the Fatou components (if any) passed through by $[w,z]_K$  converges to $z$.  Then, replacing $w$ by a point closer to $z$ if necessary, we may assume $[w,z]_K\subset \mathcal{J}$. In this case, we can pick up a sequence of points 
$$\{z_i,\ i\geq1\}\subset[w,z]_K\cap(\mathcal{J}\setminus \text{Pre-cr})$$ from $w$ to $z$ such that $z_i$ ($i\geq1$) converges to $z$ as $n\rightarrow\infty$.
    We claim that for any $i\geq1$, we can pick up a  curve $\mathcal{R}(x_i,y_i)\in\Gamma_\infty $ so that it  separates $z_i$ and $z$. Thus, we also obtain that $\overline{xy}\in L_\theta^{\D} $ (in Figure \ref{case1}, we have $\{x,y\}=\{2/7,4/7\}$ and the red curves are what we want).

\smallskip

{\em Proof of this Claim:}
  We only need to prove that each $\mathcal{R}(x_i,y_i)$ separates $z_i$ and $z_{i+1}$. Fix  $i\geq1$. By Corollary \ref{separate}, $\iota_\theta(z_i)\not=\iota_\theta(z_{i+1})$. Denote by  $p_i$ the index of the first distinct entries of $\iota_\theta(z_i)$ and $\iota_\theta(z_{i+1})$. Then the points $f^{p_i}(z_i)$ and $f^{p_i}(z_{i+1})$ are separated by the  curve $\mathcal{R}\left(\frac{\theta}{2},\frac{\theta+1}{2}\right)$. Lifting this curve along the orbit of the pair $\{z_i,z_{i+1}\}$, we obtain a curve  $\mathcal{R}(x_i,y_i)\in\Gamma_\infty $ that  separates $z_i$ and $z_{i+1}$.
  \smallskip
\end{itemize}

\item The set $ [w,z]_K\cap \mathcal{J}$ does not cluster $z$. In this case, the point $z$ is on the boundary of a Fatou component $U\subset V$ and the angle-pair $\{x,y\}$ bounds $U$. Denote by $r_{U}(\alpha)$ the internal ray of $U$ landing at $z$. If $\alpha$ is not equal to $\frac{j}{2^n}$ for any $n\geq1,j\geq0$, we can choose a sequence of integers $\{j_n,n\geq1\}$ such that $\alpha\in\left(\frac{j_n}{2^{n}},\frac{j_n+1}{2^{n}}\right)$ for any $n\geq1$. Let
$\mathcal{R}(x_n,y_n)$ be the curve in $\Gamma_\infty $ that contains $r_{U}\left(\frac{j_n}{2^{n}}\right)$ and $r_{U}\left(\frac{j_n+1}{2^{n}}\right)$. Then this sequence of curves  converge to the ray-pair of $\{x,y\}$ as $n\rightarrow\infty$. If $\alpha=j_0/2^{n_0}$ for some integers $n_0,j_0$,  then  $\alpha$ can be expressed as $j_n/2^n$ for every $n\geq n_0$. Note that one of $x$ and $y$, say $x$,  belongs to the set $\text{\rm Pre}^{\mathbb{S}}_\theta$. Then we obtain two sequences of curves,
 $$\left\{\mathcal{R}(x,y_n)\right\}_{n\geq n_0} \quad \textrm{and} \quad \left\{\mathcal{R}(x,s_n)\right\}_{n\geq n_0},$$
  belonging to $\Gamma_\infty $ such that each ray $\mathcal{R}(x,y_n)$ contains the internal rays $r_{U}\left(\frac{j_n-1}{2^n}\right),r_U\left(\frac{j_n}{2^n}\right)$ and each ray $\mathcal{R}(x,s_n)$ contains the internal rays $r_{U}\left(\frac{j_n+1}{2^n}\right),r_U\left(\frac{j_n}{2^n}\right)$.  By the construction of the curves in $\Gamma_\infty $, we can see that one sequence of curves converge to a single ray $\mathcal{R}(x)$ and the other sequence converges to the ray-pair of $\{x,y\}$. So $\overline{xy}\in L_\theta^{\D} $ (in Figure \ref{case23}, we have $x=2/15,y=3/5$ and the red curves are what we want).
\end{enumerate}\end{proof}

\section{Proof of Lemma \ref{land together}}

For  a rational angle $\theta\in \mathbb{S}\setminus\{0\}$, and $f:z\mapsto z^2+c_\theta$,  this lemma claims that two non pre-major angles $\alpha$ and $\beta$
form an angle pair (i.e. the external rays $\mathcal{R}(\alpha)$ and $\mathcal{R}(\beta)$
land together) if and only if they have the same itinerary relative to the major leaf.

The necessity is obvious because $\alpha$ and $\beta$ have the same itineraries as that of $z$, where $z$ is the common landing point of $\mathcal{R}(\alpha)$ and $\mathcal{R}(\beta)$.

For the sufficiency, we only need to prove the following result: if the first n-th entries of $\iota_\theta(\alpha)$ and $\iota_\theta(\beta)$ are identical (see Subsection \ref{sec:no-escaping} for the definition of $\iota_\theta(\alpha)$),
then the distance of $z,w$ is less than $C\cdot \lambda^{-n}$ with some metric, where $\lambda>1,\ C$ are constants and $z,w$ are the landing points of $\mathcal{R}(\alpha), \mathcal{R}(\beta)$ respectively.

We will prove this by distinguishing between the following two cases: i) the case that $c_\theta$ is (strictly) periodic and ii) the case that $c_{\theta}$ is (strictly) preperiodic.
\medskip

\noindent i) Case $c_\theta$ is periodic:

In this case, $\mathcal{P}_f$ consists of the orbit of $c_\theta$. For a point $p\in \mathcal{P}_f$, set
\[B_p=\Big\{z\in U_p\,\Big|\, |\phi_{U_p}(z)|<e^{-1}\Big\}\]
and set
\[B_\infty=\Big\{z\in U_\infty\,\Big|\, |\phi_{U_\infty}(z)|>e\Big\}\]
Define $\mathcal{L}$ the complement of  the union 
$$B_\infty \cup \left( \bigcup_{p\in \mathcal{P}_f}B_p \right)$$ in $\C$. Then $\mathcal{L}$ is a compact connected neighborhood of the Julia set $\mathcal{J}$
on which $ f$ is uniformly expanding with respect to the hyperbolic metric $\rho$ in a neighborhood $U$ of $\mathcal{L}$, i.e. $\exists \lambda>1$ such that if $\gamma\subset \mathcal{L}$ is an arc and $ f:\gamma\to f(\gamma)$ is a homeomorphism, then
\[\textrm{diam}_\rho( f(\gamma))>\lambda\cdot \textrm{diam}_\rho(\gamma).\]

 Define
\[\mathcal{R}(P _\theta):=\bigcup_{n\geq1}f^n\left(\mathcal{R}\left(\frac{\theta}{2},\frac{\theta+1}{2}\right)\right)\]
For any $p,q\in \mathcal{L}$, we can find an arc in $\mathcal{L}$ connecting $p,q$ such that it doesn't cross the rays in $\mathcal{R}(P _\theta)$. So we can
define a number
\begin{multline*}
M_{p,q}=\inf\Big\{\textrm{length}_\rho(\gamma)\mid \gamma\subset \mathcal{L} \textrm{ is an arc connecting $p$ and $q$ } \\
\textrm{ but not crossing  the rays in $\mathcal{R}(P _\theta)$}\Big\}
\end{multline*}
It is not difficult to check that $M_{p,q}$ is uniformly bounded for $p,q\in \mathcal{L}$, i.e. $\exists\ C>0$ s.t $M_{p,q}<C$ for any $p,q\in \mathcal{L}$.

Now, suppose the first $n$-th entries of $\iota_\theta(\alpha)$ and $\iota_\theta(\beta)$ are identical.
Set $z,w$ to be the landing points of $\mathcal{R}(\alpha),\mathcal{R}(\beta)$, respectively, and $z_i,w_i$ the $i$-th iteration of $z,w$ by $ f$.
Choose an arc $\gamma_n\in \mathcal{L}$ with 
$$\textrm{length}_{\rho}(\gamma_n)<C$$ such that it connects $z_n,w_n$ and doesn't cross the rays in $\mathcal{R}(P _\theta)$.
Lift $\gamma_n$ to the arc $\gamma_{n-1}$ with starting point $z_{n-1}$. 
Since $\gamma_n$ doesn't cross the rays in $\mathcal{R}(P _\theta)$, then so doesn't $\gamma_{n-1}$ and it implies that $\gamma_{n-1}$ is contained in the same component of 
$$\mathcal{L} \setminus\mathcal{R}\left(\dfrac{\theta}{2},\dfrac{\theta+1}{2}\right).$$

Note that the $i$-th entries of $\iota_\theta(\alpha)$ and $\iota_\theta(\beta)$ are equal if and only if $z_i,w_i$ belongs to the same component of $\mathcal{L} \setminus\mathcal{R}\left(\frac{\theta}{2},\frac{\theta+1}{2}\right)$. Then $w_{n-1}$ is the unique preimage of $w_n$ contained in the component of $\mathcal{L} \setminus\mathcal{R}\left(\frac{\theta}{2},\frac{\theta+1}{2}\right)$ that contains $z_{n-1}$. It follows that $\gamma_{n-1}$ connects $w_{n-1}$ and $z_{n-1}$. As 
$$\gamma_{n-1}\subset f^{-1}(\mathcal{L})\subset \mathcal{L},$$ by the uniform expansion of $ f$ in $\mathcal{L}$, we have
\[\textrm{length}_{\rho}(\gamma_{n-1})<\lambda^{-1} \textrm{length}_{\rho}(\gamma_n).\]

Repeating this process $n$ times, we obtain an arc $\gamma_0$ which is the $n$-th lift of $\gamma_n$ with the starting point $z$. With the same argument as before, the ending point of $\gamma_0$ is $w$ and
\[\textrm{dist}_\rho(z,w)\leq \textrm{length}_{\rho}(\gamma_{0})<\lambda^{-n} \textrm{length}_{\rho}(\gamma_n)< C\lambda^{-n}.\]

\bigskip

\noindent ii) Case $c_\theta$ is preperiodic:

 In this case, $\mathcal{J}=\mathcal{K}$ and $ f$ is uniformly expanding with respect to an {\it admissible metric (orbifold metric)} $\rho$ in a compact neighborhood of $\mathcal{J}$ (see \cite{DH,Mc,Mil}).

Since $\mathcal{J}$ is compact, the length of the regulated arc $[z,w] $ (with respect to the metric $\rho$) is uniformly bounded by a constant $C$
for any $z,w\in \mathcal{J}$. Using the same notions as the periodic case, if the first $n$-th entries of $\iota_\theta(\alpha)$ and $\iota_\theta(\beta)$ are equal, then for $0\leq i\leq n-1$, the polynomial $ f$ maps the regulated arc $[z_i,w_i] $ homeomorphically to the regulated arc $[z_{i+1},w_{i+1}] $. By the uniformly expansion of $ f$ on $\mathcal{J}$, we have that
\[\textrm{dist}_\rho(z,w)\leq \textrm{length}_\rho([z,w] )\leq \lambda^{-n} \textrm{length}_\rho([z_n,w_n] )<C\cdot\lambda^{-n}.\]

We note that the result of Lemma \ref{land together} is covered by a more general theorem in \cite{Zeng2}.

\section{Additional Images}

\begin{figure}[http]
  \[
  \includegraphics[width=.95\linewidth]{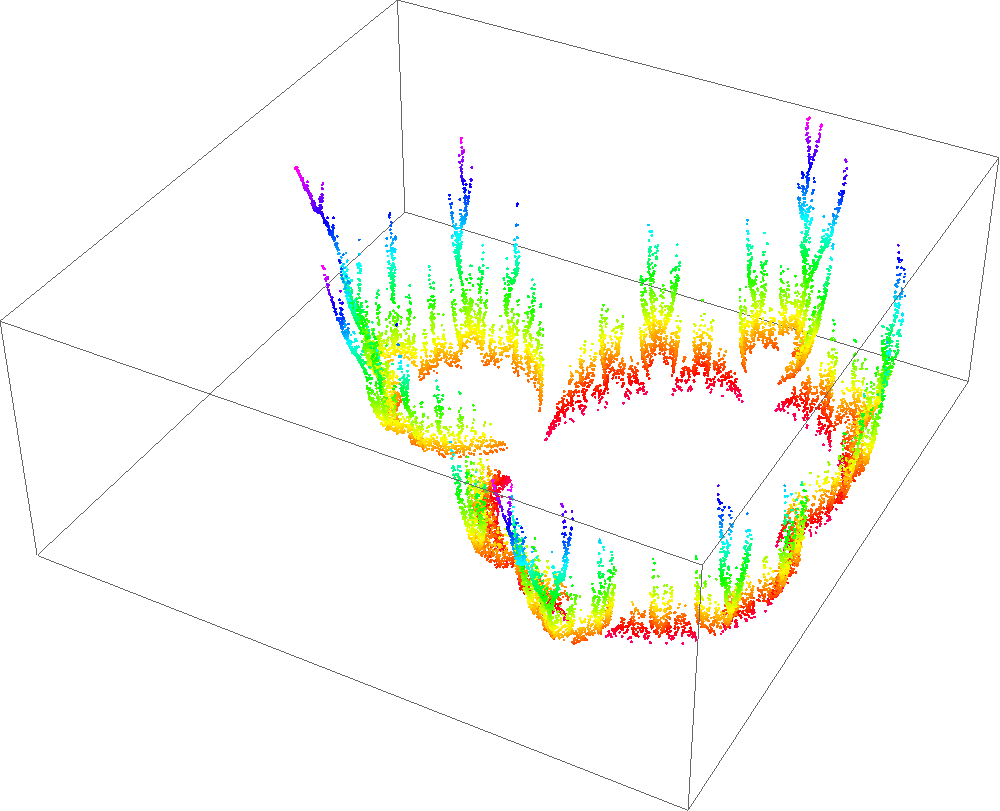}
  \]
  \caption{A plot of core entropy, using data computed by Wolf Jung.  The horizontal plane is $\mathbb{C}$ and the vertical axis is the interval $[0,\log 2] \subset \mathbb{R}$. A plotted point $(c,h) \in \mathbb{C} \times \mathbb{R}$ represents the data that the core entropy of the polynomial $z \mapsto z^2+c$ is $h$.  Data is shown for a selection  of parameters $c$ in the boundary of the Mandelbrot set with rational external angle.  Points are color-coded according to the $h$-coordinate, core entropy.
  }
\end{figure}

\begin{figure}
  \[
  \includegraphics[width=.7\linewidth]{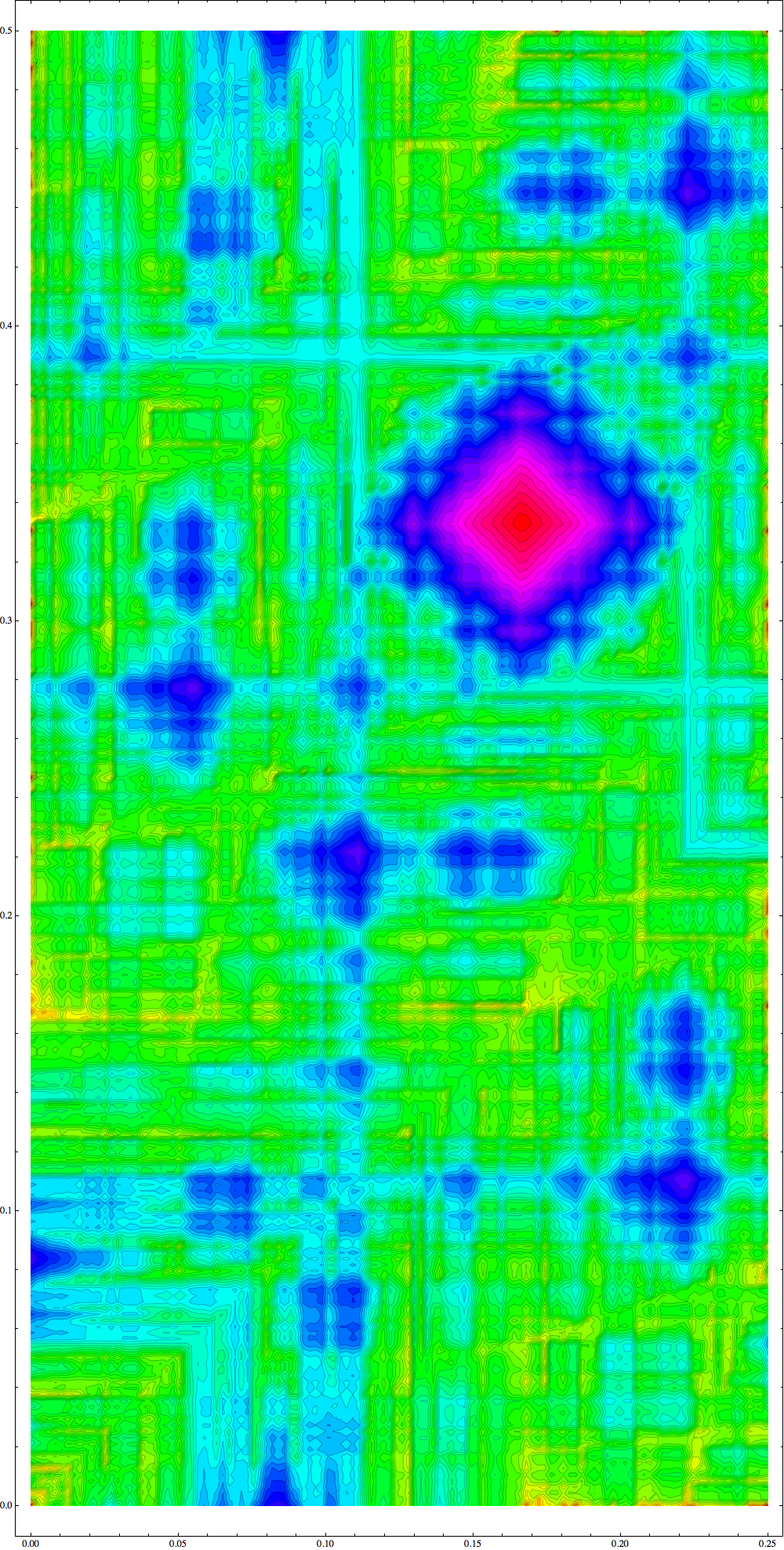}
  \]
  \caption{A contour plot by W. Thurston of core entropy as a function
    on $\textrm{PM}(3)$, using the starting point
    parametrization of Section~\ref{sec:starting-point}.}
\end{figure}

\begin{figure}
  \[
  \includegraphics[width=\linewidth]{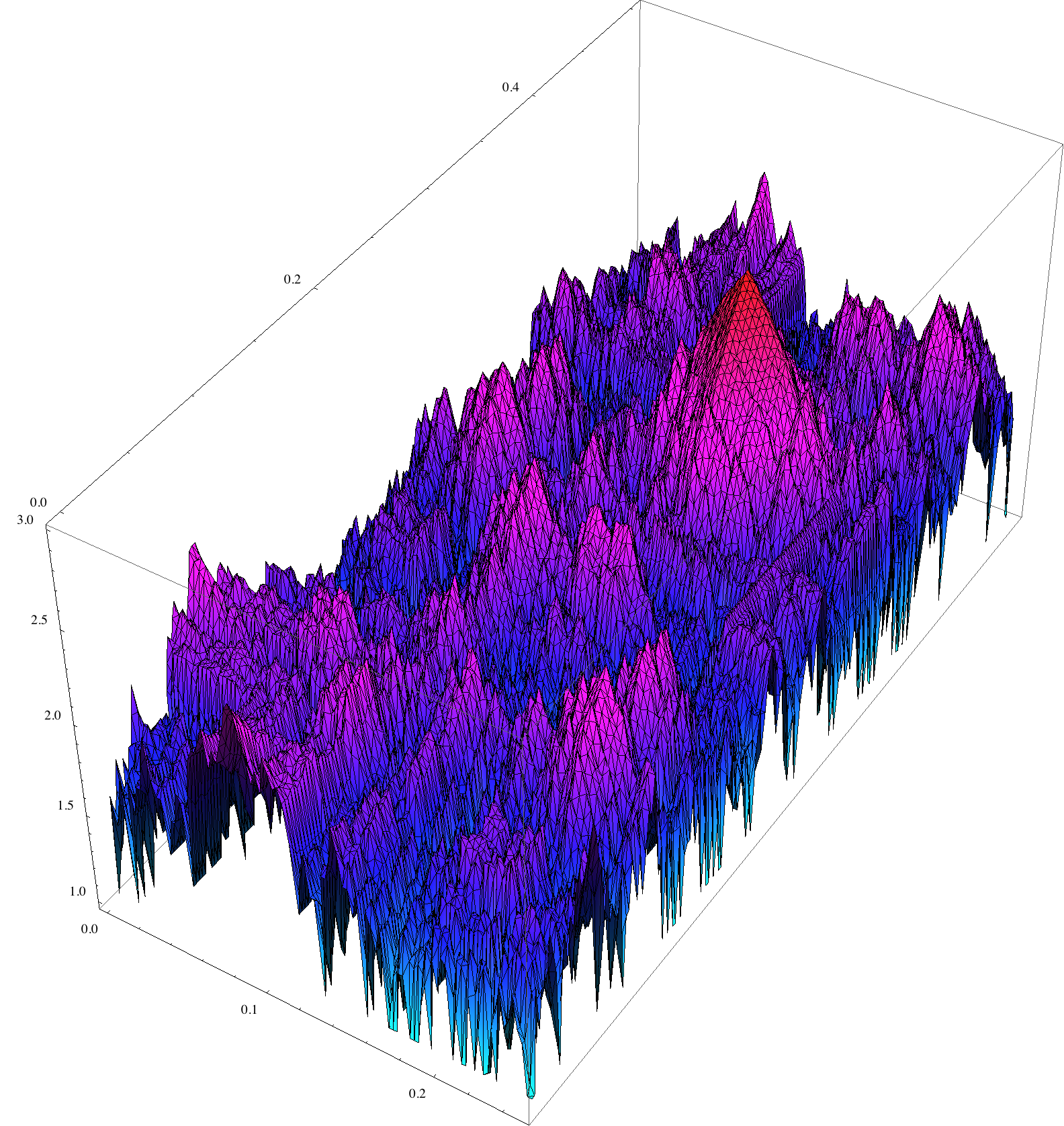}
  \]
  \caption{A plot by W. Thurston of the exponential of core entropy as
    a function on
    $\textrm{PM}(3)$, using the starting point
    parametrization of Section~\ref{sec:starting-point}.}
  \label{fig:entropy-3d}
\end{figure}

\FloatBarrier

\subsection*{Acknowledgements} The authors thank the anonymous referee
for their careful reading and helpful comments.  During the
preparation of this manuscript, Kathryn Lindsey received supported
from the NSF via a Graduate Research Fellowship and, later, a
Postdoctoral Research Fellowship.
Hyungryul Baik was partially supported by Samsung Science \&
Technology Foundation grant No. SSTF-BA1702-01.
Yan Gao was partially supported by NSFC grant No. 11871354.
Dylan Thurston was partially supported by NSF Grant Number DMS-1507244.


\bibliographystyle{amsalpha}
\addtocontents{toc}{\setcounter{tocdepth}{-10}} 
\bibliography{bibliography}

\providecommand{\bysame}{\leavevmode\hbox to3em{\hrulefill}\thinspace}
\providecommand{\MR}{\relax\ifhmode\unskip\space\fi MR }
\providecommand{\MRhref}[2]{%
  \href{http://www.ams.org/mathscinet-getitem?mr=#1}{#2}
}
\providecommand{\href}[2]{#2}
\begin{thebibliography}{BMOV13}

\bibitem[AKM65]{AKM}
R.~L. Adler, A.~G. Konheim, and M.~H. McAndrew, \emph{Topological entropy},
  Trans. Amer. Math. Soc. \textbf{114} (1965), 309--319.

\bibitem[BCO11]{BCO}
Alexander~M. Blokh, Clinton~P. Curry, and Lex~G. Oversteegen, \emph{Locally
  connected models for {J}ulia sets}, Adv. Math. \textbf{226} (2011), no.~2,
  1621--1661. \MR{2737795 (2012d:37106)}

\bibitem[BL02]{BL}
A.~Blokh and G.~Levin, \emph{An inequality for laminations, {J}ulia sets and
  ``growing trees''}, Ergodic Theory Dynam. Systems \textbf{22} (2002), no.~1,
  63--97. \MR{1889565 (2003i:37045)}

\bibitem[BMOV13]{BMOV}
Alexander~M. Blokh, Debra Mimbs, Lex~G. Oversteegen, and Kirsten I.~S.
  Valkenburg, \emph{Laminations in the language of leaves}, Trans. Amer. Math.
  Soc. \textbf{365} (2013), no.~10, 5367--5391. \MR{3074377}

\bibitem[BO04a]{BO1}
Alexander Blokh and Lex Oversteegen, \emph{Backward stability for polynomial
  maps with locally connected {J}ulia sets}, Trans. Amer. Math. Soc.
  \textbf{356} (2004), no.~1, 119--133 (electronic). \MR{2020026 (2005c:37081)}

\bibitem[BO04b]{BO2}
\bysame, \emph{Wandering triangles exist}, C. R. Math. Acad. Sci. Paris
  \textbf{339} (2004), no.~5, 365--370. \MR{2092465 (2005g:37081)}

\bibitem[BOPT14]{BOPT}
Alexander Blokh, Lex Oversteegen, Ross Ptacek, and Vladlen Timorin, \emph{The
  main cubioid}, Nonlinearity \textbf{27} (2014), no.~8, 1879--1897.
  \MR{3246159}

\bibitem[DH]{DH}
A.~Douady and J.~Hubbard, \emph{Exploring the {M}andelbrot set. {T}he {O}rsay
  notes}.

\bibitem[dMvS93]{DV}
Welington de~Melo and Sebastian van Strien, \emph{One-dimensional dynamics},
  Ergebnisse der Mathematik und ihrer Grenzgebiete (3) [Results in Mathematics
  and Related Areas (3)], vol.~25, Springer-Verlag, Berlin, 1993. \MR{1239171}

\bibitem[Dou95]{Do}
A.~Douady, \emph{Topological entropy of unimodal maps: monotonicity for
  quadratic polynomials}, Real and complex dynamical systems ({H}iller\o d,
  1993), NATO Adv. Sci. Inst. Ser. C Math. Phys. Sci., vol. 464, Kluwer Acad.
  Publ., Dordrecht, 1995, pp.~65--87.

\bibitem[Fur67]{Fu}
Harry Furstenberg, \emph{Disjointness in ergodic theory, minimal sets, and a
  problem in {D}iophantine approximation}, Math. Systems Theory \textbf{1}
  (1967), 1--49.

\bibitem[Gao]{Gao2}
Yan Gao, \emph{On {T}hurston's core entropy algorithm}, To appear in
  Transactions of the AMS.

\bibitem[Gao13]{Gao}
Yan Gao, \emph{Dynatomic curve and core entropy for iteration of polynomials},
  Ph.D. thesis, Universit{\'e} d'Angers, France, April 2013.

\bibitem[Gol94]{Goldberg}
L.~R. Goldberg, \emph{On the multiplier of a repelling fixed point}, Invent.
  Math. \textbf{118} (1994), 85--108.

\bibitem[Jun14]{Jung}
Wolf Jung, \emph{Core entropy and biaccessibility of quadratic polynomials},
  Preprint, 2014, arXiv:1401.4792.

\bibitem[Kiw02]{Ki1}
Jan Kiwi, \emph{Wandering orbit portraits}, Trans. Amer. Math. Soc.
  \textbf{354} (2002), no.~4, 1473--1485. \MR{1873015 (2002h:37070)}

\bibitem[Kiw04]{Ki2}
\bysame, \emph{{$\mathbb{R}$}eal laminations and the topological dynamics of
  complex polynomials}, Adv. Math. \textbf{184} (2004), no.~2, 207--267.
  \MR{2054016 (2005b:37094)}

\bibitem[Kiw05]{Kiwi05}
J.~Kiwi, \emph{Combinatorial continuity in complex polynomial dynamics}, Proc.
  London Math. Soc. \textbf{91} (2005), no.~3, 215--248.

\bibitem[Lev98]{Le}
G.~Levin, \emph{On backward stability of holomorphic dynamical systems}, Fund.
  Math. \textbf{158} (1998), no.~2, 97--107. \MR{1656942 (99j:58171)}

\bibitem[McM94]{Mc}
Curtis~T. McMullen, \emph{Complex dynamics and renormalization}, Annals of
  Mathematics Studies, vol. 135, Princeton University Press, Princeton, NJ,
  1994. \MR{1312365}

\bibitem[Mil06]{Mil}
John Milnor, \emph{Dynamics in one complex variable}, third ed., Annals of
  Mathematics Studies, vol. 160, Princeton University Press, Princeton, NJ,
  2006. \MR{2193309}

\bibitem[Mim10]{Mimbs}
Debra~L. Mimbs, \emph{Laminations: {A} topological approach}, ProQuest LLC, Ann
  Arbor, MI, 2010, Thesis (Ph.D.)--The University of Alabama at Birmingham.
  \MR{2801683}

\bibitem[Poi09]{Poi1}
Alfredo Poirier, \emph{Critical portraits for postcritically finite
  polynomials}, Fund. Math. \textbf{203} (2009), no.~2, 107--163.

\bibitem[Poi10]{Poi2}
\bysame, \emph{Hubbard trees}, Fund. Math. \textbf{208} (2010), no.~3,
  193--248.

\bibitem[Pta13]{Pt}
Ross~M. Ptacek, \emph{Laminations and the dynamics of iterated cubic
  polynomials}, ProQuest LLC, Ann Arbor, MI, 2013, Thesis (Ph.D.)--The
  University of Alabama at Birmingham. \MR{3187504}

\bibitem[Thu09]{Th}
William~P. Thurston, \emph{Polynomial dynamics from combinatorics to topology},
  Complex Dynamics: Families and Friends, A. K. Peters, Wellesley, MA, 2009,
  pp.~1--109.

\bibitem[Tio13]{Ti1}
Giulio Tiozzo, \emph{Topological entropy of quadratic polynomials and dimension
  of sections of the {M}andelbrot set}, Preprint, 2013, arXiv:1305.3542.

\bibitem[Tio14]{Ti2}
\bysame, \emph{Continuity of core entropy of quadratic polynomials}, Preprint,
  2014, arXiv:1409.3511.

\bibitem[Tom14]{Toma}
J.~Tomasini, \emph{G{\'e}om{\'e}trie combinatoire des fractions rationnelles},
  Ph.D. thesis, Universit{\'e} d'Angers, 2014.

\bibitem[Zen14]{Zeng}
Jinsong Zeng, \emph{On the existence of shift locus for given critical
  portrait}, Preprint, 2014.

\bibitem[Zen15]{Zeng2}
\bysame, \emph{Criterion for rays landing together}, Preprint online at
  https://arxiv.org/abs/1503.05931, March 2015.

\end{thebibliography}

\nocite{*}

\end{document}